%
%

%
%
%

\documentclass[11pt,a4paper]{amsart}
\usepackage{amsmath,amsthm,amsfonts,latexsym,amssymb,enumerate,amscd,epsfig}
\usepackage{musicography}
\usepackage{hyperref}
\input{xypic}

\usepackage{tikz}
\usetikzlibrary{backgrounds,calc}
\usetikzlibrary{calc,arrows}

\usepackage{fullpage}
\usepackage{graphicx}
\usepackage[all]{xy}
\usepackage{verbatim}
\usepackage{mathtools}
 \usepackage{color}
 
\usepackage{lineno} 



\usepackage{amsopn}
\usepackage{enumitem}


\DeclareMathOperator{\trace}{trace}
\DeclareMathOperator{\grad}{grad}
\DeclareMathOperator{\Exp}{Exp}
\DeclareMathOperator{\diam}{diam}
\DeclareMathOperator{\dist}{dist}
\DeclareMathOperator{\supp}{supp}
\DeclareMathOperator{\vol}{vol}
\DeclareMathOperator{\inj}{inj}

\DeclareMathOperator{\g}{grad}
\DeclareMathOperator{\ad}{ad}
\DeclareMathOperator{\sym}{sym}
\newcommand{\Hper}{{H^{1}_{per}}}
\newcommand{\mathcalH}{\mathcal{H}}
\newcommand{\mathcalV}{\mathcal{V}}
\newcommand{\calV}{\mathcal{V}}
\newcommand{\vectors}{\mathfrak{X}}
\newcommand{\Vvectors}{ \mathfrak{X}(\mathcal{V} )}
\newcommand{\V}{\mathcal V}

\def \p{\partial}

\newcommand{\ip}[2]{\left\langle #1 ,#2\right\rangle}
\newcommand{\depa}[2]{\frac{\p #1}{\p #2}}

\def\Xint#1{\mathchoice
   {\XXint\displaystyle\textstyle{#1}}%
   {\XXint\textstyle\scriptstyle{#1}}%
   {\XXint\scriptstyle\scriptscriptstyle{#1}}%
   {\XXint\scriptscriptstyle\scriptscriptstyle{#1}}%
   \!\int}
\def\XXint#1#2#3{{\setbox0=\hbox{$#1{#2#3}{\int}$}
     \vcenter{\hbox{$#2#3$}}\kern-.5\wd0}}
\def\ddashint{\Xint=}
\def\dashint{\Xint-}

 \newcommand{\avint}{\dashint}
 
\bibliographystyle{bold}
\begin{document}

\title{Homogenization on parallelizable Riemannian manifolds}

\date{\today}

\thanks{D.Faraco, L.Guijarro and A.Ruiz acknowledge financial support from the Spanish Ministry of Science and Innovation through the Severo Ochoa Program for Centers of Excellence in R\&D (CEX2019-000904-S) and by the PID2021-124195NB-C32, from the Spanish Ministry of Economy and Competitiveness through research program MTM2017-85934-C3-2-P2, from  CAM through the Line of Excellence for University Teaching Staff between CAM and UAM,  and from  ERC Advanced Grant 834728. Y.Kurylev was partially funded by EP/R002207/1 (Nonlinear geometric inverse problems) funded by EPSRC}

\author{Daniel Faraco}
\address{Departamento de Matem\'aticas - Universidad Aut\'onoma de Madrid
 and Instituto de Ciencias Matem\'aticas
CSIC-UAM-UC3M-UCM, 28049 Madrid, Spain} \email{daniel.faraco@uam.es}
\author{Luis Guijarro}
\address{Departamento de Matem\'aticas - Universidad Aut\'onoma de Madrid
 and Instituto de Ciencias Matem\'aticas
CSIC-UAM-UC3M-UCM, 28049 Madrid, Spain} \email{luis.guijarro@uam.es}
\author{Yaroslav Kurylev }
\address{Department of Mathematics, University College London, Gower
Street, London UK, WC1E 6BT
} 
\author{Alberto Ruiz}
\address{Departamento de Matem\'aticas - Universidad Aut\'onoma de Madrid
 and Instituto de Ciencias Matem\'aticas
CSIC-UAM-UC3M-UCM, 28049 Madrid, Spain} \email{alberto.ruiz@uam.es}
\dedicatory{Dedicated to the memory of  our colleague and friend Slava Kurylev}

{\allowdisplaybreaks \sloppy \theoremstyle{plain}}
\newtheorem{Theorem}{Theorem}[section]
\newtheorem{Lemma}[Theorem]{Lemma}
\newtheorem{lem}[Theorem]{Lemma}
\newtheorem{Cor}[Theorem]{Corollary}
\newtheorem{question}[Theorem]{Question}
\newtheorem{Prop}[Theorem]{Proposition}

\theoremstyle{remark}
\newtheorem*{Rem}{Remark}

\theoremstyle{definition}
\newtheorem{defn}[Theorem]{Definition}
\newtheorem{Def}[Theorem]{Definition}
\newtheorem{example}[Theorem]{Example}

\newtheorem{Prob}{Problem}
\numberwithin{equation}{section}
\def\halmos{{\ \vbox{\hrule\hbox{\vrule height1.3ex\hskip0.8ex\vrule}\hrule}}\par \medskip}
\def\Xint#1{\mathchoice
{\XXint\displaystyle\textstyle{#1}}%
{\XXint\textstyle\scriptstyle{#1}}%
{\XXint\scriptstyle\scriptscriptstyle{#1}}%
{\XXint\scriptscriptstyle\scriptscriptstyle{#1}}%
\!\int}
\def\XXint#1#2#3{{\setbox0=\hbox{$#1{#2#3}{\int}$}
\vcenter{\hbox{$#2#3$}}\kern-.5\wd0}}
\def\ddashint{\Xint=}
\def\dashint{\Xint-}
\def \t{\tilde}
\def \r{\tilde{R}}
\def \D{\mathbb{D}}
\def \p{\partial}
\def \div{\operatorname{div}}
\def \Re{\operatorname{Re}}
\def \Im{\operatorname{Im}}
\def \supp{\operatorname{supp}}
\def \diam{\operatorname{diam}}
\def \C{\mathbb{C}}
\def \N{\mathbb N}
\def \Z{\mathbb Z}
\def\Lip{\operatorname{Lip}}
\newcommand{\R}{\mathbb{R}}
\newcommand{\T}{\mathbb{T}}
\def\TM{\mathbb{T}M}
\newcommand{\VM}{\mathfrak{X}(\mathbb{T}M)^{\text{ver}}}
\def  \TVkm{C(M,L^2T^{k,m}(\mathcal{V}_p))}
\def  \TV11{C(M,L^2T^{1,1}(\mathcal{V}_p))}
\def  \TVX{C(M,L^2\mathfrak{X}(\mathcal{V}_p))}
\def\up{\uparrow}
\def\ver{\textrm{ver}}

\def\widechi{\widetilde{\psi}}
\def\widef{\widetilde{f}}
\def\tildeX{\widetilde{X}}

\newcommand{\mathcalT}{\mathcal{T}}

\newcommand{\mathcalN}{\mathcal{N}}

\renewcommand{\a}{\alpha}
\newcommand{\e}{\varepsilon}

\newcommand{\RR}{\mathbb{R}}
\newcommand{\TT}{\mathbb{T}}
\newcommand{\ZZ}{\mathbb{Z}}

\newcommand{\tildeA}{\widetilde{A}}
\newcommand{\tildef}{\widetilde{f}}

\newcommand{\hh}{\bar{h}}
\def\2L{\Lambda_{\tilde{\gamma}}}
\def\1L{\Lambda_{\gamma}}
\def \beq {\begin {eqnarray*} }
\def \eeq {\end {eqnarray*} }
\def \e{\epsilon}

\newcommand{\inn}[2]{\ensuremath{\langle #1,#2\rangle}}


\def \c{\overline}
\def \d{\partial_z}
\def \dc{\partial_{\overline z}}
\def \cd{\overline{\partial_z}}
\def \dk{\partial_{\overline k}}
\def \kd{\overline{\partial_k}}
\def \dx{\partial_x}
\def \H{H^{1/2}}
\def \s{{\musSharp}}
\def\2s{\xrightharpoonup{2s}}
\def\S2s{\stackrel{2s}{\longrightarrow}}

\begin{abstract} 
We  consider the problem of finding the homogenization limit of oscillating linear elliptic equations  in an arbitrary
parallelizable manifold $(M,g,\Gamma)$. We replicate  the concept of two-scale convergence by pulling back tensors 
defined on the torus bundle $\mathbb{T}M$ to the manifold $M$. The process consists of two steps: localization in the slow variable through Voronoi domains, and inducing local 
periodicity in the fast variable from the local exponential map in combination with the geometry of the torus bundle. The procedure  yields explicit cell formulae
for the homogenization limit, and as a byproduct a theory of two-scale convergence of tensors of arbitrary order.
\end{abstract}

\maketitle

\noindent \emph{Mathematics Subject Classification (2000):} 35R30,
35J15, 30C62, 53C21.


\section{Introduction}

The mathematical model known as the theory of periodic homogenization is used to describe the overall behavior of microscopic structures. Its roots can be traced back to the concept of $G$-convergence introduced by Spagnolo, generalized later by the $H$-convergence of Murat and Tartar, and it has since become a fundamental concept in the field of partial differential equations over the past four decades.
The archetypal example comes from considering a unit cube $Y$ and a bounded domain $\Omega$  in $\mathbb{R}^n$, a $Y$-periodic function $\sigma:\Omega\times Y\to\R$, and the corresponding differential operators $A_\epsilon:=\textrm{div}(\sigma(x, \frac{x}{\epsilon})\nabla)$. As $\epsilon\to 0$,  the solutions $u_\epsilon$ to the corresponding elliptic problem 
$A_\epsilon u_\epsilon= f$
 converge to some function $u$ that solves the limit equation $\textrm{div}(\sigma^*\nabla u)=f$,
where $\sigma^*$ is given by the  classical cell formula

\[ 
\sigma^*_{ij}(x)=\int_{Y} \sigma (x,y) [\nabla_y w_i(x,y)+e_i,\nabla_y w_j(x,y)+e_j]\, dy.
\]
Here $w_i(x,y)$ is the $Y$--periodic solution to the cell problem 
\[ -\div_y (\sigma(x,y)[\nabla_y w_i(p,v)+e_i ])=0\]
See for example the monographs \cite{Allairebook, BLPbook, Braidesbook, BraidesDefranceschibook, DalMasobook, JKObook, Miltonbook, Tartarbook} for basic reference, although the number of variants and applications is enormous. 

The aim of our research is to investigate from an intrinsic viewpoint such oscillating PDE's in a general Riemannian manifold $(M,g)$, and more importantly, to obtain explicit cell formulae which are amenable to computations. Notice that  a frontal obstacle for this is the lack of the concept of periodicity in a general Riemannian manifold. Partial advances in this direction appear in \cite{ConItuSic}, where the authors  put conditions on the fundamental group of the manifold that allows them to homogenize on certain Abelian covers of the original manifold. Interestingly, the probabilistic approach to homogenization (see e.g \cite{BLP79}) has been adapted to  manifolds with ergodic geodesic flow by Pak \cite{Pak05}. This is done using the tangent space at one fixed point and mapping it to the manifold with the exponential map. 

Our starting point is the observation that tangent spaces can be always endowed with the symmetries of Euclidean geometry. Interestingly, this naturally leads to consider models where at each point in the manifold the symmetries might differ.  The notion of parallelizable manifold seems tailored to describe this situation. An $n$--dimensional manifold $(M,g)$ is \emph{parallelizable} if  there is a smooth non-degenerate frame $\Gamma(p)=\{e_i(p)\}^n_{i=1}$ of vector fields  defined in the whole manifold; in more geometrical terms, this can also be formulated as the tangent bundle of the manifold being trivial.  This condition can be topologically restrictive (for instance, it forces the vanishing of every characteristic class of $M$) , but it allows nonetheless for numerous examples that were not considered in previous approaches. For instance,  
all closed 3--dimensional  manifolds and all Lie groups are parallelizable. Moreover, even if $M$ is not parallelizable, we could always work with subdomains where a frame does not degenerate.  We also think that parallelizability is the first step that needs to be taken in order to handle homogenization for Riemannian manifolds \emph{without any restrictions}. We expect to complete this project in future work.

All in all, the notion of parallelizable manifold allows us to formalize our previous intuition.  Even if $M$ does not admit symmetries, the tangent plane $T_pM$ admits those given by the lattice $\Gamma(p)$. Thus, it is natural to consider functions defined on the tangent bundle of $M$ which are $\Gamma(p)$ periodic on the second variable.  The best way to codify periodicity is to work on the so-called torus bundle $\TM$ (see Section 2 for precise definitions). For variables, $[p,v] \in \TM$, the manifold variable $p$ will play the role  of a slow variable, which we handle by discretization.
 The variable $v$ in the torus fiber will play the role of the periodic fast variable and will be handled by a local exponential map.
Such idea leads  to  a manifold version of   the by now  classical  two--scale convergence \cite{Allaire92,Nguetseng89}. In fact, we recover the classical two scale convergence theory  in the Euclidean space as a particular case. Let us remark  that furthermore, our result is new even in the Euclidean case, as we are allowing the periodicity to vary smoothly from point
to point.
(See~Remark \ref{rem:old2scale}).

Indeed, we will obtain a satisfactory two-scale convergence theory on manifolds, but we believe  that 
our main contribution is to obtain explicit formulae for homogenization limits of rapidly oscillating equations on the manifold.  We will use the standard terminology on the analysis of the torus bundle $\TM$ which will be carefully revisited in Section 2. For example $\V$ stands for the  vertical  part of the tangent to the  torus bundle and therefore $\mathfrak{X}(\V)$ for vertical vector fields (tangent to the fibers of $\TM$) and  correspondingly $T^{m,n}(\V)$ for vertical $(m,n)$-tensors.  
  
   In order to define  oscillating tensors in the spirit of the Euclidean case  $\sigma(x,\frac{x}{\epsilon})$, we will bring the periodic frame structure of the tangent plane to the manifold via the exponential.  For a fine enough net of points in $M$, $p_1,\dots, p_N$, the diffeomorphisms  $H_{\epsilon,j}: M \to T_{p_j}M$,
 \[ H_{\epsilon,j}(q)= \frac{\exp_{p_j}^{-1}(q)}{\epsilon} \]
 will play a crucial role. \footnote{The domain of these maps is not the whole of $M$, but we will write it as so in order to facilitate the reading.}  In order to bring to $M$ a function $\tilde{f}$ defined on $\TM$ we declare
 \[ f^\epsilon(q)=\sum_j \psi_j \tilde{f}\left(p_j,{\exp_{p_j}^{-1}(q)}/{\epsilon}\right)=\sum_k \psi_j H_{\epsilon,j}^* \tilde{f}
 \]
 where $\psi_j$ is a suitably constructed partition of unity.

It turns out that one can pull back vertical tensors $A \in T^{m,n}(\V)$ of the torus bundle similarly. In particular for vertical $(1,1)$-tensors $A \in T^{1,1}(\V)$ with coordinates $A_{i}^{j}[p,v]$, 
we define a sequence of oscillating tensor$\bar{A}^\epsilon \in T^{1,1}(M)$ by pulling back its coordinates, that is

\begin{equation}\label{eq:defAbarepsilon}
(\bar{A}^\epsilon)_{i}^{j}= (A_i^j)^\e.
\end{equation}

Alternatively, as in the case of functions, one can use $H_{\epsilon,j}$ and
define
 \[ 
 (A^\epsilon(q)=\sum \psi_j(q) H_{\epsilon,j}^*(A), 
 \]
(see  section~\ref{sec:2stensors} for precise definitions of tensors in the torus bundle and their 
pullbacks).

Notice
that, even in $\mathbb{R}^n$, our definition circumvents the measurability issues which are nagging with the classical $\sigma(x,\frac{x}{\epsilon})$. In any case,
both oscillating tensors $A^\epsilon, \bar{A}^\epsilon$ (more precisely their symmetric parts) are homogenized to an explicit  tensor $A^*$. The tensor $A^* \in  T^{1,1}(M)$ is obtained from $A \in T^{1,1}(\V)$
by explicit cell formulae. In the case that the metric $g$ and the frame $\Gamma$ are linked, in the sense that $\Gamma$ is an orthonormal frame respect to $g$, the cell formula
parallels the Euclidean situation discussed earlier. We call such metric the frame metric.
\begin{Def}[The homogenized problem - frame metric]
\label{def:hom}
Given $A \in T^{1,1}(\V)$, we define  $A^* \in T^{1,1}(M)$ by stating that for any $e_i,e_j \in T_pM$
\[ A^*(p)[e_i,e_j]=\int_{\TM_p} A[p,v] [\grad_v w_i(p,v)+e_i^\up, \grad_v w_j(p,v)+e_j^\up]\, dv\]
Here $w_i(p,v) \in L^2(M,\Hper(\TM_p))$ are the $\Gamma(p)$-periodic solutions to the cell problems
\[ -\div_v (A(p,v)[\grad_v w_i(p,v)+e_i ^\up])=0\]

\end{Def}
The  vector  $e_j^\up[p,v] $ is a vertical vector, obtained by lifting $e_j$ to the fiber (See definition \ref{def:vertical}). In  the case that the metric and the parallelization are arbitrary, 
the homogenized problem  incorporates the metric and its formula is given in equation \eqref{homatrix}

All in all, we obtain the following clean  homogenization theorem on manifolds. 
\begin{Theorem} 
\label{thm:hom}
Let $\Omega$ be a bounded smooth domain in $M$. 
Given  $f \in H^{-1}(M)$, the unique solution $u_\epsilon \in H_0^{1}(\Omega)$ of the problem
\[  \div (A_{\sym}^\epsilon[\grad u_\epsilon])=f \]
converges weakly to $u^* \in H_0^1(\Omega)$, defined as  the unique solution to 
\[  \div (A^*[\grad u^*])=f \]
\end{Theorem}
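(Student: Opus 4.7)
The plan is to carry out the classical two-scale argument of Nguetseng--Allaire, but with the manifold-adapted notion of two-scale convergence of tensors developed in the preceding sections. I assume that the two-scale compactness results, the identification of two-scale limits of gradients, and the convergence of admissible oscillating test functions (the pullbacks via the $H_{\epsilon,j}$ with partition of unity $\psi_j$) have all been established earlier in the paper.

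\medskip

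\textbf{Step 1 (a priori bounds and weak limits).} I first invoke uniform ellipticity of $A_{\sym}^\epsilon$ (which should follow from the construction via pullbacks, since ellipticity of $A$ on the torus bundle is preserved by $H_{\epsilon,j}^*$ and by convex combinations with the $\psi_j$). Lax--Milgram together with the Poincar\'e inequality on $\Omega$ then gives $\|u_\epsilon\|_{H_0^1(\Omega)} \le C\|f\|_{H^{-1}}$ uniformly in $\epsilon$. Extract a subsequence with $u_\epsilon \rightharpoonup u^*$ weakly in $H_0^1(\Omega)$.

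\medskip

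\textbf{Step 2 (two-scale compactness of gradients).} Applying the manifold two-scale compactness theorem to $\grad u_\epsilon$, I obtain a vertical corrector $u_1 \in L^2(M;\Hper(\TM_p))$, mean-zero in the fibre, such that
\[
\grad u_\epsilon \;\2s\; \grad u^* + \grad_v u_1 ,
\]
where the right-hand side is interpreted as a section of the vertical tangent bundle after lifting $\grad u^*$ to $e_i^\up$ components. The oscillating tensor $A_{\sym}^\epsilon$, by its very definition~\eqref{eq:defAbarepsilon} and the two-scale theory for pullbacks of tensors, two-scale converges strongly to $A_{\sym}(p,v)$.

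\medskip

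\textbf{Step 3 (passing to the limit in the weak formulation).} I test the equation against $\phi + \epsilon \phi_1^\epsilon$, where $\phi \in C_c^\infty(\Omega)$ and
\[
\phi_1^\epsilon(q) = \sum_j \psi_j(q)\,\phi_1\!\bigl(p_j,\exp_{p_j}^{-1}(q)/\epsilon\bigr)
\]
for $\phi_1 \in C_c^\infty(M; C^\infty_{\mathrm{per}}(\TM_p))$. The gradient of $\epsilon \phi_1^\epsilon$ two-scale converges to $\grad_v \phi_1$, up to a remainder of order $\epsilon$ coming from differentiating the $\psi_j$ and the base-point dependence, which vanishes as $\epsilon\to 0$ (this is where the Voronoi localization plays its role). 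Combining Step 2 with the strong two-scale convergence of $A_{\sym}^\epsilon$, the product
\[
\int_\Omega A_{\sym}^\epsilon\bigl[\grad u_\epsilon,\grad(\phi+\epsilon\phi_1^\epsilon)\bigr]\,d\vol_g
\]
converges to
\[
\int_M \int_{\TM_p} A_{\sym}(p,v)\bigl[\grad u^* + \grad_v u_1,\ \grad \phi + \grad_v \phi_1\bigr]\,dv\,d\vol_g(p),
\]
while the right-hand side converges to $\int_\Omega f\phi$.

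\medskip

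\textbf{Step 4 (identification via the cell problem and conclusion).} Choosing $\phi \equiv 0$ and arbitrary $\phi_1$, the limiting identity becomes the weak form of
\[
-\div_v\bigl(A_{\sym}(p,v)[\grad u^* + \grad_v u_1]\bigr) = 0
\]
for a.e. $p$, which by linearity forces $u_1(p,v) = \sum_i w_i(p,v)\,(\p_i u^*)(p)$ with $w_i$ the cell solutions of Definition~\ref{def:hom}. Inserting this back into the limit identity, choosing $\phi_1 \equiv 0$, and recognizing the inner $v$-integral as $A^*(p)[\grad u^*,\grad\phi]$, yields the homogenized equation
\[
\int_\Omega A^*[\grad u^*,\grad \phi]\,d\vol_g = \int_\Omega f\phi
\]
for every test $\phi$. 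Ellipticity of $A^*$ (standard consequence of the cell formula) gives uniqueness of $u^*$, so the full sequence $u_\epsilon$ converges.

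\medskip

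\textbf{Main obstacle.} The delicate point is Step~3: the admissible test function $\phi + \epsilon\phi_1^\epsilon$ lives on $M$ but encodes a fibrewise-periodic object via the patch of exponential maps $H_{\epsilon,j}$ glued by $\psi_j$. One must verify that differentiating through this patching produces only $O_{L^2}(\epsilon)$ remainders (curvature of $\exp_{p_j}$, derivatives of $\psi_j$, and the mismatch between lattices $\Gamma(p_j)$ and $\Gamma(p)$ for $p$ in the Voronoi cell of $p_j$), and that the resulting two-scale limit is exactly $\grad_v \phi_1(p,v)$ with no stray contributions. Once this compatibility between the geometry of the torus bundle and the pullback construction is nailed down, the rest of the argument is the standard Tartar-style manipulation.
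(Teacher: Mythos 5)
Your proposal follows essentially the same route as the paper: Steps 1--3 correspond to Theorem~\ref{2stheorem} (two-scale homogenization) together with the corollary at the end of Section~\ref{sec:2selliptic}, and Step~4 is exactly Theorem~\ref{thm:finalalternativo} (decoupling the two-scale homogenized system via the cell solutions). The one point you pass over is that the raw pullback $A^\epsilon$ is generically \emph{not} self-adjoint with respect to $g$, even though $A$ is self-adjoint on $\TM$: the metric varies over each Voronoi cell while the tensor data is frozen at $p_j$, so replacing $A^\epsilon$ by $A^\epsilon_{\sym}$ alters the tensor pointwise, and one needs the paper's Lemma~\ref{lem:offadjoints} (i.e.\ that $\|A^\epsilon-(A^\epsilon)^{\ad}\|_{L^1}\to 0$) to conclude that $A^\epsilon_{\sym}$ still strongly two-scale converges to $A$ before the compensated-compactness limit in your Step~3 applies. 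With this supplied, your outline coincides with the paper's argument, and your "main obstacle" paragraph correctly locates the technical weight of the paper (the integration-by-parts estimates and Proposition~\ref{prop:gradient_epsilon_and_epsilon_gradient}).
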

The same homogenization result holds if we replace $A^\epsilon$ by $\bar{A}^\epsilon$ as defined in \eqref{eq:defAbarepsilon}.
As a matter of fact,  being technical for a moment, as soon as we have an
arbitrary sequence $A_\epsilon \in T^{1,1}(M)$ which two scale converges in the strong sense to $A \in T^{1,1}(\V)$ (c.f section~\ref{sec:2stensors} for definition of two strong convergence of tensor fields) the same homogenization result holds.  In fact, the homogenization  theorem is a corollary of the corresponding  two scale homogenization Theorem \ref{2stheorem}. Notice that, loosely speaking, strong two scale convergence  amounts to $\e$ being the scale of oscillation. An analogous theory of multiscale convergence  that deals with various scales of oscillations, and leading to reiterated homogenization is also possible, but we defer its development to future work. Similarly, our approach should work for nonlinear equations, but for brevity, we include here only the linear case.

The structure of the paper is as follows. Section 2 gives some background on parallelizable manifolds, and shows how to construct its associated torus bundle; the main idea is that its fibers will detect in the limit the periodicity phenomena, thus resulting in the right setting where two-scale limits abide. Notice that pulling back functions from the tangent bundle is relatively easy, but pulling back vector fields and forms is rather complicated, and it requires a careful understanding of the torus bundle and
its vertical fibers. In this section we introduce the vertical framing $\{e_1^\up, \ldots, e_n^\up\}$ which will be fundamental to understand two scale convergence of vector fields. 
The other key  idea to study oscillatory phenomena at a very small scale is to approximate the manifold by discrete versions. This is attained through metric $\e$-nets, 
$\{p_j \}$ and more precisely, through   
their Voronoi decompositions.  Associated to them, it is important to construct partitions of unity $\{\psi_j\}$ with a good control on the size of the gradients of the functions appearing in them. We do this in Section \ref{sec:voronoi}, with Theorem \ref{thm:usable_partition_of_unity} as the final statement that is used at latter points in the paper. We think that this construction can be of independent interest  and certainly useful in other contexts. For example, we need to control the angles between geodesics emanating from the points in the metric net (See lemma~\ref{lem:triangle}) and to describe the boundary of the Voronoi domains rather precisely (see lemma~\ref{lem:Voronoiboundary}). Voronoi domains also appeared  in the reference \cite{GirouardLagace21}, although their authors did not need such an accurate description of their geometry. 

Part 3 of the paper contains Sections 4 to 8, dedicated to introducing two-scale convergence of functions, vector fields, and differential forms. As explained before, in the spirit of \cite{Nguetseng89} and \cite{Allaire92}, 
given $\tilde{f}:\TM\to\R$, 
we declare test functions
\[ f^\epsilon(q)=\sum_j \psi_j \tilde{f}\left(p_j,{\exp_{p_j}^{-1}(q)}/{\epsilon}\right)=\sum_k \psi_j H_{\epsilon,j}^* \tilde{f}\]
With such functions at hand, a suitable notion of two--scale convergence can be defined. Indeed, we can replicate some of the similar results  for functions in the Euclidean space. This takes most of Sections 4 and 5.  The careful choice
of the partition of unity allows us to replicate all results in the Euclidean setting, without problems of measurability. It also allows us to calibrate the Euclidean setting,  shifting the origin as desired. 
To make a  theory of two scale convergence of vector fields is more subtle. Morally, if in order to define test functions in $M$ we need to pull back functions defined in the torus bundle $\TM$ back to $M$ through the use of the maps $H_{\epsilon,j}$, for vector fields, we will push forward vertical vector fields tangent to the torus fibers of $\TM$ towards $M$.  This is convenient for integrating by parts, and at the end of the day equivalent to the two scale convergence of the coordinate functions. It takes some effort to prove  that all these alternative definitions of two scale convergence for vector fields are indeed equivalent, and 
this constitutes the bulk of Section \ref{sec:2stensors}. All in all, a solid and flexible  theory of two-scale convergence of vector fields is established there. As two scale convergence of two tensors is also needed to deal with elliptic equations we develop the theory  for arbitrary $(k,m)$-tensors, which could be of interest in other contexts, e.g. in linear elasticity.  We warn  the reader just interested in the application to elliptic equations that, for that purpose, most of the section  can be skipped. It suffices  with definition~\ref{def:2stensors} and sections \ref{sec:by_parts_main} and \ref{sec:2sdifferentials}.

In the Euclidean case, it holds that $\nabla f(x,\frac{x}{\epsilon})=(\nabla f_x+\frac{1}{\epsilon} \nabla f_y)(x,\frac {x}{\epsilon})$, namely, for the obvious framing, gradients of test functions and test vector fields in $M$ coming from the vertical gradient of a function agree. 
A similar relation in the manifold case holds only approximately as $\e\to 0$; see Theorem \ref{vector_limit}. This is, indeed, a rather difficult fact to establish, and its proof takes all of Sections \ref{sec:by_parts_technical} and \ref{sec:by_parts_main}.
Section~\ref{sec:2sdifferentials} examines the relation between convergence of functions and of their gradients. In addition to the integration by parts formula, we needed to establish a vertical Hodge decomposition type of argument.

The last part of the paper, part~\ref{part:equations}, deals with second order elliptic equations in manifolds. 
Section~\ref{sec:2selliptic} introduces carefully the classes of tensors that we are able to homogenize and gives the proof of the theorem~\ref{2stheorem}.
Finally, Section~\ref{sec:homogenization}  uses all the previous results in order to prove the Main Theorem \ref{thm:hom} (in the most general case of unrelated  frames and metrics) relating two scale convergence with homogenization. This section is parallel to the Euclidean one, but one needs to be careful in the Riemannian setting.  

To help the reader, the appendixes review some general facts of tangent bundles of manifolds, as well as discussing
well-posedness of the two scale homogenized problem. It also  reviews the weak and strong formulation of both homogenized problems. 

We conclude this introduction by pointing out ideas for future research. From the manifold viewpoint, it would be interesting to investigate whether allowing a degeneracy set for the frame, we could deal with any Riemannian manifold (thus getting rid of the parallelizability restriction).  Similarly, our theory possibly extends to a reiterated homogenization. 
From the point of view of applications,  the ideas expressed here should yield various types of Darcy Law valid on manifolds with a heterogeneous distribution of pores, resulting in a point dependent permeability tensor.  For problems related to microscopic crystalline structures  in elasticity (e.g. \cite{FranckfortGiacomini14}), it might be  also important that the periodicity frame is allowed to  vary from point to point. From the viewpoint of partial differential equations, it will be desirable to make our homogenization results  quantitative alike in the Euclidean situation (see e.g \cite{KLS12, Shen22, Shenbook})

Finally, the original motivation of our research was to study the combination of collapse and homogenization in general geometries. Collapsing happens naturally when considering Riemannian manifolds with geometric conditions that guarantee Gromov-Hausdorff convergence to a lower dimension metric space. The more natural geometric conditions are bounds on the sectional  curvature, as was considered originally in the seminal work of Cheeger, Gromov and Fukaya in \cite{CG1}, \cite{CG2}, \cite{CFG}, or \cite{Fuk}. An excellent introduction to collapsing under these conditions is the survey paper of X.Rong \cite{Rong}.  
We are interested in this aspect from a Riemannian manifold view point, but this approach should also be interesting in dimension reduction theories in nonlinear elasticity, particularly the non-Euclidean case (see the recent available rigidity estimate \cite{CDMpreprint} in the line of Friesecke-James-Muller theory (\cite{BCHKpreprint, FJM02, FJMM03, FJM06,Lewickabook, LMP09, MoraMuller04,Mullerbook}). Consider, for example, the theory of non-Euclidean plates and the works combining homogenization with dimension reduction in elasticity (e.g \cite{HNV14, HV15}) e.g. where oscillating variational problems are considered.
These are all exciting lines of research that we leave for future work.

\textbf{Acknowledgments:}
The authors would like to thank Juan Luis V\'{a}zquez for general discussion about partial differential equations on manifolds and useful suggestions to improve the readability of the paper. They also thank Marcos Solera for interesting discussions.  We also thank the hospitality of University College of London for our various visits along the years. D.F acknowledges as well the hospitality and financial support of the Institute for Advanced Study in Princeton, where part of this research took place.

\part{Parallelizable manifolds}\label{part:parallelizable}

\section{Parallelizable manifolds}

\begin{defn}
We say that a differentiable manifold $M$ is \emph{parallelizable} if there exists a vector bundle  isomorphism $\Theta:M\times \mathbb{R}^n \to TM$. Such an isomorphism is called a parallelization. In other words, there are vector fields $X_1,\dots, X_n$ in $M$ (where $n=\dim M$) such that at each point $p\in M$, the vectors $X_{1,p},\dots,X_{n,p}$ form a basis of $T_pM$.
\end{defn}

 A parallelization provides a set $\Gamma=\Theta(M \times \mathbb{Z}^n)$ that becomes a linear lattice of $T_pM$ for each $p\in M$. Let $e_i \in {\mathcal X}(M)$ be the vector field defined as 
 \[
 e_i(p)=\Theta(p, {\bf e}_i), 
\qquad 
 {\bf e}_i=(0, \dots, 1,\dots, 0).
\]
  Notice that the collection of  possible lattices ${\Gamma} $ can be parametrized by  
smooth sections of the trivial bundle $M \times GL(n, \R)$. 
Since frames are non-degenerate and $M$ is compact, volumes of cells associated to two different lattices are comparable. 

Being parallel is, in general, a rather restrictive topological condition to set on a manifold. For instance, it forces every Pontryaguin number of the manifold to vanish (see \cite{MilSta}). Nonetheless, it allows for a rather large set of examples. For instance, every orientable three-dimensional manifold is parallelizable, every torus of any dimension,  as well as Lie groups and the 7-dimensional sphere \cite{Lee}.

\subsection{The torus bundle associated to a lattice}
\label{sec:torus_bundle}
Since periodicity with respect to a lattice $\Gamma$ is a key notion in our work, we will  consider the torus bundle
$\TM$ associated to a frame.
Let $(M,g)$ a parallelizable Riemannian manifold with lattice $\Gamma$ defined as above. There is a free, properly discontinuous action 
$\theta :\mathbb{Z}^n\times TM \to TM$
defined as 
\[
 \theta((k_1,k_2,\dots k_n),(p,v))=(p,v+\sum k_i e_i).
\] 
\begin{Def} \emph{The torus bundle associated to $\Gamma$} is the quotient manifold $\mathbb{T}M=TM/\mathbb{Z}^n$. 
\end{Def}

For a tangent vector $(p,v)\in TM$, we will denote by $[p,v]$ the corresponding point in $\TM$, and by $\Phi:TM\to\TM$ the quotient map sending $(p,v)$ to $[p,v]$. 
It is clear that $\Phi$ is a
 local diffeomorphism  that restricts to the covering map  $T_pM \to \TM_p\simeq\mathbb{T}^n$ for each $p\in M$. 
This can be summarized in the following diagram.

\[
\xymatrix{
M\times\R^n \ar[d] \ar[r]^{\Theta} &  TM \ar[d]^\Phi\\
M\times\T^n \ar[r] & \TM 
}
\qquad
\xymatrix{
(p,(x_1,\dots,x_n)) \ar[d]\ar[r] &  \left(p,\sum_{i=1}^n\,x_i\,e_i(p)\right) \ar[d]\\
(p,[x_1,\dots,x_n]) \ar[r] & [p, \sum_{i=1}^n\,x_i]
}
\]

Let $Q$ be an open cube in $\R^n$ with unit length sides.
If $(U,\phi)$ are coordinates  in $M$ and $\tilde{\phi}$ is the chart in $TM$ associated to the frame defined in \eqref{eq:chart_in_TM}, then in the open set
$
W:=\tilde{\phi}^{-1}(\mathbb{R}^n\times Q),
$ 
the quotient map $\Phi$ is injective, and we can define coordinates in $\TM$ as
$
\tilde{\phi}\circ (\Phi|_W)^{-1}.
$
By changing $Q$ we can obtain a full coordinate atlas of $\TM$. 

The projection $\pi:TM\to M$,  $(p,v)\to p$, induces a map $\pi:\TM\to M$, $[p,v]\to p$.
Observe that  with this map, $\mathbb{T}M$ is a fiber bundle over $M$ with fiber the torus $\mathbb{T}^n=\R^n/\Z^n$.

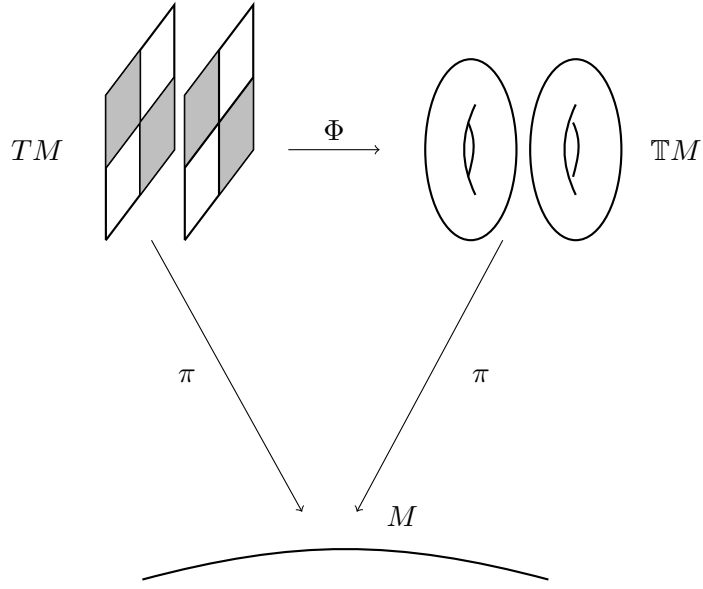
\begin{figure}
    \centering
        \begin{tikzpicture}[scale=0.6]
    \draw[thick, yscale=0.8] (-8,-2.5) -- (-8,1.5) -- (-6.5,4) -- (-6.5,0) -- (-8,-2.5);
    \draw[thick, yscale=0.8] (-8,-0.5) -- (-6.5,2);
    \draw[thick, yscale=0.8] (-7.25,-1.25) -- (-7.25,2.75);
    \draw[fill=lightgray, yscale=0.8] (-8,-0.5) -- (-8,1.5) -- (-7.25,2.75) -- (-7.25,0.75) -- (-8,-0.5);
    \draw[thick, yscale=0.8,xshift=1.73cm, yshift=0cm] (-8,-2.5) -- (-8,1.5) -- (-6.5,4) -- (-6.5,0) -- (-8,-2.5);
       \draw[fill=lightgray, yscale=0.8,xshift=1.73cm, yshift=0cm] (-8,-0.5) -- (-8,1.5) -- (-7.25,2.75) -- (-7.25,0.75) -- (-8,-0.5);
       \draw[fill=lightgray, yscale=0.8] (-7.25,-1.25) -- (-7.25,0.75) -- (-6.5,2) -- (-6.5,0) -- (-7.25,-1.25);
       \draw[fill=lightgray, yscale=0.8,xshift=1.73cm, yshift=0cm] (-7.25,-1.25) -- (-7.25,0.75) -- (-6.5,2) -- (-6.5,0) -- (-7.25,-1.25);    
    
    \draw[thick, yscale=0.8,xshift=1.73cm, yshift=0cm] (-8,-0.5) -- (-6.5,2);
    \draw[thick, yscale=0.8,xshift=1.73cm, yshift=0cm] (-7.25,-1.25) -- (-7.25,2.75);

    \draw[->] (-7,-2) -- (-3.7,-8);
    \node[left] at (-5.8,-5) {$\pi$};
     \draw[->] (-4,0) -- (-2,0);
     \node[above] at (-3,0) {$\Phi$};
    
    \draw[thick] (0,0) ellipse (1cm and 2cm);
    \draw[thick] (0.1,-1)  to [out=115, in=245] (0.1,1);
    \draw[thick] (-0.06,-0.6)  to [out=75, in=295] (-0.06,0.6);
        \draw[thick] (2.3,0) ellipse (1cm and 2cm);
    \draw[thick] (2.3,-1)  to [out=115, in=245] (2.3,1);
    \draw[thick] (2.24,-0.6)  to [out=75, in=295] (2.24,0.6);
    \draw[->] (0.7,-2) -- (-2.5,-8);
    \node[right] at (-0.2,-5) {$\pi$};
    \draw[thick] (-7.2,-9.5)  to [out=15, in=165] (1.7,-9.5);
    \node at (4.5,0) {$\mathbb{T}M$};
        \node at (-9.5,0) {$TM$};
     \node at (-1.5,-8.1) {$M$};
    \end{tikzpicture}      
    \caption{Relation between the tangent and the torus bundles}
    \label{fig:tangent and torus bundle}
\end{figure}

Since the action $\theta$ leaves the fibers $T_pM$ invariant, the  decomposition of the tangent bundle to $TM$ into horizontal and vertical parts induces
a similar decomposition in the tangent spaces at points in $\TM$; we will use the same notation for them.  Namely 

\begin{equation}
\label{eq:horiz_and_vertical_in_torus_bundle}
T_{[p,v]}\TM = \mathcalH_{[p,v]}\oplus \mathcalV_{[p,v]}, 
\end{equation}
where 
\[
\mathcalH_{[p,v]}=\Phi_*\mathcalH_{(p,v)}, 
\qquad
\mathcalV_{[p,v]}=\Phi_*\mathcalV_{(p,v)}.
\]
It is also clear that $\mathcalV_{[p,v]}$ is tangent to the fibers of $\TM$ and $\mathcalH_{[p,v]}$ is transverse to them. 
The decomposition 
\begin{equation}
T_{[p,v]}\TM = \mathcalH_{[p,v]}\oplus \mathcalV_{[p,v]}, 
\end{equation}
gives vector bundles
\[
\mathcalH\to \TM, \qquad \mathcalV\to\TM.
\]
Observe that $\mathcalH$ and $\mathcalV$ are subbundles of the tangent bundle to $\TM$. 

\begin{defn}
\label{def:Sasaki_metric}
    The \emph{Sasaki metric} in $\TM$ is the Riemannian metric such that for  
    vectors $\xi$, $\eta$ in $T_  {[p,v]}TM$, we define
\[
g_{[p,v]}(\xi,\eta):= g_p(\xi^h,\eta^h)+g_p(\xi^v, \eta^v), 
\]
where $\xi=\xi^h+\xi^v$, $\eta=\eta^h+\eta^v$ are the horizontal-vertical splittings.
\end{defn}

The metric is induced by the usual Sasaki metric in the tangent bundle $TM$ (see Appendix \ref{appendix:tangent} or  the excellent introduction in \cite{Paternain}), and with them, the quotient map $\Phi:TM\to\TM$ becomes a local isometry. 

\subsection{Vertical tensors in \texorpdfstring{$\TM$}{}}
When working with certain objects  in $\TM$, it is usually more convenient to think of them as  objects in $TM$ satisfying certain periodicity conditions. We make this explicit in the following lemma.

\begin{lem}
\label{lem:tensors_torus_bundle}
Let $M$ be a parallelizable manifold and $\TM$ the torus bundle associated to a given lattice. Then there is a one-to-one correspondence between
functions (vector fields, $k$-forms) in $\TM$ and functions (vector fields, $k$-forms) in $TM$ that are invariant under the $\mathbb{Z}^n$ action.
\end{lem}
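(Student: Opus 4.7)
\medskip

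\noindent\textbf{Proof proposal.} The plan is to exploit the fact that the quotient map $\Phi:TM\to\TM$ is a covering map: since the $\mathbb{Z}^n$-action $\theta$ is free and properly discontinuous by construction, each point $[p,v]\in\TM$ admits a neighbourhood $U$ such that $\Phi^{-1}(U)$ is a disjoint union of open sets $\{U_k\}_{k\in\mathbb{Z}^n}$, each mapped diffeomorphically onto $U$ by $\Phi$, and the translations $\theta_k$ permute the sheets. In particular $\Phi\circ\theta_k=\Phi$ for every $k\in\mathbb{Z}^n$.

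For functions this is immediate: given $f\in C^\infty(\TM)$, set $\tilde{f}:=\Phi^*f=f\circ\Phi$; then $\theta_k^*\tilde{f}=\Phi^*f=\tilde{f}$, so $\tilde{f}$ is $\mathbb{Z}^n$-invariant. Conversely, an invariant function $\tilde{f}$ on $TM$ is constant on orbits, hence descends to a unique function on $\TM$, and smoothness transfers using local sections of $\Phi$. The two operations are inverse to each other, which gives the bijection.

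For a vector field $X$ on $\TM$ (or, more generally, a $k$-form or any $(m,n)$-tensor field), I would use that $\Phi$ is a local diffeomorphism to pull back pointwise: on each $U_k$ as above, $\Phi|_{U_k}:U_k\to U$ has a smooth inverse $s_k$, and I set $\tilde{X}|_{U_k}:=s_k^{*}X$ (for forms) respectively $(s_k)_*X$ for vector fields. Since $\theta_k$ sends $U_0$ diffeomorphically onto $U_k$ and $\Phi\circ\theta_k=\Phi$, the two local lifts are related by $\theta_k^*(\tilde{X}|_{U_k})=\tilde{X}|_{U_0}$; glueing these local pieces produces a well-defined, smooth, $\mathbb{Z}^n$-invariant tensor on $TM$. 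Conversely, if $\tilde{X}$ is an invariant tensor field on $TM$, then for each $[p,v]\in\TM$ the value $(\Phi_*\tilde{X})_{[p,v]}$ defined using any representative $(p,v+\sum k_ie_i)$ is the same, thanks to invariance under $\theta_k$; smoothness again follows from local diffeomorphism of $\Phi$. The two constructions are mutually inverse.

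I expect the only subtle point is checking that the object obtained on $\TM$ from an invariant tensor on $TM$ is genuinely well-defined and smooth, rather than only locally defined on each sheet. The main obstacle is thus bookkeeping the compatibility of the local pushforwards on overlapping charts; however, this is controlled by the identities $\Phi\circ\theta_k=\Phi$ together with the invariance hypothesis $\theta_k^*\tilde{X}=\tilde{X}$, which makes the different local descents agree on every nonempty intersection. Once that is established for functions, vector fields and $1$-forms, the cases of higher-order tensors and $k$-forms follow by the same argument applied factor by factor.
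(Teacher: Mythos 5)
Your proposal is correct and follows essentially the same route as the paper: use the quotient map $\Phi:TM\to\TM$ and the identity $\Phi\circ\theta_k=\Phi$ to transport objects, pulling back functions and pushing forward via local inverses of the covering map for vector fields and tensors. The paper's own proof is much terser — it writes out only the function case and dismisses vector fields and $k$-forms with ``similar proofs apply'' — so the extra detail you supply about glueing local pushforwards and checking compatibility across sheets is exactly the bookkeeping the paper leaves implicit, but the underlying argument is identical.
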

\begin{proof}
For any $\Z^n$-invariant function $f:\TM\to\R$, define 
$\tilde{f}:TM\to\R$ as $\tilde{f}(p,v):=f([p,v])$.
For any integer $n$-tuple $\bar{k}\in\Z^n$, $[p,v]=[p,v+\sum k_i e_i]$, thus $\tilde{f}$ is $\Z^n$-periodic. The reciprocal is obvious.
Similar proofs apply to the other cases.
\end{proof}

We will be specially interested on tensors related to the vertical bundle $\mathcalV$, since two-scale limits are tensors of this type.  
Thus, we will denote by 
$\vectors(\mathcalV)$ the vertical vectors, and by $\Lambda^k(\mathcalV)$ the vertical alternating $k$-multilinear maps; that is, for $X\in\vectors(\mathcalV)$ and $\omega\in\Lambda^k(\mathcalV)$, we have that 
\[
X_{[p,v]}\in\mathcalV_{[p,v]}, \qquad
\omega_{[p,v]}:\mathcalV_{[p,v]}\times\dots\times\mathcalV_{[p,v]}\to\R
\text{ multilinear and alternating},
\]  
at each point $[p,v]\in\TM$. 
The sections of these bundles will be the vertical vector fields and the vertical $k$-forms. The set of $0$-forms can be identified with the functions $f:\TM\to\R$.

We describe now how to use the frame associated to a parallelization to construct framings of $\vectors(\mathcalV)$ and $\Lambda^k(\mathcalV)$. 
\begin{Def}\label{def:vertical}
Let $e\in T_pM$. For any $[p,v]\in \TM$, we define the vertical vector field $e^{\uparrow}\in\vectors(\mathcalV)$ as 
\begin{equation}
\label{def:vector_lift}
e^{\uparrow}[p,v]= \frac{d}{dt}\bigg|_{t=0}[p,v+te ].
\end{equation}
\end{Def}

The above definition is correct, since the corresponding vertical lift 
$e^{\uparrow}$ to $TM$ is invariant by the $\Z^n$-action, and thus, by  
Lemma \ref{lem:tensors_torus_bundle} defines a vector field in $\TM$. 
More precisely, if $[p,v]=[p,w]$, i.e, if $v-w\in\Z^n$ in coordinates with respect the  frame, then $[p,v+te]=[p,w+te]$, and $e^\up[p,v]=e^\up[p,w]$ as claimed. 

\subsubsection{Vertical vector field framing}
Let  $e_i$ be the vector fields in the frame of $M$. 
\begin{Def}
The vertical framing of $\TM$ is the collection of vertical vector fields 
$e_1^\up, \dots, e_n^\up$.
\end{Def}
It is clear that $e_1^\up, \dots, e_n^\up$ form a basis of $\mathcalV_{[p,v]}$ at each $[p,v]$, since the fields $e_i$ form a basis of $T_pM$ at each $p$. 
Notice that if we have the coordinates
\begin{equation}\label{framing coordinates}
\tilde{\varphi}(p,v)=(x^1(p), \ldots, x^n(p), v^1(p,v), \ldots, v^n(p,v))
\end{equation}
in $TM$, where $(x_1,\ldots,x_n)$ are coordinates in $M$, and for $v\in T_pM$, the $v^i$ are given by 
\[ 
v=\sum_{j=1}^n v^i (p,v) e_i, 
\]
then for the corresponding coordinates in $\TM$, we have that 
\begin{equation}
\label{eq:up_and_coordinates}
e^{\up}_{i,[p,v]}= \left.\frac{\p}{\p v^i}\right|_{[p,v]}
\end{equation} 
Therefore, if $X \in \mathfrak{X}(M)$ has coordinates  $X_p= \sum_i X^i(p) e_i$, then

\begin{equation}
X^\uparrow_{[p,v]}= \sum_i X^i(p)\frac{\p}{\p v^i}
\end{equation}
More in general, any vector field in $\vectors(\mathcalV)$ can be written as a linear combination of the vertical frame $e_i^\up$, although in this case, the coefficients of the $e_i^\up$ may vary in $[p,v]$.

\subsubsection{Framing for vertical \texorpdfstring{$k$}{}-forms}
Given a 1-form $\omega$ in $M$, we can lift $\omega_p$ to each point $(p,v)\in TM$ as 
\begin{equation}
\label{eq:one_form_lift_defn}
\omega^\up_{(p,v)}(e^\up)=\omega_p(e), \quad e\in T_pM.
\end{equation}
Observe that we have only defined $\omega^\up$ over vectors tangent to the fibers of $TM$, thus on elements of $\mathcal{V}TM$. Once again, we notice that the 1-form $\omega^\up$ is invariant under the $\Z^n$-action, and it defines a vertical $1$-form on $\TM$.
It is clear that 
if $X \in \mathfrak{X}(M)$, $\omega \in \Lambda^1(M)$, then
\[ (\omega^\uparrow, X^\uparrow)[p,v]=(\omega,X)(p). \]

\begin{Def}
\label{def:framing_forms}
Given the frame $e_1, \dots, e_n$ of $\Gamma$, we denote by $\theta_1,\dots,\theta_n$ the associated dual basis of 1-forms in $M$; i.e,
$\theta_i(e_j)=\delta_{ij}$ where $\delta_{ij}$ are the Kronecker symbols. 
We say that the basis of vertical $k$-forms defined as  
\[
\theta_{i_1}^\up\wedge\dots\wedge\theta_{i_k}^\up, \qquad
1\leq i_1<\dots<i_k\leq n,
\]
is the associated framing for $\Lambda^k(\mathcalV)$.
\end{Def}
With respect to the coordinates in $\TM$ induced from those in $TM$ as before, we have that
$\theta_i^\up=dv_i$, where $dv_i$ is applied only to vertical vectors.\footnote{To be precise, since $v_i$ is a function in $\TM$, $dv_i$ could be applied also to any vector in $\TM$, whether vertical or not; our use of $dv_i$ above means that we are taking its restriction to $\mathcalV$.}

\subsubsection{Vertical lifting of arbitrary tensors} 
The above definition yields a canonical way to  lift tensors in $M$ to vertical tensors.  Along the paper we will use multi-index notation 
$I,J$, for $I=i_1,i_2, \ldots, i_k$, $J=j_1,j_2, \dots, j_m$ when no confusion arises. Thus, e.g.  $X_I \in \mathfrak{X}(M)^k$ stands for the $k$-tuple of vector fields
$X_{i_1}, X_{i_2}, \ldots, X_{i_k}$

\begin{Def}
Let $T \in T^{k,m}(M)$ be a $(k,m)$-tensor in $M$. Consider arbitrary  $k$-tuples of vector fields$\{X_1\}_{i=1}^k \in \mathfrak{X}(M)$,
and arbitrary $m$-tuples of $1$-forms, $ \{\omega_j \}_{j=1}^m \in\Lambda^1(M)$. Then $T^\up \in T^{k,m}(\mathcal V)$ is defined by,
\[ T^\up ((X_1^\up,\ldots, X_k^\up, \omega_1^\up, \ldots \omega_m^\up):=T(X_1,\ldots, X_n, \omega_1, \ldots \omega_m)\]

\end{Def}

\subsection{Integration in \texorpdfstring{$\TM$}{}} We give $\TM$ the Sasaki metric induced by the metric in $M$.
Recall that integration in manifolds is always defined with regard to a volume form in the manifold. In the case of Riemannian manifolds, the most natural volume form is that assigning to each positively oriented orthonormal basis the value one. \footnote{Nonorientable manifolds require passing to the orientable double cover, but we will ignore them here since $M$ being parallelizable implies $\TM$ being orientable.} 

To construct such a form on $\TM$, we will take the wedge product of two $n$-forms. The first one will be $\pi^*\omega$, the pullback of the Riemannian volume form in $M$ by the projection map $\pi:\TM\to M$. For the second one, we start by taking the $1$-forms induced by the vertical frame fields $e_i^\up$, i.e, $\theta_i^\up$ extended also to nonvertical vectors as
\[
\theta_i^\up(X)=\ip{\e_i^\up}{X},
\]
where we use the Sasaki metric. Then we construct the $n$-form
\[
\eta:= g^{1/2}\,\theta_1^\up\wedge\dots\wedge \theta_n^\up
\]
where $g$ is the determinant of the matrix whose entries are 
\begin{equation}
\label{eq:inner_product}
\ip{e_i^\up}{e_j^\up}[p,v]= g(e_i, e_j)=g_{ij}(p).
\end{equation}
This $g^{1/2}$- term is necessary since the vectors $e_i^\up$ do not constitute an orthonormal set. 
Alternatively, if we denote by $dV_M$ the Riemannian volume form for $M$, it is clear that $\eta=(dV_M)^\up$, with the terminology from the last section. Thus, the volume form in $\TM$ with the Sasaki metric is given by 
\[
dV_{\TM}= \pi^*\omega\wedge\eta=\pi^*(dV_M) \wedge (dV_M)^\uparrow
\]
Integration with respect to this form will be denoted as 
$\int f[p,v]\, dv\, dp$.

For each $p\in M$, and coordinates $v^1, \dots, v^n$  
on $\TM_p$  in the frame at $p$, 
the corresponding volume element in each torus fiber of $\TM$ is just given by  $dV:= g^{1/2}dv^1\wedge\dots\wedge dv^n$, thus 
$\vol(\TM_p)=g^{1/2}(p)$
, where $g(p)= \det g_{ij}(p)$.

\begin{Def}[Normalized integrals on $\TM$] 
For $f \in L^1(\TM)$, define
\[
\oint_{\TM} f\, dv \,dp
:= 
\int_{\T M}\,  \vol(\TM_p)^{-1} f[p,v] \,dv\,dp \]
where the volume on $\TM$ is induced from the above metric. 
\end{Def}

There is an "integration along the fiber" formula that will be useful later.

\begin{lem}
    Let $f:\TM\to R$ an integrable function such that, for almost any $p\in M$, the function 
    $v\to f[p,v]$ is integrable in $\TM_p$; let $\tildef(p)=\int_{\TM_p} f[p,v]\,dv$.  Then
    \[
    \oint_{\TM}\,f[p,v]\, dv\,dp = \int_M\, \tildef(p)\, dp
    \]
\end{lem}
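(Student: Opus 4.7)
The plan is to treat the statement as a Fubini-type integration-along-fibers formula for the bundle $\pi:\TM \to M$, tailored to the Sasaki volume form. The key input is the orthogonal splitting $T_{[p,v]}\TM = \mathcalH_{[p,v]} \oplus \mathcalV_{[p,v]}$ from \eqref{eq:horiz_and_vertical_in_torus_bundle}, together with the decomposition $dV_{\TM} = \pi^*(dV_M) \wedge (dV_M)^\up$ stated just before the lemma. This is precisely the wedge factorization into base and fiber volume forms that Fubini requires.

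First I would cover $M$ by coordinate charts $U$ small enough that $\Phi:TM \to \TM$ is a diffeomorphism onto $\pi^{-1}(U)$, using the atlas construction of Section~\ref{sec:torus_bundle}. In such a chart the fiber bundle trivializes as $U \times \mathbb{T}^n$ and the Sasaki volume form pulls back to the wedge of its horizontal and vertical parts, reducing the computation to the classical Fubini theorem in $\R^{2n}$. This yields the local identity
$$\int_{\pi^{-1}(U)} h[p,v]\, dV_{\TM} \;=\; \int_U \left( \int_{\TM_p} h[p,v]\, dV_{\TM_p} \right) dV_M(p)$$
for every integrable $h$, where $dV_{\TM_p}$ is the volume form on the fiber induced by the restriction of the Sasaki metric. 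A standard partition-of-unity argument glues these local identities into a global one on $\TM$.

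Applying this global identity with $h = \vol(\TM_p)^{-1} f$ and pulling $\vol(\TM_p)^{-1}$ out of the inner integral (it depends only on the base point), the definition of $\oint$ yields
$$\oint_{\TM} f\, dv\, dp \;=\; \int_M \vol(\TM_p)^{-1} \left( \int_{\TM_p} f[p,v]\, dV_{\TM_p} \right) dV_M(p).$$
Using $\vol(\TM_p) = g^{1/2}(p)$ and $dV_{\TM_p} = g^{1/2}(p)\, dv^1 \wedge \cdots \wedge dv^n$ from the preceding paragraph of the paper, the two $g^{1/2}(p)$ factors cancel, and the inner integral becomes the Lebesgue integral of $v \mapsto f[p,v]$ over the fundamental cube of the $\Z^n$-action, which is exactly $\tilde{f}(p)$. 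The only real subtlety is the global-versus-local Fubini bookkeeping, which the partition-of-unity argument handles cleanly, so I expect no substantive obstacle beyond this.
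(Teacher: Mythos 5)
Your proposal is correct and follows the same route as the paper: localize in a trivializing chart, pull back the Sasaki volume form, cancel the $g^{\pm 1/2}$ factors coming from $\vol(\TM_p)^{-1}$ and the fiber density, and apply classical Fubini. The only difference is that you make the partition-of-unity gluing step explicit, which the paper's proof leaves implicit after exhibiting the local identity.
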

\begin{proof}
    We can take parametrizations in $\TM$ of the form 
    \[
    \Psi(x_1,\dots,x_n,v_1,\dots,v_n)=[\phi(x_1,\dots,x_n), v_1 e_1+\dots +v_n e_n].
    \]
    Then $\Psi^* dV_{\TM}=( g^{1/2}\circ\phi)\, (\phi^* dV_M)\wedge dv_1\wedge \dots\wedge dv_n$, and 
    \[
    \int_{\Psi(U\times [0,1]^n)} \vol(\TM_p)^{-1} f[p,v]\,dv\,dp = 
    \int_{U\times [0,1]^n}( g^{-1/2}\circ\phi)(x) f\circ \Psi(x,v)\,( g^{1/2}\circ\phi)(x)\,dv\,dx,
    \]
    where the right-hand side is just usual integration in $\R^{2n}$. 
    After cancellations, the result then follows from Fubini's Theorem.
\end{proof}

\begin{Cor}
\label{Cor:updown} 
Let $X \in \mathfrak{X}(M)$ , $\omega \in \Lambda^1(M)$.  Then 
\[ 
\int_{M} (X,\omega)\, dp= \oint_{\TM} ({X^\uparrow}, {\omega^\uparrow})\,dv\,dp 
\]
\end{Cor}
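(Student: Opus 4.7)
The proof plan is to exploit two facts that have already been established: the vertical lifts $X^\uparrow$ and $\omega^\uparrow$ satisfy the ``fiberwise constancy'' identity $(\omega^\uparrow, X^\uparrow)[p,v] = (\omega, X)(p)$, and the preceding integration-along-the-fiber lemma reduces normalized integrals on $\TM$ to ordinary integrals on $M$ once the integrand depends only on the base point.

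First I would verify the pointwise identity $(X^\uparrow, \omega^\uparrow)[p,v] = (X,\omega)(p)$. By Definition~\ref{def:vertical} and the framing relations \eqref{eq:up_and_coordinates}, if $X_p = \sum_i X^i(p)\,e_i$, then $X^\uparrow_{[p,v]} = \sum_i X^i(p)\,\partial/\partial v^i|_{[p,v]}$, which is a vertical vector at $[p,v]$. On the other hand, by \eqref{eq:one_form_lift_defn}, $\omega^\uparrow_{[p,v]}$ is the vertical $1$-form characterized by $\omega^\uparrow(e^\uparrow) = \omega_p(e)$ for every $e \in T_pM$. Applying $\omega^\uparrow_{[p,v]}$ to $X^\uparrow_{[p,v]}$ and using linearity over functions of $p$ yields $\omega_p(X_p) = (X,\omega)(p)$, which is independent of the fiber variable $v$.

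Next I would apply the preceding lemma (integration along the fiber) to $f = (X^\uparrow, \omega^\uparrow) \in C(\TM)$. Because $f[p,v] = (X,\omega)(p)$ does not depend on $v$, we have
\[
\tildef(p) = \int_{\TM_p} f[p,v]\, dv = (X,\omega)(p)\cdot \int_{\TM_p} 1\, dv = (X,\omega)(p),
\]
where in the last step we use that the fiber integration of the constant $1$ equals one under the normalization built into the lemma (the volume normalization $\vol(\TM_p)^{-1}$ in the definition of $\oint$ is precisely what cancels the Sasaki fiber volume $g^{1/2}(p)$ appearing in $\vol(\TM_p)$). The lemma then gives
\[
\oint_{\TM} (X^\uparrow, \omega^\uparrow)\, dv\, dp = \int_M \tildef(p)\, dp = \int_M (X,\omega)\, dp,
\]
which is the claim.

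There is no real obstacle here: the corollary is essentially the combination of two facts set up immediately above. The only point that deserves care is keeping track of the Sasaki volume normalization on $\TM_p$ versus the flat fiber measure; this is already absorbed into the definition of $\oint_{\TM}$, so that a function depending only on the base point integrates to its $M$-integral under $\oint_{\TM}\,dv\,dp$. Once this is made explicit, the statement follows in one line from the lifting identity $(X^\uparrow, \omega^\uparrow)[p,v] = (X,\omega)(p)$.
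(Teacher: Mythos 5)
Your proof is correct and follows essentially the same route as the paper: establish the pointwise fiber-constancy identity $(X^\uparrow,\omega^\uparrow)[p,v]=(X,\omega)(p)$ from the definitions of the vertical lifts, then integrate along each fiber, noting that the normalization $\vol(\TM_p)^{-1}$ built into $\oint_{\TM}$ cancels the fiber volume $\vol(\TM_p)=g^{1/2}(p)$.
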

\begin{proof}
From the definition of $\omega^\up$ and $X^\up$, it holds that 
\[
({X^\uparrow} , {\omega^\uparrow})[p,v]=(X,\omega)(p);
\]
the corollary follows integrating along each fiber, since  $\vol(\TM_p)=\det g^{1/2}$. 
\end{proof}

\begin{Def}[Tensor average on the fibers]
\label{def:tensor_average}
Let $T \in T^{k,m}(\mathcal{V} )$ be a vertical $(k,m)$-tensor; then we define its
average $\tilde{T} \in T^{k,m}(M)$ by

\[ \tilde{T}(X_I, \omega^J)= \vol(\TM_p)^{-1} \int_{\TM_p} T(X_I^\up, \omega_J^{\up})\, dv 
:={\dashint}_{\TM_p} T(X_I^\up, \omega_J^{\up})\, dv\]
Notice that if 
\[
T = \sum_i T_I^J[p, v]\, e_I^\up \otimes \theta_J^\up
\]
in  coordinates with respect to the vertical frames, then setting 
\begin{equation}
\tilde{T}_I^J(p) = {\avint}_{\TM_p} T_I^J[p, v]\, dv,\,\, 
\end{equation}
we obtain that
\begin{equation} \label{average}
\tilde{T}= \sum \tilde{T}_I^J e_I \otimes \theta_J.
\end{equation}

\end{Def} 
\subsection{Vertical differential and vertical gradient}
Let $f:\TM\to\R$ a smooth function. Its \emph{vertical gradient} is the vertical vector field $\grad_vf$ along $\TM$ such that 
\[
\ip{\grad _vf}{e^\up}[p,v]=e^\up(f),
\]
where the inner product comes from the Sasaki metric. It is clear that in coordinates associated to the frame, 
\[
\grad_vf=\sum_{i,j}\,e_i^\up(f) g^{ij}(p) e_j^\up
\]

Similarly, we define its vertical differential $d^vf$ as the element of $\Lambda^{1}(\mathcalV)$ satisfying
\[
d^vf_{[p,v]}(X)=X(f)[p,v]=\sum_{i=1}^n\, \depa{f}{v_i}\cdot X_i,
\]
where $X=\sum_{i=1}^n\, X_i\cdot e_i^\uparrow[p,v]$ in frame coordinates (we are using \eqref {eq:up_and_coordinates}).

\subsection{Vertical divergence of vertical vector fields}
Let $X$ be a vertical vector field in $\TM$ with $X=\sum_i f_i[p,v] \frac{\partial}{\partial v_i}$. If $Y $ is some other vertical tangent vector at $[p,v]\in \TM$, we define 
\[
\nabla^\mathcalV_Y X(p,v)= \sum_i Y(f_i)\frac{\partial}{\partial v_i},
\]
or equivalently, 
\[
\nabla^\mathcalV_Y X[p,v]=\frac{d}{dt}X_{[p,v+tY
]},
\]
where the derivative is taken at $t=0$. 

\begin{Def}
The \emph{vertical divergence} of $X$ is the function defined as
\[
(\div^v X)[p,v]= \trace (Y\mapsto \nabla^\mathcalV_Y X)
\]
where $Y\in\mathcalV_{[p,v]}$.
\footnote{
When $TM$ is given the Sasaki metric with flat fibers, this agrees with the standard vertical divergence for semibasic vector fields, as defined, for instance, in \cite[Section 6.1]{Knie}.}
\end{Def}
When $\{e_i\}$ is the frame basis of $\mathcalV_{(p,v)}TM$,
and $X=\sum_i f_i[p,v] \frac{\partial}{\partial v_i}$, the above formula yields
\[
(\div^v X)[p,v]= \sum_{i=1}^n\frac{\partial f_i}{\partial v_i}
\]

\begin{lem}
For any $e\in T_pM$, and any smooth ${f}:\TM\to\R$, we have
\[
\div^v ({f}e^\uparrow)
=
e^\uparrow({f})
\]
\end{lem}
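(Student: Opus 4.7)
The plan is to unwind the definitions in frame coordinates on the fiber $\TM_p$; there is essentially nothing to do beyond observing that the coefficients of $e^\uparrow$ in the vertical frame are constant along each fiber.

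First I would fix $p \in M$ and work in the coordinates $v^1,\dots,v^n$ on $\TM_p$ induced by the frame at $p$, so that by \eqref{eq:up_and_coordinates} we have $\p/\p v^i = e_i^\uparrow$. Writing $e = \sum_i e^i\, e_i(p) \in T_p M$, the key observation is that the scalars $e^i$ are fixed numbers, depending only on $p$, hence independent of the fiber variable $v$. Since the lifting operation $e \mapsto e^\uparrow$ in Definition~\ref{def:vertical} is $\R$-linear in $e$, this gives
\[
e^\uparrow[p,v] \;=\; \sum_{i=1}^n e^i\, \frac{\p}{\p v^i}.
\]

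Next I would apply the local formula for $\div^v$ displayed just above the lemma. Writing
\[
f e^\uparrow \;=\; \sum_{i=1}^n (f\,e^i)\,\frac{\p}{\p v^i},
\]
the formula yields
\[
(\div^v(f e^\uparrow))[p,v] \;=\; \sum_{i=1}^n \frac{\p(f e^i)}{\p v^i}.
\]
Since $e^i$ does not depend on $v$, the product rule collapses to $\p(f e^i)/\p v^i = e^i\, \p f/\p v^i$, and summing gives
\[
\sum_{i=1}^n e^i\, \frac{\p f}{\p v^i} \;=\; e^\uparrow(f)[p,v],
\]
where the last equality is precisely the definition of the action of the vector field $e^\uparrow = \sum_i e^i \p/\p v^i$ on the smooth function $f$.

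There is no real obstacle here: the content of the lemma is that the vertical divergence annihilates the $e^i$ because these are pulled back from the base $M$ and are therefore $v$-constant, so they commute with the vertical partial derivatives. A coordinate-free phrasing of the same argument would be to note that by the definition $\nabla^\mathcalV_Y e^\uparrow[p,v] = \frac{d}{dt}|_{t=0} e^\uparrow[p,v+tY] = 0$ for every vertical $Y$, since $e^\uparrow$ is constant along the fiber $\TM_p$; then the Leibniz rule for $\nabla^\mathcalV$ gives $\nabla^\mathcalV_Y(f e^\uparrow) = Y(f)\, e^\uparrow$, and taking the trace in $Y$ produces $e^\uparrow(f)$.
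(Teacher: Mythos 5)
Your proof is correct and takes essentially the same approach as the paper: the paper simply invokes the Leibniz rule $\div^v(fX) = X(f) + f\div^v X$ together with $\div^v e^\uparrow \equiv 0$, which is exactly the coordinate-free phrasing you give in your final paragraph. The coordinate computation you lead with is a more explicit unwinding of the same two facts (that the $e^i$ are $v$-constant and that $\div^v$ is a sum of vertical partials).
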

\begin{proof}
It is immediate to check that the standard formula
\[
\div^v ({f}X)=X({f})+{f}\div^v X
\]
still holds, so the lemma follows from
$\div^v e^\uparrow\equiv 0$.
\end{proof}

In the next lemma and the rest of the paper, whenever we speak about the divergence of a vector field in the torus bundle, we will consider the divergence with respect to the Sasaki metric in $\TM$ (see  Definition \ref{def:Sasaki_metric} for the definition).
\begin{lem}
\label{lem:vertical_divergence_versus_regular_divergence}
    For any $X\in \mathfrak{X}(\V)$, and the Sasaki metric on $\TM$,
    \[
    \div X =\div^v X.
    \]
\end{lem}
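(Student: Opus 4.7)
The plan is to compute both divergences in the local coordinates $(x^1,\dots,x^n,v^1,\dots,v^n)$ on $\TM$ coming from the frame, as introduced in \eqref{framing coordinates}, and match them using the standard divergence formula on a Riemannian manifold. Recall that in these coordinates $\partial/\partial v^i = e_i^\up$, so any vertical vector field takes the form $X=\sum_i f_i[p,v]\,\partial/\partial v^i$, and thus $\div^v X=\sum_i \partial f_i/\partial v^i$ by the formula already recorded above.

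The first step is to write the Sasaki metric matrix $G$ in these coordinates. Since $\partial/\partial x^i$ decomposes as its horizontal lift plus a purely vertical correction depending linearly on $v$ through the Christoffel symbols of $\nabla$ in the frame, one sees that $G$ has the block form
\[
G=\begin{pmatrix} g+A^{T}gA & A^{T}g \\ gA & g \end{pmatrix},
\]
where $g=(g_{ij}(p))$ is the base metric and $A=A(p,v)$ encodes the horizontal-vertical coupling. The second step is to use the LU-type factorization
\[
G=\begin{pmatrix} I & A^{T} \\ 0 & I \end{pmatrix}\begin{pmatrix} g & 0 \\ 0 & g \end{pmatrix}\begin{pmatrix} I & 0 \\ A & I \end{pmatrix},
\]
from which $\det G=(\det g)^{2}$, so $\sqrt{\det G}=\det g$. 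The key observation is that this quantity depends only on the base point $p$ and not on the fiber variable $v$.

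With these preliminaries, the divergence of any vector field $Y=\sum_\mu Y^\mu \partial/\partial z^\mu$ on $\TM$ with respect to the Sasaki metric is
\[
\div Y=\frac{1}{\sqrt{\det G}}\sum_\mu \frac{\partial}{\partial z^\mu}\bigl(\sqrt{\det G}\,Y^\mu\bigr).
\]
Applied to $X=\sum_i f_i\,\partial/\partial v^i$, only the vertical indices contribute, and since $\sqrt{\det G}=\det g$ is independent of the $v^i$, it pulls out of each $\partial/\partial v^i$ and cancels, leaving
\[
\div X=\sum_{i=1}^{n}\frac{\partial f_i}{\partial v^i}=\div^v X.
\]

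The only delicate point is the block-matrix computation, specifically verifying that the off-diagonal $A^{T}g$ block in the Sasaki metric combines with the $A$-contribution to the $\partial_{x}$-$\partial_{x}$ block in exactly the right way to yield the clean product structure above; everything else is bookkeeping. Once $\sqrt{\det G}$ is seen to be a function of $p$ alone, the claim is forced by the standard formula for divergence.
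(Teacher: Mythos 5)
Your proof is correct, and it takes a genuinely different route from the paper's. The paper argues abstractly: since the Sasaki metric makes $\mathcal H$ and $\mathcal V$ orthogonal, $\ip{\grad f}{X}=\ip{\grad_v f}{X}$ for every vertical $X$, and then the integration-by-parts characterizations of $\div$ (on $\TM$) and $\div^v$ (on the fibers) force $\div X = \div^v X$. You instead compute directly in frame coordinates: you identify the block structure of the Sasaki metric, factor it so that $\sqrt{\det G}=\det g(p)$, observe that this density is independent of the fiber variable, and watch the coordinate divergence formula collapse.

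Both proofs hinge on the same geometric input, namely that the Sasaki volume density is constant along the torus fibers. The paper uses this implicitly (it is what makes the fiberwise integration by parts for $\div^v$ drop the weight); you surface it explicitly by computing $\det G = (\det g)^2$. What your approach buys is transparency: it pinpoints exactly which feature of the Sasaki metric is responsible for the lemma, and it does so without invoking any weak characterization. What the paper's approach buys is brevity and independence from a coordinate atlas. One small remark: your block form for $G$ and the LU factorization are correct. Writing $\partial_{x^j} = H_j + \sum_k A^k_j e_k^\up$ with $H_j$ the horizontal lift, one indeed finds $G_{x^i x^j}=g_{ij}+(A^Tg A)_{ij}$, $G_{x^i v^j}=(A^Tg)_{ij}$, $G_{v^iv^j}=g_{ij}$, and
\[
\begin{pmatrix} I & A^{T} \\ 0 & I \end{pmatrix}\begin{pmatrix} g & 0 \\ 0 & g \end{pmatrix}\begin{pmatrix} I & 0 \\ A & I \end{pmatrix}=G,
\]
so the computation you flag as ``the only delicate point'' does go through.
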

\begin{proof}
    The proof follows from a standard argument, since for any $f\in C^\infty(\TM)$,
    \[
\ip{\grad f}{X} = \ip{\grad_v f}{X},
    \]
    and the definitions of divergence, vertical divergence and the integration by parts formula
    \[
    \int_{\TM}\,\ip{\grad f}{X} \,dv\,dp = - \int_{\TM}\,f\cdot \div X \,dv\,dp.
    \]
\end{proof}
\subsection{Vertical derivative  and divergence of one-forms}

\label{sec:vertical_divergence}
Recall that, in a closed Riemannian manifold, the divergence of a 1-form $\omega$ is defined as the function $\div\omega$ such that for any smooth function $f\in C^{\infty}(M)$,
\[
\int_M \,({\omega},{\grad f})\,dp =
-\int_M \, f\div\omega\,dp.
\]
 In the above formula, the parentheses denote   the usual pairing; alternatively, using the $L^2$-product in one forms induced from the metric, we could have written the above as
\[
\int_M \, \ip{\omega}{df}\,dp =
-\int_M \, f\div\omega\,dp.
\]
For a one-form $\omega_0$ in the torus bundle $\TM$, we will define its \emph{vertical divergence} as the unique function $\div^v\omega_0\in C^{\infty}(\TM)$ that satisfies 
\begin{equation}
\int_{\TM} \, ({\omega_0},{\grad_v f})\,dp\, dv
=
-\int_{\TM}\, f \div^v\omega_0\, dp \,dv
\end{equation}
where $f\in C^\infty(\TM)$, and $\grad_v f$ is just the vertical part of the gradient $\nabla f$.

\subsection{Spaces of functions and  tensors} \label{subsec:functionspaces}
For functions defined on the manifold $M$ or in the torus bundle $\TM$, we will use the classical notation $C(M), L^2(M), L^2(\TM)$. For vector fields or tensors in $M$ we will use   $L^2 \mathfrak{X}(M), L^2 T^{k,m}(M)$ and  for vertical vector  fields and  vertical tensors, we use   $L^2 \mathfrak{X}(\V)$, $L^2T^{k,m}(\V)$.

In addition,  for functions defined on $\TM$, we will use mixed spaces to consider different regularities on the base and the fibers of the torus bundle.  
\begin{Def}
Let $\mathcal{A}$ be a function space in $M$, and $\mathcal{B}$ be some Banach space at each torus fiber $\TM_p$. 
\begin{enumerate}
\item We say that $f\in {\mathcal{A}} (M,{\mathcal{B}}( \TM_p))$ if the function 
$
 f^*(p)\to \|f[p,*]\|_{\mathcal{B}}
$
belongs to $\mathcal{A}$. 

\item If $\mathcal{A}$ is given by a norm $\|*\|_{\mathcal{A}}$, then we define a norm in 
${\mathcal{A}} (M,{\mathcal{B}}( \TM_p))$ as 
\[
\|f\|_{{\mathcal{A}} (M,{\mathcal{B}}( \TM_p))}:=\|f^*\|_{\mathcal{A}}.
\]
\end{enumerate}
Likewise we will denote vertical vector fields or vertical tensor fields such that their frame coordinate functions belong to ${\mathcal{A}} (M,{\mathcal{B}}( \TM_p))$ by $\mathcal{A}(M,\mathcal{B}\mathfrak{X}(\V_p))$, $\mathcal{A}(M,\mathcal{B}T^{k,m}(\V_p))$.
\end{Def}

 Throughout the paper, we will  always take
 $\mathcal{A}$ to be continuous functions, as this simplifies the presentation and avoids some technical details. 
 $\mathcal{B}$ will typically be either continuous,  $L^r$ spaces or Sobolev spaces  in the fibers. 
 As a matter of fact, since the fibers of $\TM$ are tori, we have that $L^r(\TM_p)\subseteq L^1(\TM_p)$ for any $1\leq r$. 
 A recurrent  choice for us will be functions $C(M, L^2(\TM_p))$ and vector and tensors fields in $\TVX, \TVkm$.
 Notice that sometimes mixed spaces agree with standard spaces. For example, by Fubini's Theorem, $L^2(M,L^2(\TM_p))=L^2(\TM)$.
 
   Finally, in each torus fiber  $\TM_p$, we will consider
the space $ \Hper(\TM_p)$ of functions with zero vertical mean, that is $\int_{\mathbb TM_p} u[p,v] dv=0$ for almost every $p$. Thus, Poincar\`e-Wirtinger inequality in the fiber $\TM_p$ allows us to consider the following norm for the space $L^2(M,\Hper(\TM_p))$:

\begin{equation}\label{eq:norm}
\|u\|_{L^2(M,\Hper(\TM_p))}=\oint_{\TM} |\grad_v u(p,v)|^2\,dp\,dv 
\end{equation}

Finally, by $\| \cdot\|$ we will indicate the supremum norm in $M$ or $\TM$.

\part[Voronoi domains and partitions of unity]{Nets and Voronoi decompositions}

\section{Voronoi domains and bump functions}
\label{sec:voronoi}
The results in this section apply to arbitrary closed Riemannian manifolds; we will not assume parallelizability anywhere. 
\subsection{Normal charts}
\label{subsec:normal_charts}
Recall that given a point $p\in M$,
 we can always consider a totally normal neighborhood $U$ of $p$ in $M$ and the diffeomorphism
 \[
 \phi_p=\exp_p^{-1}: U  \to V\subset T_pM.
 \]
We will drop the $p$ whenever no confusion may arise. 
By choosing orthonormal coordinates in $T_pM$,  we have a coordinate chart $(U,\phi)$ in a neighbourhood of $p$
such that
$\phi(p)=0$, and such that the metric satisfies
	\begin{equation}
	\label{eq:metric_at_zero}
g_{ij}(0)=\delta_{ij}, 
\quad
\partial_k g_{ij}(0)=0,
\quad
\det g_{ij}(v)=1+O(|v|^2).	
	\end{equation}

	In fact, in these coordinates, the differential
	\[
d\phi:TM|_{U}\to T\R^n	
	\]
is $C^1$-close to the identity whenever $U$ is  a small neighbourhood of $p$. To make this more precise, recall that $\phi:U\to \R^n$ is a diffeomorphism onto its image $V\subset\R^n$, and the tangent bundle of $\R^n$ over $V$ trivializes as 
$V\times \R^n$. Thus,  
$d\phi:TU\to TV$ is a diffeomorphism. If, for instance, we take the Sasaki metric $\hat{g}$ induced by $g$ on $TU$ (although many other metrics could be used at this point), and the Euclidean metric on $TV=V\times\R^n$,  $\hat{g}_0$, then the map $d\phi:TU\to TV$ can be taken as close as we want to an \textcolor{blue} isometry in the $C^1$-topology by reducing the size of $U$. This argument can be used to give a formal proof of the following Lemma. 

\begin{lem}
\label{lem:almost Euclidean charts}
Let $M$ be a compact Riemannian manifold. Then for  every positive constant $C$, there exists $\delta>0$ such that for any normal chart $(U,\phi)$ as above in $M$
with $\diam U\leq\delta$, we have that 
\begin{equation}
\label{eq:exponentialc1close}
\|d\phi_p(q)- I\| \le C(d(p,q)),
\end{equation}

\end{lem}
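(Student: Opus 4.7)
The plan is to reduce \(\|d\phi_p(q)-I\|\) to a Taylor expansion of the metric in normal coordinates and then make the resulting estimate uniform in \(p\) by compactness.

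First I would pin down what the norm measures. The chart \(\phi_p=\exp_p^{-1}\) sends the coordinate basis \(\{\partial_i|_q\}\) of \(T_qM\) onto the standard basis of \(\R^n\cong T_pM\); equipping \(T_qM\) with the metric \(g_q\) and expressing \(d\phi_p(q)\) with respect to a \(g_q\)-orthonormal basis on the source and the standard basis on the target, one computes that the matrix of \(d\phi_p(q)\) is \(G(v)^{-1/2}\), where \(v=\phi_p(q)\) and \(G(v)=(g_{ij}(v))\). Thus
\[
\|d\phi_p(q)-I\|=\bigl\|G(v)^{-1/2}-I\bigr\|_{\mathrm{op}}.
\]

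Next I would invoke the defining properties of the normal chart in \eqref{eq:metric_at_zero}: since \(g_{ij}(0)=\delta_{ij}\) and \(\partial_k g_{ij}(0)=0\), Taylor's theorem yields
\[
g_{ij}(v)=\delta_{ij}+O(|v|^2),
\]
with a remainder constant controlled by a local bound on \(\partial^2 g_{ij}\) (equivalently by the sectional curvatures and their derivatives on the normal neighbourhood). Since \(M\) is compact and \(g\) is smooth, a finite cover by normal charts yields a universal radius \(r_0>0\) and a universal constant \(K>0\) such that \(\|G(v)-I\|\le K\,d(p,q)^2\) for every \(p\in M\) and every \(q\) with \(d(p,q)\le r_0\). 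The expansion \((I+A)^{-1/2}=I-\tfrac{1}{2}A+O(\|A\|^2)\), valid for \(\|A\|\) small, then upgrades this, after enlarging \(K\) if necessary, to the quadratic bound
\[
\|d\phi_p(q)-I\|\le K\,d(p,q)^2.
\]

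Finally, given any \(C>0\), set \(\delta=\min\{r_0,\,C/K\}\). Then for any normal chart with \(\diam U\le\delta\) and any \(q\in U\), we have \(d(p,q)\le\delta\), and hence
\[
\|d\phi_p(q)-I\|\le K\,d(p,q)^2\le K\delta\,d(p,q)\le C\,d(p,q),
\]
as claimed. The only delicate point is realizing that the estimate in the statement, which is linear in \(d(p,q)\), must be extracted from the sharper quadratic rate that normal coordinates provide; without that extra factor of \(d(p,q)\), shrinking \(\delta\) would not suffice to make the constant arbitrarily small. Once this is understood, the main substantive step is the uniform bound on the remainder in the Taylor expansion, which is precisely what compactness of \(M\) provides.
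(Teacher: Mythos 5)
Your proof is correct and formalizes what the paper only sketches: reduce the estimate to the Taylor expansion \eqref{eq:metric_at_zero} of the metric in normal coordinates, and invoke compactness of $M$ for uniformity of the constants over $p$. Your key observation — that the vanishing of the first derivatives $\partial_k g_{ij}(0)=0$ gives a quadratic rate in $d(p,q)$, which is precisely what permits the constant $C$ to be taken arbitrarily small by shrinking $\delta$ — is exactly right, and makes explicit a point the paper leaves implicit in its Sasaki-metric framing.
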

 Notice that \eqref{eq:exponentialc1close} yields that, by choosing $\delta$ small enough, we have that, $\exp$ and  $d\exp$ are arbitrarily close to the identity in a bilipschitz sense, uniformly in the manifold. In particular, they are  $L^\infty$ close to the identity.


\subsection{Nets and Voronoi domains in manifolds}

We consider a compact Riemannian manifold $(M,g)$ with distance function denoted by $d(p,q)$. 
\begin{defn}
Let $\varepsilon>0$.
\begin{enumerate}
\item A maximal $\varepsilon$-separated set is  a maximal set of points $\{x_i\}_{i\in I}$ in $M$ such that $d(x_i,x_j)\geq \varepsilon$. 
\item An $\varepsilon$-minimal net is a set of points $\{x_i\}_{i\in I}$ in $M$ with a minimal number of elements such that any point in $M$ lies in an $\varepsilon$-ball $B(x_i,\varepsilon)$.
\end{enumerate}
\end{defn}

\begin{lem}
 A maximal $\varepsilon$-separated set is a $\varepsilon$-minimal net
\end{lem}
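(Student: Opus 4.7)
The plan is to verify two properties in sequence: first that a maximal $\varepsilon$-separated collection $\{x_i\}_{i\in I}$ is in fact an $\varepsilon$-net (that is, $M=\bigcup_{i}B(x_i,\varepsilon)$), and second that it is minimal, in the sense that no point of the collection may be discarded without destroying the covering property (so in particular the collection cannot be further reduced to a net of strictly smaller cardinality).

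For the covering property I would argue by contradiction using maximality. Suppose there were some $q\in M$ with $q\notin B(x_i,\varepsilon)$ for every $i$; then $d(q,x_i)\geq\varepsilon$ for all $i\in I$, and consequently the enlarged family $\{x_i\}_{i\in I}\cup\{q\}$ would still satisfy the $\varepsilon$-separation inequalities, contradicting the assumed maximality of $\{x_i\}_{i\in I}$. Hence every point of $M$ lies in at least one ball $B(x_i,\varepsilon)$, proving the covering part. Compactness of $M$ then additionally guarantees that the index set $I$ is finite.

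For the minimality part, I would fix an index $k\in I$ and note that the $\varepsilon$-separation condition gives $d(x_k,x_i)\geq\varepsilon$ for every $i\neq k$, so $x_k\notin B(x_i,\varepsilon)$ whenever $i\neq k$. Consequently the subfamily $\{x_i\}_{i\in I\setminus\{k\}}$ fails to cover $x_k$, and no element of the maximal separated set can be removed while preserving the net property. This establishes the minimality in the inclusion sense, which in particular forces $|I|$ to coincide with the minimal number of elements required for a net obtained by pruning $\{x_i\}_{i\in I}$.

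The only point that requires a moment of care is the precise reading of ``minimal number of elements'' in the definition of $\varepsilon$-minimal net: taken literally as global minimum cardinality among all $\varepsilon$-nets, the identification with a maximal $\varepsilon$-separated set can depend on the convention for open versus closed balls and on subtle metric features, so I would interpret the statement in the natural sense that no proper subset of $\{x_i\}_{i\in I}$ is still an $\varepsilon$-net, which is precisely what the argument in the previous paragraph delivers. This is the version of the property actually used in the subsequent Voronoi constructions, where what matters is that each $x_i$ is essential for covering its own neighborhood.
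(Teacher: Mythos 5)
Your covering argument is exactly the one the paper gives: a point $q$ with $d(q,x_i)\geq\varepsilon$ for all $i$ could be adjoined without breaking $\varepsilon$-separation, contradicting maximality, so a maximal $\varepsilon$-separated set covers. The paper's proof stops there; it never addresses the ``minimal number of elements'' clause at all, so your paragraph on irredundancy (no $x_k$ lies in $\bigcup_{i\neq k}B(x_i,\varepsilon)$, because $d(x_k,x_i)\geq\varepsilon$ for $i\neq k$) is a genuine addition beyond what the paper writes, and it is correct.

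Your caution in the final paragraph is also justified, and in fact it identifies a real imprecision. Read literally as ``minimum possible cardinality among all $\varepsilon$-nets,'' the lemma is not true: on $[0,2]$ with $\varepsilon=1$, the set $\{0,1,2\}$ is a maximal $1$-separated set and hence a $1$-net, but $\{1/2,3/2\}$ is a $1$-net with fewer elements. The paper's one-line proof establishes only the covering property, which is consistent with the fact that minimum cardinality is not derivable from maximality alone. What the subsequent Voronoi constructions actually use is the cardinality upper bound $\#\{p_i\}\lesssim\varepsilon^{-n}\vol(M)$, which follows from the $\varepsilon$-separation by a packing argument, together with covering and the finite-overlap property; none of this requires achieving the global minimum. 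Your inclusion-minimal reading is thus the one that is both proved and used, and your identification of the gap between that reading and the stated definition is accurate.
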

\begin{proof}
If there was an $x$ with $d(x,x_i)\geq \varepsilon$, then we could add $x$ to $\{x_i\}$ to get an $\varepsilon$-separated set with one element more, contradicting maximality. This shows that 
  maximal $\varepsilon$-separated sets are $\varepsilon$-minimal nets.
 
\end{proof}

\begin{defn}
Let $\mathcalN(\varepsilon)=\{p_i\,,\,1\leq i\leq N(\varepsilon) \}$,  a maximal $\varepsilon$-separated set. The Voronoi domain corresponding to $p_i$ is defined as 
\[
D_i:=\{q\in M \,:\, \dist(q,p_i)\leq \dist(q,p_j) \text{ for all } 1\leq j\leq N\,\}
\]
\end{defn}

Standard counting arguments show that  the cardinal  of $\mathcal{N}(\varepsilon)$  grows as $\varepsilon^{-n}$ as $\varepsilon \to 0$ with $n=\dim M$.

\begin{lem}
\label{lem:vor_eps}
If $q\in D_i$, $\dist(q,p_i)\leq \varepsilon$.
\end{lem}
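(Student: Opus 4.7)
The plan is to prove this as an immediate consequence of the maximality of the $\varepsilon$-separated set $\mathcal{N}(\varepsilon)$, together with the definition of the Voronoi domain $D_i$. The previous lemma already records that a maximal $\varepsilon$-separated set is an $\varepsilon$-minimal net, and the key content I need is the contrapositive form of maximality.

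First, I would observe that by the maximality of $\mathcal{N}(\varepsilon)$, for every $q \in M$ there exists some index $j$ with $\dist(q,p_j) < \varepsilon$; indeed, if this failed then $\dist(q,p_k) \geq \varepsilon$ for all $k$, and appending $q$ to $\mathcal{N}(\varepsilon)$ would produce a strictly larger $\varepsilon$-separated set, contradicting maximality.

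Next, suppose $q \in D_i$. By definition of the Voronoi domain, $\dist(q,p_i) \leq \dist(q,p_j)$ for every $j$. Applying this with the $j$ produced in the previous step yields
\[
\dist(q,p_i) \leq \dist(q,p_j) < \varepsilon,
\]
which gives the desired inequality $\dist(q,p_i) \leq \varepsilon$.

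There is no genuine obstacle here: the lemma is essentially a one-line consequence of chaining the maximality property with the defining inequality of $D_i$. The only mild subtlety is making sure the reader sees that maximality of an $\varepsilon$-separated set is exactly the statement that every point of $M$ is within distance strictly less than $\varepsilon$ of some $p_j$, which is what supplies the comparison point needed to bound $\dist(q,p_i)$.
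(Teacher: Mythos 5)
Your proof is correct and takes essentially the same approach as the paper: both invoke maximality of the $\varepsilon$-separated set to produce a comparison point within distance $\varepsilon$ of $q$, then apply the defining inequality of the Voronoi domain. The paper phrases it as a direct contradiction, while you unpack the maximality into a "for every $q$ there exists $p_j$" statement first, but the content is identical.
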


\begin{proof}
Suppose otherwise; then $\varepsilon<\dist(q,p_i)\leq \dist(q,p_j)$ for any $j$, and we could add $q$ to $\mathcalN(\varepsilon)$ to get a larger maximal $\varepsilon$-separated set.
\end{proof}

The sets $D_i$ cover all of $M$, intersecting only in sets of empty interior.

\begin{lem}
\label{lem:ball in interior Voronoi}
For any $i$, the ball $B(p_i,\varepsilon/2)$ is contained in $D_i$.
\end{lem}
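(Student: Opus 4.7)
The plan is to use the triangle inequality together with the defining separation property of $\mathcal{N}(\varepsilon)$. The key observation is that any two distinct points $p_i, p_j$ in the net satisfy $\dist(p_i, p_j) \geq \varepsilon$ (this is what it means to be $\varepsilon$-separated), so a point close to $p_i$ cannot be too close to any other $p_j$.

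Concretely, I would take $q \in B(p_i, \varepsilon/2)$, so $\dist(q, p_i) < \varepsilon/2$. For any $j \neq i$, the triangle inequality gives
\[
\dist(q, p_j) \geq \dist(p_i, p_j) - \dist(p_i, q) \geq \varepsilon - \frac{\varepsilon}{2} = \frac{\varepsilon}{2} > \dist(q, p_i),
\]
where we used $\varepsilon$-separation in the second inequality. Since this holds for every $j \neq i$ (and trivially for $j = i$), the point $q$ satisfies the defining inequality for $D_i$, hence $q \in D_i$.

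There is no real obstacle here; the statement is an immediate consequence of the $\varepsilon$-separation property built into the definition of $\mathcal{N}(\varepsilon)$, and the argument is identical to the Euclidean case because it only uses the metric triangle inequality. No curvature, completeness, or injectivity radius assumption is needed for this particular statement, since the inequalities only involve the intrinsic distance function $d$ on $M$.
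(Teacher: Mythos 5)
Your proof is correct and uses the same essential ingredients as the paper's (triangle inequality plus the $\varepsilon$-separation of the net); the only difference is cosmetic: you argue directly that $\dist(q,p_j)\geq\dist(q,p_i)$ for all $j$, while the paper assumes $q\in D_j$ and derives a contradiction with $\varepsilon$-separation unless $j=i$.
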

\begin{proof}
If $\dist(p_i,q)<\varepsilon/2$, and $q\in D_j$, then $\dist(p_j,q)\leq \dist(p_i,q)$; thus 
\[
\dist(p_i,p_j)\leq \dist(p_i,q)+\dist(p_j,q)<\varepsilon,
\]
contradicting the separation condition except if $i=j$.
\end{proof}

 We will say that two Voronoi domains are \emph{adjacent} if $D_i\cap D_j\neq \emptyset$.

\begin{lem}\label{lem:distvoronoi}
 Suppose $D_i$ and $D_j$ are adjacent  Then 
\[
\varepsilon\leq \dist(p_i,p_j)\leq 2\varepsilon.
\]
\end{lem}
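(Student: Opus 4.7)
The plan is to handle the two inequalities separately, as they have entirely different origins: the lower bound comes directly from the defining separation property of the net, while the upper bound will come from combining the Voronoi radius estimate of Lemma \ref{lem:vor_eps} with the triangle inequality.

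For the lower bound $\varepsilon \leq \dist(p_i, p_j)$, I would simply invoke the fact that $\mathcal{N}(\varepsilon)$ is a maximal $\varepsilon$-separated set. By definition, whenever $i \neq j$, the points $p_i$ and $p_j$ satisfy $\dist(p_i, p_j) \geq \varepsilon$. Since the hypothesis of adjacency, $D_i \cap D_j \neq \emptyset$, implicitly requires $i \neq j$ (otherwise the statement about two domains is vacuous), this gives the lower bound immediately with no further work.

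For the upper bound $\dist(p_i, p_j) \leq 2\varepsilon$, I would pick any point $q \in D_i \cap D_j$ (which exists by adjacency). Applying Lemma \ref{lem:vor_eps} twice, once with $q$ viewed as a point of $D_i$ and once viewed as a point of $D_j$, yields $\dist(q, p_i) \leq \varepsilon$ and $\dist(q, p_j) \leq \varepsilon$. The triangle inequality then gives
\[
\dist(p_i, p_j) \leq \dist(p_i, q) + \dist(q, p_j) \leq 2\varepsilon,
\]
which closes the argument.

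There is no serious obstacle here; the result is essentially a soft consequence of two already established facts. The only point worth being careful about is noting that adjacency means a nonempty intersection (not merely some boundary incidence in a geometric sense), so that an actual point $q$ witnessing both distance bounds is available to feed into the triangle inequality.
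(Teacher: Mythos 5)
Your proof is correct and matches the paper's argument exactly: the lower bound from the $\varepsilon$-separation property, and the upper bound by picking $q\in D_i\cap D_j$, applying Lemma~\ref{lem:vor_eps} to both domains, and invoking the triangle inequality.
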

\begin{proof}
The first inequality comes from the definition of $\varepsilon$-separated set; 
the second is an immediate consequence of the triangle inequality and  Lemma \ref{lem:vor_eps} after choosing some $q\in D_i\cap D_j$.
\end{proof}

In what follows we will assume $\varepsilon$ is smaller than half the injectivity radius of $M$ so that there are unique geodesics between $p_i$'s belonging to adjacent Voronoi domains.

\begin{lem}
\label{lem:triangle}  

Let $M$ be a closed Riemannian manifold. Then there are positive numbers  $\varepsilon_0$ and $\delta$  such that   for  any $\varepsilon \le \varepsilon_0$ and for any points 
$q_i,q_j,q_k \in M$ with
\[  
\varepsilon \le \dist(q_i,q_j),  \dist(q_i,q_k),  \dist(q_j,q_k) \le  2 \varepsilon 
\]
we have that 
\[
\measuredangle_{p_i}(p_j,p_k)\geq \delta,
\]
where $\measuredangle_{p_i}(p_j,p_k)$ is the angle at $p_i$ of the minimal geodesic triangle $\triangle(q_i,q_j,q_k)$.  \footnote{i.e, every side is a minimal segment.}
\end{lem}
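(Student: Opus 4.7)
My approach is to pull the triangle $\triangle(q_i,q_j,q_k)$ back to $T_{q_i}M$ via the normal chart $\phi=\exp_{q_i}^{-1}$ and reduce the angle estimate to a Euclidean computation. First I would fix $\varepsilon_0$ smaller than half the injectivity radius of $M$, so that $\exp_{q_i}^{-1}$ is unambiguously defined on a ball of radius $3\varepsilon$ around $q_i$, which contains both $q_j$ and $q_k$. Setting $v_j:=\exp_{q_i}^{-1}(q_j)$ and $v_k:=\exp_{q_i}^{-1}(q_k)\in T_{q_i}M$, the radial isometry property of $\exp_{q_i}$ gives $|v_j|_{g_{q_i}}=\dist(q_i,q_j)$ and $|v_k|_{g_{q_i}}=\dist(q_i,q_k)$ \emph{exactly}, and the angle $\measuredangle_{q_i}(q_j,q_k)$ at the vertex of the minimal geodesic triangle is by definition the angle between $v_j$ and $v_k$ in the inner product space $(T_{q_i}M,g_{q_i})$.

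Next I would control the ``opposite side'' $|v_j-v_k|_{g_{q_i}}$ by invoking Lemma \ref{lem:almost Euclidean charts}: for $\varepsilon_0$ small enough and every $q_i\in M$, the map $\exp_{q_i}^{-1}$ is $C^1$-close to an isometry on $B(q_i,3\varepsilon)$, hence bilipschitz with multiplicative constants $1\pm\eta(\varepsilon)$ where $\eta(\varepsilon)\to 0$ uniformly in $q_i$ by compactness of $M$. This yields
\[
(1-\eta(\varepsilon))\,\dist(q_j,q_k)\;\leq\;|v_j-v_k|_{g_{q_i}}\;\leq\;(1+\eta(\varepsilon))\,\dist(q_j,q_k).
\]

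To finish I would apply the Euclidean law of cosines in $(T_{q_i}M,g_{q_i})$,
\[
\cos\measuredangle_{q_i}(q_j,q_k)=\frac{|v_j|^2+|v_k|^2-|v_j-v_k|^2}{2\,|v_j|\,|v_k|},
\]
and rescale by $\varepsilon$. Writing $a=|v_j|/\varepsilon$, $b=|v_k|/\varepsilon$, $c=|v_j-v_k|/\varepsilon$, the hypotheses force $a,b\in[1,2]$ and the bilipschitz estimate gives $c\in[\,1-\eta,\;2(1+\eta)\,]$. The maximum of $(a^2+b^2-c^2)/(2ab)$ on this compact region is attained on the boundary and, for $\eta$ sufficiently small, is strictly less than $1$ (a convenient explicit bound is $7/8+O(\eta)$, attained near $a=b=2$, $c=1$); setting $\delta:=\arccos\!\bigl(7/8+O(\eta(\varepsilon_0))\bigr)$ then gives the desired lower bound on the angle. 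The only mildly delicate point is the uniformity of the bilipschitz constant in the basepoint $q_i$, but this is immediate from compactness of $M$ combined with Lemma \ref{lem:almost Euclidean charts}.
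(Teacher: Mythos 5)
Your route --- pulling the triangle back to $T_{q_i}M$, using the Gauss lemma for the two radial sides and Lemma \ref{lem:almost Euclidean charts} for the opposite side, and then applying the Euclidean law of cosines --- is a genuinely different argument from the paper's, which invokes Toponogov's comparison theorem together with the hyperbolic law of cosines in the model space of curvature $k_0<0$. It is arguably cleaner: after Taylor-expanding $\cosh$ and $\sinh$ for small arguments the paper's hyperbolic formula reduces to exactly the Euclidean quotient $(a^2+b^2-c^2)/(2ab)$ up to $O(\varepsilon^2)$, so both proofs converge to the same final optimization over the normalized side lengths.

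That optimization, however, is where the argument fails, and it fails in the same way in both proofs. The maximum of $(a^2+b^2-c^2)/(2ab)$ over $a,b,c\in[1,2]$ is not $7/8$ near $a=b=2$, $c=1$; it is exactly $1$, attained at $a=2$, $b=c=1$, which corresponds to a degenerate triangle in which $q_k$ lies on the minimal geodesic from $q_i$ to $q_j$. This configuration genuinely occurs --- take three collinear points at spacing $\varepsilon$ on a flat torus: all three pairwise distances lie in $[\varepsilon,2\varepsilon]$ yet $\measuredangle_{q_i}(q_j,q_k)=0$. So no uniform $\delta>0$ exists and the lemma as stated is actually false. The paper's own proof contains the identical oversight (it simply asserts that ``the worst case'' is $a=b=2\varepsilon$, $c=\varepsilon$, silently discarding the collinear boundary case), so you have faithfully reproduced the paper's gap rather than introduced a new one. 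The downstream uses of the lemma can be rescued only because they carry extra structure beyond the bare distance bounds: in Lemma \ref{lem:lower bound for angles} adjacency of the Voronoi cells excludes the collinear configuration (the nearer point shields the farther one from being a Voronoi neighbour of $D_i$), and in Lemma \ref{lem:Voronoiboundary} the estimated angle sits at the apex of an isoceles triangle whose opposite side is at least as long as the two legs, forcing the angle to be at least $\pi/3$. A correct proof has to feed in one of these additional constraints rather than rely on the stated distance hypotheses alone.
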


\begin{proof}
Since the geodesics are minimal, we can use Toponogov's theorem \cite{CheEbin} to compare angles with those in the corresponding triangle in  the simply connected space form of constant curvature $k_0<0$, where $k_0$ is a lower bound for the sectional curvature of $M$,  and then use hyperbolic trigonometry.
Namely,  we consider a corresponding triangle in $\mathbb{H}(k_0)$ where the side lengths are $l_{ij}=a$, $l_{ik}=b$, $l_{jk}=c$ are the corresponding hyperbolic distances between the vertices, and  assume the  worst case $a=b=2\e$ and $c=\e$ .  We denote by $\alpha_i$ the angle at the vertex $p_i$
(still in the space form) and  use  the following  law of cosines.

 \[\cos \alpha=\frac{\cosh(\sqrt{|k_0}|a) \cosh(\sqrt{|k_0|} b)-\cosh(\sqrt{|k_0|} c)}{\sinh(\sqrt{|k_0|}a) \sinh( \sqrt{|k_0|} b) )}
. \]

 As $\sinh(x) =x+ O(x^3), \cosh{x}=1+x^2+ O(x^4)$ for small $x$, we have that there  exists $\varepsilon_0(k_0)>0$ such that for   $\varepsilon<\varepsilon_0$, there exists  a universal constant $c_0>0$ with

\[   \cos \alpha \le 1-c_0.  \]

Toponogov's theorem states that 

\[ \measuredangle_{p_i}(p_j',p_k')\geq \alpha \]
and the proof follows.
\end{proof}

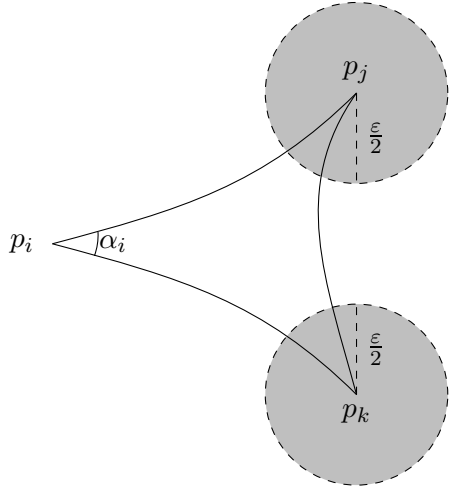
\begin{figure}
    \centering
       \begin{tikzpicture}[scale=2]
       \draw[dashed, fill=lightgray] (2,1) circle [radius=0.6];
        \draw[dashed, fill=lightgray] (2,-1) circle [radius=0.6];
   \draw (0,0) to [out=15, in=225] (2,1);
    \draw (0,0) to [out=-15, in=135] (2,-1);
    \draw (2,-1) to [out=105, in=235] (2,1);
	\draw[dashed] (2,1) -- (2,0.4) ;
	\draw[dashed] (2,-1) -- (2,-0.4) ;
   \node at (-0.2,0) {$p_i$};
    \node[above] at (2,1) {$p_j$};
    \node[below] at (2,-1) {$p_k$};
   \node[right] at (2,0.7) {$\frac{\varepsilon}{2}$};
      \node[right] at (2,-0.7) {$\frac{\varepsilon}{2}$};
    \draw (0.3,0.078) arc [radius=0.4, start angle=3, end angle= -20];
    \node at (0.4,0) {$\alpha_i$};
     \end{tikzpicture}
    \caption{A geodesic triangle of size comparable to $\e$.}
    \label{fig:geodesic triangle}
\end{figure}

\begin{lem}
\label{lem:lower bound for angles}
Let $D_j$ and $D_k$ be Voronoi domains adjacent to $D_i$. Denote by $p_j'$ and $p_k'$ the unit vectors at $p_i$ tangent to geodesics from $p_i$ to $p_j$ and $p_k$ respectively.
Then there is a $\delta=\delta(M)>0$ such that 
\[
\measuredangle_{p_i}(p_j',p_k')\geq \delta.
\]
\end{lem}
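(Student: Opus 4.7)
The plan is to reduce the lemma to Lemma~\ref{lem:triangle} by verifying its three side-length hypotheses. The adjacency of $D_j$ and $D_k$ to $D_i$, combined with Lemma~\ref{lem:distvoronoi}, gives
\[
\varepsilon \leq \dist(p_i,p_j),\ \dist(p_i,p_k) \leq 2\varepsilon,
\]
while the $\varepsilon$-separation of the net $\mathcalN(\varepsilon)$ gives $\dist(p_j,p_k) \geq \varepsilon$. Thus two of the three pairwise distances already fit directly into the hypothesis of Lemma~\ref{lem:triangle}, and the conclusion of the present lemma is morally a direct translation of Lemma~\ref{lem:triangle} into the Voronoi setting.

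The only side-length bound that is not automatic is the upper bound $\dist(p_j,p_k) \leq 2\varepsilon$; the triangle inequality yields only $\dist(p_j,p_k) \leq 4\varepsilon$. To address this I would split into two cases. When $\dist(p_j,p_k) \leq 2\varepsilon$, Lemma~\ref{lem:triangle} applies verbatim to the geodesic triangle $\triangle(p_i,p_j,p_k)$ and produces a constant $\delta_1 = \delta_1(M) > 0$ bounding the angle at $p_i$ from below. When $\dist(p_j,p_k) \in (2\varepsilon,4\varepsilon]$, I would re-run the Toponogov argument used in the proof of Lemma~\ref{lem:triangle} with the model space $\mathbb{H}(k_0)$, where $k_0$ is a lower bound for the sectional curvature of $M$, and exploit the fact that the hyperbolic law of cosines makes the comparison angle at $p_i$ a strictly increasing function of the opposite side $\dist(p_j,p_k)$ when the other two sides are held fixed. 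Hence the comparison angle in this range is at least the one obtained in Lemma~\ref{lem:triangle} for the extreme case $a=b=2\varepsilon$, $c=\varepsilon$, and Toponogov's theorem transfers this into the desired lower bound for the genuine angle $\measuredangle_{p_i}(p_j',p_k')$ in $M$. Taking $\delta := \min\{\delta_1,\delta_2\}$ produces the uniform constant sought.

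The main obstacle is essentially cosmetic: the distance $\dist(p_j,p_k)$ need not lie in $[\varepsilon,2\varepsilon]$, as Lemma~\ref{lem:triangle} literally requires. The observation that enlarging this distance only enlarges the comparison angle at $p_i$ — a direct consequence of monotonicity in the hyperbolic law of cosines — removes this gap without any new geometric input beyond what Lemma~\ref{lem:triangle} already establishes, and the choice of $\varepsilon_0$ can simply be inherited from that lemma so that all relevant geodesic segments and comparison triangles are well defined.
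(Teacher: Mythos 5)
Your proposal is correct, and it is in fact more careful than the paper's own argument. The paper's proof of this lemma simply invokes Lemma~\ref{lem:distvoronoi} to conclude that all three side lengths $l_{ij},l_{ik},l_{jk}$ of the geodesic triangle lie in $[\varepsilon,2\varepsilon]$ and then cites Lemma~\ref{lem:triangle} directly. But Lemma~\ref{lem:distvoronoi} only applies to \emph{adjacent} pairs of domains, and the hypothesis here guarantees adjacency only for the pairs $(D_i,D_j)$ and $(D_i,D_k)$ --- not for $(D_j,D_k)$. So the bound $l_{jk}\leq 2\varepsilon$ is not automatic; as you observe, only $\varepsilon\leq l_{jk}\leq 4\varepsilon$ is guaranteed. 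Your repair is exactly the right move: since in the comparison space $\mathbb{H}(k_0)$ the angle at $p_i$ is
\[
\cos\alpha=\frac{\cosh(\sqrt{|k_0|}\,a)\cosh(\sqrt{|k_0|}\,b)-\cosh(\sqrt{|k_0|}\,c)}{\sinh(\sqrt{|k_0|}\,a)\sinh(\sqrt{|k_0|}\,b)},
\]
whose right-hand side is strictly decreasing in $c$ for fixed $a,b$, increasing $c=l_{jk}$ beyond $2\varepsilon$ can only increase the comparison angle, so the case $c\in(2\varepsilon,4\varepsilon]$ reduces to the already-covered range and Toponogov transfers the bound to $M$. (The phrasing ``at least the one obtained for $a=b=2\varepsilon,c=\varepsilon$'' is slightly loose --- what monotonicity gives you is that the angle is at least the value at $c=2\varepsilon$ for the same $a,b$, which Lemma~\ref{lem:triangle} then bounds below --- but the logic is sound.) One should also note that the comparison triangle exists since $l_{jk}\le l_{ij}+l_{ik}$ by the triangle inequality in $M$. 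In short: same route as the paper (Toponogov plus the hyperbolic law of cosines through Lemma~\ref{lem:triangle}), but you notice and close a gap the paper elides.
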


\begin{proof}

We consider the geodesic triangle with {vertices}  $p_i,p_j, p_k$. By Lemma~\ref{lem:distvoronoi}, the length of the corresponding geodesic $\varepsilon \le l_{ij},l_{ik},l_{jk}\le 2\epsilon$, thus the claim is a corollary of lemma~\ref{lem:triangle}

\end{proof}

In fact, $\alpha_0$ can be estimated precisely; for instance, for $k_0=0$, $\tan\alpha_0=1/2$.

\begin{lem}
\label{lem:bound_voronoi_neighbours}
Let $D_i$ be a given Voronoi domain generated by a maximal $\e$-separated   
net; the number of its adjacent domains is bounded above by a constant $I_0$ independent of $\epsilon$.
\end{lem}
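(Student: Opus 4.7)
The plan is to bound the number of neighbors by a packing argument on the unit sphere of $T_{p_i}M$, exploiting the uniform angular separation provided by Lemma~\ref{lem:lower bound for angles}.

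First, assume (as noted just before Lemma~\ref{lem:triangle}) that $\varepsilon$ is smaller than half the injectivity radius of $M$, so that for every $D_j$ adjacent to $D_i$ the minimal geodesic from $p_i$ to $p_j$ exists and is unique; let $v_j \in T_{p_i}M$ denote its unit tangent vector at $p_i$. By Lemma~\ref{lem:distvoronoi} every such $p_j$ satisfies $d(p_i,p_j)\leq 2\varepsilon$, so the hypotheses needed to apply Lemma~\ref{lem:lower bound for angles} to any pair of adjacent neighbors $D_j, D_k$ are in force.

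Next, apply Lemma~\ref{lem:lower bound for angles}: for any two distinct neighbors $D_j, D_k$ of $D_i$, the unit vectors $v_j, v_k \in T_{p_i}M$ satisfy $\measuredangle(v_j, v_k)\geq \delta$, where $\delta=\delta(M)>0$ is independent of $\varepsilon$ and of the chosen net. Hence the collection $\{v_j\}$ sits inside the unit sphere $S^{n-1}\subset T_{p_i}M$ as a $\delta$-separated set (with respect to angular distance).

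Finally, invoke a standard packing bound on $S^{n-1}$: the maximal number of points that can be pairwise separated by angle at least $\delta$ on the unit sphere of $\R^n$ is bounded by a constant $I_0 = I_0(n,\delta)$, depending only on $n$ and $\delta$ (not on $p_i$ or on $\varepsilon$). This yields the desired uniform bound on the number of adjacent Voronoi domains.

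There is no serious obstacle here; the only point that requires a moment of care is ensuring we are using minimizing geodesics so that Toponogov's comparison (used in the proof of Lemma~\ref{lem:triangle}) applies, which is why we fix $\varepsilon$ small relative to the injectivity radius. Note also that an alternative, equally short route is a volume comparison argument: the balls $B(p_j,\varepsilon/2)$ for $p_j$ in the net are pairwise disjoint (by $\varepsilon$-separation) and all contained in $B(p_i, 5\varepsilon/2)$ whenever $D_j$ is adjacent to $D_i$; Bishop--Gromov on a manifold with a uniform lower Ricci bound then yields a uniform upper bound on the number of such $p_j$. Either argument produces an $I_0$ depending only on $M$.
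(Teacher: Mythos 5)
Your proof is correct and follows essentially the same route as the paper: apply Lemma~\ref{lem:lower bound for angles} to obtain a uniform angular separation $\delta$ among the directions $v_j$ at $p_i$, then bound the number of such directions by a packing argument on the unit sphere $S^{n-1}\subset T_{p_i}M$. The alternative Bishop--Gromov volume-comparison route you sketch is a valid bonus, but the primary argument is the one the paper uses.
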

\begin{proof}
Lemma \ref{lem:lower bound for angles} shows that, if $D_{i_1},\dots, D_{i_k}$ are adjacent to $D_i$, then the set of directions 
$p'_{i_1}, \dots, p'_{i_k}$ form an $\alpha_0/2$-separated net in the unit tangent sphere at $p_i$. 
Since this unit sphere has dimension $n-1$, a simple volume counting argument shows that the amount of these points is bounded above by $C\omega_{n-1}/\alpha_0^{n-1}$ for some $C=C_{n-1}>0$ independent of $\epsilon$ and $\omega_{n-1}$ the volume of the unit $(n-1)$-dimensional sphere.
\end{proof}
 
 We reformulate the last Lemma in a form more useful for the future.
 
\begin{lem}[Finite overlapping property]
\label{lem:FOP}
 Let $(M,g)$ be a closed Riemannian manifold and $\varepsilon>0$ be small enough. If $\{p_i\}$ is a maximal $\varepsilon$-separated set in $M$, then 
there exists some $K>0$ independent of $\varepsilon$ such that
$\sum_i\chi_{B(p_i,\varepsilon)}\leq K$, where $\chi_A$ is the characteristic function of the set $A$.
\end{lem}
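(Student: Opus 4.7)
The plan is to reduce the statement to the angle estimate of Lemma~\ref{lem:triangle}, which is the geometric content underlying Lemma~\ref{lem:bound_voronoi_neighbours}. Fix an arbitrary $q\in M$ and set $S_q:=\{i : q\in B(p_i,\e)\}$; the aim is to bound $|S_q|$ by a constant depending only on $M$.

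First, for any $i\in S_q$ one has $d(q,p_i)<\e$, so all the points $\{p_i\}_{i\in S_q}$ lie in the ball $B(q,\e)$. Combined with the $\e$-separation of the net, any two distinct $i,j\in S_q$ satisfy $\e\le d(p_i,p_j)\le 2\e$ by the triangle inequality.

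Next, fix one index $i_0\in S_q$ and, for each $i\in S_q\setminus\{i_0\}$, consider the unit vector $v_i\in T_{p_{i_0}}M$ tangent at $p_{i_0}$ to the minimizing geodesic from $p_{i_0}$ to $p_i$; these geodesics are unique provided $\e$ is below half the injectivity radius of $M$. For distinct $i,j\in S_q\setminus\{i_0\}$, the geodesic triangle with vertices $p_{i_0},p_i,p_j$ has all three side lengths in $[\e,2\e]$, so Lemma~\ref{lem:triangle} applies and yields a uniform lower bound $\delta>0$, independent of $\e$ and of the chosen points, for the angle $\measuredangle_{p_{i_0}}(p_i,p_j)$.

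To finish, the family $\{v_i\}_{i\in S_q\setminus\{i_0\}}$ forms a $\delta$-separated subset of the unit tangent sphere at $p_{i_0}$. A standard volume-packing count on the $(n-1)$-sphere bounds the cardinality of such a set by a constant depending only on $n$ and $\delta$, so $|S_q|\le K$ for some $K=K(M)$ independent of $\e$ and $q$, which is precisely the pointwise bound asserted. The only mild subtlety is the simultaneous use of the injectivity radius restriction and the threshold $\e_0$ of Lemma~\ref{lem:triangle}; both are already in force by the smallness assumption on $\e$, so no real obstacle arises and the main work has already been carried out in the angle estimate.
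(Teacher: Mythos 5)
Your proof is correct and rests on the same geometric engine as the paper's treatment, namely Lemma~\ref{lem:triangle} plus a volume-packing count on the unit sphere. The paper handles Lemma~\ref{lem:FOP} only implicitly, presenting it as a ``reformulation'' of Lemma~\ref{lem:bound_voronoi_neighbours}, which bounds the number of Voronoi domains adjacent to a fixed one. Strictly speaking that reformulation requires an extra observation, since $q\in B(p_i,\e)\cap B(p_j,\e)$ does not by itself mean $D_i$ and $D_j$ are adjacent; your argument bypasses the Voronoi-adjacency language entirely and applies Lemma~\ref{lem:triangle} directly to the points $p_i$ landing in $B(q,\e)$, which is cleaner and self-contained. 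Both routes deliver a bound depending only on $n$ and the curvature-dependent angle threshold $\delta$, so nothing is gained or lost in generality; your version just spells out the step the paper leaves to the reader.
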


Finally, in order to create a convenient partition of unity  we need to study the boundary of each Voronoi domain, and estimate
the measure of a tubular neighborhood.


\begin{lem}\label{lem:Voronoiboundary}
Let $D_i$ be one of the Voronoi domains. Then
its boundary $\partial D_i$ is a  piecewise  smooth submanifold of dimension  $n-1$. Indeed,
there exist constants $c$, $C'$ independent of $\varepsilon$ such that
\begin{equation}
\mathcal{H}^{n-1}(\partial D_i) \le C' \varepsilon^{n-1}
\end{equation}
and for
$\delta \le c \varepsilon$, 
\[\vol(\,\cup_{p\in\partial D_i} B(p,\delta)\,)\leq C'\varepsilon^{n-1}\delta.
\]
\end{lem}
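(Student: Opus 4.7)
The strategy is to work in the normal chart $\phi_i=\exp_{p_i}^{-1}$ at $p_i$, where Lemma~\ref{lem:almost Euclidean charts} tells us that the geometry is as close to Euclidean as desired once $\e$ is small, and then to decompose $\partial D_i$ into at most $I_0$ smooth pieces coming from pairwise bisectors with the (uniformly bounded number of) adjacent generators.

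\textbf{Step 1: Piecewise smoothness.} By Lemma~\ref{lem:vor_eps} the Voronoi cell $D_i$ sits inside $B(p_i,\e)$, which is contained in the normal chart once $\e$ is less than half the injectivity radius. Every boundary point $q\in\partial D_i$ lies on at least one bisector
\[
\Sigma_{ij}=\{q:\ d(q,p_i)=d(q,p_j)\}
\]
where $D_j$ is adjacent to $D_i$. Since $q\in B(p_i,\e)\cap B(p_j,2\e)$ and $\e$ is a small fraction of the injectivity radius, both $d(\cdot,p_i)^2$ and $d(\cdot,p_j)^2$ are smooth near $q$. In the normal coordinates at $p_i$, a direct gradient computation, combined with the $C^1$-closeness of $d\phi_i$ to the identity from Lemma~\ref{lem:almost Euclidean charts}, shows that $\nabla\bigl(d(\cdot,p_i)^2-d(\cdot,p_j)^2\bigr)$ is within $o(1)$ of $-2(\phi_i(p_j)-\phi_i(p_i))$ in Euclidean norm, hence of magnitude $\ge c\,\e$ by Lemma~\ref{lem:distvoronoi}. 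The implicit function theorem then yields that each $\Sigma_{ij}$ is a smooth hypersurface in a neighborhood of $q$, and $\partial D_i$ is the union of the finitely many closed pieces $F_{ij}=D_i\cap D_j\subset\Sigma_{ij}$, giving the piecewise smooth structure.

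\textbf{Step 2: Area bound.} By Lemma~\ref{lem:bound_voronoi_neighbours}, only at most $I_0$ indices $j$ contribute. Each face $F_{ij}$ is contained in $B(p_i,\e)$. Pull it back through $\phi_i$ to a subset $\widetilde F_{ij}\subset B_{\R^n}(0,\e)$ of a smooth hypersurface whose unit normal is, by the previous step, within $o(1)$ of the Euclidean bisector normal. Hence $\widetilde F_{ij}$ is (a piece of) the graph of a smooth function over an $(n-1)$-dimensional disc of Euclidean radius $\le C\e$, so $\mathcal{H}^{n-1}(\widetilde F_{ij})\le C\,\e^{n-1}$. Lemma~\ref{lem:almost Euclidean charts} ensures that $\phi_i$ is bilipschitz with constants uniform in $p_i$, hence $\mathcal{H}^{n-1}(F_{ij})\le C'\e^{n-1}$, and summing over the $\le I_0$ faces gives $\mathcal{H}^{n-1}(\partial D_i)\le C'\e^{n-1}$.

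\textbf{Step 3: Tubular neighbourhood.} Using again the normal chart and the uniform bilipschitz estimate, it suffices to bound the Lebesgue measure of the Euclidean $\delta$-neighbourhood of each $\widetilde F_{ij}$, for $\delta\le c\e$. Since $\widetilde F_{ij}$ is a graph over an $(n-1)$-disc of radius $\le C\e$ with uniformly bounded Lipschitz constant, its $\delta$-neighbourhood is contained in a slab of thickness $O(\delta)$ around that graph, whose volume is $\le C\,\e^{n-1}\delta$. Summing the $\le I_0$ contributions yields the stated inequality.

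The main technical point is Step~1: ensuring that the bisectors are genuine smooth hypersurfaces (and not merely level sets of functions with small gradient) and that their normals are transverse enough to the cell geometry for Step~2 to run. This is precisely what the Euclidean approximation in Lemma~\ref{lem:almost Euclidean charts}, combined with the two-sided distance control in Lemma~\ref{lem:distvoronoi}, provides.
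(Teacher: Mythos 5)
Your proof is correct in substance and has the same overall skeleton as the paper's (decompose $\partial D_i$ into at most $I_0$ faces via Lemma~\ref{lem:bound_voronoi_neighbours}, establish a lower bound on the gradient of the defining function, invoke the implicit function theorem, then estimate face areas and tube volumes in the normal chart), but several key technical choices differ. The paper works with the \emph{unsquared} difference $f_{ik}(q)=\dist(q,p_i)-\dist(q,p_k)$, whose gradient is $v_{i,q}-v_{k,q}$ (a difference of unit vectors), and obtains the crucial bound $|\nabla f_{ik}|^2\ge c_0$ purely from comparison geometry: Lemma~\ref{lem:triangle} controls the angle between $v_{i,q}$ and $v_{k,q}$. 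You instead take the \emph{squared} difference, which is smoother near $p_i$, and get a lower bound $|\nabla(d^2-d^2)|\ge c\,\varepsilon$ by comparing directly with the Euclidean model via Lemma~\ref{lem:almost Euclidean charts} and Lemma~\ref{lem:distvoronoi}; this bypasses the hyperbolic law-of-cosines argument entirely. For the two volume estimates the paper uses the coarea formula with the gradient bound and the slice-area estimate \eqref{eq:slices}, whereas you give an elementary graph-plus-slab argument. Both are valid; the paper's coarea route keeps the geometry intrinsic and reuses Lemma~\ref{lem:triangle} (which is also needed later), while your version trades that for a more coordinate-heavy but arguably more transparent computation.

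One imprecision you should tighten: in Step~1 you write that $\nabla(d(\cdot,p_i)^2-d(\cdot,p_j)^2)$ is within ``$o(1)$'' of $-2\bigl(\phi_i(p_j)-\phi_i(p_i)\bigr)$. Since the target vector has magnitude of order $\varepsilon$, an $o(1)$ additive error does not by itself give the lower bound $\ge c\,\varepsilon$; you need the error to be $o(\varepsilon)$. This is in fact what Lemma~\ref{lem:almost Euclidean charts} supplies --- the chart distortion is $O(d(p_i,q))=O(\varepsilon)$, so the discrepancy from the Euclidean gradient is $O(\varepsilon)\cdot O(\varepsilon)=O(\varepsilon^2)$ --- but the estimate has to be stated at that precision. (There is also a harmless sign slip: the Euclidean gradient is $+2(\phi_i(p_j)-\phi_i(p_i))$, though only the magnitude is used.) With those two sentences corrected, the argument is complete.
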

\begin{proof}

Assume that $D_i$ and $D_k$ are adjacent Voronoi domains, and consider the common boundary  denoted by  
\begin{equation}
\label{eq:def_Bik}
B_{ik}:=D_i \cap \{   \,q\,:\,\dist(q,p_i)=\dist(q,p_k)\,   \}
\end{equation}
Let $f_{ik}:B(p_i,\e)\to\R$ be the function defined as 
 $f_{ik}(q)=\dist(q,p_i)-\dist(q,p_k)$; thanks to Lemma \ref{lem:vor_eps}, it is clear that $B_{ik}\subset f_{ik}^{-1}(0)$. 
The function $f_{ik}$ has gradient 
\begin{equation}
\label{eq:gradient_difference_distances}
\nabla f_{ik}(q)=v_{i,q}-v_{k,q},
\end{equation}
where $v_{i,q}$ is the unit vector at $q$ tangent to the unique geodesic from $q$ to $p_i$, and  $v_{k,q}$ is defined analogously.  In order to apply the coarea
formula, we need a lower bound on the gradient of $f_{ik}$. The argument is again of geometric nature.  For $q \in B_{ik}$ we consider the corresponding geodesic triangle $\triangle(p_i,p_k,q)$. Recall  that lemma~\ref{lem:distvoronoi} implies that  $\varepsilon \le \dist(p_i,p_k) \le 2\varepsilon$ and $\varepsilon \le \dist(p_i,p_k) \le 2 \dist(p_i,q)=2 \dist(p_k,q)\le 2\varepsilon$, where the first inequality follows from the triangle inequality and the second because $q_i \in \bar{D_i}$ and lemma~\ref{lem:vor_eps}. Therefore, lemma~\ref{lem:triangle}
yields an upper bound $1-c_0$ for the cosine of the angle between $v_{i,q}$ and $v_{k,q}$. Since both vectors are of unit length, we obtain that if $f_{ik}(q)=0$

\[ |\nabla f_{ik}(q)|^2 \ge 2c_0 \]

Now, if $f_{i,k}(q) \le \delta$, $\varepsilon \le 2 \dist(p_i,q)+\delta$. Thus, if $q \in B(p_i,2\varepsilon)$ the geodesics in the corresponding triangle, $\triangle(p_i,p_k,q)$, have length
between $\frac{1}{C(\delta)} \varepsilon$ and $C(\delta) \varepsilon$ with $\lim_{\delta \to 0} C(\delta)=2$. Hence for  $\varepsilon$ small enough, there exists a uniform $\delta$, independent of $\varepsilon$, and such that for every $i,k$,

\begin{equation}\label{eq:lbgradient} |\nabla f_{ik}(q)|^2 \ge c_0 \end{equation}
The same argument, interchanging $p_i$ and $p_k$, provides a similar bound when $-\delta<f_{ik}(q)$.

Next,  by using  the implicit
function theorem  and Lemma \ref{lem:almost Euclidean charts}, there are uniform constant $C',\delta_0$ such that  for every $i,k$, and for every $\delta \le \delta_0$
it holds that, 
\begin{equation}\label{eq:slices}
\mathcal{H}^{n-1}( |f_{ik}|^{-1}(\delta) \cap  B(p_i,2\varepsilon)) \le C' \varepsilon^{n-1}.\end{equation}
Denoting
\[
R_{ik}(\delta):=B(p_i,2\varepsilon)\cap \left\{-\delta<f_{ik}<\delta\right\},
\]
 we insert estimates \eqref{eq:lbgradient}, \eqref{eq:slices}  into 
the coarea formula

\[ \vol(R_{ik}(\delta)) \le \int_{-\delta}^\delta \int_{ f_{ik}^{-1}(\delta)} \frac{1}{|\nabla f_{ik}(q')|}\, d\mathcal{H}^{n-1}(q')\, dt \]
to obtain that 

\begin{equation}
\label{eq:vol_slice_voro}
\vol R_{ik}(\delta) < C_n \varepsilon^{n-1}\delta.
\end{equation}
where the constant $C_n$ is independent of $\epsilon$.

For the first part of the Lemma, use that the relative interior of each set $B_{ik}$ appearing in \eqref{eq:def_Bik} is a smooth manifold, since it is the level set of a regular value of the function $f_{ik}$.
The proof of the second part follows from inequality   \eqref{eq:vol_slice_voro}, together with the uniform upper bound on the number of possible faces of $D_i$ obtained in  
Lemma \ref{lem:bound_voronoi_neighbours}.
\end{proof}

 For the reader's convenience, we collect the results obtained in this section in the next Theorem, where we denote by $r_0>0$ a positive number that assures that any open set of diameter less than $r_0$ is a totally  normal neighborhood of each of its points. 

\begin{Theorem}[Voronoi decomposition Theorem]
\label{thm:big_Voronoi}
Let $(M,g)$ a closed Riemannian manifold, and $\varepsilon<<r_0$. For any maximal $\varepsilon$-separated set, let $\{D_i\}$ denote the corresponding Voronoi decomposition. Then
\begin{enumerate}
\item $M=\cup_i D_i$;
\item the interiors $\mathring D_i$ are disjoint sets and $B(p_i, \frac12\varepsilon)\subset\mathring D_i$;
\item 
\label{thm_sec:big_Voronoi_boundary}
The boundary $\partial D_i$ is a piecewise smooth submanifold of dimension $n-1$, with the bound
$\mathcal{H}^{n-1}(\partial D_i) \le C' \varepsilon^{n-1}$ and thus, 
\[
\vol(\cup_{p\in\partial D_i} B(p,\delta))\leq C\varepsilon^{n-1}\delta,
\]
where $C>0$ is independent of $\varepsilon$;
\item every $D_i$ is a totally normal neighborhood of each of its points.
\end{enumerate}
\end{Theorem}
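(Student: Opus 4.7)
The plan is to assemble the theorem from the lemmas established throughout the section, since each conclusion corresponds to one of the preceding statements. No essentially new argument is required; the main work is to verify that the hypothesis $\varepsilon \ll r_0$ is enough to trigger each of the earlier lemmas.

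For item (1), I would argue that maximality of the $\varepsilon$-separated set $\mathcal{N}(\varepsilon)=\{p_i\}$ forces every $q\in M$ to lie within distance $\varepsilon$ of some $p_i$ (otherwise $q$ could be added to the net). Picking any $p_i$ that minimises $d(q,p_i)$ places $q$ in $D_i$. For item (2), I would use Lemma \ref{lem:ball in interior Voronoi} to inscribe $B(p_i,\varepsilon/2)$ in $D_i$; since this ball consists of points $q$ with $d(q,p_i) < \varepsilon/2 < d(q,p_j)$ for $j\ne i$, it in fact lies in the interior. Disjointness of the interiors follows because a point in $\mathring{D}_i\cap \mathring{D}_j$ would have to satisfy strict inequalities $d(q,p_i)<d(q,p_j)$ and $d(q,p_j)<d(q,p_i)$ simultaneously.

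Item (3) is exactly the content of Lemma \ref{lem:Voronoiboundary}: the boundary is the union of the pieces $B_{ik}$, each being (an open subset of) the regular level set of $f_{ik}(q)=d(q,p_i)-d(q,p_j)$ in $B(p_i,\varepsilon)$. The combination of the $(n-1)$-Hausdorff measure bound from that lemma with the uniform bound $I_0$ on the number of neighbours of $D_i$ provided by Lemma \ref{lem:bound_voronoi_neighbours} gives $\mathcal{H}^{n-1}(\partial D_i)\le C'\varepsilon^{n-1}$. The volume estimate for the $\delta$-tubular neighbourhood of $\partial D_i$ follows by applying the slice estimate \eqref{eq:vol_slice_voro} to each of the at most $I_0$ faces and summing.

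For item (4), I would use Lemma \ref{lem:vor_eps} to deduce that $D_i\subset \overline{B(p_i,\varepsilon)}$, so $\diam D_i\le 2\varepsilon$. Choosing $\varepsilon\ll r_0$ then guarantees that $D_i$ sits inside a set of diameter strictly less than $r_0$, hence is a totally normal neighbourhood of each of its points by the very definition of $r_0$. The main subtlety here is purely bookkeeping — ensuring that the small constants controlling the injectivity radius, the bilipschitz closeness of the exponential map from Lemma \ref{lem:almost Euclidean charts}, and the angular separation $\delta$ from Lemma \ref{lem:triangle} are all compatible with a single uniform choice $\varepsilon\le\varepsilon_0$; but this is exactly what the running hypothesis $\varepsilon\ll r_0$ is designed to supply, and no further geometric estimate beyond those already proved is needed.
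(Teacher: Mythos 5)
Your proposal is correct and coincides with the paper's approach: Theorem~\ref{thm:big_Voronoi} is presented in the paper explicitly as a collection of the section's lemmas, with no further proof, and you have mapped each item to the right lemma. The only small imprecision is in item (2): membership in $\mathring{D}_i$ does not by itself yield the strict inequalities $d(q,p_i)<d(q,p_j)$; the cleaner justification is that an open set in $D_i\cap D_j$ would force $f_{ij}\equiv 0$ on that open set, contradicting the gradient lower bound~\eqref{eq:lbgradient} established in Lemma~\ref{lem:Voronoiboundary}.
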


\subsection{A refined partition of unity}
\label{sec:partition}
Our goal is to associate a smooth function $\psi_i$ for any $p_i\in \mathcalN(\varepsilon)$ and any sufficiently small $\delta>0$. Moreover, we will require that $\psi_i$ is supported in $D_i$, and $\psi_i(x)\equiv 1$ in points of $D_i$ with $\dist(x,\partial D_i)\geq \delta$.

\subsubsection{Construction}
Consider $p_i\in \mathcalN(\varepsilon)$ and let $I(p_i)$ be a set of indexes corresponding to adjacent regions to $D_i$. For each $k\in I(p_i)$, define  
\[
d_{ik}(x):=\dist(p_k,x)-\dist(p_i,x),
\]
and
\[
\phi_{ik,\delta}(x)=H_{\delta}\circ d_{ik}(x)
\]
%
where $H_{\delta}:\R\to\R$ is a smooth function with
\begin{itemize}
\item $H_\delta(t)=1$ for $t>\delta$, and 
$H_\delta(t)=0$ for $t< -\delta$;
\item $H_\delta$ is strictly increasing with $H_\delta'(t)\leq 2/\delta$ in  $[-\delta,\delta]$.
\end{itemize}
Then it is clear that 
\begin{enumerate}
\item $\phi_{ik}(x)=1$, if  
$\dist(p_i,x)<\dist(p_k,x)
-\delta$;
\item $\phi_{ik}(x)=0$, if $\dist(p_i,x)\geq\dist(p_k,x)+
\delta$.
\end{enumerate}

Moreover, the bounds of the gradient of $d_{ik}$ as in  \eqref{eq:gradient_difference_distances} and  the choice of $H_\delta$ yields that 
$|\nabla\phi_{ik}|\leq C\delta^{-1}$
for some constant $C$ that is independent of $\e$ and $\delta$.

Define next 
\begin{equation}
\label{eq:3}
\phi_i(x)= \prod_{k \in I(i)} \phi_{ik}(x).
\end{equation}

Then $|\nabla \phi_i| < C \delta^{-1}$ where $C>0$ is, again, $\varepsilon$ and $\delta$ independent.

The sets 
\begin{equation}
\label{eq:supp_inside_voronoi} 
 D_i^{\delta, +} : =\cap_{k \in I(i)} 
 \left\{\,x\,:\,\dist(p_i,x)\leq\dist(p_k,x)
+\delta\,
\right\} 
\end{equation}
and 
\begin{equation}
\label{eq:value_one_inside_voronoi}
  D_i^{\delta, -} :=
\cap_{k \in I(i)} \{\,x\,:\,\dist(p_i,x)\leq\dist(p_k,x)-
\delta \}  
\end{equation}
satisfy that
\[
 D_i^{\delta, -}\subset D_i\subset  D_i^{\delta, +};
\]
moreover, by the choice of the $\phi_{ik}$'s, we have that
\begin{equation}
\label{eq:thick_thin_voronoi_}
\supp \phi_i \subset 
 D_i^{\delta, +}, 
 \qquad
 \phi_i=1 \text{ in } D_i^{\delta, -},  
\end{equation}

Let $R_i:=\{\,0<\phi_i<1\,\}$, and notice that, from the above,
\[
R_i\subset\cup_{p\in\partial D_i} B(p,\delta),
\]
and from Theorem \ref{thm_sec:big_Voronoi_boundary}, we get that
\begin{equation}
\vol R_i\leq C'\varepsilon^{n-1}\delta,
\end{equation}

 At last, we define
 \begin{equation}
\label{eq:partition_of_unity_def} 
 \psi_i(x):=\frac{\phi_i(x)}{\sum_{p_k\in \mathcalN(\varepsilon)}\phi_k(x)}
 \end{equation}

\begin{Theorem}
\label{thm:partition_of_unity}
Let $(M,g)$ be a closed Riemannian manifold, and $\delta,\varepsilon >0$ as chosen before, and $\{p_i\}$ a maximal $\varepsilon$  set with associated Voronoi decomposition $D_i$.
Then
\begin{enumerate}
\item $\{\psi_i\}_i$ is a smooth partition of unity with 
\[ 
\psi_i=1 \text{ in } D_i^{\delta, -},
\qquad
\psi_i=0   \text{ in } 
 M\setminus D_i^{\delta, +}.
\]
\item For each $i$, the set $\{\nabla \psi_i\neq 0\,\} \subset D_i^{\delta,+} \setminus D^{\delta,-}_i \subset\cup_{p\in\partial D_i} B(p,\delta),$, hence
\begin{equation}
\label{eq:vol_nabla_psi}
\vol\{\, \nabla \psi_i\neq 0\,\}
\leq
C'\varepsilon^{n-1}\delta
\end{equation}
for some $C'>0$.
\item We have an inequality
\begin{equation} 
 \vol( \cup _i\supp \nabla \psi_i  )\leq  C''\delta\varepsilon^{-1}.
\end{equation} 
for some constant $C''>0$.
\item For any multi-index $\gamma \in \mathbb{Z}^n$,
\begin{equation} 
  | D^\gamma \psi_j (x)| \leq C_\gamma \delta^{-|\gamma|}.
\end{equation}   
\end{enumerate}
\end{Theorem}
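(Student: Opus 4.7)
The plan is to verify the four claims in order, using the building blocks already in place: the elementary properties of $H_\delta$, the structural facts \eqref{eq:supp_inside_voronoi}--\eqref{eq:thick_thin_voronoi_} about $\phi_i$, the uniform bound on the number of adjacent Voronoi domains from Lemma \ref{lem:bound_voronoi_neighbours}, and the measure estimate on tubular neighborhoods of $\partial D_i$ from Theorem \ref{thm:big_Voronoi}.

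For part (1), the first step is to verify that $\sum_k \phi_k(x)$ never vanishes on $M$, so that the quotient \eqref{eq:partition_of_unity_def} is well-defined and smooth. Any $x\in M$ lies in some $D_j$, and for every $l\in I(j)$ one has $d_{jl}(x)\geq 0$, whence $\phi_{jl}(x)\geq H_\delta(0)>0$; since $|I(j)|\leq I_0$ uniformly by Lemma \ref{lem:bound_voronoi_neighbours}, this gives $\phi_j(x)\geq H_\delta(0)^{I_0}$, a positive lower bound $m_0$ on $\sum_k \phi_k$ independent of $\varepsilon$ and $\delta$. The identities $\psi_i=1$ on $D_i^{\delta,-}$ and $\supp\psi_i\subset D_i^{\delta,+}$ then follow from \eqref{eq:thick_thin_voronoi_} once one observes that on $D_i^{\delta,-}$ every $\phi_k$ with $k\neq i$ vanishes: if $k\in I(i)$, then $i\in I(k)$ and $d_{ki}(x)=-d_{ik}(x)<-\delta$, so $\phi_{ki}(x)=0$ and hence $\phi_k(x)=0$; if $k\notin I(i)$, then for $\delta$ sufficiently small $D_k^{\delta,+}$ is disjoint from $D_i^{\delta,-}$ since $D_k$ and $D_i$ are separated by an intermediate Voronoi boundary.

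Parts (2) and (3) follow quickly. Since $\psi_i$ is locally constant on $D_i^{\delta,-}$ (value $1$) and on $M\setminus D_i^{\delta,+}$ (value $0$), one has $\{\nabla \psi_i\neq 0\}\subset D_i^{\delta,+}\setminus D_i^{\delta,-}$; any $x$ in this set satisfies $|d_{ik}(x)|\leq \delta$ for some $k\in I(i)$, which places $x$ within distance $O(\delta)$ of $\partial D_i$, and Theorem \ref{thm:big_Voronoi} then yields \eqref{eq:vol_nabla_psi}. Summing over $i$ and using the standard packing estimate $\#\mathcalN(\varepsilon)=O(\varepsilon^{-n})$ (from disjointness of the balls $B(p_i,\varepsilon/2)$ in the compact manifold $M$) gives $\vol(\cup_i\supp\nabla\psi_i)\leq C\varepsilon^{-n}\cdot C'\varepsilon^{n-1}\delta = C''\delta\varepsilon^{-1}$.

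Part (4) is handled by Fa\`a di Bruno together with Leibniz. By construction $|H_\delta^{(m)}|\leq C_m\delta^{-m}$, and $d_{ik}$ has uniformly bounded first derivative, so the dominant term in $D^\gamma(H_\delta\circ d_{ik})$ is $H_\delta^{(|\gamma|)}(d_{ik})\cdot(\nabla d_{ik})^{\otimes |\gamma|}=O(\delta^{-|\gamma|})$. The product rule applied to $\phi_i=\prod_{k\in I(i)}\phi_{ik}$, together with the uniform bound $|I(i)|\leq I_0$, then gives $|D^\gamma\phi_i|\leq C_\gamma\delta^{-|\gamma|}$, and using the lower bound $\sum_k\phi_k\geq m_0$ from part (1), Leibniz applied to $\psi_i=\phi_i/\sum_k\phi_k$ transfers this estimate to $\psi_i$. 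The main obstacle lies precisely here: one must check that the higher-derivative terms involving $D^\gamma d_{ik}$, which by bounded-geometry arguments (see Lemma \ref{lem:almost Euclidean charts}) grow at worst like $O(\varepsilon^{-(|\gamma|-1)})$ in the region of interest, are absorbed by the leading $\delta^{-|\gamma|}$ factor. This absorption requires the scale relationship $\delta\lesssim\varepsilon$, which is the only essential use in the proof of $\delta$ being small compared to $\varepsilon$.
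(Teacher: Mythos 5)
Your proposal is correct and follows the same route the paper takes (the paper's own proof is essentially a one-paragraph pointer to the construction), but you fill in several details the paper leaves tacit, and one of these is worth highlighting. Your observation in part (4) that the Fa\`a di Bruno terms $H_\delta^{(m)}(d_{ik})\prod D^{\gamma_j}d_{ik}$ with $m<|\gamma|$ contribute $O(\delta^{-m}\varepsilon^{-(|\gamma|-m)})$, and hence are dominated by the leading $\delta^{-|\gamma|}$ term \emph{only} under the scale constraint $\delta\lesssim\varepsilon$, is a genuine refinement: the paper merely says ``work in normal coordinates and apply induction'', and the constraint is only implicitly guaranteed later via $\delta=\varepsilon^{\alpha}\ll\varepsilon^{\beta}$ with $\alpha>\beta$ in Theorem \ref{thm:usable_partition_of_unity}. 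One small caveat: in part (1), your treatment of indices $k\notin I(i)\cup\{i\}$ (``$D_k^{\delta,+}$ is disjoint from $D_i^{\delta,-}$ for $\delta$ small'') is a bit breezy. The set $D_k^{\delta,+}$ is defined by constraints involving only $\ell\in I(k)$, and if $D_i$ and $D_k$ share a lower-dimensional Voronoi face without being adjacent (possible when $n\geq 3$), the disjointness is not obvious and does need $\delta$ small compared to $\varepsilon$; note also that this is a second place where the assumption $\delta\lesssim\varepsilon$ enters, so it is not ``the only essential use'' as you claim. This subtlety is not introduced by you --- the paper's ``the first is immediate'' silently absorbs it --- but since you chose to spell out part (1), it would be worth one more line (e.g. observing that $\supp\phi_k$ lies in a $C\delta$-neighborhood of $\partial D_k$ union $D_k$, and this neighborhood cannot meet $D_i^{\delta,-}$ because $D_i^{\delta,-}$ retreats a distance $\gtrsim\delta$ from every face of $D_i$, including any face shared with a cell adjacent to both $D_i$ and $D_k$).
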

\begin{proof}
Every statement follows from the previous comments. 
The first is immediate. For the second, observe that, by construction, 
\[
\supp\psi_i\subset (D_i^+\setminus D_i^-)\cup \cup_j[(D_j^+\setminus D_j^-)\cap D_i];
\]
inequality \eqref{eq:vol_nabla_psi} follows then from the third part of Theorem \ref{thm:big_Voronoi}.
The third follows because, for small $\varepsilon$, the number of points in the net grows as $\varepsilon^{-n}$, by a standard volume counting argument. To prove the last one, we can just work in normal coordinates and apply induction.
\end{proof}

\subsection{Application: the main Theorem }
For the rest of the paper, we will need to use Theorem \ref{thm:partition_of_unity} with specials $\varepsilon$ and $\delta$. 
This is due because, as the reader will see,  
 the crucial step for our approach to homogenization is the projection of the prototype test functions from $\TM$ to $M$,
 which requires a proper microlocalization procedure parametrized by $\e <<1$. To this end
 we introduce, for $1/2 <\beta <1$,  a maximal $\e^\beta$-separated net $\{p_i(\e)\}_{i=1}^{J(\epsilon)}$.
Assuming that 
$\inj_M > 3 \e^\beta$, where $\inj_M$ is the injectivity radius of $(M, g)$, we
see using simple volume-based arguments that 
\begin{equation} \label{J}
 J(\e) \leq C 
 \vol(M) \e^{-\beta n}.
\end{equation}

\begin{Theorem}
\label{thm:usable_partition_of_unity}
Let $(M,g)$ be a closed Riemannian manifold, and constants $1/2<\beta<\alpha<1$ such that 
$\e^\beta$ and $\e^\alpha$ satisfy the conditions in Theorem \ref{thm:partition_of_unity} for $\varepsilon$ and $\delta$ respectively. Let $\{p_i\}$ be a maximal $\e^\beta$-separated net with associated Voronoi decomposition $D_i$.
Then
\begin{enumerate}
\item $\{\psi_i\}_i$ is a smooth partition of unity with 
\[ 
\psi_i=1 \text{ in } D_i^{ -},
\qquad
\psi_i=0   \text{ in } 
 M\setminus D_i^{ +},\] where  we denote $D^+_i=D_i^{\e^\alpha, +}$ and $D^-_i=D_i^{\e^\alpha, -}$

\item 
For each $i$, there is an inclusion $\{\nabla \psi_i\neq 0\,\} \subset D_i^{+} \setminus D^{-}_i$, hence
\begin{equation} \label{eq:volume of one gradient}
\vol\{\, \nabla \psi_i\neq 0\,\}
\leq
C'\e^{\beta(n-1)+\alpha}
\end{equation}
for some $C'>0$.
\item We have an inequality
\begin{equation} \label{eq:volume of all gradients}
 \vol( \cup _i\supp \nabla \psi_i  )\leq  C''\e^{\alpha-\beta}.
\end{equation} 
\item For any multi-index $\gamma \in \mathbb{Z}^n$,
\begin{equation} \label{eq:size of iterated gradient}   
  | D^\gamma \psi_j (x)| \leq C_\gamma \e^{-\alpha|\gamma|}.
\end{equation}   
\item \begin{equation}\label{eq:psijpsil}
\{ p: \psi_j \cdot \psi_{\ell}(p)\neq 0 \} \subset \{ \nabla \psi_j \neq 0 \}
\end{equation}
\item \begin{equation}\label{eq:psi^2}
\{ p: \psi_j - \psi_{j}^2(p)\neq 0 \} \subset \{ \nabla \psi_j \neq 0 \}
\end{equation}

\end{enumerate}
\end{Theorem}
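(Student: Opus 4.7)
The plan is to obtain Theorem \ref{thm:usable_partition_of_unity} as a direct specialization of Theorem \ref{thm:partition_of_unity} under the substitution $\varepsilon \mapsto \varepsilon^\beta$ (net spacing) and $\delta \mapsto \varepsilon^\alpha$ (slab thickness). First I would verify that the smallness hypothesis $\delta \leq c\,\varepsilon$ required by Lemma \ref{lem:Voronoiboundary} and inherited by Theorem \ref{thm:partition_of_unity} translates here to $\varepsilon^\alpha \leq c\,\varepsilon^\beta$, which is satisfied for all sufficiently small $\varepsilon$ precisely because $\alpha > \beta$; this is the reason for assuming the gap between the two exponents.

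Parts (1), (2) and (4) are then immediate translations. The inclusion $\{\nabla \psi_i \neq 0\} \subset D_i^{+}\setminus D_i^{-}$ is copied verbatim from Theorem \ref{thm:partition_of_unity}. The per-cell volume bound in (2) comes from substituting the new scales into $C'\varepsilon^{n-1}\delta$, yielding $C'(\varepsilon^{\beta})^{n-1}\cdot\varepsilon^\alpha = C'\varepsilon^{\beta(n-1)+\alpha}$. The multi-index derivative bound in (4) is the translation of $|D^\gamma \psi_j| \leq C_\gamma \delta^{-|\gamma|}$ to $C_\gamma \varepsilon^{-\alpha|\gamma|}$.

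For part (3), I would combine the per-cell bound from (2) with the cardinality estimate \eqref{J}, which gives $J(\varepsilon) \leq C\vol(M)\varepsilon^{-\beta n}$. Summing over all Voronoi cells,
\[
\vol\Big(\bigcup_i \supp \nabla \psi_i\Big) \;\leq\; \sum_{i=1}^{J(\varepsilon)} \vol\{\nabla \psi_i \neq 0\} \;\leq\; J(\varepsilon)\cdot C'\varepsilon^{\beta(n-1)+\alpha} \;\leq\; C''\,\varepsilon^{\alpha-\beta},
\]
using the arithmetic identity $-\beta n + \beta(n-1) + \alpha = \alpha-\beta$. The requirement $\alpha > \beta$ is exactly what makes this bound tend to zero as $\varepsilon \to 0$, and is the core quantitative content of the theorem.

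Parts (5) and (6) are structural and follow directly from the construction in Section \ref{sec:partition}. Since $\psi_j \equiv 1$ on $D_j^{-}$ and $\psi_j \equiv 0$ outside $D_j^{+}$, the function $\psi_j(1-\psi_j)$ is nonzero only on the transition annulus $D_j^{+}\setminus D_j^{-}$, and by the strict monotonicity of the smoothing template $H_\delta$ on $(-\delta,\delta)$ this region coincides with $\{\nabla \psi_j \neq 0\}$, giving (6). For (5), taken for distinct indices $j \neq \ell$, the identity $\sum_k \psi_k = 1$ together with $\psi_\ell(p) > 0$ forces $\psi_j(p) < 1$, so $\psi_j(p)\psi_\ell(p) \neq 0$ entails $\psi_j(p) \in (0,1)$, and the conclusion then reduces to (6). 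The only genuinely technical step in the whole argument is the exponent bookkeeping of part (3); everything else is a direct application of previously established results.
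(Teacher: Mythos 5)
Your proof is correct and takes essentially the same route as the paper, which simply observes that parts (1)--(4) are obtained by substituting $\varepsilon\mapsto\varepsilon^\beta$, $\delta\mapsto\varepsilon^\alpha$ into Theorem~\ref{thm:partition_of_unity} and that (5)--(6) are immediate from the construction. Your treatment of part (3) unfolds one layer further than necessary (the paper would instead substitute directly into $C''\delta\varepsilon^{-1}$ from Theorem~\ref{thm:partition_of_unity}(3), whose own proof is the cardinality argument you reproduce), but the arithmetic is identical, and your observation that $\alpha>\beta$ is exactly what forces the slabs to be thin relative to the cells and makes the total overlap volume vanish is the right way to see why both exponents are needed.

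One small caution on part (6): you assert that the transition annulus $D_j^{+}\setminus D_j^{-}$ \emph{coincides} with $\{\nabla\psi_j\neq 0\}$, but Theorem~\ref{thm:partition_of_unity}(2) only gives the inclusion $\{\nabla\psi_j\neq 0\}\subset D_j^{+}\setminus D_j^{-}$, and the reverse inclusion is not obvious (in the corner regions of $D_j^{+}\setminus D_j^{-}$ where some $d_{jk}=-\delta$ one may have $\psi_j\equiv 0$ on an open set). What you actually need, and what the construction does give, is the inclusion $\{0<\psi_j<1\}\subset\{\nabla\psi_j\neq 0\}$ (up to the closure, which is all the later volume estimates require): if $0<\psi_j(p)<1$ then some $d_{jk}(p)\in(-\delta,\delta)$ with $\phi_{jk}(p)\in(0,1)$, and the strict monotonicity of $H_\delta$ together with the lower bound \eqref{eq:lbgradient} on $|\nabla d_{jk}|$ prevents $\nabla\psi_j$ from vanishing there. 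Your reduction of (5) to (6) via $\sum_k\psi_k=1$ for $j\neq\ell$ is clean and correct.
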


\begin{proof}
    The first four parts of the Theorem are a direct application of Theorem \ref{thm:partition_of_unity}; the last two follow immediately from the construction. 
\end{proof}

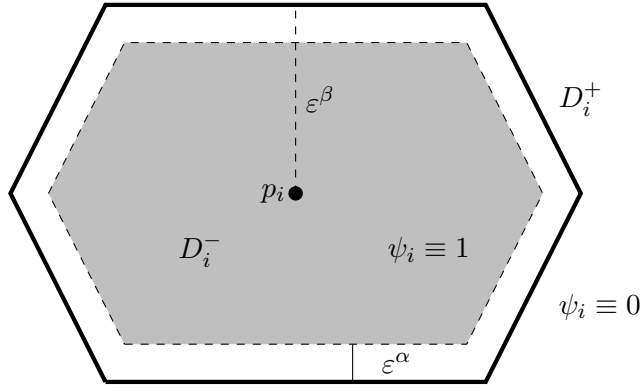
\begin{figure}
    \centering
        \begin{tikzpicture}[scale=2.5]
    \draw[ultra thick] (-2,-1) -- (0,-1) -- (0.5,0) -- (0,1) -- (-2,1) -- (-2.5,0) -- (-2,-1);  
    \draw[dashed, fill=lightgray] (-1.9,-0.8) -- (-0.1,-0.8) -- (0.3,0) -- (-0.1,0.8) -- (-1.9,0.8) -- (-2.3,0) -- (-1.9,-0.8) ;
    \draw[fill] (-1,0) circle [radius=1pt];
    \node[left] at (-1,0) {$p_i$};
     \draw[dashed] (-1,0) -- (-1,1);
     \node[right] at (-1,0.5) {$\varepsilon^\beta$};
     \node[right] at (-0.6,-0.9) {$\varepsilon^\alpha$};
     \draw (-0.7,-0.8) -- (-0.7,-1);
     \node at (-1.5,-0.3) {$D_i^-$};
      \node at (0.5,0.5) {$D_i^+$};
       \node at (-0.3,-0.3) {$\psi_i\equiv 1$};
       \node at (0.6,-0.6) {$\psi_i\equiv 0$};
      \end{tikzpicture}
    \caption{A Voronoi domain and its corresponding function in the partition of unity.}
    \label{fig:Voronoi_partition_unity}
\end{figure}

\part{Two scale convergence}\label{part:twoscale}

 \section{Oscillating test functions} \label{sec;testfunctions}

 The key concept of the two-scale convergence  in a domain $\Omega \subset \mathbb{R}^n$
 is associated with a fixed
 cell $Y$  and smooth  functions $f : \Omega \times \R^n\to\R$  that are periodic on the second variable with
 period $Y$. Such functions generate  
 test functions simply as $f_\epsilon(x)=f(x,\frac{x}{\epsilon})$. 
 However, on an  manifold there is, in general, no such concept as periodicity.  
 Our approach consists on ``pulling back" the notion of periodicity, with respect to the lattice
 $\Gamma$, from $\TM$ to $TM$ to $M$. 
 The diffeomorphisms (onto its image) $H_{\epsilon,j}: D_j \to T_{p_j}M$ where $D_j$ is a small enough neighbourhood of $p_j$ (as the one provided by the Voronoi decomposition) and defined as 
 \[ H_{\epsilon,j}(q)= \frac{\exp_{p_j}^{-1}(q)}{\epsilon} \]
 will play a crucial role.

 Given a function $f:\TM\to\R$, denote by $\tilde{f}:TM\to\R$ the function defined as $\tilde{f}:=f\circ\Phi$, where $\Phi:TM\to\TM$ is the quotient map  that sends each fiber $T_pM$ to the torus $\TM_p$ after modding out the lattice defined by the frame $\{e_i(p)\}$ (see Section \ref{sec:torus_bundle}).

 The following definition gives a method to generate rapidly oscillating functions in the manifold $M$ from functions in the torus bundle $\TM$.
 \begin{defn}
 \label{def:test_functions} 
 Let $f:\TM\to\R$ be a  function in $C(M,\mathcal{B}(\TM_p))$. For any $\epsilon>0$  small enough, we define 
 $f^\epsilon:M\to\R$ as 
\begin{equation} 
\label{test_function}
 f^\epsilon(q)
 =
 \sum_j \psi_j(q)  \tilde{f}(p_j, 
 \exp_{p_j}^{-1}(q)/\epsilon)=
 \sum \psi_j(q) H_{ \epsilon,j}^* \tilde{f}(p_j,\cdot)(q), \quad q \in M.
\end{equation}
 \end{defn}
 
  In the definition of two scale convergence we just require smooth test functions. However, we can enlarge the class of test functions
with the following notion

\begin {Def} 
\label{def:admissible_function}
A function   $f\in  C(M,{\mathcal{B}}( \TM_p))$  is  \emph{an  admissible test  function}
if   the sequence $f^\epsilon$ satisfies
\begin{equation}\label{seq1}
\int_M | f^\epsilon(q)|^2\, dq \leq  
C\, \|f \|^2_{C(M,{\mathcal{B}}( \T_pM))}
\end{equation} 
for some $C>0$, and
\begin{equation}\label{seq2}
\lim_{\epsilon \to 0}\int_M  |f^\epsilon(q)|^2\,dq=  \oint_{\TM}\,
|f (p,v)|^2 \,dv\,dp
\end{equation}
where we use for $v\in\T_pM$ the coordinates associated to the frame, as in \eqref{framing  coordinates}, and $\vol(\TM_p)=g^{1/2}$ for $g(p)=\det g_{ij}(p).$
We say that the  sequence  $f^\epsilon $ { \bf is realized  by   }$f$.
\end{Def}

 We first  introduce the following
differential operator, which will often be encountered after integration by parts.

\begin{Lemma} 
\label{lem:P_kf}
Let $k \in \mathbb{Z}^n$. For $f:\R^n\to\R$, we set $P_kf$ as 
\[ 
P_kf= \frac{\ip{k}{\nabla f}}{2\pi i |k|^2} .
\]
Then it holds that 
\[ |P_kf| \le \frac {|\nabla f|}{2\pi|k|}.
\]
\end{Lemma}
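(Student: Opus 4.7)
The statement is an immediate consequence of the Cauchy--Schwarz inequality applied to the inner product in the numerator of the definition of $P_k f$. My plan is the following.

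First, I would write out the modulus of $P_k f$ directly from its definition,
\[
|P_k f| = \frac{|\langle k, \nabla f\rangle|}{2\pi |k|^2}.
\]
Then I would bound the inner product in the numerator via Cauchy--Schwarz in $\R^n$, obtaining $|\langle k, \nabla f\rangle| \le |k|\,|\nabla f|$. Substituting this bound yields
\[
|P_k f| \le \frac{|k|\,|\nabla f|}{2\pi |k|^2} = \frac{|\nabla f|}{2\pi |k|},
\]
which is exactly the claim.

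There is no real obstacle here; the lemma is a bookkeeping statement isolating a uniform pointwise bound on an operator that will appear naturally after integrating by parts against Fourier modes $e^{2\pi i \langle k, v\rangle}$ (this is the reason for the factor $2\pi i |k|^2$ in the denominator: if one integrates $f$ against such an exponential on a torus and integrates by parts using $\langle k, \nabla\rangle$, the operator $P_k$ is what multiplies the exponential). The only thing worth being slightly careful about is that $k \neq 0$ (otherwise $P_k f$ is not defined), but this is implicit in the statement since the bound itself blows up as $k \to 0$. No further structure of $f$ (beyond $\nabla f$ existing pointwise) is used.
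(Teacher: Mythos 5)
Your proof is correct and is the natural (and essentially only) argument: Cauchy--Schwarz on $\langle k,\nabla f\rangle$ gives $|k|\,|\nabla f|$, and the factor $|k|$ cancels against one power in $|k|^2$. The paper states this lemma without proof, treating it as immediate, so your argument supplies exactly the step the authors left implicit; your side remark about the role of $2\pi i|k|^2$ and the tacit assumption $k\neq 0$ is consistent with how the lemma is used in the paper.
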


Many of our arguments rely on the fact that after integrating by parts,  we can discard Fourier modes with non 
constant character.  This is a consequence of  the following basic estimate, which follows from the properties
of the partition of unity. Since it will appear recurrently we state it as a separate lemma, where we need to explain some notation: suppose $\psi_j$ is one of the functions belonging to the partition of unity constructed in Section \ref{sec:partition}; assume that its support lies inside the $\e^\alpha$-neighbourhood of the Voronoi domain $D_j$ corresponding to the point $p_j$.  
Then $\exp_{p_j}^{-1}$ is well-defined in 
$\supp \psi_j$,
 and therefore we can define unambiguously 
$
  f(\exp_{p_j}^{-1}q)\cdot\psi_j(q)
$
for any function $f:M\to\R$ by extending it as zero outside of $\supp\psi_j$. 

\begin{Lemma}
\label{lem:byparts1} 
Let $\psi_j$ be one of the functions belonging to the partition of unity and $\alpha$, $\beta$ as in Theorem \ref{thm:usable_partition_of_unity}. Then for every $k \in \mathbb{Z}^n$, and $N \in \mathbb{N}$, there exists $C_N>0$ such that

\begin{equation} \label{eq:E1}
\left|\, \int_M\psi_j(p) \,e^{2\pi i k\cdot \exp_{p_j}^{-1}(p)/\epsilon}\,dp \,\right|
 \leq C_N |k|^{-N} \e^{\beta(n-1)+N(1-\alpha)+\alpha}. 
\end{equation}
\end{Lemma}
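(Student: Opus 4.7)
The plan is a standard non-stationary phase argument adapted to the two competing length scales $\varepsilon^\beta$ (diameter of $\mathrm{supp}\,\psi_j$) and $\varepsilon^\alpha$ (thickness of $\mathrm{supp}\,\nabla\psi_j$). First I would change variables through the exponential chart at $p_j$, setting $y=\exp_{p_j}^{-1}(p)$ and writing $dp = \sqrt{\det g_{p_j}(y)}\,dy$, where $g_{p_j}$ denotes the metric in normal coordinates at $p_j$. Defining
\[
F_j(y):=\psi_j(\exp_{p_j}(y))\,\sqrt{\det g_{p_j}(y)},
\]
the integral in \eqref{eq:E1} becomes $\int_{\mathbb{R}^n} F_j(y)\,e^{2\pi i k\cdot y/\varepsilon}\,dy$, with $F_j\in C^\infty_c$ supported in a Euclidean ball of radius $\lesssim\varepsilon^\beta$ (by Theorem~\ref{thm:big_Voronoi}, since $\mathrm{supp}\,\psi_j\subset D_j^+$ is contained in a slightly thickened Voronoi domain).

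Next I would iterate integration by parts using the identity $e^{2\pi i k\cdot y/\varepsilon}= \varepsilon\,(2\pi i|k|^2)^{-1}\,k\cdot\nabla_y\,e^{2\pi i k\cdot y/\varepsilon}$. Since $F_j$ is compactly supported inside the chart, no boundary terms arise, and after $N$ steps
\[
\int F_j(y)\,e^{2\pi i k\cdot y/\varepsilon}\,dy
=
\Bigl(\frac{-\varepsilon}{2\pi i|k|^2}\Bigr)^{N}\int (k\cdot\nabla_y)^{N}F_j(y)\,e^{2\pi i k\cdot y/\varepsilon}\,dy,
\]
which yields $|I|\le (\varepsilon/2\pi)^N|k|^{-N}\int|D^N F_j(y)|\,dy$.

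The crux is to bound $\int|D^NF_j|\,dy$ sharply. By the Leibniz rule, $D^NF_j$ splits into a term where all $N$ derivatives hit the smooth Jacobian factor $\sqrt{\det g_{p_j}}$ (call it part B), and a sum of terms where at least one derivative hits $\psi_j\circ\exp_{p_j}$ (call it part A). Part A is supported in the pulled-back transition shell $\exp_{p_j}^{-1}\{|\nabla\psi_j|\neq 0\}$, which by \eqref{eq:volume of one gradient} has volume $\lesssim \varepsilon^{\beta(n-1)+\alpha}$, while its sup-norm is controlled by \eqref{eq:size of iterated gradient} and the bilipschitz estimates of Lemma~\ref{lem:almost Euclidean charts} to give $\lesssim \varepsilon^{-\alpha N}$; thus part A contributes at most $C_N\,\varepsilon^{\beta(n-1)+\alpha-\alpha N}$ to the integral. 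Part B is supported in $\exp_{p_j}^{-1}(\mathrm{supp}\,\psi_j)$, of volume $\lesssim\varepsilon^{\beta n}$, with a uniform bound on the derivatives of $\sqrt{\det g_{p_j}}$, so it contributes $\lesssim \varepsilon^{\beta n}$.

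The final step is the comparison of the two contributions: for every $N\ge 1$ the inequality $\beta(n-1)+\alpha-\alpha N\le \beta n$ is equivalent to $\alpha(N-1)\ge -\beta+\alpha-\alpha =\alpha-\beta>0$, which holds since $N\ge 1$ and $\alpha>\beta$. Consequently part A dominates and $\int|D^NF_j|\,dy\le C_N\,\varepsilon^{\beta(n-1)+\alpha(1-N)}$. Feeding this into the estimate from the integration by parts gives
\[
|I|\;\le\; C_N\,\frac{\varepsilon^{N}}{|k|^{N}}\,\varepsilon^{\beta(n-1)+\alpha(1-N)}\;=\;\frac{C_N}{|k|^N}\,\varepsilon^{\beta(n-1)+N(1-\alpha)+\alpha},
\]
as required. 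The only delicate point is the tradeoff between volume and sup-norm in the Leibniz splitting: the Jacobian's derivatives are uniformly bounded but its support is as large as $\mathrm{supp}\,\psi_j$, while the $\psi_j$-derivatives blow up as $\varepsilon^{-\alpha N}$ but live on the thinner set of measure $\varepsilon^{\beta(n-1)+\alpha}$. The hypothesis $\beta<\alpha$ is precisely what guarantees that the latter term wins uniformly in $N$.
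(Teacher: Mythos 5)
Your proposal follows exactly the same route as the paper: change to exponential normal coordinates at $p_j$, integrate by parts $N$ times with the $P_k$ operator (no boundary terms since $F_j$ has compact support inside the chart), then bound the resulting integral using the volume of $\{D\psi_j\neq 0\}$ from \eqref{eq:volume of one gradient} and the derivative bound \eqref{eq:size of iterated gradient}. Where you actually improve on the paper's exposition is the explicit Leibniz splitting: the paper simply writes the bound $|\{D\psi_j\neq 0\}|\cdot\sup|D^N\psi_j|$, which tacitly addresses only the terms where some derivative lands on $\psi_j$; your Part~A/Part~B comparison makes precise why the residual term (all derivatives on the Jacobian $\sqrt{\det g_{p_j}}$) is harmless.

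There is, however, an arithmetic slip in that last comparison that you should fix, even though the conclusion it leads to is correct. The inequality $\beta(n-1)+\alpha-\alpha N\le \beta n$ is equivalent to $\alpha(N-1)\ge -\beta$, not to $\alpha(N-1)\ge\alpha-\beta$ as you wrote (you seem to have miscopied $-\beta+\alpha-\alpha$). The inequality you claim, $\alpha(N-1)\ge\alpha-\beta>0$, in fact fails at $N=1$; the correct inequality $\alpha(N-1)\ge-\beta$ is trivially true for every $N\ge 1$ and every $\alpha,\beta>0$. Consequently the closing remark that "the hypothesis $\beta<\alpha$ is precisely what guarantees that the latter term wins uniformly in $N$" is a misattribution: Part~A dominates Part~B for all $N\ge 1$ regardless of the sign of $\alpha-\beta$. (The constraint $\alpha>\beta$ is of course needed elsewhere in the paper, to make error terms such as $\varepsilon^{\alpha-\beta}$ tend to zero, but it plays no role in this particular dominance.)
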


\begin{proof}

We write the above integral in the chart 
$\exp_{p_j}:T_{p_j}M\simeq\R^n\to M$ (where we are reducing the domain and codomain to open sets, where the exponential is a diffeomorphism). Then, using $v\in\R^n$ as coordinate and $g:=g(v)=\det g_{ij}(v)$, the left-hand side integral is written as
\begin{equation} \label{eq:local_E1}
 \int_{\R^n}\,\psi_j(\exp_{p_j}v)\, g^{1/2}(v)\,e^{2\pi i k\cdot v/\epsilon}\,dv 
\end{equation}

Due to the compact support of $\psi_j$, we can integrate by parts $N$ times to get
\begin{equation}
\begin{split} 
\label{by_parts0}
\left|\,
\int_{\R^n}\psi_j(\exp_{p_j}v)\, g^{1/2}(v)\,e^{2\pi i k\cdot v/\epsilon}\,dv
\,\right|
\\
=\left|\,
 (-\e)^N\int_{\R^n}  P_k^N 
\left[ \psi_j(\exp_{p_j}v)\, g^{1/2}(v) \right] e^{2\pi i k\cdot v/\epsilon}\,dv\,
\right|
 \\ \le C \left(\frac{\epsilon}{|k|}\right)^{N}
 |\{x: D \psi_j \neq 0\}|\,
  \sup |D^N(\psi_j)|
\\  \le C\left(\frac{\epsilon}{|k|}\right)^{N}  \epsilon^{-N \alpha}\,  \epsilon^{\beta(n-1)+\alpha}
 \le C_N|k|^{-N} \epsilon^{N(1- \alpha)+n\beta+\alpha-\beta}
\end{split}
\end{equation}
where we have used part \eqref{eq:volume of one gradient} in Theorem \ref{thm:usable_partition_of_unity}, equation \eqref{eq:size of iterated gradient} and the uniform bounds on the derivatives of the metric
and the exponentials. 
\end{proof}

To study the admissibility condition, we start with an important result:
 \begin{Lemma}[Riemann-Lebesgue Lemma] 
 \label{RL} 
 Let $f \in C(M, L^1(\TM_p))$ and
 $f^\epsilon$ as in \eqref{test_function}. Then it holds that
$$
\lim_{\epsilon \to 0} \int_{M}\, f^\epsilon (q)\, dq=\oint_{\TM}\,  f[q,v]\, dv\,dq.
$$
\end{Lemma}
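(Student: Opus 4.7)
The plan is to prove the claim first for a dense class of simple test functions, and then extend by a uniform-in-$\epsilon$ approximation argument. Since tensor products $\sum_j a_j(p)\,g_j(v)$ with $a_j\in C(M)$ and $g_j\in L^1(\TM_p)$ are dense in $C(M,L^1(\TM_p))$, and trigonometric polynomials are dense in $L^1(\TM_p)$, it suffices to verify the identity for the building blocks $f(p,v)=a(p)\,e^{2\pi i k\cdot v}$ with $a\in C(M)$ and $k\in\mathbb Z^n$ (using the frame coordinates on $\TM_p$), provided the two sides depend continuously on $f$ in the norm of $C(M,L^1(\TM_p))$, uniformly in $\epsilon$.

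For the case $k\neq 0$, both sides vanish. On the torus fiber, $\int_{\TM_p}e^{2\pi i k\cdot v}\,dv=0$, so the right-hand side is zero. For the left-hand side, we apply Lemma \ref{lem:byparts1} with $N=1$ to each summand
\[
\int_M \psi_j(q)\,a(p_j)\,e^{2\pi i k\cdot\exp_{p_j}^{-1}(q)/\epsilon}\,dq,
\]
obtaining a bound $C\,\|a\|_\infty\,|k|^{-1}\epsilon^{\beta(n-1)+\alpha+(1-\alpha)}=C\,\|a\|_\infty\,|k|^{-1}\epsilon^{\beta(n-1)+1}$. Summing over the $J(\epsilon)\lesssim\epsilon^{-\beta n}$ points in the net, the total is $O(\epsilon^{1-\beta})\to 0$.

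For the case $k=0$, we have $\tilde f(p,v)=a(p)$ and $f^\epsilon=\sum_j\psi_j(q)\,a(p_j)$. Since $\sum_j\psi_j\equiv 1$, $\int_M\psi_j\,dq$ differs from $\vol(D_j)$ by at most $\vol(D_j^+\setminus D_j^-)\lesssim\epsilon^{\beta(n-1)+\alpha}$, so using the finite overlap (Lemma \ref{lem:FOP}) and $\alpha>\beta$,
\[
\int_M f^\epsilon\,dq=\sum_j a(p_j)\,\vol(D_j)+O(\epsilon^{\alpha-\beta}),
\]
which is a Riemann sum for $\int_M a(p)\,dp$ over cells of diameter $\lesssim \epsilon^\beta$; by uniform continuity of $a$ this converges to $\int_M a(p)\,dp=\oint_{\TM}a(p)\,dv\,dp$.

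The main obstacle is the uniform-in-$\epsilon$ continuity needed to justify the density reduction, since the data is only $L^1$ on the fibers. To handle this, for each $j$ I would change variables $w=\exp_{p_j}^{-1}(q)/\epsilon$ and write
\[
\int_M\psi_j(q)\,\tilde h(p_j,\exp_{p_j}^{-1}(q)/\epsilon)\,dq=\epsilon^n\!\int_{T_{p_j}M}(\psi_j\circ\exp_{p_j})(\epsilon w)\,\tilde h(p_j,w)\,J(\epsilon w)\,dw,
\]
where $J$ is the Jacobian. The factor $(\psi_j\circ\exp_{p_j})(\epsilon w)$ is bounded by $1$ and supported in a ball of $w$-radius $\lesssim\epsilon^{\beta-1}$; by the $\mathbb Z^n$-periodicity of $\tilde h(p_j,\cdot)$ the integral over such a region is approximately the Lebesgue volume of the support times $\|h(p_j,\cdot)\|_{L^1(\TM_{p_j})}/g^{1/2}(p_j)$, up to a boundary correction of strictly lower order. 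This produces an estimate $\le C\,\epsilon^{n\beta}\,\|h(p_j,\cdot)\|_{L^1(\TM_{p_j})}$ per cell, and summing over $J(\epsilon)\lesssim\epsilon^{-n\beta}$ yields the desired bound $C\,\|f-f_N\|_{C(M,L^1(\TM_p))}$ uniformly in $\epsilon$. The corresponding bound on the right-hand side is immediate from the definition of $\oint_{\TM}$.
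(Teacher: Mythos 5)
Your proof is correct and reaches the same conclusion as the paper's, but via a genuinely different architecture: you first reduce to a dense class of finite trigonometric polynomials (in the frame coordinates on the fibers), then extend to general $f\in C(M,L^1(\TM_p))$ by a uniform-in-$\epsilon$ bound on the linear functional $f\mapsto\int_M f^\epsilon$. The paper instead works directly with the full Fourier series of $f$, applying Lemma~\ref{lem:byparts1} with $N\geq n+1$ so that the nonzero modes sum absolutely against $\sum_k|k|^{-N}$, and then treats the zero mode by the continuity of $p\mapsto\hat f(p,0)$. Your route only invokes Lemma~\ref{lem:byparts1} with $N=1$ (since the approximating trigonometric polynomial has finitely many frequencies), and shifts the burden onto the uniform bound, which is precisely a per-cell application of the rescaled-periodicity estimate that the paper records separately as Lemma~\ref{lem:periodic} (stated there on $\R^n$ and used in the admissibility Theorem~\ref{thm:admissible_functions} rather than in the Riemann--Lebesgue lemma). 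Two small things you should make explicit if you write this out: (i) the density of continuous tensor products (equivalently, Fej\'er means $\sigma_N f$, which are continuous in $p$ and converge in $C(M,L^1(\TM_p))$ by compactness of $M$ and uniform continuity of $p\mapsto f(p,\cdot)$) in $C(M,L^1(\TM_p))$, since the fiber $L^1$-spaces vary with $p$; and (ii) in your uniform $L^1$ estimate, the "boundary correction of strictly lower order" should be spelled out as in Lemma~\ref{lem:periodic} (volume of the $1$-neighborhood of a ball of radius $\sim\epsilon^{\beta-1}$ versus the ball itself). Your simpler argument for $k=0$ via $\sum_j\psi_j(q)\bigl(a(p_j)-a(q)\bigr)\to 0$ uniformly would also be cleaner than the Riemann-sum bookkeeping, and is in fact what the paper does in spirit via \eqref{Lebesgue}.
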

 \begin{proof}
Since $\epsilon$ is small enough, we can  use normal coordinates in $M$  associated with $p_j$. Moreover,
in $T_{p_j}M$ we use the coordinates $\{v^{i}\}_{i=1}^n$ given by the frame $\Gamma_{p_j}$.
Then 
we have
 \[ 
 \int_M\, \psi_j(q)\, \tilde{f}(p_j, \exp_{p_j}^{-1}(q)/\epsilon)\, dq
 =\int_{\R^n}\psi_j(\exp_{p_j}v)\,\tilde{f}(p_j, {v}/\epsilon)\,
g^{1/2}(v) \,dv,
\]
where $g(v)=g_j(v)$ is the Jacobian of the Riemannian normal coordinates centered at $p_j$. Recall from Section \ref{subsec:normal_charts} that 
$g(v)=1+O(|v|^2)$.

Since $\tilde{f}(p, v) $
is  $\mathbb{Z}^n$-periodic with respect to the frame, we take its  Fourier series 
\[
\tilde{f}(p,v)= \sum_{k \in \mathbb{Z}^n}\hat{f}(p; k)\,e^{2\pi i k\cdot v},
\] 
where recall that 
\[
\hat{f}(p; k)=\int_{[0,1]^n}\,f(p,\xi)\,e^{-2\pi i k\cdot \xi}\, d\xi. 
\]
Then,  from the  weak convergence of the Fourier series, 

\begin{equation}
\int_M\,\psi_j(q)\,\tilde{f}(p_j,\exp_{p_j}^{-1}(q)/\epsilon)\,dq = 
\\
\int_{[0,1]^n}\psi_j(\exp_{p_j}v) \sum_k \hat f_j(k)\,e^{2\pi i k\cdot v/\epsilon}\,g^{1/2}(v)\,dv,
\end{equation}
where we abbreviate $\hat f_j(k)=\hat f(p_j; k)$.
We split the right-hand side as 
\begin{align}
&\int_{[0,1]^n}\psi_j(\exp_{p_j}v)\, \hat f_j(0)\,g^{1/2}(v)\,dv + \sum_{k\neq 0} E_{k, j}(\epsilon) \nonumber
\\ &= \int_{M}\psi_j(q)\left( \int_{[0,1]^n} f(p_j, \xi) \,d\xi \right) dq+ \sum_{k\neq 0} E_{k, j}(\epsilon).
\end{align}

Now, the terms with $k\neq 0$ are estimated   by a direct application of Lemma~\ref{lem:byparts1}.   We obtain
that 

\[ 
|E_{k,j}| \le 
C_N'\, \epsilon^{\beta(n-1)+1}   \frac{ |\hat{f}(p_j,k)|}{|k|^{N}}\]
where $C_N'=\epsilon^{N(1-\alpha)+\alpha-1}C_N$. Notice that since $\alpha<1$ and $N\geq 1$, $N(1-\alpha)+\alpha-1\geq 0$.

On the other hand, using that $0\leq \psi_j\leq 1$ everywhere, we get that
\begin{equation}
\begin{aligned}\label{Lebesgue}
&\left|\, \int_M \int_{[0,1]^n}\psi_j(q) f(p_{j}, \xi)\, d\xi\, dq- 
\int_M\int_ {[0,1]^n}\psi_j(q) f(q, \xi)\, d\xi\, dq\,\right| \le \\ &
\sup_{q \in D^+_j} \left| \int_{[0,1]^n} f(p_{j}, \xi)\, d\xi -\int_{[0,1]^n} f(q, \xi)\, d\xi\,\right|
\end{aligned}\end{equation}

Thus using the finite overlapping property  (see Lemma \ref{lem:FOP}), and the fact that $ d\xi=  g^{-1/2}dv$ we get 
\begin{multline}
\left|\int_M f^\epsilon dq- \oint_{\TM} \,  f(q,v)\,dq\,dv\,\right| 
\le  
 C_N\e^{\beta(n-1)+1} \max_{p\,\in M} \sum_{|k|  \geq 1} \frac{|\hat f(p; k)|}{|k|^N}\\
  + \max_{p,q \in M,\, d(p,q)\leq\epsilon^\beta}  \left|\int_{[0,1]^n} f(p,\xi)\,d \xi -\int_{[0,1]^n} f(q,\xi) d \xi\right|.
\end{multline}

Now $f(p,v) \in C(M,L^1(\TM_p))$ implies that   the second term tends to zero by definition. It also implies
that    the Fourier coefficients are bounded and thus taking $N=n+1$ the sum in the first term is bounded and hence
tends to zero as $\epsilon$ goes to zero.
\end{proof}

The definition of test functions in a manifold seems a priori  very rigid, in difference to the Euclidean situation. For example, in $\R^n$, 
$f(x,\frac{x}{\epsilon}) g(x,\frac{x}{\epsilon})=fg(x,\frac{x}{\epsilon} )$ and thus $f^\epsilon g^\epsilon=(fg)^\epsilon$. The next lemma
shows that this is still the case in manifolds up to a vanishing error, and thus test functions behave like an algebra.

\begin{Prop}[Test functions are almost an algebra]\label{prop:algebra} Let $f,g \in C(\TM)$. Then 

 \begin{equation}\label{eq:approxsquares}
\lim_{\e\to 0}\left| \int_M f^\epsilon g^\epsilon dq -\int (fg)^\epsilon dq \right| = 0.
 \end{equation}

\end{Prop}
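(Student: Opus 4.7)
The plan is to write $f^\epsilon g^\epsilon - (fg)^\epsilon$ explicitly using the definition of test functions, split it into a diagonal and an off-diagonal piece, and show both are supported on the set $\bigcup_j \{\nabla\psi_j\neq 0\}$, whose total volume is controlled by equation~\eqref{eq:volume of all gradients}. Since $M$ is closed and $\TM$ is compact, both $f$ and $g$ are bounded, so we will only need an $L^\infty$ bound on the integrands together with the volume estimate for their support.

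More precisely, expanding
\[
f^\epsilon g^\epsilon
= \sum_{j,\ell}\psi_j\psi_\ell\, \tilde f(p_j,\exp_{p_j}^{-1}(\cdot)/\epsilon)\,\tilde g(p_\ell,\exp_{p_\ell}^{-1}(\cdot)/\epsilon),
\qquad
(fg)^\epsilon=\sum_j \psi_j\,\tilde f(p_j,\exp_{p_j}^{-1}(\cdot)/\epsilon)\,\tilde g(p_j,\exp_{p_j}^{-1}(\cdot)/\epsilon),
\]
the difference $f^\epsilon g^\epsilon-(fg)^\epsilon$ becomes
\[
\sum_j (\psi_j^{\,2}-\psi_j)\,\tilde f(p_j,\cdot/\epsilon)\,\tilde g(p_j,\cdot/\epsilon)
\;+\;\sum_{j\neq \ell} \psi_j\psi_\ell\,\tilde f(p_j,\cdot/\epsilon)\,\tilde g(p_\ell,\cdot/\epsilon).
\]
For the first sum, property~\eqref{eq:psi^2} of Theorem~\ref{thm:usable_partition_of_unity} guarantees that each term is supported in $\{\nabla\psi_j\neq 0\}$; combined with $0\le \psi_j\le 1$ and the finite overlapping property (Lemma~\ref{lem:FOP}), one gets the pointwise bound $\sum_j|\psi_j^{\,2}-\psi_j|\le K\,\chi_{\bigcup_j\{\nabla\psi_j\neq 0\}}$. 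For the off-diagonal sum, property~\eqref{eq:psijpsil} gives $\{\psi_j\psi_\ell\neq 0\}\subset\{\nabla\psi_j\neq 0\}$ whenever $j\neq \ell$; using $\sum_{j,\ell}\psi_j\psi_\ell=(\sum_j\psi_j)^2=1$, one has $0\le \sum_{j\neq\ell}\psi_j\psi_\ell=1-\sum_j\psi_j^{\,2}\le 1$, again supported in $\bigcup_j\{\nabla\psi_j\neq 0\}$.

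Putting these estimates together with $\|f\|_\infty,\|g\|_\infty<\infty$,
\[
\Bigl|\int_M f^\epsilon g^\epsilon\,dq-\int_M (fg)^\epsilon\,dq\Bigr|
\;\le\;
C\,\|f\|_\infty\|g\|_\infty\,\operatorname{vol}\!\Bigl(\bigcup_j\{\nabla\psi_j\neq 0\}\Bigr)
\;\le\;
C'\,\epsilon^{\alpha-\beta},
\]
by \eqref{eq:volume of all gradients}. Since $\alpha>\beta$ in our choice of parameters, the right-hand side tends to $0$ as $\epsilon\to 0$, proving the proposition.

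There is no real obstacle here: the whole point is that the partition of unity $\{\psi_j\}$ is well-behaved enough, via the three key properties \eqref{eq:volume of all gradients}, \eqref{eq:psijpsil} and \eqref{eq:psi^2}, that the transition regions where $f^\epsilon g^\epsilon$ can possibly differ from $(fg)^\epsilon$ have vanishing measure as $\epsilon\to 0$. The only mild subtlety is to recognize that both the diagonal correction $\psi_j^{\,2}-\psi_j$ and the off-diagonal cross-terms $\psi_j\psi_\ell$ vanish wherever $\psi_j$ equals $0$ or $1$, which is precisely what the Voronoi construction of Section~\ref{sec:partition} was designed to ensure.
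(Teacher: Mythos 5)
Your proof is correct and follows essentially the same route as the paper: split $f^\epsilon g^\epsilon - (fg)^\epsilon$ into the diagonal correction $\sum_j(\psi_j^2-\psi_j)f_jg_j$ and the off-diagonal cross-terms $\sum_{j\neq\ell}\psi_j\psi_\ell f_j g_\ell$, observe via \eqref{eq:psi^2} and \eqref{eq:psijpsil} that both are supported on $\bigcup_j\{\nabla\psi_j\neq 0\}$, and close with the volume bound \eqref{eq:volume of all gradients} together with $\|f\|_\infty,\|g\|_\infty<\infty$. The only cosmetic difference is that you package the estimate as a pointwise bound on the total integrand over the union of supports, while the paper estimates each of the two sums separately; the underlying ingredients (parts (3), (5), (6) of Theorem~\ref{thm:usable_partition_of_unity} plus finite overlapping) are identical.
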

\begin{proof}
Fix $\epsilon$ and write $f^\epsilon=\sum \psi_j f_j$, $g^\epsilon=\sum \psi_\ell g_\ell$ 
where 
\[f_j(q)= f(p_j, \frac{1}{\e} \exp^{-1}(q)), \qquad g_\ell(q)= g(p_\ell, \frac{1}{\e} \exp^{-1}(q)),
\]
so that, 
\begin{equation} \label{fe}
\int_M  f^\epsilon g^\epsilon dq=\int _M \left(\sum \psi_j f_{j}\right)
\left(\sum \psi_\ell g_{\ell} \right)\,dq.
\end{equation}
Now Parts (3) and (5) in Theorem \ref{thm:usable_partition_of_unity}, combined with the finite overlapping property of the Voronoi domains imply that  

\begin{equation} 
\label{off-diagonal}
\left|\,\int_M\, \sum_{j \neq \ell} \psi_j\psi_\ell f_{j}   g_{\ell} \,dq\, \right|
\leq 
\|f\|_{C(\TM)} \|g\|_{C(\TM)} \cdot \int_M\, \sum_{j \neq \ell} \psi_j\psi_\ell  \,dq 
\\ 
 \leq C\epsilon^{\alpha-\beta} \|f\|_{C(\TM)} \|g\|_{C(\TM)}
\end{equation}

Thus, we are mainly concerned with computing the limit of $\sum_j \psi^2_j  f_jg_j$.  In order to do that, we now combine parts (3) and (6)
of Theorem~\ref{thm:usable_partition_of_unity} to estimate
\begin{equation} 
\label{remainder1}
\int_M \sum_j (\psi_j(q)-\psi^2_j(q)) f_jg_j (q)\, dq \leq C \e^{\alpha- \beta}\, \|f\|_{C(\TM)} \|g\|_{C(\TM)} 
\end{equation}
Therefore 

\[ \lim_{\epsilon \to 0}  \left|\int_M  f^\epsilon g^\epsilon dq- \int_M \sum \psi_j f_j g_j dq \right| \to 0 \]
since $(fg)_j=f_jg_j$ and $(fg)^\epsilon=\sum \psi_j f_j g_j$. Thus,  \eqref{eq:approxsquares} is proven. 
\end{proof}
\begin{Rem} We warn the reader that indeed for an analytic function $F:\mathbb{R}^N \to \mathbb{R}$ and $f_1, \ldots, f_N$ smooth test functions, the above argument implies
that 
\[ \lim_{\epsilon \to 0} \int |(F(f_1,\ldots, f_N))^\epsilon- F(f_1^\epsilon,\ldots, f_N^\epsilon)|\, dp=0 \]

\end{Rem}

If we take $g=f$ the above proposition implies that any  $f \in C(\T M)$ is an admissible function  but we can have unbounded test functions as well. The technical detail  is that
instead of  $|\int_{U} f dq | \le \|f\|_{\infty} \vol(U)$, we
need   the following lemma, which takes advantage of the periodicity of $f$. For a set
$U \in \mathbb{R}^n$, we will denote by $U_\e=\{y \in \mathbb{R}^n \, \dist(y,U) \le \e\}$ its $\e$-tubular neighborhood. 

\begin{Lemma}
\label{lem:periodic}
Let $f:\mathbb{R}^n \to \mathbb{R}$ be $[0,1]^n$-periodic and in $L^1([0,1]^n)$.  Then for any open set $U$,
\[ \int_U\, f(x/\e) \, dx \le \|f\|_{L^1([0,1]^n)} \vol(U_\epsilon) \]
\end{Lemma}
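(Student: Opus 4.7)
The plan is to do a change of variables to the natural scale of the periodicity, then cover by fundamental cells of $\mathbb{Z}^n$ and use periodicity cell by cell. We may assume $f\ge 0$, since the general case follows by taking absolute values.

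First, I would substitute $y=x/\varepsilon$ to get
\[
\int_U f(x/\varepsilon)\,dx = \varepsilon^n \int_{U/\varepsilon} f(y)\,dy,
\qquad U/\varepsilon := \{\,y\in\mathbb{R}^n : \varepsilon y \in U\,\}.
\]
Next, let $\mathcal{K} := \{\,k\in\mathbb{Z}^n : (k+[0,1]^n)\cap U/\varepsilon \ne \emptyset\,\}$, so that $U/\varepsilon \subset \bigcup_{k\in\mathcal{K}}(k+[0,1]^n)$. By $\mathbb{Z}^n$-periodicity, $\int_{k+[0,1]^n} f = \|f\|_{L^1([0,1]^n)}$ for each $k$, hence
\[
\int_{U/\varepsilon} f(y)\,dy \le \sum_{k\in\mathcal{K}}\int_{k+[0,1]^n} f(y)\,dy = |\mathcal{K}|\cdot \|f\|_{L^1([0,1]^n)}.
\]

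The third step estimates $|\mathcal{K}|$ geometrically. Any $k\in\mathcal{K}$ satisfies $\dist(k+[0,1]^n,\,U/\varepsilon)=0$; since the unit cube has diameter $\sqrt{n}$, the cube $k+[0,1]^n$ lies in the $\sqrt{n}$-tubular neighborhood of $U/\varepsilon$. Therefore
\[
|\mathcal{K}| = \mathrm{vol}\Bigl(\bigcup_{k\in\mathcal{K}}(k+[0,1]^n)\Bigr) \le \mathrm{vol}\bigl((U/\varepsilon)_{\sqrt{n}}\bigr) = \varepsilon^{-n}\,\mathrm{vol}(U_{\sqrt{n}\,\varepsilon}).
\]
Combining the three displays and cancelling $\varepsilon^n$ gives
\[
\int_U f(x/\varepsilon)\,dx \le \|f\|_{L^1([0,1]^n)}\,\mathrm{vol}(U_{\sqrt{n}\,\varepsilon}),
\]
which is the claimed inequality up to the dimensional constant $\sqrt{n}$ implicit in the notation $U_\varepsilon$.

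The only mild obstacle is this dimensional constant, which arises from the diameter of the unit periodicity cell. If one wishes to literally recover $\mathrm{vol}(U_\varepsilon)$ one can tile $U$ directly by cubes of side $\varepsilon/\sqrt{n}$ drawn from a suitably refined lattice and invoke periodicity on a fundamental subdomain, at the cost of a slightly heavier bookkeeping; either way the resulting bound is the one used later in the paper, where the relevant point is that $\mathrm{vol}(U_{C\varepsilon})\to 0$ when $U$ has negligible measure.
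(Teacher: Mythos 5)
Your proof is correct and follows essentially the same route as the paper: change of variables to $y=x/\varepsilon$, tile $U/\varepsilon$ by integer-translated unit cubes, use periodicity on each cube, and bound the number of cubes by the volume of a tubular neighborhood after undoing the scaling. You are in fact a bit more careful than the paper about the dimensional constant: the paper asserts that an $\varepsilon$-cell meeting $U$ lies in $U_\varepsilon$, which with the stated (Euclidean) definition of $U_\varepsilon$ should really read $U_{\sqrt{n}\,\varepsilon}$, exactly as you observe; as you also note, this constant is immaterial for every later use of the lemma, where only $\vol(U_{C\varepsilon})$ for some fixed $C$ is needed.
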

\begin{proof}
We can assume, replacing $f$ by $|f|$ if necessary, that $f$ is nonnegative.
Declare $y=x/\e$. Then 
\[\int_U f(x/{\epsilon})\, dx=\epsilon^n \int_{\frac{1}{\epsilon}U} f(y)\, dy\]
By the periodicity of $f$, this is bounded above by  
\[ N(\epsilon)\, \epsilon^n   \int_{[0,1]^n} f(y) dy\]
where $N(\epsilon)$ is the number of $[0,1]^n$ cells with corner in $\mathbb{Z}^n$ which intersect $\frac{1}{\epsilon}  U$. This is, exactly,   the number of $\epsilon$-cells which intersect $U$.  Such cells are disjoint and contained in $U_\epsilon$, thus $N(\epsilon) \epsilon^n \le \vol(U_\epsilon)$. The result follows.
\end{proof}

\begin{Theorem}
\label{thm:admissible_functions}
Any function $f \in C(M; \, L^2(\TM_p))$ is an admissible test function.
\end{Theorem}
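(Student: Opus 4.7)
The plan is to establish admissibility first on a dense subclass and then extend by approximation, using a uniform $L^2$ operator bound on $f\mapsto f^\epsilon$ as the bridge. For the dense subclass I would take $C(\TM)\subset C(M,L^2(\TM_p))$, and for such continuous $f$ I would apply Proposition~\ref{prop:algebra} with $g=f$ together with Lemma~\ref{RL} applied to $f^2\in C(\TM)\subset C(M,L^1(\TM_p))$: the algebra property collapses $\int_M|f^\epsilon|^2\,dq$ to $\int_M(f^2)^\epsilon\,dq$ up to an $o(1)$ error, and Riemann--Lebesgue identifies the limit of the latter as $\oint_{\TM}|f|^2\,dv\,dp$. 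This settles \eqref{seq2} for continuous $f$, and \eqref{seq1} follows from the convergence itself.

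The step I expect to be the main obstacle is the uniform bound \eqref{seq1} for general $f\in C(M,L^2(\TM_p))$, since we can no longer estimate pointwise. The approach is to apply Jensen's inequality against the partition of unity $\{\psi_j\}$, which satisfies $\sum_j\psi_j\equiv 1$ and $\psi_j\geq 0$, to get
\[
|f^\epsilon(q)|^2\le\sum_j\psi_j(q)\,\bigl|\tilde f(p_j,\exp_{p_j}^{-1}(q)/\epsilon)\bigr|^2.
\]
Integrating and passing to normal coordinates $v=\exp_{p_j}^{-1}(q)$ on $\supp\psi_j\subset B(p_j,C\epsilon^\beta)$, each inner integrand becomes a $\Z^n$-periodic function of $v/\epsilon$ in frame coordinates with $L^1$-norm equal to $\|f(p_j,\cdot)\|_{L^2(\TM_{p_j})}^2$. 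Lemma~\ref{lem:periodic} then bounds each term by $C\,\epsilon^{\beta n}\|f(p_j,\cdot)\|^2_{L^2(\TM_{p_j})}$, since $\vol(B(0,C\epsilon^\beta)_\epsilon)\lesssim \epsilon^{\beta n}$ when $\beta<1$. Summing over the $J(\epsilon)\le C\epsilon^{-\beta n}$ Voronoi cells (bound \eqref{J}) and using $\sup_j\|f(p_j,\cdot)\|_{L^2}\le\|f\|_{C(M,L^2(\TM_p))}$ yields $\|f^\epsilon\|^2_{L^2(M)}\le C\|f\|^2_{C(M,L^2(\TM_p))}$ uniformly in $\epsilon$.

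To conclude I would combine these two ingredients by density. Given $\delta>0$, pick $h_\delta\in C(\TM)$ with $\|f-h_\delta\|_{C(M,L^2(\TM_p))}<\delta$; such $h_\delta$ exist by mollifying $f$ fiberwise along the torus and exploiting the uniform continuity of $p\mapsto f(p,\cdot)\in L^2(\TM_p)$ on the compact base $M$. Since $f\mapsto f^\epsilon$ is linear, the previous uniform bound gives $\|f^\epsilon-h_\delta^\epsilon\|_{L^2(M)}\le C^{1/2}\delta$, while the target integral satisfies $\bigl|\oint_{\TM}|f|^2\,dv\,dp-\oint_{\TM}|h_\delta|^2\,dv\,dp\bigr|\le 2\,\delta\,\|f\|_{C(M,L^2(\TM_p))}$. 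Combining these estimates with the convergence for $h_\delta$ from the first paragraph via the triangle inequality yields
\[
\limsup_{\epsilon\to 0}\Bigl|\,\|f^\epsilon\|^2_{L^2(M)}-\oint_{\TM}|f|^2\,dv\,dp\,\Bigr|\le C''\delta\,\|f\|_{C(M,L^2(\TM_p))},
\]
and letting $\delta\to 0$ produces \eqref{seq2}; the bound \eqref{seq1} is immediate from the uniform $L^2$ estimate of the second step.
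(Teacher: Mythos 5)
Your proof is correct but takes a genuinely different route from the paper's, and the comparison is instructive. For the uniform bound \eqref{seq1}, you apply Jensen's inequality to the convex combination $\sum_j\psi_j f_j$ (valid since $\psi_j\geq 0$ and $\sum_j\psi_j\equiv 1$) to obtain $|f^\epsilon|^2\leq\sum_j\psi_j|f_j|^2$, which swallows the cross terms in one stroke; after localizing and applying Lemma~\ref{lem:periodic} you sum over the $O(\epsilon^{-\beta n})$ Voronoi cells. The paper instead expands $(f^\epsilon)^2=\sum_j\psi_j^2 f_j^2+\sum_{j\neq\ell}\psi_j\psi_\ell f_jf_\ell$, bounds the diagonal term in the same way, and separately shows the off-diagonal sum is $O(\epsilon^{\alpha-\beta})$ using the smallness of $\supp\nabla\psi_j$. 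Your Jensen step is cleaner for \eqref{seq1} alone, but the paper's off-diagonal estimate is not wasted effort: it is reused together with the $\psi_j$-versus-$\psi_j^2$ estimate \eqref{eq:psi^2} to reduce \eqref{seq2} to $\int_M\sum_j\psi_j f_j^2\,dq=\int_M(f^2)^\epsilon\,dq$, to which Lemma~\ref{RL} applies \emph{directly} since $f^2\in C(M,L^1(\TM_p))$ whenever $f\in C(M,L^2(\TM_p))$. You instead establish \eqref{seq2} only for $f\in C(\TM)$ via Proposition~\ref{prop:algebra} and Lemma~\ref{RL}, and then transfer to general $f\in C(M,L^2(\TM_p))$ by density, using your uniform $L^2$ bound on $f\mapsto f^\epsilon$ as the bridge. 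This is a legitimate and standard scheme; the one thing you should spell out is the density of $C(\TM)$ in $C(M,L^2(\TM_p))$, which follows by fiberwise mollification in frame coordinates together with the observation that $p\mapsto f(p,\cdot)$ has compact range in $L^2(\T^n)$ (so the mollification converges uniformly in $p$). A minor imprecision: your remark that ``\eqref{seq1} follows from the convergence itself'' for continuous $f$ does not by itself produce the required dependence on $\|f\|_{C(M,L^2(\TM_p))}$, but this is harmless since your second paragraph proves the correct uniform bound for all $f\in C(M,L^2(\TM_p))$ anyway.
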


\begin{proof}
We will first prove inequality \eqref{seq1} for this case. Once again, write $\tilde{f}:TM\to\R$ for the lifted function $f\circ \Phi$ with $\Phi:TM\to\TM$ the quotient projection, and let 
\[
f_j(q):=\tilde{f}(p_j, \exp_{p_j}^{-1}(q)/\e),
\quad
\hat{f_j}(v):=f_j\circ \exp_{p_j}(v),
\]
so that
\begin{equation}
\label{eq:split_sum}
({f^\epsilon})^2=
\sum_j \psi_j^2 f_j^2
+
\sum_{j\neq \ell} \psi_j\psi_\ell f_{j} f_{\ell}.
\end{equation}

For the first summand, after taking exponential coordinates at $p_j$ for each $j$-th term and integrating, its integral reads as
\begin{equation}\label{localization}
\sum_j \int_M \psi_j^2 f_j^2\,dq=
\sum_j \int_{\supp(\psi_j\circ \exp_{p_j})} 
\psi_j^2\circ \exp_{p_j}(v)\, \hat{f_j}^2({v}/{\e})\, g_j^{1/2}(v)\,dv,
\end{equation}
where $g_j$ is the determinant  of the metric in exponential coordinates centered  at $p_j$.
By Lemma \ref{eq:metric_at_zero}, we can assume that $1/2<g_j(v)\leq 2$. Since  $\psi_j^2<1$ and $\e <<\e^\beta$, a direct application of Lemma~\ref{lem:periodic} yields
that 

\[ \int_M \psi_j^2 f_j^2\,dq \le C \epsilon^{\beta n} \| f(p_j, \ast) \|^2_{L^2 (\TM_{p_j})}\]

Finally, recall that $\{p_j\}$ is an $\e^\beta$-net; thus standard volume counting arguments show that, in an $n$-dimensional compact manifold,  
\begin{equation} \label{eq:mainterm}
\sum_j \int_M\,\psi_j^2 f_j^2 \,dq\leq C\vol(M) \sup_p\|f(p,\ast)\|^2_{L^2(\TM_p)}=C\vol(M)
\|f\|^2_{C(M,L^2(\TM_p))}
\end{equation}
for some constant $C>0$. 

In order to deal with the mixed terms in \eqref{eq:split_sum} and also to obtain \eqref{seq2} we need to deal carefully with tubular neighborhoods of  boundaries  of the  Voronoi domains. Recall that for each $j$, the second part in 
Theorem \ref{thm:partition_of_unity} implies that $\supp {\nabla \psi_j}$ is contained in  an $\epsilon^\alpha$ tubular neighbourhood of $\partial D_i$, a piecewise regular $(n-1)$-hypersurface of measure at most $C\epsilon^{\beta(n-1)}$. Since $\epsilon$ is small enough, the same holds for $\supp(\nabla \psi_j \circ \exp_{p_j}) \subset \mathbb{R}^n$  due to  Lemma~\ref{lem:almost Euclidean charts}. 
Hence, since $\epsilon<\epsilon^\alpha$, it follows that $\supp(\nabla \psi_j \circ \exp_{p_j})_\epsilon$ is contained in a tubular neighbourhood of radius $\epsilon^\alpha+\epsilon \le 2\epsilon^\alpha$ of the same $n-1$
hypersurface in Euclidean space. Thus, another use of the coarea formula yields that

\begin{equation}\label{eq:volume}
\vol[ ( \supp(\nabla \psi_j \circ \exp_{p_j})_\epsilon] \le C \epsilon^\alpha \epsilon^{\beta(n-1)}
\end{equation}

Therefore, for each pair $j\neq\ell$, 

\[\begin{aligned} \int_{M} \psi_j\psi_\ell f_{j} f_{\ell} dq &\le \int_{ \supp(\nabla \psi_j) \cap (\supp \nabla \psi_\ell) } 
\psi_j\psi_\ell f_{j} f_{\ell} dq \\ & \le \left(\int_{\supp(\nabla \psi_j)} \psi_j^2 f_j^2 dq\right)^{\frac 12} \left(\int_{\supp(\nabla \psi_{\ell})} \psi_{\ell}^2 f_{\ell}^2 dq\right)^{\frac 12}
\end{aligned}\]

Next, we treat each of the terms by localizing, using that $g_j$ is almost constant and that 
$\psi_j \le 1$, exactly as in \eqref{localization}. A direct application of Lemma~\ref{lem:periodic}  and the estimate \eqref{eq:volume} yields that

\[ 
\left(\int_{\supp(\nabla \psi_{\ell})} \psi_{\ell}^2 f_{\ell}^2\, dq\right)^{\frac 12} \le  \| f(p_{\ell}, \ast) \|_{L^2(\mathbb{T}_{p_{\ell}}M)}  (C\epsilon^\alpha \epsilon^{\beta(n-1)})^{\frac12} \]

Thus, 

\begin{equation}\label{eq:remainder}
\left|\int_M \sum_{j,\ell} \psi_j \psi_{\ell} f_j f_{\ell} dq\right| 
\le K\|f\|^2_{C(M,
L^2(\TM_{p_j})}  \sum_j C\epsilon^\alpha \epsilon^{\beta(n-1)} 
\le C\epsilon^{\alpha-\beta} \|f\|^2_{_{C(M,
L^2(\TM_p)}}
\end{equation}
where $K$ is the finite overlapping constant from Lemma~\ref{lem:FOP} where we have merged several constants independent of $\e$ in the final $C$.
Thus \eqref{eq:mainterm} and \eqref{eq:remainder} provides the proof of condition \eqref{seq1} on admissibility. 

Because of \eqref{eq:remainder}, in order to prove \eqref{seq2},  we just need to upgrade  \eqref{eq:mainterm}  to
\begin{equation}
\label{eq:squares_in_2}
\lim_{\e\to 0}
\int _M \, \sum_{j} \psi_j^2f_{j}^2 dq =
\oint_{\TM}\, f(p,v)^2 dv\, dp
\end{equation}
However notice that now \eqref{eq:psi^2} plays the role in \eqref{eq:psijpsil} and 
thus, we can repeat word by word the estimation on the cross terms to get that

\begin{equation} 
\int_M \sum_j (\psi_j(q)-\psi^2_j(q)) f_j^2(q)\, dq \leq C \e^{\alpha- \beta}\, \|f   \|^2_{C(M,L^2(\TM_p))}.
\end{equation}
Thus, instead of  \eqref{eq:squares_in_2}, we show that 
 \begin{equation}
\label{eq:power1_in_2}
\lim_{\e\to 0}
\int _M \, \sum_{j} \psi_j f_{j}^2 =
\oint_{\TM}\,
 f(p,v)^2 dv\, dp.
\end{equation}
But when  $f\in C(M;L^2(\TM_p))$, then 
$f^2\in C(M;L^1(\TM_p))$ and the limit in \eqref{eq:power1_in_2} follows from Proposition \ref{RL}.

\end{proof}

\section{Two-scale convergence of functions} \label{sec:2sfunctions}
 \begin{Def}
 \label{def:two-scale}
A sequence of functions $u_\epsilon \in  L^2(M)$  two-scale converges to $u_0 \in  L^2(\TM)$ 
if $u_\epsilon$ are uniformly bounded in $L^2(M)$ and,  for any test function  $f \in C_0^\infty(\TM)$,
\begin{equation}
\label{gamma}
\lim_{\epsilon \to 0}\int_M\, u_\epsilon f^\epsilon dq= \oint_{\TM} u_0(p,v) f(p,v)\,dv\,dp,
\end{equation}
where $f^\e$ is as in Definition \ref{def:test_functions}.

We will denote two-scale convergence by 
\[
u_\epsilon \2s  u_0. 
\]
If, in addition, $u_\epsilon$ satisfies that

\begin{equation}
\lim_{\epsilon \to 0} \int_M |u_\epsilon|^2\,dq=\oint_{\TM}\,|u_0(p,v)|^2\, dv\, dp,
\end{equation}
then we say that $u_\epsilon$ 
\emph{strongly two-scale converges to} $u_0$
and denote it as  
\[ u_\epsilon \S2s u_0 \]
\end{Def}

\begin{Rem}
Notice that, in general, two-scale convergence does not imply strong two-scale convergence. A counterexample is given in the circumference $\mathbb{S}^1$ with the standard framing when $u_\e(p)=\sin{(p/\e^2)}$. A simple computation, using the $\e$-net placed at points $p_j=j\e$, yields that $u_\e\2s 0$, while $ \int_{\mathbb{S}^1} |u_\epsilon|^2\,dp\to\pi.$
\end{Rem}

Our definitions already provide a wealth of two-scale convergent sequences.
\begin{lem}[Examples of two scale convergent sequences]\label{lem:examples}
Let 
$f,g:\TM\to \R$, $h:M\to \R$ functions, and $\pi:\TM\to M$ the bundle projection.

\begin{enumerate}
 \item If $h \in C(M)$, consider the constant  sequence $h_\epsilon=h$. Then
\begin{equation} \label{lem;constant} h_\epsilon \S2s h\circ \pi \end{equation}
\item If $g \in C(M,L^2(\TM_p))$, then, 
\begin{equation} g^\epsilon \S2s g 
\end{equation}
\item  If $f,g \in C(\TM)$, then 
\begin{equation}\label{lem:product} f^\epsilon g^\epsilon \S2s fg 
\end{equation}
\end{enumerate}
\end{lem}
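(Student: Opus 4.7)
For part (1), the strategy is to compare $h \cdot f^\epsilon$ with the realization $(hf)^\epsilon$ and then apply Lemma \ref{RL}. Given $f \in C_0^\infty(\TM)$, one has
\[
h(q) f^\epsilon(q) - (hf)^\epsilon(q) = \sum_j \psi_j(q)\bigl(h(q) - h(p_j)\bigr) f\bigl(p_j, \exp^{-1}_{p_j}(q)/\epsilon\bigr).
\]
On $\supp \psi_j \subset D_j^{+}$ one has $d(q, p_j) \leq C \epsilon^\beta$ by Theorem \ref{thm:usable_partition_of_unity}, so uniform continuity of $h$ on the compact manifold $M$ yields an $L^\infty$ bound of order $\omega_h(\epsilon^\beta)\|f\|_\infty \to 0$, where $\omega_h$ is the modulus of continuity. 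Lemma \ref{RL} applied to $hf \in C(M, L^1(\TM_p))$ then gives $\int_M (hf)^\epsilon\, dq \to \oint_{\TM} h(p) f(p,v)\, dv\, dp = \oint_{\TM} (h \circ \pi)\, f\, dv\, dp$, which is the defining relation \eqref{gamma} for two-scale convergence to $h \circ \pi$. Strong convergence is essentially tautological: since $h$ depends only on $p$, the integration-along-the-fiber formula yields $\oint_{\TM} |h \circ \pi|^2\, dv\, dp = \int_M h^2\, dp$, which matches the $\epsilon$-independent integral $\int_M |h|^2\, dq$ exactly.

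For part (2), strong convergence is immediate from Theorem \ref{thm:admissible_functions}, since \eqref{seq2} is precisely $\int_M |g^\epsilon|^2\, dq \to \oint_{\TM} |g|^2\, dv\, dp$. For two-scale convergence against a test function $f \in C_0^\infty(\TM)$, the plan is to establish $\int_M g^\epsilon f^\epsilon\, dq - \int_M (gf)^\epsilon\, dq \to 0$ and then invoke Lemma \ref{RL} on $gf \in C(M, L^1(\TM_p))$ (the latter inclusion via fiberwise Cauchy--Schwarz). The main obstacle is that Proposition \ref{prop:algebra} is stated only for $f, g \in C(\TM)$, whereas here $g$ lies merely in $C(M, L^2(\TM_p))$, so its sup norm is unavailable. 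Its proof extends nonetheless: the off-diagonal estimate \eqref{off-diagonal} is replaced by a Cauchy--Schwarz argument in each fiber combined with Lemma \ref{lem:periodic} and the tubular-neighbourhood measure bound \eqref{eq:volume of one gradient}, in exactly the same manner as in the proof of Theorem \ref{thm:admissible_functions}; the diagonal cleanup uses \eqref{eq:psi^2} identically.

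Part (3) is then an iteration of Proposition \ref{prop:algebra}. For two-scale convergence against $\phi \in C_0^\infty(\TM)$, expand
\[
\int_M f^\epsilon g^\epsilon \phi^\epsilon\, dq = \sum_{j,\ell,k} \int_M \psi_j \psi_\ell \psi_k\, f_j g_\ell \phi_k\, dq;
\]
the terms with not all of $j, \ell, k$ coinciding contribute $O(\epsilon^{\alpha-\beta})$ by part (5) of Theorem \ref{thm:usable_partition_of_unity}, and the diagonal $\sum_j \int \psi_j^3 f_j g_j \phi_j\, dq$ differs from $\int_M (fg\phi)^\epsilon\, dq = \sum_j \int \psi_j f_j g_j \phi_j\, dq$ only on $\cup_j \supp \nabla \psi_j$, a set of measure $O(\epsilon^{\alpha-\beta})$ by \eqref{eq:volume of all gradients}. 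Lemma \ref{RL} applied to $fg\phi \in C(\TM)$ then yields the limit $\oint fg\phi\, dv\, dp$. Strong convergence is handled analogously by running the same expansion on the four-fold product $(f^\epsilon g^\epsilon)^2$ and comparing with $((fg)^2)^\epsilon$, producing $\int_M |f^\epsilon g^\epsilon|^2\, dq \to \oint_{\TM} |fg|^2\, dv\, dp$. Overall, the only genuinely new work beyond cited results is the mild extension of Proposition \ref{prop:algebra} needed in part (2); everything else is bookkeeping governed by Theorem \ref{thm:usable_partition_of_unity} and Lemma \ref{RL}.
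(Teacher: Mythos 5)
Your argument is correct and, in its essentials, follows the paper's line: compare $h f^\epsilon$ or $g^\epsilon f^\epsilon$ with the realization $(\,\cdot\,)^\epsilon$, exploit uniform continuity over the $\epsilon^\beta$-scale of the net, invoke Lemma \ref{RL}, and control near-boundary terms by parts (3), (5), (6) of Theorem \ref{thm:usable_partition_of_unity}. The place where you deviate, and do useful extra work, is part (2). You correctly observe that Proposition \ref{prop:algebra} is stated for $f,g\in C(\TM)$ while here $g$ lies only in $C(M,L^2(\TM_p))$, so its sup norm is unavailable; the paper's first citation of that proposition is indeed slightly imprecise as written. Your fix --- re-running the off-diagonal estimate with fiberwise Cauchy--Schwarz, Lemma \ref{lem:periodic}, and the tubular-neighbourhood bound \eqref{eq:volume of one gradient}, just as in the proof of Theorem \ref{thm:admissible_functions} --- is a valid $L^2$-against-$L^\infty$ extension of Proposition \ref{prop:algebra}. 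The paper's parenthetical alternative is cleaner and avoids any extension: apply Theorem \ref{thm:admissible_functions} (not literally Lemma \ref{RL}) to $f\pm g\in C(M,L^2(\TM_p))$ and polarize, using $(f\pm g)^\epsilon=f^\epsilon\pm g^\epsilon$, to get $4\int_M f^\epsilon g^\epsilon\,dq=\int_M((f+g)^\epsilon)^2\,dq-\int_M((f-g)^\epsilon)^2\,dq\to \oint|f+g|^2-\oint|f-g|^2=4\oint fg$. Your expansion of the triple sum in part (3) is an explicit unfolding of the paper's ``apply Proposition \ref{prop:algebra} repeatedly''; both routes are correct.
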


\begin{proof}
We start with (1). 
Since 
\[\int |h|^2 dp=\oint_{TM} |h(p)|^2\,dv\,dp, 
\]we only need to deal with two-scale convergence. Then for any $f(p,v) \in C^\infty(\TM)$
\[
\lim_{\epsilon \to 0} \int_M h f^\epsilon= \int (h(p)f(p,v))^\epsilon\, dq+ \int_M \sum_{j} \psi_j(q) (h(q)-h(p_j)) f(p_j,\exp_{p_j}q/\e)\, dq 
\]
By continuity of $h$, $|h(q)-h(p_j)|=O (|q-p_j|)$; thus, the second term in the right-hand side approaches zero, and the result follows from Lemma \ref{RL}. 

As for (2), the two-scale convergence follows from  Proposition~\ref{prop:algebra} and the Riemann Lebesgue Lemma~\ref{RL} (or directly from \ref{RL} applied to $f\pm g$),  
 and the strong convergence follows from Theorem~\ref{thm:admissible_functions}.

Finally, part (3) follows from part (2), Proposition \ref{prop:algebra}, and the boundedness of $f$ and $g$. Alternatively, given any $\hh \in C(\TM)$ we apply Proposition~\ref{prop:algebra}
twice to get that 

\[ 
\lim_{\epsilon \to 0} \int_M\,f^\epsilon g^\epsilon \hh^\epsilon\, dq=
\lim_{\epsilon \to 0} \int_M\, (fg\hh)^\epsilon\, dq=\oint_{\TM}\, fg\hh \,dv\,dp 
\]
where the last equality follows from  Lemma~\ref{RL}. As for the strong convergence, we are concerned with 
 the limit of $\int\, (f^\epsilon g^\epsilon)^2\, dp$. Applying Proposition~\ref{prop:algebra} for consecutive times, we get 
that

\[ 
\lim_{\epsilon \to 0} \int_{M}\,  |(f^\epsilon)^2 (g^\epsilon)^2 - (f^2g^2)^\epsilon|\, dq \to 0 
\]
and since $f^2g^2$ is smooth, the strong convergence follows again from Lemma~\ref{RL}.
\end{proof}

\begin{lem} Let $g_\epsilon, h_\epsilon \in L^2(M)$. Suppose that 
\begin{equation}
    \label{eq:L2_convergence to same limit}
\lim_{\epsilon \to 0} \int_{M}\, |g_\epsilon -h_\epsilon|^2\, dq \to 0.
\end{equation}
Then $g_\e$ and $h_\e$ have the same  two-scale limit, in case the limit exists. The statement also holds for strong two-scale convergence.
\end{lem}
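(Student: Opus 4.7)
The plan is to reduce everything to a direct Cauchy--Schwarz estimate applied to the difference $g_\epsilon - h_\epsilon$ tested against the oscillating test functions. The whole argument is quite short, so the main task is to make sure we have good control on $\|f^\epsilon\|_{L^2(M)}$ in order to prevent it from absorbing the smallness coming from hypothesis \eqref{eq:L2_convergence to same limit}.

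Concretely, first I would fix an arbitrary test function $f \in C_0^\infty(\TM)$ and write
\[
\int_M g_\epsilon\, f^\epsilon\, dq - \int_M h_\epsilon\, f^\epsilon\, dq = \int_M (g_\epsilon - h_\epsilon)\, f^\epsilon\, dq.
\]
By the Cauchy--Schwarz inequality, the right-hand side is bounded in absolute value by $\|g_\epsilon - h_\epsilon\|_{L^2(M)} \cdot \|f^\epsilon\|_{L^2(M)}$. Since $f \in C_0^\infty(\TM) \subset C(M, L^2(\TM_p))$, Theorem \ref{thm:admissible_functions} says $f$ is an admissible test function, so $\|f^\epsilon\|_{L^2(M)}$ is uniformly bounded by (a multiple of) $\|f\|_{C(M, L^2(\TM_p))}$ (condition \eqref{seq1}). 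Combined with the hypothesis that $\|g_\epsilon - h_\epsilon\|_{L^2(M)} \to 0$, the difference of the integrals tends to zero. Therefore, if either of the sequences has a two-scale limit, passing $\epsilon \to 0$ in \eqref{gamma} shows that the other satisfies the same limiting identity for every test function $f$, which gives the coincidence of two-scale limits. (The $L^2(M)$-boundedness of $g_\epsilon$ and $h_\epsilon$ required by Definition \ref{def:two-scale} transfers between the two sequences thanks to the triangle inequality and \eqref{eq:L2_convergence to same limit}.)

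For the strong two-scale statement, I additionally need the norms $\|g_\epsilon\|_{L^2(M)}^2$ and $\|h_\epsilon\|_{L^2(M)}^2$ to share the same limit. Using the reverse triangle inequality in $L^2(M)$,
\[
\bigl|\, \|g_\epsilon\|_{L^2(M)} - \|h_\epsilon\|_{L^2(M)}\, \bigr| \leq \|g_\epsilon - h_\epsilon\|_{L^2(M)} \longrightarrow 0,
\]
and since both norm sequences are bounded (as noted above), their squares also have vanishing difference, which together with the already established equality of two-scale limits gives strong two-scale convergence of each to the common limit.

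There is no real obstacle here: the only point requiring a moment's care is confirming that $\|f^\epsilon\|_{L^2(M)}$ stays bounded as $\epsilon \to 0$, which is precisely the content of the admissibility estimate \eqref{seq1} applied via Theorem \ref{thm:admissible_functions}. Everything else is standard Hilbert-space bookkeeping.
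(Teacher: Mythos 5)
Your proof is correct. You and the paper both reduce the first claim to estimating $\int_M (g_\epsilon - h_\epsilon)\,f^\epsilon\,dq$, but the technical route differs: the paper simply observes that $f$ is bounded, hence $|f^\epsilon|\leq \|f\|_\infty$ pointwise (since $f^\epsilon$ is a partition-of-unity average of translates of $f$), and then uses that on a compact manifold $L^2$-convergence of $g_\epsilon - h_\epsilon$ implies $L^1$-convergence; this needs nothing beyond H\"older and $\mathrm{vol}(M)<\infty$. You instead apply Cauchy--Schwarz in $L^2$ and then invoke the admissibility bound \eqref{seq1} via Theorem~\ref{thm:admissible_functions} to control $\|f^\epsilon\|_{L^2(M)}$. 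Both work; the paper's route is the more elementary one (it never needs the admissibility theorem), while yours has the mild advantage of extending unchanged to unbounded but admissible test functions, which is sometimes convenient. For the strong two-scale statement, your reverse-triangle-inequality argument is equivalent to the paper's observation that $\int(|g_\epsilon|^2-|h_\epsilon|^2)\,dq\to 0$; both silently use the $L^2$-boundedness of the two sequences (built into the definition of two-scale convergence), which you rightly flag when you note the boundedness of the norm sequences.
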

\begin{proof}
For two-scale convergence, we take $f \in C(\TM)$. Then since $f$ is bounded
\[ \lim_{\epsilon \to 0} \left|\int_{M} g_\e f^\epsilon dq -\int h_\epsilon f^\epsilon dq \right|=0 \]
(Recall that $M$ is compact and thus $L^2$ convergence implies $L^1$). For the strong limit, simply notice that,

\[ \lim_{\epsilon \to 0}\int (|g_\epsilon|^2-|h_\epsilon|^2 )\,dq= 0 \]
\end{proof}
There are many variants of the result; for example, if there is $L^1$ convergence to zero replacing \eqref{eq:L2_convergence to same limit}, we get the same two-scale limit, which will be a strong two-scale limit if both sequences have equiintegrable $L^2$ powers.

Next we prove the compactness of $L^2$ with respect to two-scale convergence. The proof is very similar to that of  the Euclidean space.

\begin{Prop}
\label{prop:L2_bounded_2_scale}
 Assume that $u_\epsilon$ is a bounded sequence in $L^2(M)$.  Then there 
exists a subsequence $u_{\epsilon_k}$ and a function 
$u_0 \in L^2(\TM)$
 such that $u_{\epsilon_k}$  
two-scale converges to $u_0$.
\end{Prop}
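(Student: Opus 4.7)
The plan is to follow the classical Nguetseng--Allaire argument, suitably adapted to the torus bundle setting. The main strategy is to view the sequence $u_\epsilon$ as inducing a sequence of linear functionals on the space of admissible test functions and to extract a weak-$\ast$ limit via the Riesz representation theorem on $L^2(\TM)$.

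First, for each $\epsilon$, I would define a linear functional $L_\epsilon$ on $C(\TM)$ by
\[
L_\epsilon(f) := \int_M u_\epsilon f^\epsilon\,dq,
\]
where $f^\epsilon$ is the test function attached to $f$ via Definition \ref{def:test_functions}. Applying Cauchy--Schwarz followed by the admissibility estimate \eqref{seq1} from Theorem \ref{thm:admissible_functions}, I obtain
\[
|L_\epsilon(f)| \leq \|u_\epsilon\|_{L^2(M)}\, \|f^\epsilon\|_{L^2(M)}
\leq C\, \|u_\epsilon\|_{L^2(M)}\, \|f\|_{C(M,L^2(\TM_p))},
\]
so that the uniform bound on $\|u_\epsilon\|_{L^2(M)}$ translates into a uniform bound on $\{L_\epsilon\}$ viewed as a family of linear functionals on the separable Banach space $C(\TM)$ (with the $C(M,L^2(\TM_p))$ norm).

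Next I would pick a countable dense subset $\{f_k\} \subset C(\TM)$. By the uniform bound and a standard Cantor diagonal argument, a subsequence $\epsilon_n\to 0$ can be chosen so that $L_{\epsilon_n}(f_k)$ converges for every $k$. The uniform continuity of the $L_{\epsilon_n}$ then propagates this pointwise convergence to every $f \in C(\TM)$, yielding a linear functional $L$ on $C(\TM)$ with $L(f) = \lim_n L_{\epsilon_n}(f)$.

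The crucial step is to upgrade the bound on $L$ from the $C(M,L^2(\TM_p))$ norm to the $L^2(\TM)$ norm, and this is where the \emph{strong} part of Theorem \ref{thm:admissible_functions} (i.e.\ equality \eqref{seq2}) is indispensable. Indeed, for any $f \in C(\TM)$,
\[
|L(f)| = \lim_{n\to\infty}|L_{\epsilon_n}(f)|
\leq \limsup_{n\to\infty}\|u_{\epsilon_n}\|_{L^2(M)}\, \|f^{\epsilon_n}\|_{L^2(M)}
\leq C\left(\oint_{\TM}|f|^2\,dv\,dp\right)^{1/2}.
\]
Hence $L$ extends, by density of $C(\TM)$ in $L^2(\TM)$, to a bounded linear functional on $L^2(\TM)$. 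The Riesz representation theorem then delivers a unique $u_0 \in L^2(\TM)$ with
\[
L(f) = \oint_{\TM} u_0(p,v)\, f(p,v)\,dv\,dp
\]
for every $f \in L^2(\TM)$, in particular for every $f \in C_0^\infty(\TM)$. By the very definition of $L$, this means exactly that $u_{\epsilon_n}$ two-scale converges to $u_0$ in the sense of Definition \ref{def:two-scale}. The main obstacle — the only place where genuine work happens beyond routine functional analysis — is the passage from the naive bound in the $C(M,L^2(\TM_p))$ norm to the $L^2(\TM)$ bound required to invoke Riesz; that step relies on the nontrivial fact \eqref{seq2} that the oscillating test functions $f^\epsilon$ have $L^2$ norms converging to $\|f\|_{L^2(\TM)}$, which was precisely the point of the admissibility theory developed in Section \ref{sec;testfunctions}.
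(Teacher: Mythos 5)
Your proof is correct and follows essentially the same strategy as the paper: bound the functionals $T_\epsilon(f)=\int_M u_\epsilon f^\epsilon\,dq$ via the admissibility estimate \eqref{seq1}, extract a limit functional from the uniformly bounded family, and then use \eqref{seq2} to upgrade the bound from the $C(M,L^2(\TM_p))$ norm to the $L^2(\TM)$ norm so that Riesz representation produces $u_0\in L^2(\TM)$. The only cosmetic difference is that you extract the convergent subsequence directly by a Cantor diagonal argument over a countable dense subset of $C(\TM)$, whereas the paper first identifies each $T_\epsilon$ with a Radon measure on $\TM$ and invokes weak-$*$ compactness in $\mathcal{M}(\TM)$; both routes deliver the same limit functional, and you correctly identify the key step as the $L^2$ bound coming from the strong admissibility property.
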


\begin{proof} For any admissible test function, let us define
\[
T_\epsilon:{\mathcal C}(\TM)\to\R,\qquad T_\epsilon (f)=\int_M u_\epsilon f^\epsilon\, dq
\]

Since $f$ is admissible, condition \eqref{seq1} in Definition \ref{def:admissible_function} yields
\begin{equation}
\label{measure}
|T_\epsilon(f)|\leq (\int_M| u_\epsilon|^2)^{1/2}(\int_M| f^\epsilon|^2)^{1/2} 
\leq C \|u_\epsilon\|_2\|f\|_{{\mathcal C}(\TM)}
\leq C'\|f\|_{{\mathcal C}(\TM)}
\end{equation}
Hence $T_\epsilon$ is  in the dual of ${\mathcal C}(\T M)$ which can be identified with ${\mathcal M}(\T M)$.
Thus,
  there  exists a Borel measure $ \mu_\epsilon$ on $\T M$   such that 
$T_\epsilon (f) =  ( \mu_\epsilon, f )$ (dual pairing). 

Since  ${\mathcal C}(\T M)$  is  separable, and the family $\mu_\epsilon$ is uniformly bounded in its dual,  there exists  a subsequence $\mu_ \epsilon $  and  a measure $\mu_0$  such that  
$\mu_\epsilon \to 
 \mu_0, $ in the weak* topology of ${\mathcal M}(\T M)$. This means that there exists $\mu_0$ such that
$$
\langle \mu_0,f\rangle = \lim_{\e \to 0}\, \langle \mu_\epsilon,f\rangle.
$$
Moreover, using \eqref{seq2}, we also get
\begin{equation}
\label{eq:measure_bound_inL2}
|T_\epsilon(f)|\leq (\int_M| u_\epsilon|^2)^{1/2}(\int_M| f^\epsilon|^2)^{1/2}\leq
C''\|f\|_{L^2(\TM)}
\end{equation}

Thus, going to the limit in \eqref{eq:measure_bound_inL2}, we see that, for an admissible test function $f$,
\[
|\langle \mu_0,f\rangle| \leq C \|f\|_{L^2(\T M)}
\]
Since the class of the admissible test-functions is dense in $L^2(\T M)$, the limit measure is 
given by a function $\tilde u_0 \in  L^2(\T M)$. Hence,   
\[
\langle \mu_0,f\rangle= \int_{\T M}\, \tilde u_0 f\,dv\, dp.
\]
Take $u_0=\vol (\TM_p )\tilde u_0$.
 \end{proof}
 
 The next lemma relates two-scale convergence with the usual weak convergence. 
 \begin{Lemma} \label{lem:extra}
 Let $v_\epsilon $ be a bounded  sequence in $ L^2(M)$ that  two-scale converges  to 
 $v \in L^2(\T M)$. Then $v_\epsilon $ converges  weakly in $L^2(M)$ to the function 
 $\tilde{v}(p)={\avint}_{\TM_p} v(p,\xi)\,d\xi$.
 \end{Lemma}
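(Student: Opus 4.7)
The plan is to exploit the two-scale convergence using test functions on $\TM$ that are independent of the fiber variable, then upgrade to $L^2$ by density. First, I would verify that $\tilde v \in L^2(M)$: applying Jensen's inequality fiber-wise gives
\[
|\tilde v(p)|^2 \leq \vol(\TM_p)^{-1}\int_{\TM_p} |v(p,\xi)|^2 \,d\xi,
\]
so integrating over $M$ and applying the fiber-integration lemma yields $\|\tilde v\|_{L^2(M)}^2 \leq \oint_{\TM} |v|^2\,dv\,dp < \infty$.

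Next, fix $h\in C^\infty(M)$ and consider $f:\TM\to\R$ defined by $f(p,v):=h(p)$, which lies in $C^\infty(\TM)$ and is a valid test function per Definition~\ref{def:two-scale}. By Definition~\ref{def:test_functions},
\[
f^\epsilon(q) = \sum_j \psi_j(q)\,h(p_j).
\]
Since $\sum_j \psi_j \equiv 1$ and each nonzero $\psi_j(q)$ forces $d(p_j,q) \lesssim \e^{\beta}$ by the partition of unity of Theorem~\ref{thm:usable_partition_of_unity}, the continuity of $h$ on the compact manifold $M$ gives $\|f^\epsilon - h\|_{L^\infty(M)}\to 0$.

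Now two-scale convergence together with the fiber-integration lemma yields
\[
\lim_{\epsilon\to 0}\int_M v_\epsilon\,f^\epsilon\,dq = \oint_{\TM} v(p,\xi)\,h(p)\,d\xi\,dp = \int_M h(p)\,\tilde v(p)\,dp,
\]
while the uniform $L^2(M)$-bound on $\{v_\epsilon\}$ combined with Cauchy–Schwarz gives
\[
\left|\int_M v_\epsilon (h-f^\epsilon)\,dq\right| \leq \|v_\epsilon\|_{L^2(M)}\,\vol(M)^{1/2}\,\|h-f^\epsilon\|_{L^\infty(M)} \longrightarrow 0.
\]
Subtracting, we conclude $\int_M v_\epsilon\,h\,dq \to \int_M \tilde v\,h\,dp$ for every $h\in C^\infty(M)$.

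Finally, to obtain weak convergence in $L^2(M)$, I would argue by density: given $\phi\in L^2(M)$ and $\eta>0$, pick $h\in C^\infty(M)$ with $\|\phi-h\|_{L^2}<\eta$, and use the uniform $L^2$-bound on $\{v_\epsilon\}$ and $\tilde v\in L^2(M)$ to control $\int (v_\epsilon-\tilde v)(\phi-h)\,dp$; the previous paragraph controls $\int (v_\epsilon-\tilde v)h\,dp$ in the limit. No serious obstacle arises; the only point requiring care is the uniform approximation $f^\epsilon \to h$, which follows cleanly from the Voronoi construction and continuity of $h$.
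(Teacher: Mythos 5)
Your proof is correct and takes essentially the same approach as the paper: both proofs lift a smooth function $h$ on $M$ to a fiber-constant test function on $\TM$, observe that $f^\epsilon = \sum_j \psi_j\, h(p_j)$ converges uniformly to $h$ because $\supp\psi_j\subset B(p_j,\e^\beta)$, apply two-scale convergence plus the uniform $L^2$ bound on $v_\e$ to pass to the limit, and finish by density. The only cosmetic differences are that you work with $C^\infty(M)$ and a uniform-continuity estimate where the paper uses $C^1(M)$ and an explicit $O(\e^\beta)$ Lipschitz bound, and you add a preliminary Jensen-inequality check that $\tilde v\in L^2(M)$.
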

\begin{proof} 
Let $\phi \in C^1(M)$. Lift it to a function $f_\phi \in C^1(\TM)$ by $f_\phi(p, \xi)=\phi(p).$ 
Then
\begin{equation}
\label{eq:sequence_in_L2_convergence}
\int_M v_{\e}(q) f_\phi^{\e}(q) \,dq \to \oint_{\T M} v(p, \xi)\, \phi(p)\,d\xi\, dp=
\int_M \left({\avint}_{\TM_p} v(p, \xi)\,d\xi \right)\,\phi(p)\,dp.
\end{equation}

Observe that
\[
f_\phi^{\e}=\sum_j \psi_j \phi(p_j),
\quad
\text{ and }
\quad
f_\phi^{\e}-\phi=\sum_j \psi_j\cdot (\phi(p_j)-\phi).
\]

Since $\supp \psi_j\subset B(p_j,\e^\beta)$ and the $\psi_j$'s add to one,
\[
|f_\phi^{\e}-\phi|\leq \e^\beta \|\phi\|_{C^1(M)}
\]
Thus $\|f_\phi^{\e}-\phi\|_{L^2(M)}\leq C \e^\beta \|\phi\|_{C^1(M)}$, and it  follows from \eqref{eq:sequence_in_L2_convergence}
that
\[
\int_M v_{\e}(q) \phi(q) \,dq \to \int_M \tilde{v}(q) \phi(q)\, dq,   
\qquad \tilde{v}(q)= 
{\avint}_{\TM_q} v(q, \xi)\,d\xi.
\]
At last, due to the density of $C^1(M)$ in $L^2(M)$ and the uniform boundedness of $\|v_{\e}\|_{L^2(M)}$,
the above equation implies the desired result.
\end{proof}


\begin{Rem}\label{rem:old2scale}
In the Euclidean setting, the  classical two-scale convergence uses test functions of the form $f(x,\frac{x}{\epsilon})$ which are $[0,1]^n$-periodic with respect to the canonical lattice (placed at the origin). However, these choices are amenable to changes.
For instance,  the lattice of periodicity could change from point to point and the origin could be shifted. For a smooth 
$GL(\mathbb{R}^n)$-valued function $A(x)$ and a sequence $x_0^\epsilon$, we could define, for $f$ as above,  test functions   $f^\epsilon(x)=f(x,A(x)[\frac{x-x_0^\epsilon}{\epsilon}])$.  Our choice of
the underlying net  provides yet  an additional degree of freedom. 

Notice that by a careful choice of the net in the definition of Voronoi domains, our definition of two--scale convergence in the Euclidean setting coincides with the classical one. For example, given $\frac12<\beta<1$,  one considers the net with 
coordinates   $\ell \epsilon^\beta$, for $0\le \ell \le \epsilon^{-\beta}$. Then, if we  choose $\epsilon$ so that
$\epsilon^{-\beta} \in \mathbb{N}$,  it is straightforward to see that definition~\ref{def:two-scale} agrees with the classical notion.
\end{Rem}
\begin{Rem}\label{rem:nonuniqueness}
In principle, although the weak limit in \ref{lem:extra} is always  unique, the two-scale limit may depend on the chosen sequences of nets. Indeed, 
for $M=S^1$, we can consider $N:=\e^\beta/2\pi$, $f[p,v]=\sin{v}$, and the nets given as
\[
p_\ell=\frac{2\pi\ell}{N}, 
\qquad
\tilde{p_\ell}=\frac{2\pi\ell}{N}+\frac{\pi}{2\e}.
\] 
In this case, we can see that $\tilde{f}^\e(q)$ in the second net, 2-scale converges to the function $v_0[p,v]=\cos{v}$.  
However, this corresponds to a simple shift of the origin in the classical definition of test functions in the two scale convergence.  
\end{Rem}

Next, we take a closer look  at  strong two scale convergence.

\begin{lem}\label{lem:approx}
Let $u_\epsilon \S2s  a$ and $f$ be admissible. Then, 
\[ \lim_{\epsilon \to 0} \int_M |u_\epsilon -f^\epsilon|^2dq=\oint_{\TM} |a-f|^2 dvdp \]
\end{lem}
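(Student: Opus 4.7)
The plan is to expand the square and reduce the claim to three separate limit computations, one for each cross term. We write
\[
\int_M |u_\epsilon - f^\epsilon|^2\, dq
= \int_M |u_\epsilon|^2\, dq
- 2\int_M u_\epsilon f^\epsilon\, dq
+ \int_M |f^\epsilon|^2\, dq,
\]
and analogously expand $\oint_{\TM} |a-f|^2\, dv\, dp$. By hypothesis $u_\epsilon \S2s a$, so the first term converges to $\oint_{\TM} |a|^2\, dv\, dp$ directly from the definition of strong two-scale convergence. The third term converges to $\oint_{\TM} |f|^2\, dv\, dp$ by the second defining property \eqref{seq2} of admissibility. It therefore remains to show that the cross term satisfies
\[
\lim_{\epsilon\to 0}\int_M u_\epsilon f^\epsilon\, dq = \oint_{\TM} a\, f\, dv\, dp.
\]

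The main obstacle is that Definition \ref{def:two-scale} only gives the cross-term convergence for test functions in $C_0^\infty(\TM)$, whereas we need it for arbitrary admissible $f$. I would remove this obstacle by a density and Cauchy--Schwarz argument. Since $\TM$ is compact, $C^\infty(\TM)$ is dense in $C(M,L^2(\TM_p))$ in the natural norm, so I can pick a sequence $f_k \in C^\infty(\TM)$ with $\|f_k - f\|_{C(M,L^2(\TM_p))}\to 0$. Writing
\[
\int_M u_\epsilon f^\epsilon\, dq - \oint_{\TM} a f\, dv\, dp
= \int_M u_\epsilon (f^\epsilon - f_k^\epsilon)\, dq
+ \int_M u_\epsilon f_k^\epsilon\, dq - \oint_{\TM} a f_k\, dv\, dp
+ \oint_{\TM} a(f_k - f)\, dv\, dp,
\]
I bound the first piece by $\|u_\epsilon\|_{L^2(M)} \|(f-f_k)^\epsilon\|_{L^2(M)}$. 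Using $\|u_\epsilon\|_{L^2(M)} \leq C$ and the admissibility bound \eqref{seq1} applied to $f - f_k$, this is $\leq C'\|f-f_k\|_{C(M,L^2(\TM_p))}$, which is small uniformly in $\epsilon$. The middle piece tends to zero as $\epsilon\to 0$ for each fixed $k$ by Definition \ref{def:two-scale} applied to the smooth test function $f_k$. The last piece is controlled by Cauchy--Schwarz on $\TM$. Taking $k$ large and then $\epsilon$ small, the cross-term limit follows.

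Combining the three limits yields
\[
\lim_{\epsilon\to 0}\int_M |u_\epsilon - f^\epsilon|^2\, dq
= \oint_{\TM}|a|^2\, dv\, dp - 2\oint_{\TM} af\, dv\, dp + \oint_{\TM}|f|^2\, dv\, dp
= \oint_{\TM} |a-f|^2\, dv\, dp,
\]
as desired. The only real technical point is the extension of two-scale testing from $C_0^\infty$ to admissible functions, and this is handled cleanly by the admissibility inequality \eqref{seq1}, which is precisely the uniform bound needed to make the approximation argument work.
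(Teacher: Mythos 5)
Your proposal is correct and follows the same decomposition as the paper's own proof: expand the square, use strong two-scale convergence for $\int_M|u_\epsilon|^2\,dq$, use admissibility \eqref{seq2} for $\int_M|f^\epsilon|^2\,dq$, and treat the cross term. The paper's proof states the cross-term limit $\int_M u_\epsilon f^\epsilon\,dq \to \oint_{\TM} af\,dv\,dp$ without further comment, whereas you explicitly justify it by approximating the admissible $f$ by smooth $f_k$ and invoking the admissibility bound \eqref{seq1} on $f-f_k$ together with $\|u_\epsilon\|_{L^2}\le C$ — a worthwhile precaution, since Definition~\ref{def:two-scale} tests only against $C_0^\infty(\TM)$ functions and the general statement for admissible test functions is only derived later (Remark~\ref{rem:testfunctions}) from Lemma~\ref{compensated}, which itself depends on Lemma~\ref{lem:approx}.
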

\begin{proof}

Indeed, 
$$
\int_M|u_\epsilon-f^\epsilon|^2\,dq = 
\int_M|u_\epsilon|^2 \,dq +\int_M|f^\epsilon|^2\,dq-2\int_M u_\epsilon f^\epsilon\,dq
$$
$$
\to \oint_{\TM} |a |^2\, dv\,dp  + 
\oint _{\TM }| f |^2\,dv\, dp 
-2\oint _{\TM} a f \,dv\, dp 
$$
$$=\oint_{\TM} |a (p,\xi)-f(p,\xi)|^2\,dp\, d\xi,
$$
as stated.

\end{proof}

The following lemma is the  analogue of Lemma \ref {lem:examples} for sequences  of functions
\begin{Lemma}[Compensated compactness] \label{compensated}
 Assume that  $u_\epsilon $  strongly two-scale converges  to $u_0$  and $v_\epsilon$ two-scale
 converges to $v_0$.  
  
  \begin{enumerate}
  \item[(a)]
  Then 
 \[
 u_\epsilon(p) v_\epsilon(p)  \to\vol(\TM_p)^{-1} \int _{\TM_p}u_0(p, v)\,v_0(p, v)\,dv\quad \hbox{in} \quad{\mathcal D}'(M).
\] 
 
 \item[(b)] If  $u_0\in{ \mathcal C} (M;\, L^2(\TM_p))$, then  $\|u_\epsilon- u_0^\epsilon\|_{L^2(M) }\to 0$.
   \end{enumerate}

 \end{Lemma}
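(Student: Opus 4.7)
The plan is to handle part (b) first as a direct consequence of Lemma \ref{lem:approx}, and then leverage (b) together with the ordinary two-scale convergence of $v_\e$ to establish (a) via an $L^2(\TM)$ approximation of $u_0$.

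For (b), the hypothesis $u_0\in C(M;L^2(\TM_p))$ combined with Theorem \ref{thm:admissible_functions} tells us that $u_0$ is itself an admissible test function. Applying Lemma \ref{lem:approx} with the same $u_0$ playing the roles of both $a$ and $f$ yields immediately
\[
\lim_{\e\to 0}\int_M |u_\e-u_0^\e|^2\,dq=\oint_{\TM}|u_0-u_0|^2\,dv\,dp=0.
\]

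For (a), fix $\phi\in C^\infty(M)$. Given $\eta>0$, I would choose $f\in C^\infty(\TM)$ (dense in $L^2(\TM)$) with $\|u_0-f\|_{L^2(\TM)}<\eta$ and split
\[
\int_M u_\e v_\e\phi\,dq=\int_M (u_\e-f^\e)\,v_\e\,\phi\,dq+\int_M f^\e v_\e\phi\,dq.
\]
For the first summand, Cauchy--Schwarz and Lemma \ref{lem:approx} give
\[
\Bigl|\int_M (u_\e-f^\e)v_\e\phi\,dq\Bigr|\leq \|\phi\|_\infty\|v_\e\|_{L^2(M)}\|u_\e-f^\e\|_{L^2(M)}\longrightarrow C\,\Bigl(\oint_{\TM}|u_0-f|^2\,dv\,dp\Bigr)^{1/2}\leq C'\eta.
\]
For the second summand, let $\bar\phi(p,v):=\phi(p)$ denote the lift of $\phi$ to $\TM$. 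Using the Lipschitz continuity of $\phi$ together with the bound $|q-p_j|\lesssim\e^\beta$ valid on $\supp\psi_j$, a direct comparison of the formulas for $f^\e\phi$ and $(f\bar\phi)^\e$ gives
\[
\|f^\e\cdot\phi-(f\bar\phi)^\e\|_\infty\leq C\,\e^\beta\,\|f\|_\infty\,\|\phi\|_{C^1},
\]
so up to a vanishing error I may replace $f^\e\phi$ by $(f\bar\phi)^\e$. Since $f\bar\phi\in C(\TM)$ is admissible, the two-scale convergence $v_\e\2s v_0$ gives
\[
\int_M (f\bar\phi)^\e v_\e\,dq\longrightarrow \oint_{\TM}f\,v_0\,\phi\,dv\,dp.
\]
Passing to the limit in $\e$ and then sending $\eta\to 0$, the contribution $\oint_{\TM}f\,v_0\,\phi$ converges to $\oint_{\TM}u_0\,v_0\,\phi\,dv\,dp$, which by integration along the fiber equals the right-hand side in (a), establishing convergence in $\mathcal{D}'(M)$.

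The main obstacle is not deep but requires careful bookkeeping: the strong two-scale hypothesis enters only through Lemma \ref{lem:approx}, which is phrased with the normalized integral $\oint_{\TM}$, so one must verify that $L^2(\TM)$-approximation of $u_0$ translates into control of $\|u_\e-f^\e\|_{L^2(M)}$ via that normalized integral. This is automatic because $\vol(\TM_p)$ is bounded above and below on the compact base $M$, so $\|\cdot\|_{L^2(\TM)}$ and $(\oint|\cdot|^2\,dv\,dp)^{1/2}$ are equivalent. Beyond this, the proof is structurally parallel to the classical Euclidean compensated compactness result for two-scale convergence, with Lemma \ref{lem:approx}, Theorem \ref{thm:admissible_functions}, and the partition-of-unity estimates from Theorem \ref{thm:usable_partition_of_unity} serving as the geometric replacements of their Euclidean counterparts.
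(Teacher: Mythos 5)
Your proof is correct and follows essentially the same route as the paper's: part (b) is a direct application of Lemma \ref{lem:approx} with $\psi_k=u_0$, and part (a) proceeds by choosing a smooth $L^2(\TM)$-approximant of $u_0$, splitting $\int_M \phi\,u_\e v_\e$ accordingly, controlling the first piece via Cauchy--Schwarz and Lemma \ref{lem:approx}, and passing to the two-scale limit in the second. The one place you are more explicit than the paper is the replacement of $f^\e\cdot\phi$ by $(f\bar\phi)^\e$ up to an $O(\e^\beta)$ error — the paper simply declares $\varphi(p)\psi_k(p,\xi)$ to be a test function and invokes two-scale convergence, but the same $\e^\beta$-bookkeeping is what justifies that step (it is the argument used for Lemma \ref{lem:examples}(1)), so your presentation is a slightly cleaner version of the same idea.
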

 
\begin{proof} 
 We want to show that for a test function $ \varphi\in {\mathcal D}(M)$,
 $$
\int_M\,\varphi\, u_\epsilon v_\epsilon\,dq \to
 \int _M \varphi \left(\vol(\TM_p)^{-1}\int_{\T_p}u_0(p,\xi)v_0(p,\xi)\,d\xi \right)\, dp.
$$
 Let $\psi_k(p,\xi)$ be a sequence in $\mathcal{C}^\infty_0(\T M) $
which is  $L^2(\T M)$-convergent to $u_0(p, \xi)$. It follows from lemma~\ref{lem:approx} that, 
\begin{equation} \label{29.1}
\lim_{\epsilon \to 0}\int_M|u_\epsilon-\psi_k^\epsilon|^2\,dq
 =\oint_{\T M}|u_0-\psi_k|^2\,dpd\xi.
\end{equation}

To continue, write
\begin{equation}
\label{eq:integral_in_compensated}
\int_M \varphi\, u_\epsilon v_\epsilon \,dq= 
 \int_M \varphi\, (u_\epsilon -\psi_k^\epsilon)v_\epsilon\,dq
 +\int_M \varphi\,  \psi_k^\epsilon v_\epsilon\,dq.
\end{equation}
 Due to the smoothness of $\varphi $, the function $\varphi(p)\, \psi_k(p, \xi)$ is a test-function.
Using  the two-scale-convergence of $v_\epsilon$, the last term in \eqref{eq:integral_in_compensated} converges to 
$\oint_{\T M}\varphi\,  \psi_k  v_0\,dp\, d\xi$ as $ \epsilon\to 0$.

Then 
$$
\left| \int_M \,\varphi\, u_\epsilon v_\epsilon\,dq-
\int_{M} \varphi(p)
\left(\vol(\TM_p)^{-1}\int_{\T_p} u_0(p, \xi) v_0(p, v)\,dv\right) dq\, \right|
$$
$$ 
\leq \left|\int_M \varphi\, v_\epsilon (u_\epsilon- \psi_k^\epsilon )\,dq \right|+
\left|\int_M \varphi\, \psi_k^\epsilon v_\epsilon\,dq-
\oint_{\T M} \varphi(p)\, \psi_k(p,v) v_0(p, v)\,dv \, dp\, \right|
+$$
$$
+\left|\oint_{\T M} \varphi\, v_0 ( \psi_k -u_0) \,dv \, dp\ \right|\leq $$
\begin{multline}
\leq \| \varphi v_\epsilon\|_{L^2(M)}\,\|u_\epsilon-\psi_k^\epsilon\|_ {L^2(M)} 
+\left|\int_M \varphi \psi_k^\epsilon v_\epsilon dq-\oint_{\T M} \varphi  \psi_k  v_0 \,dv \, dp \right| + \\
+ \|\vol(\TM_p)^{-1}\varphi v_0\|_{L^2(\T M)}\,\cdot\,\|u_0-\psi_k\|_{L^2(\T M)}.
\end{multline}

Now let $\delta >0$ be arbitrary, and define
$C$ as the maximum of the following two numbers $\|\vol(\TM_p)^{-1}\varphi v_0\|_{L^2(\T M)}$ and $\sup_{\e>0} \|\varphi v_\epsilon\|_{L^2(M)}$.
  Take also some $k$ such that $\|u_0-\psi_k\|_{L^2(\T M)}\leq \frac{\delta}{2 C}$.  Using \eqref{29.1},
we then choose $\e_0$ such that for $\e <\e_0$ we have 
$\|u_\e-\psi_k^\e\|_{L^2(M)}\leq \frac{\delta}{2 C}$.
Since $\phi(p) \psi_k(p, v)$ is a proper test-function, for a fixed $k$,
\[
\left|\int_M\,\varphi\, \psi_k^\epsilon\, v_\epsilon \, dq-\oint_{\T M}\varphi  \psi_k  v_0 \,dv \, dp\ \right| \to 0,
\quad \hbox{as}\,\, \e \to 0.
\]
Summarizing the above, we see that
$$ \lim_{\e \to 0}
\left| \int_M \varphi u_\epsilon v_\epsilon-\oint_{\TM} \varphi  u_0 v_0 \right|\leq
 \limsup_{\epsilon \to 0} 
 \left|\int_M \varphi \psi_k^\epsilon v_\epsilon-\oint_{\T M} \varphi  \psi_k  v_0 \right|
 +\delta= \delta,
 $$
 thus proving the first part of the Proposition.
 
	To prove the second statement, assume that $u_0$ is an admissible test function and argue as in the proof of \eqref{29.1} with $\psi_k=u_0$.
 
 \end{proof}

 \begin{Rem} \label{rem:testfunctions} Notice that  the above theorem implies that, if $v_\epsilon$ two-scale converges to $v_0$, then  for every admissible $f  \in C(M,{\mathcal{B}_p}( \TM))$, we have that
 \[ 
 \lim_{\epsilon \to 0} \int_{M} \varphi \, v_\epsilon \, f^\epsilon\, dp=\int_{M} \varphi  \vol(\TM_p)^{-1}\int_{T_p M} v_0 \, f \,dv \,dp 
 \]
for every $\varphi \in  {\mathcal D}(M)$. This follows  directly from part  (a) of Lemma~\ref{compensated}  because, as discussed in  Lemma  \ref{lem:examples},  $f^\epsilon$ strongly  two-scale converges to $f$.
 
 \end{Rem}

Compensated compactness lemma is stated for  bilinear quantities, i.e. it analyzes the limits of products $a_\epsilon, b_\epsilon$ of two sequences. In fact, if we have a family of sequences $\{a^{i}_\epsilon\}_{i=1}^n$
such that $a^{i}_\epsilon \S2s a^{i}$ and $b_\epsilon \2s b$, the same lemma describes the distributional limit of 
$\Pi_{i=1}^n a^{i}_\epsilon b_\epsilon$ thanks to our next result

\begin{Lemma}\label{lem: strong product} 
Let $a_\e $ and $b_\e$  be  bounded sequences  of functions  on $M$ which strongly two-scale converge to $a$ and  $b$ respectively, Then $a_\e b_\e$ strongly two scale converges to $ab$.
\end{Lemma}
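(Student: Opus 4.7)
The plan is to verify both ingredients of strong two-scale convergence for $a_\e b_\e$: that $\int_M a_\e b_\e \, g^\e \, dq \to \oint_{\TM} ab\, g \, dv \, dp$ for every admissible test function $g$, and that $\int_M |a_\e b_\e|^2 \, dq \to \oint_{\TM} |ab|^2 \, dv \, dp$. The natural reading of the hypothesis is that $a_\e$ and $b_\e$ are uniformly bounded in $L^\infty(M)$, in which case the two-scale limits $a,b$ belong to $L^\infty(\TM)$ and both $a_\e b_\e$ and $ab$ sit uniformly in $L^2$.

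The central idea is to reduce to smooth data, where the conclusion is already available. Fix $\delta > 0$ and pick $\tilde a, \tilde b \in C(\TM)$ with $\|a - \tilde a\|_{L^2(\TM)}, \|b - \tilde b\|_{L^2(\TM)} < \delta$, truncated to have $L^\infty$ norms comparable to those of $a,b$. By part (3) of Lemma \ref{lem:examples}, $\tilde a^\e \tilde b^\e \S2s \tilde a \tilde b$. To transfer this to $a_\e b_\e$ I would invoke Lemma \ref{lem:approx}, which gives
\[
\lim_{\e \to 0} \|a_\e - \tilde a^\e\|_{L^2(M)}^2 = \|a - \tilde a\|_{L^2(\TM)}^2 \le \delta^2,
\]
and analogously for $b_\e$ and $\tilde b^\e$. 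Using the $L^\infty$ bounds on the remaining factors, the decomposition
\[
a_\e b_\e - \tilde a^\e \tilde b^\e = (a_\e - \tilde a^\e)\, b_\e + \tilde a^\e \, (b_\e - \tilde b^\e)
\]
then yields $\limsup_{\e \to 0} \|a_\e b_\e - \tilde a^\e \tilde b^\e\|_{L^2(M)} \le C\delta$ for some constant $C$ depending only on the uniform $L^\infty$ bounds.

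With this estimate in hand, the two-scale condition reduces to a three-term triangle inequality: $\int_M a_\e b_\e \, g^\e - \oint_{\TM} ab\, g$ splits as $\int_M (a_\e b_\e - \tilde a^\e \tilde b^\e) g^\e + \bigl(\int_M \tilde a^\e \tilde b^\e g^\e - \oint_{\TM} \tilde a \tilde b \, g\bigr) + \oint_{\TM} (\tilde a \tilde b - ab)\, g$. The outer two contributions are $O(\delta)$ by Cauchy--Schwarz, using admissibility of $g$ and the $L^\infty$ bounds, while the middle term vanishes as $\e \to 0$ by Lemma \ref{lem:examples}(3). Sending first $\e \to 0$ and then $\delta \to 0$ settles the two-scale convergence. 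The norm convergence is proven identically using $|a_\e b_\e|^2 - |\tilde a^\e \tilde b^\e|^2 = (a_\e b_\e - \tilde a^\e \tilde b^\e)(a_\e b_\e + \tilde a^\e \tilde b^\e)$, together with the strong two-scale limit for $|\tilde a^\e \tilde b^\e|^2$ which again comes from Lemma \ref{lem:examples}(3).

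I expect the main subtlety to be conceptual rather than computational. Strong two-scale convergence is \emph{not} ordinary $L^2$ convergence on $M$: the sequence $a_\e$ typically oscillates and only weakly converges to the vertical average of $a$ (Lemma \ref{lem:extra}), not to anything resembling $a$ itself. So one cannot naively multiply limits. Lemma \ref{lem:approx} is what converts strong two-scale convergence into genuine $L^2(M)$-closeness to a realization $\tilde a^\e$, modulo an $L^2(\TM)$ error that can be made arbitrarily small; the $L^\infty$ hypothesis is then precisely what lets this closeness survive multiplication.
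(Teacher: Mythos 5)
Your proposal takes essentially the same approach as the paper's proof: approximate $a$ and $b$ by smooth functions $\tilde a,\tilde b$ in $L^2(\TM)$ (in the paper, a sequence $a_k, b_k$), invoke Lemma~\ref{lem:approx} to convert strong two-scale convergence of $a_\e, b_\e$ into genuine $L^2(M)$-closeness to $\tilde a^\e, \tilde b^\e$, use the uniform $L^\infty$ bounds to propagate this closeness through the product, treat the smooth case via Proposition~\ref{prop:algebra} (equivalently Lemma~\ref{lem:examples}(3)), and close the argument with a three-term triangle inequality in $\delta$ (resp.\ $k$) and $\e$. The one genuine difference is one of completeness: the paper's written proof only verifies the norm condition $\int_M |a_\e b_\e|^2 \to \oint_{\TM}|ab|^2$ and leaves the two-scale convergence $a_\e b_\e \2s ab$ implicit, whereas you spell out both halves of the definition of strong two-scale convergence, which is the more careful thing to do — the norm condition alone does not force the weak two-scale limit to be $ab$. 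You also explicitly flag the need to truncate the smooth approximants so their $L^\infty$ norms stay comparable, a detail the paper glosses over but which is needed for the product estimates. In short: same route, with your version slightly more thorough.
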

\begin{proof}
Take  smooth $L^2(\T M)$  approximations $a_k$ and $b_k$ of $a$ and $b$ respectively.  Firstly, observe that  by boundedness of $a,b$ it holds
\begin{equation}\label{uno}
\lim_{k \to \infty} \oint_{\TM} |a_k b_k|^2 \,dv \, dp =\oint_{\TM} |ab|^2 \, dv \, dp
\end{equation}

As in the proof of Lemma~\ref{compensated}, strong two scale convergence and Lemma~\ref{lem:approx}  imply that 
 \begin{align}\label{eq:doubleaproximation}
  &\lim_\e \int_M|a_\e- (a_k)^\e|^2dq= \oint_{\T M}|a- a_k|^2\,dv\, dp 
   \textrm{ and }\\ &  \lim_\e \int_M|b_\e- (b_k)^\e|^2 dq= \oint_{\T M}|b- (b_k) |^2\,dv\,dp \end{align}

Now we write  \[ a_\epsilon b_\epsilon=(a_\epsilon-(a_k)^\epsilon+ a_k^\epsilon) (b_\epsilon-(b_k)^\epsilon+ (b_k)^\epsilon) \]
and using the $L^\infty$-boundedness of the sequences $a_\e, b_\e, f^\e_k$ and $g^\e_k$ together with  Cauchy-Schwartz inequality  and \eqref{eq:doubleaproximation}
we have that 

\begin{equation}\label{dos}
\limsup_{\epsilon \to 0} |\int_M |a_\e b_\e|^2 \, dq- \int_M |(a_k)^\e (b_k)^\e|^2\, dq | \le C(\|a-a_k\|_{L^2} + \|b-b_k\|_{L^2}), \end{equation}

On the other hand, by Proposition~\ref{prop:algebra}

\begin{equation} \lim_{\epsilon \to 0}  \int_M |(a_k)^\e (b_k)^\e|^2 \, dq= \oint_{\TM} |a_k b_k|^2 \,dv \, dp \end{equation}

Finally, we write, 

\[ \begin{aligned}|\oint_{\TM} |ab|^2 dq-\int_{M} |a_\epsilon b_\epsilon|^2 dq| &\le |\oint_{\TM} (|ab|^2-|a_kb_k|^2) \, |\\ &+ |\int_M |(a_k)^\epsilon (b_k)^\epsilon|^2 \, dq  - \oint_{\TM} |a_kb_k|^2|\\ &+|\int_M (|a_\epsilon b_\epsilon|^2-|(a_k)^\epsilon (b_k)^\epsilon|^2)  \,|
\end{aligned}\]

and conclude by choosing $k$ large enough to make the first and third term as small as we wish  by \eqref{uno} and \eqref{dos} and for such $k$ choose $\epsilon$ to make the second term small as well. 
	
\end{proof}

\section{Two-scale convergence of tensors}\label{sec:2stensors}
Along the entire section, $\{X_i\}_{i=1}^k \in \mathfrak{X}(M)$ will be a set of $k$ arbitrary vector fields,
and  $ \{\omega^j \}_{j=1}^m \in \Lambda^1(M)$ will be a set of  $m$ arbitrary $1$-forms. We will use the shortcuts $X_I$, $\omega^J$, where no confusion 
is possible.

\begin{Def}
\label{def:2stensors}
Let $T_\epsilon \in L^2 T^{k,m}(M)$ be a sequence of $(k,m)$-tensors, and $T \in L^2T^{k,m}(\V)$ be a vertical $(k,m)$-tensor.  Then  we say that
 $T_\epsilon$ two-scale (strongly) converges  to $T$,
\[T_\epsilon \2s T,
\qquad (T_\epsilon \S2s T)\]
if  for every $(X_I,\omega^J) \in \mathfrak{X}(M)^k \times \Lambda(M)^m$ the functions 
 $T_\epsilon(X_I, \omega^J)$ two-scale (strongly) converge to $ T(X_I^\up,{\omega^J}^\up)$.
\end{Def}

Notice that the convergence is invariant under raising or lowering indexes of forms and  vector fields. Thus, we can consider  $Y_\e\in\mathfrak{X}(M)$ as linear functionals on $\Lambda^{1}$
or   $Y_\e \in T^{0,1}(M)$  through   the scalar  product
\[
\langle Y_\e,X\rangle= Y_\e^b(X).
\]
Notice also that if $T_\epsilon \S2s T$, it follows that
\[ 
\lim_{\e \to 0} \int\, |T_\epsilon|^2 \, dq=\oint\, |T|^2 \,dv\,dp 
\]

\begin{lem}\label{lem:constanttensors} Let $T \in L^2T^{k,m}(M)$ and declare $T_\epsilon=T$. Then 
\[ T_\epsilon \S2s T^\up \]
\end{lem}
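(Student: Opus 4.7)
The plan is to reduce the statement to the scalar case already handled in Lemma~\ref{lem:examples}(1), and then extend that scalar case from $C(M)$ to $L^2(M)$ by density.

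First I would unpack Definition~\ref{def:2stensors}: given smooth test vector fields $X_I\in\mathfrak{X}(M)^k$ and one-forms $\omega^J\in\Lambda^1(M)^m$, it suffices to show that the constant scalar sequence $T_\e(X_I,\omega^J)=T(X_I,\omega^J)=:h$ strongly two-scale converges to $T^\up(X_I^\up,(\omega^J)^\up)$. By the very definition of the vertical lift of a tensor, this target function is exactly $h\circ\pi$, where $\pi:\TM\to M$ is the bundle projection. Since $X_I$ and $\omega^J$ are smooth and $M$ is compact, they are bounded, and so $h\in L^2(M)$ whenever $T\in L^2T^{k,m}(M)$. Thus the whole lemma reduces to the scalar claim that for every $h\in L^2(M)$ the constant sequence $h_\e\equiv h$ strongly two-scale converges to $h\circ\pi$.

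For $h\in C(M)$ this scalar claim is precisely Lemma~\ref{lem:examples}(1). To upgrade from continuity to $L^2$, I would pick $h_n\in C(M)$ with $\|h-h_n\|_{L^2(M)}\to 0$ and, for any admissible $f\in C(M,L^2(\TM_p))$, split
\[
\int_M h\,f^\e\,dq=\int_M h_n\,f^\e\,dq+\int_M(h-h_n)\,f^\e\,dq.
\]
For fixed $n$, the first summand converges as $\e\to 0$ to $\oint_{\TM}h_n(p)\,f(p,v)\,dv\,dp$ by the continuous case already established. The admissibility bound \eqref{seq1} combined with Cauchy--Schwarz then gives
\[
\left|\int_M(h-h_n)\,f^\e\,dq\right|\le C\,\|h-h_n\|_{L^2(M)}\,\|f\|_{C(M,L^2(\TM_p))},
\]
uniformly in $\e$, and the parallel estimate for $\oint_{\TM}(h-h_n)\circ\pi\cdot f\,dv\,dp$ follows from the fiber integration formula with another Cauchy--Schwarz. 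A diagonal argument (first $\e\to 0$, then $n\to\infty$) yields the two-scale convergence.

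The strong part comes essentially for free: by the integration-along-the-fiber lemma of Section~\ref{sec:torus_bundle},
\[
\oint_{\TM}(h\circ\pi)^2\,dv\,dp=\int_M h(p)^2\,dp=\int_M h_\e^2\,dq
\]
for every $\e$, so the $L^2$-norm equality required to upgrade two-scale to strong two-scale convergence already holds pointwise in $\e$. I do not anticipate any substantive obstacle: the only nontrivial ingredient is the uniform admissibility control of $\|f^\e\|_{L^2(M)}$, which is exactly what Section~\ref{sec;testfunctions} has been built to provide.
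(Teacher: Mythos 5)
Your reduction to the scalar case and invocation of Lemma~\ref{lem:examples}(1) is exactly how the paper argues, so the skeleton of the two proofs coincides. The one genuine addition you make is the density step that extends the scalar claim from $h\in C(M)$ to $h\in L^2(M)$. This is a real improvement: the paper's proof opens with ``Let $T\in T^{k,m}(M)$'' and cites Lemma~\ref{lem:examples}(1), which is stated (and whose proof uses pointwise continuity of $h$ via $|h(q)-h(p_j)|=O(|q-p_j|)$) only for $h\in C(M)$, whereas the statement of Lemma~\ref{lem:constanttensors} allows $T\in L^2T^{k,m}(M)$, so $h=T(X_I,\omega^J)$ is merely $L^2$. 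Your argument — approximate $h$ by $h_n\in C(M)$ in $L^2$, control $\int_M(h-h_n)f^\e$ uniformly in $\e$ via the admissibility bound \eqref{seq1} and Cauchy--Schwarz, control $\oint(h-h_n)\circ\pi\cdot f$ via the fiber-integration formula, then diagonalize — is correct and closes this gap. Your observation that the strong-convergence norm identity $\int_M h^2\,dq=\oint_{\TM}(h\circ\pi)^2\,dv\,dp$ holds \emph{identically} in $\e$ (so no limiting argument is even needed there) is also right and is slicker than re-running the approximation for the squared norms.
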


\begin{proof}
Let $T \in T^{k,m}(M)$ and $(X_I,\omega^J) \in \mathfrak{X}(M)^k \times \Lambda(M)^m$. If $\pi:\TM\to M$ denotes the bundle projection,
 then, by Lemma~\ref{lem:examples}, the constant sequence two-scale converges to 
 \[ 
 T(X_I,\omega^J) \S2s
 T(X_I,\omega^J)\circ \pi  =T^\up (X_I^\up,{\omega^J}^\up) 
 \]
which is precisely the definition. 

\end{proof}

Next, we state a compensated compactness lemma for tensor and vector fields. 
\begin{lem} [Compensated compactness for tensors]\label{lem:cctensors}
Suppose we have sequences  $A_\epsilon \in T^{2,0}(M)$, 
$X_\e, Y_\epsilon \in \mathfrak{X}(M)$, such that 
\[
A_\epsilon\S2s A, \quad 
X_\epsilon \2s X, \quad 
Y_\epsilon \S2s Y.
\]
Assume further than $A_\epsilon$ and $Y_\epsilon$ are uniformly bounded in $M$. 
 Then
 \[ 
 \lim_{\epsilon \to 0} \int_{M} \, A_\epsilon[X_\epsilon, Y_\epsilon]\, dq=\oint_{\TM}   A[X,Y]\, dv\,dp 
 \]
\end{lem}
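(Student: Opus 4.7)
The plan is to reduce this bilinear tensor statement to the scalar compensated compactness lemma (Lemma~\ref{compensated}) by using the global frame $\{e_i\}$ to expand everything in coordinates. With $\{\theta_i\}$ the dual coframe, write
\[
A_\epsilon = \sum_{i,j} A^{ij}_\epsilon\, \theta_i\otimes\theta_j, \quad X_\epsilon = \sum_i X^i_\epsilon e_i, \quad Y_\epsilon = \sum_j Y^j_\epsilon e_j,
\]
so that the bilinear evaluation becomes
\[
A_\epsilon[X_\epsilon, Y_\epsilon] = \sum_{i,j} A^{ij}_\epsilon X^i_\epsilon Y^j_\epsilon.
\]
Decomposing the vertical limits analogously in the vertical framing, $A = \sum A^{ij}\,\theta_i^\up\otimes\theta_j^\up$, $X = \sum X^i e_i^\up$, $Y = \sum Y^j e_j^\up$, it suffices to prove the scalar statement
\[
\lim_{\epsilon\to 0}\int_M A^{ij}_\epsilon X^i_\epsilon Y^j_\epsilon\, dq = \oint_{\TM} A^{ij} X^i Y^j\, dv\, dp
\]
for each pair $(i,j)$ and then sum.

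By Definition~\ref{def:2stensors}, applying the tensor two-scale convergence to the frame and coframe elements yields the scalar convergences $A^{ij}_\epsilon = A_\epsilon[e_i,e_j] \S2s A^{ij}$, $Y^j_\epsilon = \theta_j(Y_\epsilon) \S2s Y^j$, and $X^i_\epsilon = \theta_i(X_\epsilon) \2s X^i$. The uniform boundedness of $A_\epsilon$ and $Y_\epsilon$, together with the smoothness of the frame on the compact manifold $M$, gives $L^\infty$ bounds on the scalar coordinates $A^{ij}_\epsilon$ and $Y^j_\epsilon$. Lemma~\ref{lem: strong product} (the strong product lemma for bounded strongly two-scale convergent functions) then yields
\[
A^{ij}_\epsilon Y^j_\epsilon \S2s A^{ij} Y^j.
\]
Pairing this bounded, strongly convergent sequence against the weakly two-scale convergent $X^i_\epsilon$ through Lemma~\ref{compensated}(a), we get
\[
A^{ij}_\epsilon Y^j_\epsilon X^i_\epsilon \longrightarrow \vol(\TM_p)^{-1}\int_{\TM_p} A^{ij} X^i Y^j\, dv \qquad \text{in } \mathcal{D}'(M).
\]
Testing against the constant function $\varphi \equiv 1$, which belongs to $\mathcal{D}(M)$ since $M$ is closed, and integrating the resulting identity over $M$ gives exactly the required scalar convergence; summing over $i,j$ completes the proof.

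The main subtlety — and really the only place where care is needed — is the application of Lemma~\ref{compensated}, which requires strong two-scale convergence of one factor. This forces us to first fuse $A^{ij}_\epsilon$ with $Y^j_\epsilon$ via Lemma~\ref{lem: strong product}, a step that genuinely needs both factors to be $L^\infty$-bounded; that is precisely why the hypothesis assumes uniform boundedness for $A_\epsilon$ and $Y_\epsilon$ but not for $X_\epsilon$.
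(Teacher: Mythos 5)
Your proof follows essentially the same route as the paper: expand everything in the global frame to reduce to scalar coordinates, observe that the coordinate functions two-scale converge (strongly for $A^{ij}_\epsilon$ and $Y^j_\epsilon$, weakly for $X^i_\epsilon$), use Lemma~\ref{lem: strong product} to fuse the two bounded strongly convergent factors, and close with the scalar compensated compactness Lemma~\ref{compensated}. The only difference is that you make explicit the final step of testing the distributional limit against $\varphi\equiv 1$, which the paper leaves implicit.
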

\begin{proof} After expanding on the frame basis, we obtain that 
\[ 
\lim_{\epsilon \to 0} \int_{M} \,A_\epsilon [X_\epsilon, Y_\epsilon]\, dq=\sum_{i,j} \lim_{\epsilon \to 0} \int_{M} (X_\epsilon)_i\,  (Y_\epsilon)_j\,  A_\epsilon[e_i,e_j] \, dq
\]
We have the convergences $(X_\epsilon)_i \2s X_i$ , $(Y_\epsilon)_j 
\S2s Y_j$ and $A_\epsilon[e_i,e_j] \S2s A[e_i^\up, e_j^\up]$.
Then by Lemma~\ref{lem: strong product}, $ A_\epsilon[e_i,e_j]\,( Y_\epsilon)_j \S2s A[e_i^\up, e_j^\up]\,Y_j$ since both are bounded. Therefore by Lemma~\ref{compensated}, that is,  the compensated compactness lemma
for functions, it holds that 
\[ 
\lim_{\epsilon \to 0} \int_{M} (X_\epsilon)_i  (Y_\epsilon)_j \,A_\epsilon[e_i,e_j]\, dq
=
\int_{M} \int_{\TM_p }   \vol(\TM_p)^{-1} X_i Y_j \,A[e_i^{\up},e_j^\up]\, dv\, dp
\]
and the result follows. 
\end{proof}
 As discussed before, our notion of two-scale convergence is invariant under lowering 
and raising indexes, thus the result holds for $(1,1)$ tensors as well. We have stated the result for $(2,0)$-tensors as the notation becomes slightly easier, but there is a corresponding statement for tensors of arbitrary order.

\subsection{Oscillating  tensor fields via  pullback of its coordinate functions}
Next, we send vertical tensors to oscillating tensor fields in the base manifold in the spirit of what  we already did for functions; this is natural, since we can consider functions as $(0,0)$-tensors. There are various possibilities to do this.
The first one is simply to oscillate the coordinate functions with respect to the frame.

\begin{Def} 
\label{def:Tensorcoordinates}
Let $T\in T^{k,m}(\mathcalV)$ and   $(X_I,\omega^J) \in \mathfrak{X}(M)^k \times \Lambda(M)^m$. Then we define the sequence of tensors $\overline{T}^\epsilon \in T^{k,m}(M)$ by 
\[ 
\overline{T}^\epsilon(X_I,\omega^J) =(T (X_I^\up,{\omega^J}^\up)) ^\epsilon 
\]
\end{Def}
In order to obtain an explicit description of  $\overline{T}^\epsilon$ it suffices to  use  Definition~\ref{def:Tensorcoordinates}  in some basis,  canonically the  frame $\{e_i\}_{i=1}^n$ and its dual 1-forms frame $\{\theta_i \}_{i=1}^n$. 
For example, if $X \in \mathfrak{X}(\mathcal V)$ is a smooth vertical  vector field in $\TM$,  $X$ can be written as
\begin{equation} 
\label{eq:vertical_vector_field}
X[q,v]=\sum_{i=1}^n\,f_i[q,v]\,e_i^\up[q,v],
\end{equation}
for some smooth functions $f_i:\TM\to\R$. 
Given that $X(\theta_i^\up)=f_i$
Definition~\ref{def:Tensorcoordinates} yields that
$\overline{X}^\epsilon(\theta_i)=f_i^\epsilon$. Letting $i$ run from $1$ to $n$, we obtain the following explicit expression for  $\bar{X}^\e_q$
,
\begin{equation}\label{equation:alternative test vector fields}
\bar{X}^\e_q:=\sum_j\, \psi_j(q)\sum_{i=1}^n\,f_i[p_j,\exp_{p_j}^{-1}(q)/\e]\,e_i(q).
\end{equation}

Similarly  for $(X_I,\omega^J) \in \mathfrak{X}(\V)^k \times \Lambda(\V)^m$, we will consider $(\bar{X}_I)^\epsilon,(\bar{\omega}^J)^\epsilon \in   \mathfrak{X}(M)^k \times \Lambda(M)^m$ to extend this observation to the general case. 

\begin{lem} For $T \in T^{k,m}(\mathcal V)$, 
$T= \sum T^{I}_{J} e_{I}^\up \otimes {\theta^{J}}^\up$ for
multi-indexes $I={i_1, \ldots i_k}$, $J={j_1, \ldots j_m}$,
we have that
\begin{equation}
\label{eq:oscillation_one}
\bar{T}^\epsilon= \sum \psi_j  \sum T^{I}_{J} [p_j,\exp_{p_j}^{-1}(q)/\e]\, e_I \otimes \theta^J
\end{equation}
\end{lem}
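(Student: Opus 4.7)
The plan is a direct coordinate-based verification that reduces to applying the definitions in sequence. By multilinearity, it suffices to evaluate both sides of \eqref{eq:oscillation_one} on frame inputs of the form $(e_{a_1},\ldots,e_{a_k};\,\theta^{b_1},\ldots,\theta^{b_m})$, since both $\bar{T}^\epsilon$ and the proposed expression are $C^\infty(M)$-multilinear in the vector field and one-form arguments.

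First I would unpack the left-hand side. By Definition~\ref{def:Tensorcoordinates},
\[
\bar{T}^\epsilon(e_{a_1},\ldots,e_{a_k};\,\theta^{b_1},\ldots,\theta^{b_m})
= \bigl(T(e_{a_1}^\up,\ldots,e_{a_k}^\up;\,\theta^{b_1,\up},\ldots,\theta^{b_m,\up})\bigr)^\epsilon.
\]
Using the expansion $T=\sum T^{I}_{J}\,e_{I}^\up\otimes \theta^{J,\up}$ and the duality built into the vertical framing (Definition~\ref{def:framing_forms} together with \eqref{eq:one_form_lift_defn}), namely $\theta^{j,\up}(e_i^\up)=\theta^j(e_i)=\delta^j_i$, one immediately gets
\[
T(e_{a_1}^\up,\ldots,e_{a_k}^\up;\,\theta^{b_1,\up},\ldots,\theta^{b_m,\up})[p,v]
= T^{(a_1,\ldots,a_k)}_{(b_1,\ldots,b_m)}[p,v].
\]
Applying Definition~\ref{def:test_functions} to this scalar function $f=T^{a}_{b}\in C(M,\mathcal{B}(\TM_p))$ yields
\[
\bar{T}^\epsilon(e_a;\,\theta^b)(q)
=\sum_j \psi_j(q)\,T^{a}_{b}\bigl[p_j,\exp_{p_j}^{-1}(q)/\epsilon\bigr].
\]

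Next I would evaluate the right-hand side of \eqref{eq:oscillation_one} on the same frame inputs. Since $(e_{I}\otimes \theta^{J})(e_{a_1},\ldots,e_{a_k};\,\theta^{b_1},\ldots,\theta^{b_m}) = \delta^{a_1}_{i_1}\cdots\delta^{a_k}_{i_k}\,\delta^{j_1}_{b_1}\cdots\delta^{j_m}_{b_m}$, only the term with $I=a$, $J=b$ survives, and we obtain precisely
\[
\sum_j \psi_j(q)\,T^{a}_{b}\bigl[p_j,\exp_{p_j}^{-1}(q)/\epsilon\bigr].
\]
The two expressions agree on all frame inputs, and by multilinearity therefore on all arguments, proving the identity.

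There is no substantive obstacle here: the lemma is a bookkeeping statement whose content is entirely that the vertical lift of frames and coframes is dual, and that $(\cdot)^\epsilon$ acts componentwise in the frame expansion. The only point requiring any care is the distinction between the summation index $j$ labelling the partition of unity and the components $j_1,\ldots,j_m$ of the multi-index $J$; once this is notationally disentangled, the verification is immediate.
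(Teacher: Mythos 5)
Your approach --- direct evaluation of both sides on frame inputs $(e_{a_1},\ldots,e_{a_k};\,\theta^{b_1},\ldots,\theta^{b_m})$, invoking the duality $\theta^{j,\up}(e_i^\up)=\delta^j_i$, and then reading off the coordinate function via Definition~\ref{def:test_functions} --- is exactly the intended argument; the paper regards the lemma as an immediate unpacking of Definition~\ref{def:Tensorcoordinates} in the frame and its dual frame, and gives no further proof.

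One caveat worth flagging, since it touches a small infelicity in the paper itself: your reduction to frame inputs appeals to $C^\infty(M)$-multilinearity of $\bar{T}^\epsilon$, but the formula $(T(X_I^\up,\omega^{J,\up}))^\epsilon$ taken literally is \emph{not} $C^\infty(M)$-multilinear. Indeed, since $(fX)^\up=(f\circ\pi)X^\up$ for $f\in C^\infty(M)$, scaling $X_1\mapsto fX_1$ produces $((f\circ\pi)\,g)^\epsilon=\sum_j\psi_j(q)\,f(p_j)\,g[p_j,\exp_{p_j}^{-1}(q)/\e]$, which differs from $f(q)\,g^\epsilon(q)$ by an $O(\e^\beta)$ remainder; the operation $g\mapsto g^\epsilon$ is only $\R$-linear, not $C^\infty(M)$-linear. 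The correct reading --- which both you and the paper implicitly adopt, and which the sentence preceding the lemma in the paper makes clear --- is that Definition~\ref{def:Tensorcoordinates} is to be applied only to frame inputs and then extended by multilinearity, so that the lemma is essentially a restatement of the definition in explicit coordinates rather than a nontrivial identity between two independently defined objects. If you wanted to tighten the write-up, you would state this interpretation up front rather than invoking multilinearity of the raw left-hand side.
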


The next lemma will be useful to prove  that the two-scale strong  limit of $\bar{T}^\e_q$ is $T$ as it is natural. 

\begin{lem}\label{upversusepsilon}
Let $f \in C_0^\infty(\mathbb T M)$ and  $X \in \mathfrak{X}(M)$. Then, 
\begin{equation} \lim_{\epsilon \to 0} \|{X^\up}^\epsilon- X\| =0,
\qquad \lim_{\epsilon \to 0}  \| (f X^\up)^\epsilon -f^\epsilon X \|=0
\end{equation}
\end{lem}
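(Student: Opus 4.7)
The plan is to expand both sides directly in the frame $\{e_i\}$ and exploit uniform continuity. Write $X=\sum_i X^i\,e_i$ with $X^i\in C^\infty(M)$, so that in the vertical framing $X^\up=\sum_i (X^i\circ\pi)\,e_i^\up$ has fiber-independent coordinates. By Definition~\ref{def:Tensorcoordinates} and formula~\eqref{equation:alternative test vector fields},
\[
{X^\up}^\e(q) = \sum_j \psi_j(q)\sum_i X^i(p_j)\,e_i(q).
\]
Since $\sum_j \psi_j\equiv 1$, this rearranges into
\[
X(q) - {X^\up}^\e(q) = \sum_i e_i(q)\sum_j \psi_j(q)\,[X^i(q)-X^i(p_j)].
\]
The key localization step is then Theorem~\ref{thm:usable_partition_of_unity}: the inclusion $\supp\psi_j\subset D_j^+$ forces $d(q,p_j)\le C\e^\beta$ whenever $\psi_j(q)\neq 0$. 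Uniform continuity of each $X^i$ on the compact manifold $M$ therefore gives $|X^i(q)-X^i(p_j)|\le \omega(C\e^\beta)\to 0$ uniformly in $j$ and $q$, so together with $\sum_j\psi_j(q)=1$ I get the pointwise bound $|X(q)-{X^\up}^\e(q)|\le \sum_i\|e_i\|_\infty\,\omega(C\e^\beta)$, proving the first claim.

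For the second statement the argument is essentially identical with an extra bounded factor. Writing $fX^\up=\sum_i \bigl(f\cdot(X^i\circ\pi)\bigr)\,e_i^\up$ and applying \eqref{equation:alternative test vector fields} once more,
\[
(fX^\up)^\e(q) = \sum_j \psi_j(q)\sum_i f\!\left[p_j,\tfrac{\exp_{p_j}^{-1}(q)}{\e}\right] X^i(p_j)\,e_i(q),
\]
while
\[
f^\e(q)\,X(q) = \sum_j \psi_j(q)\,f\!\left[p_j,\tfrac{\exp_{p_j}^{-1}(q)}{\e}\right]\sum_i X^i(q)\,e_i(q).
\]
Subtracting and using $\sum_j\psi_j=1$ once more yields
\[
(fX^\up)^\e(q) - f^\e(q)X(q) = \sum_i e_i(q)\sum_j \psi_j(q)\,f\!\left[p_j,\tfrac{\exp_{p_j}^{-1}(q)}{\e}\right][X^i(p_j)-X^i(q)].
\]
Because $f\in C_0^\infty(\TM)$ is uniformly bounded and the same modulus-of-continuity estimate applies, the right-hand side is $O\!\bigl(\|f\|_\infty\,\omega(C\e^\beta)\bigr)\to 0$ in the supremum norm.

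No step presents a genuine obstacle; the only point requiring care is the \emph{uniformity} (in $j$ and $q$) of the bound $d(q,p_j)\le C\e^\beta$ on the support of $\psi_j$, which is exactly what Theorem~\ref{thm:usable_partition_of_unity} guarantees, and the boundedness of $f$ for the second identity, which holds because $f$ is compactly supported on $\TM$.
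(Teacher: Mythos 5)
Your proof is correct and follows essentially the same argument as the paper: expand in the frame $\{e_i\}$, use Theorem~\ref{thm:usable_partition_of_unity} to localize $d(q,p_j)\le C\e^\beta$ on $\supp\psi_j$, and bound $|X^i(p_j)-X^i(q)|$ there (the paper uses the $C^1$ bound to get $C'\e^\beta$ where you invoke uniform continuity, but this is cosmetic). The treatment of the second identity via the bounded extra factor $f$ is also the same.
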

\begin{proof}
Notice that  for $X=\sum x^{i}(q) e_i(q)$
\[ (X^\up)^\epsilon=\sum_j\, \psi_j(q)\sum_{i=1}^n x^{i}(p_j)\,e_i(q).\] Hence by the finite overlapping property and Theorem \ref{thm:usable_partition_of_unity},

\[ 
\| (X^{\up})^\epsilon(q) -X(q) \|_{L^\infty}  \le C \sup_{j,i}  |x^{i}(p_j)-x^{i}(q)|  \le C' \epsilon^\beta 
\]
which yields  the first limit.

The second limit follows in the same way, since we have that

\[ (fX^\up)^\epsilon=\sum_j\, \psi_j(q)\sum_{i=1}^n x^{i}(p_j) f [p_j,\exp_{p_j}^{-1}(q)/\e] e_i(q).\] 
and 
\[ f^\epsilon X= \sum_j\, \psi_j(q)\sum_{i=1}^n x^{i}(q) f [p_j,\exp_{p_j}^{-1}(q)/\e] e_i(q) \]
which is identical except for $x^{i}(q)$ taking the place of $x^{i}(p_j)$. 
\end{proof}

All in all, we have the following direct consequence of lemma~\ref{lem:examples}. 

\begin{Prop} 
\label{prop:strongbarT} 
Let $T \in \TVkm$. Then it holds that, 
\[ \overline{T}^\epsilon \S2s T. \]
\end{Prop}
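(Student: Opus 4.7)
The claim is that $\overline{T}^\epsilon \S2s T$ in the sense of Definition~\ref{def:2stensors}. Unfolding that definition, I need to show that for every choice of $(X_I,\omega^J) \in \mathfrak{X}(M)^k\times\Lambda^1(M)^m$, the scalar sequence $\overline{T}^\epsilon(X_I,\omega^J)$ strongly two-scale converges to the function $T(X_I^\up,{\omega^J}^\up)\in L^2(\TM)$. Since the construction $\bar T^\epsilon$ was made precisely so that $\overline{T}^\epsilon(X_I,\omega^J)=\bigl(T(X_I^\up,{\omega^J}^\up)\bigr)^\epsilon$, the entire statement will reduce to the scalar two-scale convergence of realized sequences, i.e.\ to part (2) of Lemma~\ref{lem:examples}.

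First I would expand in the frame: write $T=\sum_{I,J} T^I_J\, e_I^\up\otimes\theta_J^\up$ with coordinate functions $T^I_J\in C(M,L^2(\TM_p))$, and write $X_i=\sum_\ell x_i^\ell(q)\,e_\ell(q)$ and $\omega^j=\sum_\ell w^j_\ell(q)\,\theta^\ell(q)$ with smooth (hence bounded) coefficients on $M$. Using the duality $(e_r^\up,\theta_s^\up)=\delta_{rs}$ and the multilinearity of $T$, one obtains
\[
T(X_I^\up,{\omega^J}^\up)[q,v] \;=\; \sum_{I',J'} T^{I'}_{J'}[q,v]\,\prod_a x_{i_a}^{I'_a}(q)\prod_b w^{j_b}_{J'_b}(q).
\]
Each summand is a product of a function in $C(M,L^2(\TM_p))$ by bounded smooth functions on $M$, so the whole expression still lies in $C(M,L^2(\TM_p))$. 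In particular it is an admissible test function by Theorem~\ref{thm:admissible_functions}.

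Second, I would apply Definition~\ref{def:Tensorcoordinates} to get
\[
\overline{T}^\epsilon(X_I,\omega^J)(q) \;=\; \bigl(T(X_I^\up,{\omega^J}^\up)\bigr)^\epsilon(q),
\]
which is exactly the realized sequence associated to $f := T(X_I^\up,{\omega^J}^\up)\in C(M,L^2(\TM_p))$. By Lemma~\ref{lem:examples}(2), $f^\epsilon \S2s f$, which is precisely what Definition~\ref{def:2stensors} requires for $\overline{T}^\epsilon \S2s T$.

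The argument is essentially a bookkeeping reduction to the scalar case; no genuine obstacle appears once the coordinate expansion is in place. The only mildly delicate point is checking that $T(X_I^\up,{\omega^J}^\up)$ is really in $C(M,L^2(\TM_p))$, and this follows because the vertical lifting along the frame commutes with the multilinear pairing so that the factors depending on $v$ are exactly the coefficients $T^I_J[q,v]$, while all $q$-only factors are continuous. Thus the full proposition follows directly from Lemma~\ref{lem:examples}(2) applied componentwise.
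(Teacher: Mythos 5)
Your reduction to the scalar case via Lemma~\ref{lem:examples}(2) is the right idea, and matches the paper's strategy, but the pivotal identity you use is not actually true. You claim that Definition~\ref{def:Tensorcoordinates} gives
\[
\overline{T}^\epsilon(X_I,\omega^J)(q)=\bigl(T(X_I^\up,{\omega^J}^\up)\bigr)^\epsilon(q)
\]
for arbitrary $X_I,\omega^J$. Read literally for all arguments, that formula cannot define a tensor: the pullback $f\mapsto f^\e$ is not $C^\infty(M)$-linear (for $g\in C^\infty(M)$, $(g^\up f)^\e=\sum_j\psi_j\,g(p_j)\,\tilde f(p_j,\cdot)\ne g\cdot f^\e$), so the right-hand side would fail multilinearity over functions. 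The consistent reading, which the paper makes explicit in \eqref{eq:defAbarepsilon}, \eqref{equation:alternative test vector fields} and \eqref{eq:oscillation_one}, is that $\overline{T}^\e$ is the tensor whose \emph{frame} coordinates are $(T^I_J)^\e$. For general $X_I,\omega^J$ one then gets
\[
\overline{T}^\epsilon(X_I,\omega^J)(q)=\sum_{I',J'}(T^{I'}_{J'})^\e(q)\,\prod_a x_{i_a}^{I'_a}(q)\prod_b w^{j_b}_{J'_b}(q),
\]
with the coefficient functions evaluated at $q$, whereas $\bigl(T(X_I^\up,{\omega^J}^\up)\bigr)^\e(q)$ has those same coefficients frozen at the net points $p_j$ inside each $\supp\psi_j$. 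These two scalar sequences are not equal, so the step ``apply the Definition to get the identity'' is a genuine gap.

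The gap is repairable: the two sequences differ by $O(\e^\beta)$ in $L^\infty$ on each chart (this is exactly the content of Lemma~\ref{lem:down_vectors_variation}/Lemma~\ref{upversusepsilon}), so they share the same strong two-scale limit; alternatively one proves the convergence directly on the frame $(e_I,\theta^J)$ (where your identity \emph{is} the definition and Lemma~\ref{lem:examples}(2) applies) and then extends to general $(X_I,\omega^J)$ using that multiplication by a fixed $C(M)$ function preserves strong two-scale convergence. That extension step is what the paper's one-line proof (``every field and form is a linear combination of frame elements'') is hiding, and it is the step your argument skips by invoking a false identity.
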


\begin{proof}

The coordinate functions imply the convergences of $(k,m)$-tuples formed from the frame and the dual frame. Since the manifold is parallelizable, every field and form is a linear combination of these
elements, and the result follows. 

\end{proof}

Notice that the above allows us to describe two-scale convergence of tensors, by pairing with test vector fields and forms. That is the statement of the following lemma.

\begin{lem} 
\label{lem:weak_2s_convergence_tensors}
Let   $(X_I,\omega^J) \in \mathfrak{X}(\V)^k \times \Lambda(\V)^m$  be vertical vector fields and forms, $T_\epsilon \in L^2T^{k,m}(M)$ and $T \in L^2T^{k,m}(\V)$.
Then  $  T_\epsilon \2s T $ if and only if  
\begin{equation}\label{2spairing for tensor}
\int_{M} T_\epsilon(({X}_I)^\epsilon, ({{\omega}^J)^\e})\,
 dq \to \oint T(X_I, \omega^J )\, dv\,dp.
\end{equation}
 In particular, if $Y_\epsilon \in \mathfrak{X}(M)$ and $Y \in \Vvectors $, then $Y_\e \2s Y$ if and  only if for every $X \in \mathcal{X}(\mathcal{V})$

\begin{equation}
\lim_{\epsilon \to 0} \int_{M} \langle Y_\epsilon, X^\epsilon \rangle \, dq \to \oint \langle Y,  X \rangle \,dv\, dp.
\end{equation}

\end{lem}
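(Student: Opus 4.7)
My plan is to reduce the tensor statement to its scalar counterpart by expanding vertical fields in the frame bases $\{e_i^\up\}$ and $\{(\theta^i)^\up\}$, and then invoking the compensated compactness machinery of Lemmas \ref{compensated} and \ref{lem: strong product}. Throughout I tacitly use the uniform $L^2$-boundedness of $T_\epsilon$, which is inherent to two-scale convergence of its scalar components. For the direct implication, I expand $X_\ell = \sum_i a_{\ell,i}\, e_i^\up$ and $\omega^j = \sum_i b^j_i\, (\theta^i)^\up$ with coefficients $a_{\ell,i}, b^j_i$ on $\TM$; by \eqref{equation:alternative test vector fields} and its analogue for forms deduced from Definition \ref{def:Tensorcoordinates}, $(X_\ell)^\epsilon = \sum_i (a_{\ell,i})^\epsilon e_i$ and likewise for $(\omega^j)^\epsilon$. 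Multilinear expansion writes $T_\epsilon((X_I)^\epsilon, (\omega^J)^\epsilon)$ as a finite linear combination of products of $k+m$ oscillating scalars $(a_{\ell,i})^\epsilon, (b^j_i)^\epsilon$ against frame-evaluated components $T_\epsilon(e_{I'}, \theta^{J'})$. Each oscillating coefficient strongly two-scale converges to its vertical counterpart by Lemma \ref{lem:examples}(2), while $T_\epsilon(e_{I'}, \theta^{J'}) \2s T(e_{I'}^\up, {\theta^{J'}}^\up)$ by hypothesis. Iterating Lemma \ref{lem: strong product} coalesces the strongly convergent factors into a single strongly convergent product, after which a single application of Lemma \ref{compensated} tested against the constant function $1 \in \mathcal{D}(M)$ (valid because $M$ is compact) delivers the integrated limit $\oint T(X_I, \omega^J)\,dv\,dp$ upon reassembly.

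For the converse, I fix base fields $X_I \in \mathfrak{X}(M)^k$, $\omega^J \in \Lambda^1(M)^m$ and a scalar test function $f \in C^\infty_0(\TM)$, and must verify that $\int_M T_\epsilon(X_I, \omega^J) f^\epsilon\, dq \to \oint T(X_I^\up, {\omega^J}^\up)\, f\, dv\, dp$. The key manoeuvre is to absorb $f^\epsilon$ into the first slot by multilinearity, writing $T_\epsilon(X_I, \omega^J) f^\epsilon = T_\epsilon(f^\epsilon X_1, X_2, \ldots, \omega^J)$ and observing that $fX_1^\up \in \mathfrak{X}(\V)$ is vertical. Lemma \ref{upversusepsilon} supplies $\|(fX_1^\up)^\epsilon - f^\epsilon X_1\| \to 0$ and $\|(X_\ell^\up)^\epsilon - X_\ell\| \to 0$ for $\ell\geq 2$, together with the analogous bound for forms. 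The uniform $L^2$-bound on $T_\epsilon$ combined with Cauchy--Schwarz lets me successively replace $f^\epsilon X_1$ by $(fX_1^\up)^\epsilon$, each $X_\ell$ by $(X_\ell^\up)^\epsilon$, and each $\omega^j$ by $((\omega^j)^\up)^\epsilon$, incurring a vanishing error at each step. The hypothesis applied to the resulting fully vertical expression then produces the limit $\oint T(fX_1^\up, X_2^\up, \ldots, {\omega^J}^\up)\,dv\,dp = \oint f\cdot T(X_I^\up, {\omega^J}^\up)\,dv\,dp$ by multilinearity of $T$, precisely what we need.

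The main subtlety is upgrading the distributional convergence furnished by Lemma \ref{compensated} to an integrated one in the forward direction; the compactness of $M$ turns $1$ into a legitimate element of $\mathcal{D}(M)$, so this upgrade is painless. The remainder of the argument is careful bookkeeping of small-error approximations controlled by Lemmas \ref{lem:examples}, \ref{upversusepsilon} and the $L^2$-boundedness of $T_\epsilon$. The particular case for vector fields follows as the specialization $k=1$, $m=0$.
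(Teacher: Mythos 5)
Your proof is correct and follows the same overall strategy as the paper's: reduce to scalar statements via multilinear expansion in the frame bases, pair a weakly two-scale convergent factor against strongly two-scale convergent oscillating coefficients for the forward direction, and absorb the test function $f^\epsilon$ into a vector slot via multilinearity for the converse, invoking Lemma \ref{upversusepsilon} to trade $f^\epsilon X_1$ for $(fX_1^\up)^\epsilon$. The converse matches the paper essentially verbatim. The forward direction differs only in the lemma chain: the paper applies Proposition \ref{prop:algebra} to coalesce the scalar products $(x^{i_1}_1)^\epsilon\cdots(\omega_{m,j_m})^\epsilon$ into a single test function $(x^{i_1}_1\cdots\omega_{m,j_m})^\epsilon$, upgrades the resulting $L^1$ discrepancy via boundedness and Cauchy--Schwarz, and then pairs directly using the definition of two-scale convergence; you instead iterate Lemma \ref{lem: strong product} to merge the strongly convergent scalars and then invoke compensated compactness (Lemma \ref{compensated}(a)) tested against $1$. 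Your route is a thin abstraction over the paper's --- Lemma \ref{lem: strong product} and Lemma \ref{compensated} are themselves proved from the algebra property and the definition --- so both buy you exactly the same conclusion with comparable effort, and neither is more general in this context. One small point worth flagging, which is present in both your argument and the paper's: the expansion $(X_\ell)^\epsilon = \sum_i (a_{\ell,i})^\epsilon e_i$ really produces $\bar{X}_\ell^\epsilon$ in the paper's notation, not $X_\ell^\epsilon$ from Definition \ref{def:osctensorpull}; they differ by using $e_i$ versus $e_i^\downarrow$, and one should appeal to the unnumbered lemma preceding Lemma \ref{lem:convergenceofT} (or directly to Lemma \ref{lem:down_vectors_variation}) to justify the interchange up to vanishing error.
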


\begin{proof} Let us start by proving that two-scale convergence implies \eqref{2spairing for tensor}. By multilinearity, for each choice of multi-indexes appearing in the definition of $T$  we need to deal with the limit of sums of integrals of terms like
 \[  (T_\epsilon)_{i_1, \ldots i_k}^{j_1, \ldots j_m} (x^{i_1}_{1})^\epsilon \cdots  (x_k^{i_k})^\epsilon \cdot (\omega_{1,j_1})^\epsilon  \cdots  (\omega_{m,j_m})^\epsilon
  \]
  where $x^{i_l}_l$ stands for the coordinates of the vector field $X_l$,  $\omega_{r,j_r}$   are the coordinates of the form $\omega_r$, and    
\[
(T_\epsilon)_{i_1, \ldots i_k}^{j_1, \ldots j_m}=T_\epsilon (e_{i_1}, \ldots, e_{i_k}, \theta_{j_1}, \ldots, \theta_{j_m}) 
\]
are the coordinates of $T_\epsilon$ 
with respect to the frame and the  dual frame. 

Notice that $T_\epsilon \2s T$ implies that
\[ 
(T_\epsilon)_{i_1, \ldots i_k}^{j_1, \ldots j_m}\2s  T (e_{i_1}^\up, \ldots, e_{i_k}^\up, \theta_{j_1}^\up, \ldots, \theta_{j_m}^\up). 
\]
  
By an iterated use of Proposition~\ref{prop:algebra}  we have that 

\begin{equation}\label{eq:equal} \lim_{\e \to 0} \int_{M} ( (x^{i_1}_{1})^\epsilon \cdots  (x_k^{i_k})^\epsilon \cdot (\omega_{1})^\epsilon_{j_1} \cdots  (\omega_{m})^\epsilon_{j_m} -(x^{i_1}_{1} \cdots  x_k^{i_k} \cdot \omega_{1,j_1} \cdots  \omega_{m,j_m} )^\e) \,dq=0 \end{equation}
and notice that the last expression under the integral, $(x^{i_1}_{1} \cdots  x_k^{i_k} \cdot \omega_{1,j_1} \cdots  \omega_{m,j_m} )^\e$ is a test function. 
Therefore 

\[ \begin{aligned} &\lim_{\epsilon \to 0} \int_{M}  (T_\epsilon)_{i_1, \ldots i_k}^{j_1, \ldots j_m} (x^{i_1}_{1})^\epsilon \cdots  (x_k^{i_k})^\epsilon \cdot (\omega_{1,j_1})^\epsilon  \cdots  (\omega_{m,j_m})^\epsilon\, dq=
 \\ & \lim_{\e \to 0} \int_{M} T_\epsilon (e_{i_1}, \ldots, e_{i_k}, \theta_{j_1}, \ldots, \theta_{j_m}) (x^{i_1}_{1} \cdots  x_k^{i_k} \cdot \omega_{1,j_1} \cdots  \omega_{m,j_m} )^\e \,dq=
  \\ &\oint_{\TM}  T (e_{i_1}^\up, \ldots, e_{i_k}^\up, \theta_{j_1}^\up, \ldots, \theta_{j_m}^\up)\,  x^{i_1}_{1} \cdots  x_k^{i_k} \cdot \omega_{1,j_1} \cdots  \omega_{m,j_m}  \, dv\,dp
\end{aligned}
\]
where in the equality of the limits we have used \eqref{eq:equal} and Cauchy-Schwarz inequality,  and in the second equality
that by Definition~\ref{def:2stensors},
$T_\epsilon (e_{i_1}, \ldots, e_{i_k}, \theta_{j_1}, \ldots, \theta_{j_m})$ two-scale converges to the expression 
$T (e_{i_1}^\up, \ldots, e_{i_k}^\up, \theta_{j_1}^\up, \ldots, \theta_{j_m}^\up)$ and also that 
$(x^{i_1}_{1} \cdots  x_k^{i_k} \cdot \omega_{1,j_1} \cdots  \omega_{m,j_m} )^\e$ is a test function. Therefore, as our $(k,m)$-tuple of vectors and forms was arbitrary, we obtain \eqref{2spairing for tensor}.

Assume now that \eqref{2spairing for tensor} holds, and let us obtain two-scale convergence. Let  $X_1, X_{I_1}, \omega_J \in \mathfrak{X}(M) \times \mathfrak{X}(M)^{k-1} \times \Lambda M^m$ and $f \in C^\infty(\TM)$. 
Then by multilinearity of the tensor, 
\[ \lim_{\epsilon \to 0} \int_M T_\epsilon(X_1, X_{I_1},\omega_J) f^\epsilon dq= \lim_{\epsilon \to 0} \int_M T_\epsilon(f^\epsilon X_1, X_{I_1}, \omega_J)  dq \]
By  Lemma~\ref{upversusepsilon}, this equals to 

\[
\lim_{\epsilon \to 0} \int_M T_\epsilon(f X_1^\up)^\epsilon, (X_{I_1}^\up)^\epsilon, (\omega_J)^\epsilon) \, dq=\oint_{\TM}  T( f X_1^\up, X_{I_1}^\up, \omega^\up)\, dv \,dp=\oint_{\TM}  fT( X_1^\up, X_{I_1}^\up, \omega^\up)\, dv\, dp
  \]
which yields the two-scale convergence of the functions $T_\e(X_1,X_{I_1},\omega_J)$  and thus the  two-scale convergence of $T_\epsilon$.

\end{proof}

\subsection{Oscillating tensor fields via pullbacks of vertical tensors}
Here we will consider a second form to obtain oscillating tensor fields in the base manifold $M$ from vertical tensor fields. Although our new definition will be somewhat different from that appearing in Definition \ref{def:Tensorcoordinates}, we will show that the difference will disappear with higher oscillations.

Recall that 

\begin{equation}\label{eq:pullback} f^\epsilon= \sum \psi_j(q) H_{ \epsilon,j}^* \tilde{f}(p_j,\cdot)(q), \quad q \in M. \end{equation}
 In order to extend this view  to tensors, we start by elaborating on the definition of $H_{\e,j}$. 
  
We consider the map
\[
H_{\e,j}:D_j\to \TM, 
\quad
H_{\e,j}(q)=[p_j,\dfrac{1}{\e}\exp^{-1}_{p_j}(q)]
\]
defined in the Voronoi set $D_j$ corresponding to $p_j$ constructed in Section \ref{sec:partition}. If $\psi_j$ is the member of the partition of unity related
to the Voronoi set $D_j$,

\[H_{\e,j}(q)=[p_j,  
\psi_j(q)\cdot \dfrac{1}{\e}
\exp^{-1}_{p_j}(q)]\]
yields an extension 
\[
H_{\e,j}:M\to\TM,
\]
that we do not relabel. This extended $H_{\e,j}$ maps everything outside of $D_j$ to $[p_j,0]$.  More important, observe that the image of $H_{\e,j}$ is entirely contained in the vertical torus $\TM_{p_j}$. 
Next, we specify how $H_{\e,j}$ acts on tensors.  We denote $[p_j,v]=H_{\e,j}(q)$.

\begin{Def} 
Let $T \in T^{k,m}(\mathcal{V})$ a vertical tensor. Then  for $q \in D_j$, we define a $(k,m)$-tensor with support in $D_j$ as
\begin{equation}
\label{eq:oscillating_tensors_2}
(H_{\e,j})_*(T[p_j,v])(X_I,\omega^J)(q)=(T\circ H_{\e,j}(q))(  \epsilon  (H_{\e,j})_*(X_I),  \epsilon^{-1}   (H_{\e,j}^{-1})^*(\omega^J)) 
\end{equation} 
\end{Def}

Recall that, if in frame coordinates, we have that  
\[
X=\sum x^{i} e_i, 
\qquad 
\omega=\sum \omega_i (\exp_{p_j}^{-1})^*\theta^{i}, 
\quad
\text{ and } 
v=H_{\e,j}(q)\]
then a simple computation yields
\begin{equation}\label{eq:pushforwards}  ( (H_{\e,j})_* X)_v =\epsilon^{-1}\sum  x^{i}(q) d\exp_{p_j}^{-1} e_i, \qquad ( (H_{\e,j}^{-1})^*\omega)_v=\epsilon \sum \omega_i (q) \exp_{p_j}^{*} \theta^{i}. 
\end{equation}

This leads to the following alternative definition of oscillating tensors:

\begin{Def}\label{def:osctensorpull}  Let $T$ be a smooth vertical tensor field in $T^{k,m}(\mathcal{V})$.  For $\e>0$, and for any $\e$-net $\{p_j\}_j$ as in Theorem \ref{thm:big_Voronoi}, 
we define $T^\epsilon \in T^{k,m}(M)$ by the sum
\begin{equation}\label{eq:defTepsilon}
T^\e_q= \epsilon^{k-m} \sum_j\, \psi_j(q) \,(H_{\e,j})_*(T[p_j,v])
\end{equation}
\end{Def}

Let us look at equation \eqref{eq:defTepsilon} in coordinates. We start with vector fields. 
Since by definition $(X,(H_{\e,j}^{-1})^* \theta)=(\theta, (H_{\e,j}^{-1})_* X) $ we obtain that

\[ X^\epsilon=\sum \psi_j (H_{\e,j}^{-1})_* X   \]

In order to express in coordinates, we carefully explain how $d \exp_{p_j}$ acts on the lifted frame. 
Recall, that we are considering parallelizable  manifolds, with the corresponding  frame  $\{e_i\}_1^n$ forming a basis of the tangent space at every point of $M$. Denote by $\{e_i^\up\}$ their vertical  lifts to $TM$ as indicated in Definition \ref{def:vector_lift}. For $\e>0$, and for any $\e$-net $\{p_j\}_j$ as in Theorem \ref{thm:big_Voronoi}, define for $q=\exp_{p_j}(v)$
\begin{equation}
\label{def:downvector}
e_i^{\downarrow}(p_j;q):= (d\exp_{p_j})_{v}(e_i(p_j)).
\end{equation}
 in the Voronoi set  $D_j$, and zero otherwise. 

Notice that here we are somewhat abusing notation, identifying the tangent space to $T_{p_j}M$ at $\exp^{-1}(q)$ with $T_{p_j}M$. To be more rigorous,
as $\exp:TM\to M$, we should have written 
\begin{equation}
e_i^{\downarrow}(p_j;q):= (d\exp_{p_j})_{v}(e_i^\up(p_j,v))=\e^{-1}(H_{\e,j}^{-1})_*(e_i^\up(p_j,v)),
\end{equation}
since $(d\exp_{p_j})_{v}:T_v(T_{p_j}M)\to T_qM$, and $e_i(p_j)\in T_{p_j}M$, while $e_i^\up(p_j,v)\in T_vT_{p_j}M$. 

Observe that, if $\{\psi_j\}_j$ is the partition of unity associated to $D_j$,   the fields 
$\psi_j\,e_i^\downarrow[p_j;\cdot]$ are   smooth vector fields well-defined everywhere in $M$, since $\supp\psi_j\subset D_j$. Writing $$X=\sum_if_i[p,v]e_i^\uparrow,$$
we get that 
\begin{equation}
\label{eq:epsilon_vector field}
X^\e_q:=\sum_j\, \psi_j(q)\sum_{i=1}^n\,f_i[p_j,\exp_{p_j}^{-1}(q)/\e]\,e_i^\downarrow(p_j;q)\end{equation}

Similarly, for elements of the dual frame $\theta^{i}$, we define 
\begin{equation}\label{eq:downform
}
\theta_i^{\downarrow}(p_j;q):= (\e (H_{\e,j})^*(\theta_i^\up(p_j,v)),
\end{equation}
and notice that in coordinates, abusing the notation in a self-explanatory way for a second, 
\begin{equation}
\label{eq:downform}
\theta_i^{\downarrow}(p_j;q):= (d\exp^{-1}_{p_j})_{v}(\theta_i(p_j)).
\end{equation}
In any case we obtain, the expression
\begin{equation}
\label{eq:epsilon_oneform}
\omega^\e_q:=\sum_j\, \psi_j(q)\sum_{i=1}^n\,\omega_i[p_j,\exp_{p_j}^{-1}(q)/\e]\,\theta_i^\downarrow(p_j;q)\end{equation}

Once we understand vectors and forms, the result for general tensors follows easily. To simplify the writing, we are following Einstein's summation convention of adding over repeated indexes.

\begin{lem} 
Let $T \in T^{n,m}(\mathcal V)$,  with $T= T^{i_1, \ldots i_n}_{j_1, \ldots j_m} e_{i_1} \otimes  \ldots \otimes e_{i_n} \otimes \theta^{j_1} \otimes \ldots \otimes \theta^{j_n}$. Then, it holds that, 

\[T^\e=(T^{i_1,\ldots, i_n}_{j_1,\ldots, j_m} )^\e e_{i_1}^\downarrow  \otimes  \ldots \otimes e_{i_n}^\downarrow  \otimes (\theta^{j_1})^\downarrow  \otimes \ldots  \otimes  (\theta^{j_n})^\downarrow\]

\end{lem}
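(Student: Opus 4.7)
The plan is to reduce the claim, by multilinearity, to the special cases of vector fields and one-forms that were worked out explicitly to produce \eqref{eq:epsilon_vector field} and \eqref{eq:epsilon_oneform}. The defining formula \eqref{eq:defTepsilon} together with \eqref{eq:oscillating_tensors_2} is linear in $T$, and the tensor product is multilinear in each factor, so it suffices to verify the formula for a single simple vertical tensor
\[
T \;=\; f[p,v]\, \cdot\, e_{i_1}^\up \otimes \cdots \otimes e_{i_n}^\up \otimes (\theta^{j_1})^\up \otimes \cdots \otimes (\theta^{j_m})^\up,
\]
with $f:\TM\to\R$ a smooth scalar and the indices held fixed; summing over all multi-indices at the end recovers the general case.

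For such a simple tensor I would unwind \eqref{eq:oscillating_tensors_2} evaluated on an arbitrary test tuple of vectors and forms on $M$. The evaluation factorizes across the tensor product: the scalar contribution is $f\circ H_{\e,j}(q)=f[p_j,\exp_{p_j}^{-1}(q)/\epsilon]$; each vertical vector factor $e_{i_a}^\up$, paired with the $\epsilon$-rescaled pushforward of the dual test form, yields exactly the action of $e_{i_a}^\downarrow(p_j;q)$ on that test form; and each vertical form factor $(\theta^{j_b})^\up$, paired with the $\epsilon^{-1}$-rescaled pullback of the dual test vector, yields exactly the action of $(\theta^{j_b})^\downarrow(p_j;q)$ on that test vector. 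The two identities I need,
\[
\epsilon\,(H_{\e,j})_*(e_i^\up) \;=\; e_i^\downarrow(p_j;q), \qquad
\epsilon^{-1}\,(H_{\e,j}^{-1})^*((\theta^i)^\up) \;=\; (\theta^i)^\downarrow(p_j;q),
\]
are precisely Definitions \eqref{def:downvector} and \eqref{eq:downform} rewritten through the coordinate computation \eqref{eq:pushforwards}, and both were already used implicitly to derive \eqref{eq:epsilon_vector field} and \eqref{eq:epsilon_oneform}.

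Putting the pieces together, the prefactor $\epsilon^{k-m}=\epsilon^{n-m}$ in \eqref{eq:defTepsilon} is exactly balanced by the $n$ powers of $\epsilon$ coming from the pushforwards of the vector slots and the $m$ powers of $\epsilon^{-1}$ coming from the pullbacks of the form slots, so no residual power of $\epsilon$ survives. Summing over $j$ against the partition of unity $\psi_j$ converts the scalar piece $\sum_j \psi_j(q)\,f[p_j,\exp_{p_j}^{-1}(q)/\epsilon]$ into the oscillating test function $f^\epsilon$ of Definition \ref{def:test_functions}, while the tensorial pieces assemble into the tensor product of $\downarrow$-factors as stated. Specializing $f=T^{i_1,\ldots,i_n}_{j_1,\ldots,j_m}$ and summing over all multi-indices then yields the formula.

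The only genuine obstacle is the careful bookkeeping: tracking the $\epsilon$ powers through the various pushforwards and pullbacks, and keeping straight which slots absorb vectors and which absorb forms under the convention of Definition \ref{def:osctensorpull}. Once the two $\downarrow$ identities above are isolated, the remainder of the argument is an immediate consequence of multilinearity together with the already-established one-vector and one-form cases.
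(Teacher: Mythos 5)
Your strategy — reducing by multilinearity to simple tensors and invoking the already-derived coordinate formulas \eqref{eq:epsilon_vector field} and \eqref{eq:epsilon_oneform} for vectors and one-forms — is exactly what the paper intends; the paper in fact states only that ``once we understand vectors and forms, the result for general tensors follows easily'' and offers no further proof. One transcription slip to watch: the two displayed ``key identities'' do not agree with the paper's Definitions \eqref{def:downvector} and \eqref{eq:downform}, which read $e_i^\downarrow = \e^{-1}(H_{\e,j}^{-1})_*(e_i^\up)$ and $\theta_i^\downarrow = \e\,(H_{\e,j})^*(\theta_i^\up)$; you have the pushforward/pullback maps going the wrong way (you write $(H_{\e,j})_*$ and $(H_{\e,j}^{-1})^*$, which act on objects on $M$, not on the vertical frame elements which live on $\TM$) and the exponents of $\e$ inverted, so the signs in your $\e$-bookkeeping come out reversed as well. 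The intent and the conclusion — that the powers of $\e$ cancel against the $\e^{k-m}$ prefactor in \eqref{eq:defTepsilon} — are nonetheless correct once the identities are rewritten consistently with the paper's conventions.
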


Notice that, comparing with equation \eqref{eq:oscillation_one}, we are shifting the frame. However, the next lemma shows that for $\e$ small enough, both basis are very close. 

\begin{lem} \label{lem:down_vectors_variation}
 Let  $e_i,\theta_i$ be members of the frame and its dual 1-forms, and consider  $e_i^\downarrow[p_j;q] \in \mathfrak{X}(M)$ be defined by \eqref{def:downvector} and $\theta_i^\downarrow[p_j;q] \in \Lambda(M)$ be defined by \eqref{eq:downform} We have that 
\[
|e_i^\downarrow[p_j;q]-e_i(q)\,|+ |\theta_i^\downarrow[p_j;q]-\theta_i(q)\,|   \leq 
C\e^\beta
\]
for any $q\in D_j$.
\end{lem}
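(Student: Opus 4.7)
The plan is to exploit two facts: first, by Theorem \ref{thm:big_Voronoi}, every $q \in D_j$ satisfies $\dist(p_j,q) \leq \e^\beta$; second, by Lemma \ref{lem:almost Euclidean charts}, in the normal chart $\phi = \exp_{p_j}^{-1}$ centred at $p_j$, the differential $d\phi_q$ (equivalently, $(d\exp_{p_j})_v^{-1}$, with $v = \phi(q)$) is $C$-close to the identity in the bilipschitz sense, with an error of order $d(p_j,q)\leq \e^\beta$. Since the manifold is compact and smoothness constants are uniform, the $C$ in this approximation can be chosen independent of the basepoint $p_j$.

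For the vector field part, I would read $e_i^\downarrow(p_j;q) = (d\exp_{p_j})_v(e_i(p_j))$ in the normal chart: pulling it back to $T_{p_j}M$ via $d\phi_q$ produces simply the constant vector $e_i(p_j)$ (translated to $v$). On the other hand, the smooth field $q \mapsto d\phi_q(e_i(q))$, representing $e_i$ in the chart, agrees with $e_i(p_j)$ at $q = p_j$, so by smoothness
\[
|d\phi_q(e_i(q)) - e_i(p_j)| \leq C\,d(p_j,q) \leq C\,\e^\beta.
\]
Applying $(d\phi_q)^{-1} = (d\exp_{p_j})_v$, which has operator norm bounded uniformly by Lemma \ref{lem:almost Euclidean charts}, yields
\[
|e_i(q) - e_i^\downarrow(p_j;q)|_{T_qM} \leq C'\,\e^\beta.
\]

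The argument for the one-form $\theta_i^\downarrow$ is entirely dual. By \eqref{eq:downform}, $\theta_i^\downarrow(p_j;q) = (d\exp_{p_j}^{-1})^*_q(\theta_i(p_j))$, which in the chart is represented by the constant covector $\theta_i(p_j)$. The smooth section $\theta_i$ gives a chart-expression that agrees with $\theta_i(p_j)$ at $q = p_j$, so smoothness and compactness of $M$ bound its variation on $D_j$ by $C\,d(p_j,q) \leq C\,\e^\beta$. Transporting back to $T^*_qM$ through $(d\phi_q)^*$, which is again uniformly close to an isometry by Lemma \ref{lem:almost Euclidean charts}, produces $|\theta_i(q) - \theta_i^\downarrow(p_j;q)| \leq C''\,\e^\beta$. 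Summing the two estimates gives the claimed bound.

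The main obstacle is purely notational: one must keep careful track of the various canonical and non-canonical identifications involved (between $T_v(T_{p_j}M)$ and $T_{p_j}M$, between $T_{p_j}M$ and $T_qM$ via $d\exp_{p_j}$, and between the Sasaki norm on $TM$ and the pointwise norms). Once the chart expressions are written down correctly, the estimate reduces to a first-order Taylor bound for a smooth field on $M$ evaluated over a ball of radius $\e^\beta$, combined with the uniform bilipschitz control on $(d\exp_{p_j})_v$ from Lemma \ref{lem:almost Euclidean charts}.
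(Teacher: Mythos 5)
Your proof is correct and follows essentially the same path as the paper's (extremely terse) argument: read everything in the normal chart $\phi=\exp_{p_j}^{-1}$, where $e_i^\downarrow$ becomes the constant vector $e_i(p_j)$; bound the first-order variation of the chart expression of $e_i$ over a ball of radius $d(p_j,q)\le\e^\beta$; and use Lemma \ref{lem:almost Euclidean charts} for the uniform bilipschitz control of $(d\exp_{p_j})_v$, with uniformity in $p_j$ from compactness. You have merely expanded the bookkeeping that the paper compresses into one sentence.
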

\begin{proof}
It follows immediately from Lemma \ref{lem:almost Euclidean charts} since normal coordinates $\phi$ are written in terms  of the exponential map centered at $p_j$. 
\end{proof}

\begin{lem} Let $T \in T^{n,m}(\mathcal V)$. Then 
\[ \lim_{\epsilon \to \infty} \int_{M} |\bar{T}^\epsilon-T^\epsilon| \,dq=0 \]
\end{lem}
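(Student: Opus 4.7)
The plan is to exploit the explicit coordinate expressions for both tensors. Writing $T=\sum_{I,J} T^I_J[p,v]\,e_I^\uparrow\otimes(\theta^J)^\uparrow$ in the vertical frame, the preceding computations give
\[
\bar T^\epsilon(q)=\sum_j \psi_j(q)\sum_{I,J}T^I_J[p_j,\exp_{p_j}^{-1}(q)/\e]\,e_I(q)\otimes\theta^J(q),
\]
\[
T^\epsilon(q)=\sum_j \psi_j(q)\sum_{I,J}T^I_J[p_j,\exp_{p_j}^{-1}(q)/\e]\,e_I^{\downarrow}(p_j;q)\otimes(\theta^J)^{\downarrow}(p_j;q),
\]
so the two tensors share the same scalar coefficients, and the difference is carried entirely by the tensorial frames $e_I\otimes\theta^J$ versus $e_I^{\downarrow}\otimes(\theta^J)^{\downarrow}$.

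First I would use Lemma~\ref{lem:down_vectors_variation}, together with a telescoping argument across the $n+m$ tensor factors (at each step replacing one factor of $e_{i_k}$ or $\theta^{j_k}$ by its down-version and using that the remaining factors are uniformly bounded on the compact manifold $M$), to obtain the pointwise estimate
\[
\bigl|\,e_I(q)\otimes\theta^J(q)-e_I^{\downarrow}(p_j;q)\otimes(\theta^J)^{\downarrow}(p_j;q)\,\bigr|\le C_{n,m}\,\e^\beta,
\qquad q\in D_j.
\]
Combining this with the explicit expressions above yields the pointwise inequality
\[
|\bar T^\epsilon-T^\epsilon|(q)\le C\,\e^\beta\sum_j \psi_j(q)\sum_{I,J}\bigl|T^I_J[p_j,\exp_{p_j}^{-1}(q)/\e]\bigr|.
\]

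Finally I would integrate the right-hand side over $M$. Since $\{\psi_j\}$ is a partition of unity with the finite overlapping property of Lemma~\ref{lem:FOP}, after localizing the $j$-th summand to normal coordinates at $p_j$ as in Lemma~\ref{RL} and Theorem~\ref{thm:admissible_functions}, Lemma~\ref{lem:periodic} bounds each
\[
\int_{\supp\psi_j}\psi_j(q)\,\bigl|T^I_J[p_j,\exp_{p_j}^{-1}(q)/\e]\bigr|\,dq
\]
by $C\,\vol(D_j)\,\|T^I_J(p_j,\cdot)\|_{L^1(\TM_{p_j})}$ uniformly in $\e$. Summing the $O(\e^{-\beta n})$ contributions and multiplying by the prefactor $\e^\beta$ gives
\[
\int_M|\bar T^\epsilon-T^\epsilon|\,dq\le C'\e^\beta\sum_{I,J}\sup_{p\in M}\|T^I_J(p,\cdot)\|_{L^1(\TM_p)},
\]
which tends to $0$ as $\e\to 0$.

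The main obstacle is the bookkeeping in the telescoping estimate on the tensorial frames and the handling of coefficients with only $L^1$-integrability in the fiber; for tensors whose coefficients fall short of $C(M,L^1(\TM_p))$ one would first approximate by smooth ones, use the already-proven estimate on the smooth approximants, and close the argument via the $L^1$-boundedness of the oscillating-integral operation, which is exactly why the conclusion is formulated in the $L^1$-norm rather than in $L^2$.
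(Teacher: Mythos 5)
Your proof is correct, and it supplies an argument for a lemma the paper states without proof. The ingredients you use — the coordinate expressions \eqref{eq:oscillation_one} and the corresponding one for $T^\e$, the frame-variation estimate of Lemma~\ref{lem:down_vectors_variation}, the standard telescoping identity for differences of tensor products (using that the factors $e_i$, $\theta^i$, $e_i^\downarrow$, $(\theta^i)^\downarrow$ are all uniformly bounded on the compact $M$), the periodicity bound of Lemma~\ref{lem:periodic} after localization in normal coordinates, and the $\e^{-\beta n}$ count of net points cancelling the $\e^{\beta n}$ volume per Voronoi cell — are exactly the tools the paper has assembled for this purpose, and the resulting bound $\int_M|\bar T^\e - T^\e|\,dq \le C\,\e^\beta\sum_{I,J}\sup_p\|T^I_J(p,\cdot)\|_{L^1(\TM_p)}$ is what one expects. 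Your remark that the statement implicitly needs $T\in C(M,L^1T^{k,m}(\V_p))$ (e.g.\ the $C(M,L^2)$ class used later, via $L^2(\TM_p)\subset L^1(\TM_p)$), with density closing the general case, is also an accurate reading of the intended scope.
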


Therefore, we get the  following corollary of  Proposition \ref{prop:strongbarT} and Lemma \ref{lem:weak_2s_convergence_tensors}.

\begin{lem}\label{lem:convergenceofT}
Let $T \in \TVkm$. Then 

\begin{itemize}
\item [i)] For $T^\e$ as in Definition \ref{def:osctensorpull}, we have that  $T^\epsilon \S2s T$.
\item [ii)] Furthermore, 
$T_\epsilon \2s T$ if and only if
\begin{equation}
\int_{M} T_\epsilon(X_1^\epsilon, \ldots, X_k^\epsilon, \omega_1^\epsilon, \ldots \omega^\epsilon_m) \,dp \to \oint T(X_1, \ldots, X_k, \omega_1,\ldots,  \omega_m ) \,dv \, dq
\end{equation}
 In particular, if $Y_\epsilon \in \mathfrak{X}(M)$ and  $Y \in \mathfrak{X}(\mathcal{V} )$, $Y_\e \2s Y$ if and  only if for every $X \in \mathcal{X}(\mathcal{V})$

\begin{equation}
\lim_{\epsilon \to 0} \int_{M} \langle Y_\epsilon, X^\epsilon\rangle \,dq \to \oint \langle Y, \bar X \rangle \,dv \, dp\
\end{equation}
\end{itemize}
\end{lem}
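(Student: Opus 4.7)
The plan is to reduce both parts of the lemma to the corresponding statements already established for the coordinate oscillation $\bar{T}^\epsilon$ in Proposition~\ref{prop:strongbarT} and Lemma~\ref{lem:weak_2s_convergence_tensors}, using as the bridge the pointwise bound $|e_i^\downarrow[p_j;q]-e_i(q)|+|\theta_i^\downarrow[p_j;q]-\theta_i(q)|\leq C\e^\beta$ from Lemma~\ref{lem:down_vectors_variation}. This bound, combined with the explicit coordinate expressions in \eqref{eq:epsilon_vector field} and \eqref{eq:epsilon_oneform} and the finite overlapping property of the partition of unity (Lemma~\ref{lem:FOP}), upgrades the preceding unnumbered $L^1$-statement to a genuine $L^2$-closeness
\[
\|T^\e-\bar{T}^\e\|_{L^2(M)}\leq C\e^\beta\|T\|_{\TVkm},
\]
and analogously $\|X^\e-\bar{X}^\e\|_{L^2(M)}=O(\e^\beta)$ for vertical vector fields, with the same type of estimate for one-forms.

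For part (i), I would argue as follows. Proposition~\ref{prop:strongbarT} gives $\bar{T}^\epsilon \S2s T$. The $L^2$-closeness just discussed, applied componentwise through Definition~\ref{def:2stensors} and the elementary fact proved for functions in Section~\ref{sec:2sfunctions} that two $L^2$-asymptotic sequences share the same strong two-scale limit, yields $T^\epsilon \S2s T$.

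For part (ii), both directions follow from the same remainder estimate. For the forward direction, assuming $T_\epsilon\2s T$ and fixing arbitrary $(X_I,\omega^J)\in\mathfrak{X}(\V)^k\times\Lambda(\V)^m$, I would write
\[
\int_M T_\epsilon(X_I^\e,\omega^{J,\e})\,dq
=\int_M T_\epsilon(\bar{X}_I^\e,\bar{\omega}^{J,\e})\,dq+R(\e),
\]
where $R(\e)\to 0$ by multilinearity of $T_\e$, Cauchy--Schwarz, the uniform $L^2$-boundedness of $T_\e$, the uniform $L^\infty$-bounds on $X_i^\e,\bar{X}_i^\e,\omega_j^\e,\bar{\omega}_j^\e$ coming from the boundedness of the frame, Lemma~\ref{lem:almost Euclidean charts}, and the $L^2$-closeness noted above. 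Lemma~\ref{lem:weak_2s_convergence_tensors} identifies the first term on the right-hand side with $\oint T(X_I,\omega^J)\,dv\,dp$ in the limit. The reverse direction uses exactly the same decomposition: the hypothesised convergence of the left-hand side, together with vanishing $R(\e)$, recovers the characterisation of two-scale convergence of tensor fields in Lemma~\ref{lem:weak_2s_convergence_tensors} and therefore yields $T_\epsilon\2s T$. The particular case for vector fields is the specialisation $k=1$, $m=0$.

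The main obstacle is not conceptual but bookkeeping: one must verify that the uniform $L^\infty$-bounds on $X_i^\e$ and on $\omega_j^\e$ hold with constants independent of $\e$ (which comes immediately from the construction of the partition of unity in Theorem~\ref{thm:usable_partition_of_unity} together with the $C^1$-closeness of $d\exp_{p_j}^{-1}$ to the identity), and that the upgrade from the $L^1$-closeness stated in the unnumbered lemma preceding the statement to the $L^2$-closeness required for strong two-scale convergence is truly pointwise of order $\e^\beta$; both are routine applications of Lemma~\ref{lem:down_vectors_variation} and Lemma~\ref{lem:FOP}.
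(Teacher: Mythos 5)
Your proposal is correct and follows exactly the paper's intended route: compare $T^\e$ with the coordinate oscillation $\bar T^\e$ via Lemma~\ref{lem:down_vectors_variation}, and then invoke Proposition~\ref{prop:strongbarT} for part (i) and Lemma~\ref{lem:weak_2s_convergence_tensors} for part (ii). The paper itself merely asserts the result as a ``corollary'' after stating an unnumbered lemma that only records $L^1$-closeness $\lim_\e\int_M|\bar T^\e-T^\e|\,dq=0$; since $L^1$-closeness alone does not preserve \emph{strong} two-scale limits, you did well to upgrade this to the $L^2$-bound $\|T^\e-\bar T^\e\|_{L^2(M)}\le C\e^\beta\|T\|_{\TVkm}$, which follows from the pointwise $O(\e^\beta)$ frame estimate combined with the admissibility estimate of Theorem~\ref{thm:admissible_functions} and the finite overlapping property. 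One small precision: the closeness is not ``pointwise of order $\e^\beta$'' for the tensor itself (its components are only $L^2$ in the fiber); rather, the frames $e_i^\downarrow,\theta^i_\downarrow$ are $O(\e^\beta)$ close to $e_i,\theta^i$ in $L^\infty$, and the $L^2$-norm of the components is then controlled by Theorem~\ref{thm:admissible_functions}. For the remainder $R(\e)$ in part (ii), note that since the test fields $X_I,\omega^J$ are smooth vertical fields, the differences $X_i^\e-\bar X_i^\e$ and $\omega_j^\e-\bar\omega_j^\e$ are in fact $O(\e^\beta)$ in $L^\infty$, which slightly simplifies the multilinearity/Cauchy--Schwarz estimate you invoke. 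In all, the argument is sound and fills in the step the paper leaves implicit.
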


The following Lemma extends Proposition~\ref{prop:L2_bounded_2_scale}
 and Lemma~\ref{lem:extra} from functions to arbitrary tensors.

\begin{Lemma} \label{lem:compactnesstensors}
Let $T_\e \in T^{k,m}( M)$ be an $L^2$--bounded sequence.
\begin{enumerate}
\item Up to
a subsequence, $T_\e$ two-scale converges to a vertical tensor  $T_0 \in L^2( T^{k,m}(\mathcalV))$.
\item Up to a subsequence  $T_\epsilon$ converges weakly in $L^2$ to the  average on the fiber $\tilde{T}$.
\end{enumerate}
\end{Lemma}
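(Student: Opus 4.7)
The plan is to reduce both statements to the scalar compactness results already established in Proposition~\ref{prop:L2_bounded_2_scale} and Lemma~\ref{lem:extra}, by decomposing $T_\e$ in the frame basis of $M$. First I introduce the coordinate functions
\[
a^{I,J}_\e(q) := T_\e(e_I,\theta^J)(q) \in L^2(M),
\]
for multi-indices $I=(i_1,\dots,i_k)$, $J=(j_1,\dots,j_m)$. Since the frame $\{e_i\}$ and dual frame $\{\theta^i\}$ are smooth and nondegenerate on the compact manifold $M$, the $L^2$-norm of $T_\e$ is equivalent to $\bigl(\sum_{I,J}\|a^{I,J}_\e\|_{L^2(M)}^2\bigr)^{1/2}$, so every coordinate sequence is bounded in $L^2(M)$. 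There are only finitely many multi-index pairs, and a diagonal extraction combined with Proposition~\ref{prop:L2_bounded_2_scale} and Lemma~\ref{lem:extra} yields a common subsequence along which, for every $I,J$,
\[
a^{I,J}_\e \;\2s\; b^{I,J} \in L^2(\TM), \qquad a^{I,J}_\e \rightharpoonup \widetilde{b^{I,J}} \text{ in } L^2(M),
\]
where $\widetilde{b^{I,J}}(p) = \avint_{\TM_p} b^{I,J}[p,v]\,dv$.

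I then set
\[
T_0 := \sum_{I,J} b^{I,J}\,e_I^\up\otimes(\theta^J)^\up \in L^2 T^{k,m}(\mathcalV).
\]
To verify $T_\e \2s T_0$ in the sense of Definition~\ref{def:2stensors}, I expand arbitrary $(X_I,\omega^J)\in\mathfrak{X}(M)^k\times\Lambda^1(M)^m$ in the frame as $X_l = \sum_{i_l} x_l^{i_l}e_{i_l}$, $\omega^r = \sum_{j_r}\omega^r_{j_r}\theta^{j_r}$ with continuous scalar coefficients. Multilinearity gives
\[
T_\e(X_I,\omega^J) = \sum_{I,J}\phi_{I,J}(q)\,a^{I,J}_\e(q), \qquad \phi_{I,J} := \prod_l x_l^{i_l}\prod_r \omega^r_{j_r} \in C(M).
\]
For each $f\in C^\infty(\TM)$, the product $\phi_{I,J}\cdot f$ lies in $C(M,L^2(\TM_p))$, which is admissible by Theorem~\ref{thm:admissible_functions}. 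The uniform continuity of $\phi_{I,J}$ and the inclusion $\supp\psi_j\subset B(p_j,\e^\beta)$ yield $\|(\phi_{I,J}f)^\e - \phi_{I,J}\cdot f^\e\|_{L^\infty(M)}\to 0$, which together with the $L^2$-boundedness of $a^{I,J}_\e$ and two-scale convergence against the admissible test $\phi_{I,J}f$ implies
\[
\int_M \phi_{I,J}\,a^{I,J}_\e\,f^\e\,dq \;\longrightarrow\; \oint_{\TM}\phi_{I,J}(p)\,b^{I,J}[p,v]\,f[p,v]\,dv\,dp.
\]
Summing over $I,J$ and recognizing that $X_l^\up = \sum x_l^{i_l}(p)e_{i_l}^\up$ and $(\omega^r)^\up = \sum \omega^r_{j_r}(p)(\theta^{j_r})^\up$, the right-hand side is exactly $\oint T_0(X_I^\up,(\omega^J)^\up)\,dv\,dp$, which proves (1).

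For (2), weak $L^2$-convergence of tensors is equivalent to the coordinatewise weak $L^2$-convergence of the $a^{I,J}_\e$'s already obtained, and formula~\eqref{average} identifies $\sum_{I,J}\widetilde{b^{I,J}}\,e_I\otimes\theta^J$ with $\widetilde{T_0}$; hence $T_\e \rightharpoonup \widetilde{T_0}$ in $L^2T^{k,m}(M)$. The only nontrivial step is the product argument used in (1), that is, justifying that a two-scale convergent scalar sequence may be multiplied by a continuous function of the slow variable with the expected limit. Apart from the direct approximation sketched above, this is a direct application of the compensated compactness Lemma~\ref{compensated}, with $u_\e = \phi_{I,J}$ (strongly two-scale convergent to $\phi_{I,J}\circ\pi$ by Lemma~\ref{lem:examples}(1)) and $v_\e = a^{I,J}_\e$. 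Everything else is bookkeeping in the frame.
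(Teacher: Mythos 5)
Your proof is correct and it is exactly the fleshed-out version of the paper's one-line argument, which simply calls this lemma a "direct corollary" of Proposition~\ref{prop:L2_bounded_2_scale} and Lemma~\ref{lem:extra}; like the paper, you reduce to scalar two-scale compactness by expanding $T_\e$ in the frame coordinates and then reassemble the limiting vertical tensor $T_0$. The only cosmetic overkill is invoking a diagonal extraction (finitely many multi-indices suffice with iterated subsequences) and appealing to admissibility of $\phi_{I,J}f$ when $\phi_{I,J}f$ is in fact already in $C^\infty(\TM)$ and hence a genuine test function, but neither affects the validity of the argument.
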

\begin{proof} 

The statement of the Lemma is a direct corollary of  Proposition~\ref{prop:L2_bounded_2_scale}
 and Lemma~\ref{lem:extra}.
\end{proof}

\subsection{Gradients and differential of oscillating  functions}

We have  the following result relating the sequence of gradients of functions $f^\e$ and the $\e$--sequence obtained from the vertical differential of a function $f:\TM\to\R$ which will be important in relating two-scale limits of functions and their gradients.

\begin{Prop}
\label{prop:gradient_epsilon_and_epsilon_gradient}
Let $f:\TM\to \R$ a differentiable function, 
Then
\begin{equation}
\lim_{\e\to 0}  \,\|\e\grad_q (f^\e) - (\grad_v f)^\e\|_{L^\infty(M)}   =0.
\end{equation}

\end{Prop}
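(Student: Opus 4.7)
The plan is to compare the two vector fields pointwise in $T_qM$ after decomposing $\e\grad_q f^\e$ by Leibniz. Writing $v_j:=\exp_{p_j}^{-1}(q)/\e$ and $F_j(q):=\tilde f(p_j,v_j)$, we get
\[
\e\grad_q f^\e = \e\sum_j (\grad\psi_j)\,F_j \;+\; \e\sum_j \psi_j\,\grad_q F_j.
\]
I will show that the first summand tends to zero uniformly, while the second matches $(\grad_v f)^\e$ up to an error that also vanishes uniformly in $q$.

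For the first summand, Theorem~\ref{thm:usable_partition_of_unity}\eqref{eq:size of iterated gradient} bounds $|\grad\psi_j|\le C\e^{-\alpha}$, Lemma~\ref{lem:FOP} caps the number of nonzero terms at each $q$ by a constant, and $\tilde f$ is bounded on the compact $\TM$. The $L^\infty$ norm of this piece is therefore $O(\e^{1-\alpha})\|\tilde f\|_\infty$, which vanishes since $\alpha<1$.

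For the second summand, fix $j$ with $q\in\supp\psi_j\subset D_j^+\subset B(p_j,2\e^\beta)$. Set $L_j:=d\exp_{p_j}^{-1}|_q: T_qM\to T_{p_j}M$ and $M_j:=L_j^{-1}=d\exp_{p_j}|_{\e v_j}$; note that by construction of the down frame, $e_i^\downarrow(p_j;q)=M_j e_i(p_j)$. Write $G_j:=\grad_v f[p_j,v_j]$, which under the canonical identification of the vertical fiber with $T_{p_j}M$ is a genuine vector in $T_{p_j}M$ (since the Sasaki metric restricted to verticals at $[p_j,v_j]$ coincides with $g_{p_j}$, by \eqref{eq:inner_product}). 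The chain rule then gives, for any $X\in T_qM$,
\[
\e\,dF_j(X) = d^v\tilde f(L_jX)^\up = \langle G_j, L_jX\rangle_{p_j},
\]
so $\e\,\grad_q F_j = L_j^\ast G_j$, where $L_j^\ast:T_{p_j}M\to T_qM$ denotes the adjoint with respect to $g_{p_j}$ and $g_q$. On the other hand, unwinding \eqref{eq:epsilon_vector field} shows that the $j$-th contribution to $(\grad_v f)^\e(q)$ is exactly $\psi_j(q)\,M_jG_j$. Thus the discrepancy to control is
\[
\sum_j \psi_j(q)\,(L_j^\ast-M_j)\,G_j.
\]

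It remains to bound $\|M_j-L_j^\ast\|_{\mathrm{op}}$ uniformly. Both operators equal the identity at $q=p_j$, and Lemma~\ref{lem:almost Euclidean charts} tells us that $d\exp_{p_j}^{-1}$ is $C^1$-close to the identity with modulus $O(d(p_j,q))$, which also controls $L_j^\ast$ via the uniform closeness of $g_q$ to $g_{p_j}$ on the scale $d(p_j,q)$. Since $d(p_j,q)\le 2\e^\beta$ on $\supp\psi_j$, we obtain $\|M_j-L_j^\ast\|_{\mathrm{op}}\le C\e^\beta$ uniformly in $q$ and $j$, and the discrepancy is bounded in $L^\infty$ by $C\e^\beta\|\grad_v f\|_{L^\infty(\TM)}$, which vanishes as $\e\to 0$. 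The main difficulty is purely one of bookkeeping: keeping track of the identifications between vertical vectors at $[p_j,v_j]$ and $T_{p_j}M$, between the Sasaki metric on these verticals and $g_{p_j}$, and between $L_j$ and its (different) adjoint $L_j^\ast$, so that $\e\grad_q f^\e$ and $(\grad_v f)^\e$ are compared in a common trivialization at $T_qM$; once this is done, all quantitative estimates come directly from Lemma~\ref{lem:almost Euclidean charts} and Theorem~\ref{thm:usable_partition_of_unity}.
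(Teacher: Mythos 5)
Your proof is correct, and it takes a genuinely different route from the paper. The paper proceeds in two steps: it first proves the analogue for differentials (Lemma~\ref{lem:1-form-version-of-gradientepsilon}), where $d$ commutes exactly with $H_{\e,j}^*$ so that $\e\, d(f^\e) - (d^vf)^\e$ collapses to the single error term $\e\sum_j \tilde f(p_j,\cdot)\,d\psi_j$; it then proves separately (Lemma~\ref{lem: musicalconmut}) that the musical isomorphism $\sharp$ commutes with $\cdot^\e$ up to an $O(\e^\beta)$ error, and combines the two. You instead stay on the vector side throughout: a Leibniz split of $\e\grad_q f^\e$ isolates the same $O(\e^{1-\alpha})$ partition-of-unity term, and the remaining discrepancy is packaged as $\sum_j\psi_j(L_j^* - M_j)G_j$, where $L_j^*$ (coming from the chain rule for $\grad_q F_j$) and $M_j=L_j^{-1}$ (coming from the down-frame) are both $I + O(\e^\beta)$ in normal coordinates. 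Your $\|M_j - L_j^*\|$ estimate plays exactly the role of the paper's Lemma~\ref{lem: musicalconmut}: it absorbs the deviation of $g_q$ from $g_{p_j}$ and the deviation of $d\exp_{p_j}$ from the identity in a single operator comparison, which is arguably cleaner and avoids the detour through one-forms. Two small points worth tightening up: (i) "both operators equal the identity at $q=p_j$" and the $O(\e^\beta)$ estimate for $L_j^*$ implicitly use the trivialization of $T M$ over $D_j$ by normal coordinates at $p_j$ together with \eqref{eq:metric_at_zero} for the metric closeness—say this explicitly, since $L_j^*$ depends on both $g_{p_j}$ and $g_q$; (ii) the intermediate expression $d^v\tilde f(L_jX)^\up$ should read $d^v\tilde f_{[p_j,v_j]}\bigl((L_jX)^\uparrow\bigr)$ to make it clear it is the vertical one-form at $[p_j,v_j]$ paired with the vertical lift of $L_jX$. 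Neither affects the validity of the argument.
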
 

We separate the proof into two parts.  The first lemma proves a similar statement for differentials, where the commutation between pullbacks and exterior derivatives makes the proof specially simple. The second takes into account the metric, and says that, in the limit, the musical isomorphisms
and our way of pulling back vector fields and forms commute. 

\begin{Lemma}
\label{lem:1-form-version-of-gradientepsilon}
\label{gradients}
Let $f:\TM\to \R$ be smooth function. 
Then 
\begin{equation}
\| |\e\cdot d_q (f^\e) - (d^v f)^\e \|_{L^\infty(M)}  =O(\e^{1-\alpha}) ,
\end{equation}
as $\e\to 0$.
\end{Lemma}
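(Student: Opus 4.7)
The plan is to apply the Leibniz rule to $d_q f^\e$, separating the differential that hits the cutoff $\psi_j$ from the one that hits the $\e^{-1}$-scaled argument. Write
\begin{equation*}
d_q f^\e(q) = \sum_j d\psi_j(q)\cdot \tilde f\!\left(p_j,\tfrac{\exp^{-1}_{p_j}(q)}{\e}\right) + \sum_j \psi_j(q)\cdot d_q\!\left[\tilde f\!\left(p_j,\tfrac{\exp^{-1}_{p_j}(q)}{\e}\right)\right].
\end{equation*}
The first (``bad'') sum is what produces the error. By Theorem \ref{thm:usable_partition_of_unity} one has $|d\psi_j|\le C\e^{-\alpha}$, by hypothesis $\tilde f$ is bounded, and by the finite overlapping property (Lemma \ref{lem:FOP}) only uniformly finitely many $j$ contribute at each $q$. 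Hence the first sum is $O(\e^{-\alpha}\|f\|)$ in $L^\infty$, and multiplying by $\e$ gives the claimed $O(\e^{1-\alpha})$ bound.

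It remains to identify the second sum, after multiplication by $\e$, with $(d^v f)^\e_q$. For fixed $p_j$, the map $q\mapsto H_{\e,j}(q)=[p_j,\exp^{-1}_{p_j}(q)/\e]$ has image in the single fiber $\TM_{p_j}$, so its pullback annihilates any horizontal form. Consequently the exterior derivative of $\tilde f(p_j,\exp^{-1}_{p_j}(q)/\e) = H_{\e,j}^*\tilde f$ equals $H_{\e,j}^*(d^v \tilde f)$. Writing $d^v f = \sum_i \partial_{v_i}f\,\theta_i^\up$ in frame coordinates and using that $H_{\e,j}^*(dv_i) = \tfrac{1}{\e} d([\exp^{-1}_{p_j}(q)]_i) = \tfrac{1}{\e}\theta_i^\downarrow(p_j;q)$ by \eqref{eq:downform}, we obtain
\begin{equation*}
\e\cdot d_q\!\left[\tilde f\!\left(p_j,\tfrac{\exp^{-1}_{p_j}(q)}{\e}\right)\right] = \sum_i (\partial_{v_i}\tilde f)\!\left(p_j,\tfrac{\exp^{-1}_{p_j}(q)}{\e}\right)\cdot \theta_i^\downarrow(p_j;q).
\end{equation*}
Summing over $j$ with the weights $\psi_j$ reproduces exactly the coordinate expression \eqref{eq:epsilon_oneform} of $(d^v f)^\e_q$, so the ``good'' term cancels completely.

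The main point — and essentially the only nontrivial input beyond a careful chain rule — is recognizing that the pullback under $H_{\e,j}$ automatically projects onto the vertical direction, so the $\e^{-1}\theta_i^\downarrow$ arising from the chain rule matches the frame \eqref{eq:downform} used to define $(d^v f)^\e$. The sole obstruction to a perfect equality is the derivatives of the partition of unity, whose $\e^{-\alpha}$ blowup is beaten by the prefactor $\e$ exactly to produce the $\e^{1-\alpha}$ remainder. No finer cancellation (such as $\sum_j d\psi_j=0$ combined with a Lipschitz estimate on $\tilde f(p_j,\cdot)-\tilde f(p_\ell,\cdot)$) is needed for the stated rate.
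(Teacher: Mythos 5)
Your proposal is correct and follows essentially the same route as the paper's proof: apply the Leibniz rule to split $d_q f^\e$ into the "good" term (the pullback of $d^v f$, identified via commutativity of $d$ with $H_{\e,j}^*$ and the fact that $H_{\e,j}$ maps into a single fiber) and the "bad" term $\sum_j \tilde f[p_j,\cdot]\,d\psi_j$, then bound the latter by $C\e^{-\alpha}$ using \eqref{eq:size of iterated gradient} and the finite overlapping property, so that the prefactor $\e$ yields $O(\e^{1-\alpha})$. Your explicit coordinate verification of the identity $\e\sum_j\psi_j\,dH^*_{\e,j}\tilde f = (d^v f)^\e$ is more detailed than the paper's one-line appeal to pullback/exterior-derivative commutativity, but it is the same step.
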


\begin{proof}

Apply the exterior differential to  formula \eqref{eq:pullback} to obtain that

 \[
 df^\epsilon_q = 
 \sum \psi_j(q) dH_{ \epsilon,j}^* f[p_j,\cdot](q)+ d\psi_j\cdot  H_{ \epsilon,j}^* f[p_j,\cdot](q),\qquad q \in M. 
 \]
Notice that since $dH_{ \epsilon,j}^* f[p_j,\cdot](q)=H_{ \epsilon,j}^* df[p_j,\cdot](q)$,  
it follows from Definition~\ref{def:osctensorpull} that 

\[ \epsilon \sum \psi_j(q) dH_{ \epsilon,j}^* f[p_j,\cdot](q)= (d_v f)^\epsilon \]
Therefore,
\[
d(f^\e)_q-\frac{1}{\e}(d^vf)^\e_q=\sum_j\,f[p_j,\exp_{p_j}^{-1}(q)/\e]  d\psi_j.
\]
Multiplying both sides by $\e$,  using \eqref{eq:size of iterated gradient}  and the finite overlapping of the partition of unity, prove the Lemma.

\end{proof}

For the second part, recall that in a Riemannian manifold $(M,g)$, we can define the musical isomorphisms
\[
X\in\chi(M)\to X^\flat\in\Lambda^1(M),
\qquad
X^\flat(Y):=g(X,Y),
\]
and
\[
\theta\in\Lambda^1(M)\to\theta^\sharp\in\chi(M),
\qquad
g(\theta^\sharp, Y):=\theta(Y)
\]
to raise and lower indexes.  In particular, recall that for a smooth function $f$ in a Riemannian manifold,
$ (df)^\sharp=\grad f$.  Because exterior derivative commutes with pullbacks, it is somewhat easier to deal with 
differentials and then lower indexes. The following lemma states that this is a legitimate operation for oscillating sequences.

\begin{lem}\label{lem: musicalconmut}
Let $\omega\in\Lambda^1(\V)$. Then
\[ \lim_{\e \to 0 }\| (\omega^\e)^\sharp -(\omega^\sharp)^\e\|_{L^\infty(M)} =0,\]

\end{lem}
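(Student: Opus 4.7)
The plan is to expand $\omega$ in the vertical frame $\{\theta_i^\up\}$ and $\omega^\sharp$ in $\{e_i^\up\}$, then apply the coordinate formulas \eqref{eq:epsilon_vector field} and \eqref{eq:epsilon_oneform} for the pullback constructions. The key observation is that at each $q\in M$, the musical isomorphism $\sharp$ is $\R$-linear over scalars, so it commutes with the factors $\psi_j(q)$ and $\omega_i[p_j,\cdot/\e]$. This reduces the estimate to a pointwise comparison of two specific vectors at $q$, after which the main input is Lemma \ref{lem:down_vectors_variation} together with the smoothness of the inverse metric $g^{i\ell}$. I expect a bound of the shape $O(\e^\beta)$, which is far stronger than the statement requires.

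In concrete terms, writing $\omega=\sum_i\omega_i\,\theta_i^\up$ with $\omega_i\in C(\TM)$ (hence bounded since $\TM$ is compact), the Sasaki metric on vertical vectors together with \eqref{eq:inner_product} yields $\omega^\sharp=\sum_{i,\ell}g^{i\ell}(p)\,\omega_i\,e_\ell^\up$. Applying Definition \ref{def:osctensorpull} and the explicit formulas \eqref{eq:epsilon_vector field}, \eqref{eq:epsilon_oneform}, the difference can be rewritten at each $q\in M$ as
\[
(\omega^\sharp)^\e_q-(\omega^\e)^\sharp_q
=\sum_j\psi_j(q)\sum_i\omega_i[p_j,\exp_{p_j}^{-1}(q)/\e]\;R_{i,j}(q),
\]
where
\[
R_{i,j}(q):=\sum_\ell g^{i\ell}(p_j)\,e_\ell^\downarrow(p_j;q)-\bigl(\theta_i^\downarrow(p_j;q)\bigr)^\sharp.
\]

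To bound $R_{i,j}$, I would add and subtract $\bigl(\theta_i(q)\bigr)^\sharp=\sum_\ell g^{i\ell}(q)\,e_\ell(q)$. Lemma \ref{lem:down_vectors_variation} handles the replacement of $e_\ell^\downarrow(p_j;q)$ by $e_\ell(q)$ and of $\theta_i^\downarrow(p_j;q)$ by $\theta_i(q)$ up to an $O(\e^\beta)$ error uniformly on $\supp\psi_j\subset D_j^+$. For the metric coefficients, since $d(p_j,q)\le\e^\beta+\e^\alpha\lesssim\e^\beta$ on $\supp\psi_j$ and $g^{i\ell}\in C^\infty(M)$ on the compact manifold $M$, one also has $|g^{i\ell}(p_j)-g^{i\ell}(q)|\le C\e^\beta$. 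Since the sharp operator at $q$ is continuous with a norm depending only on $\sup_M\|g^{-1}\|$, combining both ingredients gives $|R_{i,j}(q)|\le C\e^\beta$ uniformly in $i,j$ and in $q\in\supp\psi_j$.

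Plugging this bound back into the display above and using the finite overlapping property (Lemma \ref{lem:FOP}) together with $|\omega_i[p_j,\cdot/\e]|\le\|\omega\|_{C(\TM)}$, one obtains
\[
\|(\omega^\sharp)^\e-(\omega^\e)^\sharp\|_{L^\infty(M)}\le C\e^\beta\|\omega\|_{C(\TM)}\xrightarrow[\e\to 0]{}0.
\]
The main obstacle is careful bookkeeping: distinguishing the metric at $p_j$ that enters in $\omega^\sharp\in\mathfrak{X}(\mathcalV)$ from the metric at $q$ that enters when taking $\sharp$ on the base manifold, and similarly separating the role of the twisted frame $\{e_i^\downarrow,\theta_i^\downarrow\}$ from the true frame $\{e_i,\theta_i\}$ at $q$. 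Once this is neatly organized, everything collapses to Lemma \ref{lem:down_vectors_variation} and the smoothness of $g$.
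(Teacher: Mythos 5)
Your proof is correct and rests on the same two ingredients as the paper's: the $O(\e^\beta)$ Lipschitz variation of the metric coefficients over the support of each $\psi_j$, and the $C^1$-closeness of the exponential chart to the identity (which you invoke via Lemma~\ref{lem:down_vectors_variation}, while the paper uses Lemma~\ref{lem:almost Euclidean charts} directly). The only organizational difference is that you compare the two vector fields directly through the residual $R_{i,j}(q)$, whereas the paper reduces to the scalar pairings $\langle\,\cdot\,,e_\ell\rangle$; if anything your version is slightly tidier because it keeps the $T^\e$ pullback convention (i.e.\ the $e_\ell^\downarrow$, $\theta_i^\downarrow$ frame) consistently on both sides, while the paper's displayed formula for $(\omega^\sharp)^\e$ implicitly uses $\bar{T}^\e$ and then relies on the earlier lemma identifying the two up to a vanishing error.
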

\begin{proof}
It suffices to prove that
\[\langle(\omega^\e)^\sharp, e_{\ell}\rangle  -\langle (\omega^\sharp)^\e, e_{\ell}\rangle  \to  0 \]
for any vector $e_\ell$ in the frame. 
If $\omega=\sum_k \omega_k[p,v]{\theta^k}^\uparrow$ is   the form in coordinates with respect to the   dual frame ${\theta ^k}^\uparrow$ given by the parallelization,
we have that
\[ 
(\omega^\sharp)^\e =\sum_j\,\psi_j(p)\sum_{i,k} \omega_k[p_j,\frac{ \exp_{p_j}^{-1}(p)}{\e}]\, g^{ki}(p_j)\, e_i
\]

Then, on one hand  we have
\begin{equation}
    \label{eq:omega_sharp_epsilon}
\langle (\omega^\sharp)^\e, e_{\ell}\rangle=\sum_j\psi_j(p)\sum_{i,k}\omega_k[p_j,\frac{ \exp_{p_j}^{-1}(p)}\e]g^{ki}(p_j)g_{i\ell}(p)
.
\end{equation}

Observe that, when $p\in\supp\psi_j$, $|g_{i \ell}(p)-g_{i\ell}(p_j)|\leq C\cdot d(p,p_j)\leq C\e^\beta$, thus we can replace \eqref{eq:omega_sharp_epsilon} by 
\[
\sum_j\psi_j(p)\sum_{i,k}\omega_k[p_j,\frac{ \exp_{p_j}^{-1}(p)}\e]g^{ki}(p_j)g_{i\ell}(p_j)=
\sum_j\psi_j(p)\cdot \omega_{\ell}[p_j,\frac{ \exp_{p_j}^{-1}(p)}{\e}]
\]
as $\e\to 0$. 

On the other hand,
\[
\langle(\omega^\e)^\sharp, e_l\rangle =\omega^\e(e_l)=\e \sum_j\psi_j(p)H^*_{\e,j}(\omega)(e_l)\]
\[=\e \sum_j\psi_j(p) \sum_k \omega_k[p_j, \frac{ \exp_{p_j}^{-1}(p)}{\e}]({\theta^k}^\uparrow)((dH_{\e,j})_p(e_l)).\]

Thus, it suffices to show that 
\begin{equation}
\label{eq:limit}
 \sum_j\psi_j(p)\cdot \left( \omega_{\ell}[p_j,\frac{ \exp_{p_j}^{-1}(p)}{\e}]- \e\sum_k \omega_k(p_j, \frac{ \exp_{p_j}^{-1}(p)} \e)({\theta^k}^\uparrow)((dH_{\e,j})_p(e_l))  \right) \to 0
\end{equation}
as $\e\to 0$.
But thanks to Lemma \ref{lem:almost Euclidean charts}, we have that 
\[
(dH_{\e,j})_p(e_\ell)=\frac 1\e d_p\exp_{p_j}^{-1}(e_\ell)= 1/\e(e_\ell^\uparrow+ O(d(p,p_j))
\]
for $p\in\supp \psi_j$, and therefore
\[
({\theta^k}^\uparrow)((dH_{\e,j})_p(e_l))=
\delta^k_\ell+ O(d(p,p_j)).
\]
Replacing in \eqref{eq:limit}, the desired limit follows.

\end{proof}

Now, we are ready to transfer the information from Lemma~\ref{lem:1-form-version-of-gradientepsilon} to gradients. 

\begin{proof}[Proof of Proposition~\ref{prop:gradient_epsilon_and_epsilon_gradient}]
We have 
\[ \lim_{\e\to 0}\| \e \grad (f^\e)_q - (\grad^v f)^\e\|
=
\lim_{\e\to 0}\| \e (d_qf^\e )^\sharp- ((d^v f)^\sharp )^\e\|
\]
Then from lemma \ref{lem: musicalconmut}:
\[=\lim_{\e\to 0}\| \e (d_qf^\e)^\sharp- ((d^v f)^\e )^\sharp\|=\lim_{\e\to 0}\|  (\e\cdot d_q (f^\e) - (d^v f)^\e )\| =0
\]
where we used Lemma \ref{lem:1-form-version-of-gradientepsilon}.

\end{proof}

\section{Integration by parts} \label{sec:part}

\subsection{Integration by parts: Prerequisites}
\label{sec:by_parts_technical}
In this section, we collect, for the reader's convenience, a set of results that are necessary for the proof of Theorem \ref{vector_limit}. They can be skipped in a first reading, and return to them when going through Section \ref{sec:by_parts_main}. 

\subsection{A technical lemma}
Let $\tilde{f}:TM\to\R$ be a smooth function, and for $p\in M$ consider an open neighborhood $U$ of $p$ where $\exp_p^{-1}$ exists. Define the map
\[
f:U\to\R, \qquad
f(q):=\tilde{f}(p,\exp_p^{-1}(q)/\e),
\]  
where $\e>0$. Recall from definitions \ref{def:vector_lift} and  \eqref{def:downvector} that given a tangent vector $e\in T_pM$, we have  fields $e^\uparrow{(p,v)}\in \V_{p,v}$ and $e^\downarrow(p;q)$ in $T_pM$ and a neighborhood of $p$ respectively. 

\begin{lem}
\label{lem:df_divided_bi_epsilon}
With the above notation, we have 
\[
df_q(e^\downarrow(p;q))=
\frac{1}{\e}\,e^\uparrow_{(p,(\exp_p^{-1}q)/\e)}(\tilde{f}).
\]  
\end{lem}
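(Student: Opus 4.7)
The plan is to evaluate both sides by choosing a convenient curve representing $e^\downarrow(p;q)$ and applying the chain rule; the lemma is essentially an exercise in unwinding the two definitions. Set $v := \exp_p^{-1}(q)$, so $q = \exp_p(v)$, and consider the curve
\[
c:(-\delta,\delta)\to M, \qquad c(t) := \exp_p(v + t\,e).
\]
Since the curve $t\mapsto v+t e$ in $T_pM$ has $\gamma(0)=v$ and $\gamma'(0) = e$ viewed as an element of $T_v(T_pM)$, Definition \eqref{def:downvector} gives $c(0)=q$ and $c'(0) = (d\exp_p)_v(e) = e^\downarrow(p;q)$. Thus $c$ realizes the vector at which we want to evaluate $df$.

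Now compose $f$ with $c$. Directly from the definition of $f$,
\[
f(c(t)) \;=\; \tilde{f}\bigl(p,\, \exp_p^{-1}(c(t))/\e\bigr) \;=\; \tilde{f}\bigl(p,\, (v+te)/\e\bigr),
\]
so $t\mapsto f(c(t))$ is the pullback of $\tilde{f}$ along the affine line $t\mapsto (p,\, v/\e + t\,e/\e)$ in $TM$. Differentiating at $t=0$ and rescaling the parameter $s=t/\e$ yields, by the chain rule,
\[
df_q\bigl(e^\downarrow(p;q)\bigr)
\;=\;\left.\frac{d}{dt}\right|_{t=0}\!\tilde{f}\!\left(p,\tfrac{v}{\e}+t\tfrac{e}{\e}\right)
\;=\;\frac{1}{\e}\left.\frac{d}{ds}\right|_{s=0}\!\tilde{f}\!\left(p,\tfrac{v}{\e}+s\,e\right).
\]

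Finally, by Definition \ref{def:vector_lift} (lifted to $TM$ before taking the $\Z^n$ quotient), the rightmost expression is precisely $\tfrac{1}{\e}\,e^\uparrow_{(p,\,v/\e)}(\tilde{f})$. Since $v/\e = \exp_p^{-1}(q)/\e$, this is the claimed identity. There is no genuine obstacle; the only subtlety to flag is the canonical identification $T_v(T_pM)\cong T_pM$ used both in the definition of $e^\downarrow$ and when interpreting $\gamma'(0)=e$, which is what makes the two sides literally the same derivative up to the factor $1/\e$ coming from the chain rule applied to the dilation by $\e^{-1}$.
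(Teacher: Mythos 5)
Your proof is correct and takes essentially the same route as the paper's: both realize $e^\downarrow(p;q)$ as the velocity of the curve $t\mapsto \exp_p(v+te)$, pull $f$ back along it, and extract the $1/\e$ factor by the chain rule. The only cosmetic difference is that you make the curve explicit up front, while the paper phrases the same computation via $d(f\circ\exp_p)_v$.
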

\begin{proof}
Let $v=\exp_p^{-1}(q)$;
recall that $e^\downarrow(p;q)=(d\exp_p)_v(e^\uparrow(p,v))$ and that 
$e^\uparrow(p,v)$ is tangent to $t\to (p,v+te)$ at $t=0$. Thus,
\[
df_q(e^\downarrow(p;q))=
df_q(d\exp_p)_v(e^\uparrow(p,v))=
d(f\circ\exp_p)_v(e^\uparrow),
\]
and this agrees at $t=0$ with
\[
\frac{d}{dt}f\circ\exp_p(v+te)=
\frac{d}{dt}\tilde{f}(p,(v+te)/\e)=
\frac{1}{\e}e^\uparrow_{(p,v/\e)}(\tilde{f})
\] 
as we wanted to show.
\end{proof}

\subsection{Divergence of down vectors}
With our notation of nets as in Theorem \ref{thm:usable_partition_of_unity}, given a vector $e\in T_{p_j}M$, construct, as in equation \eqref{def:downvector}, the vector fields $e^\downarrow:= e^\downarrow(p_j;q)$ for $q$ in the Voronoi set $D_j$.
\begin{lem}
\label{lem:divergence_down}
With the above notation, we have that 
\[
|(\div e^\downarrow)(q)|\leq C\dist(p_j,q)
\]
\end{lem}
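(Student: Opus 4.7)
The strategy is to compute the divergence in normal coordinates centered at $p_j$ and exploit the vanishing of the first derivatives of the metric at the origin. Let $(x^1,\dots,x^n)$ be the normal coordinates at $p_j$ associated to an orthonormal basis of $T_{p_j}M$ (note that, for this computation, there is no loss in replacing the frame by an orthonormal basis at $p_j$, since any vector $e\in T_{p_j}M$ is a constant linear combination of the frame vectors and divergence is linear). By the very definition of $e_i^\downarrow(p_j;q)=(d\exp_{p_j})_v(e_i)$ with $v=\exp_{p_j}^{-1}(q)$, we see that in these coordinates the field $e_i^\downarrow$ is simply the coordinate vector field $\partial/\partial x^i$, and more generally for $e=\sum_i a^i e_i$ one has $e^\downarrow=\sum_i a^i \partial/\partial x^i$, a constant-coefficient vector field.

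The next step is to apply the standard formula for the divergence in coordinates,
\[
\div e^\downarrow=\frac{1}{\sqrt{g}}\,\partial_i\bigl(\sqrt{g}\,(e^\downarrow)^i\bigr),
\]
where $g=\det(g_{ij})$. Since the components $(e^\downarrow)^i=a^i$ are constant, this collapses to
\[
(\div e^\downarrow)(q)=\sum_i a^i\,\partial_i \log\sqrt{g(v)}=\tfrac12\sum_i a^i\,\frac{\partial_i g(v)}{g(v)}.
\]

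Finally I would invoke the properties of the normal chart collected in \eqref{eq:metric_at_zero}, namely $g_{ij}(0)=\delta_{ij}$ and $\partial_k g_{ij}(0)=0$, which imply $g(0)=1$ and $\partial_i g(0)=0$. Hence the smooth function $v\mapsto (\div e^\downarrow)(\exp_{p_j}v)$ vanishes at $v=0$, and a first-order Taylor expansion at the origin gives
\[
|(\div e^\downarrow)(q)|\le C\,|v|=C\,\dist(p_j,q),
\]
with a constant $C$ that is uniform in $p_j$ by compactness of $M$ (since the bound only depends on the $C^2$-norm of the metric in normal charts, which is uniformly controlled). No step appears to pose a real obstacle; the only thing to watch is the uniformity of the constant, which follows from compactness and the uniform control on normal charts furnished by Lemma~\ref{lem:almost Euclidean charts}.
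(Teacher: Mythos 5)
Your proof is correct and follows essentially the same route as the paper's: normal coordinates at $p_j$, the observation that $e^\downarrow$ has constant components there, the divergence formula $\frac{1}{\sqrt{g}}\partial_i(\sqrt{g}\,V^i)$, and the vanishing of $\partial_i\sqrt{g}$ at the origin. The only cosmetic difference is that you phrase the remaining factor as $\partial_i\log\sqrt{g}$ rather than $\frac{1}{\sqrt g}\partial_i\sqrt g$, and you spell out the uniformity of the constant, which the paper leaves implicit.
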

\begin{proof}
Recall that $e^\downarrow(p_j; q)=d\exp_{p_j}(e^{\uparrow}(p_j,v))$, thus in the normal coordinate chart $(U,\exp_{p_j})$ the coordinates of $e^\downarrow(p_j; q)$ are those of $e^{\uparrow}(p_j,v)$, that is a constant vector field in $T_{p_j}M$.  

Shorten the notation by denoting $V_q=e^\downarrow(p_j; q)$, thus, its $i$-th component $V^i(q)$ is constant for every $i$.
Then, using the formula for the divergence of a vector field in coordinates, we get
\[
\div V (q)= \frac{1}{\sqrt{g}}\partial_i(\sqrt{g}\, V^i)(v) 
=
\frac{1}{\sqrt{g}}V^i\partial_i(\sqrt{g})(v)
\]
where $v=\exp_{p_j}^{-1}(q)$, and $\|v\|=d(p_j,q)$. 
Since $\partial_i(\sqrt{g})(0)=0$, 
\[
|\partial_i(\sqrt{g})(v)|\leq C\|v\|=Cd(p_j,q)
\]
and the inequality follows.
\end{proof}

\subsection{{The \texorpdfstring{$P_k$}{} operator}}

Let $k\in\mathbb{Z}^n$ be a multi-index. 
Recall from Lemma \ref{lem:P_kf} the definition of the $P_k$ operator in functions $\tilde{u}$ of $\R^n$,
\[
P_k\tilde{u}:=\dfrac{\ip{k}{\nabla \tilde{u}}}{2\pi i|k|^2}.
\]
 We will often use the following  basic integration by parts formula in $\mathbb{R}^n$, that we label for future use: given $\tilde{u}:\mathbb{R}^n\to \mathbb{R}$ with compact support, it holds that
\begin{equation}
\label{parts}
\int\, \tilde{u}(v)\cdot \exp{\left(\frac{2 \pi i}{\e}\ip{v}{k}\right)}\,dv = -\epsilon\int P_k \tilde{u}(v)\cdot \exp{\left(\frac{2 \pi i}{\e}\ip{v}{k}\right)\, dv}.
\end{equation}

The following estimates are trivial 
for our operator $P_k$. Let $\psi_j:M\to\mathbb{R}$, a member of the partition of unity from Theorem~\ref{thm:usable_partition_of_unity} and define $\widechi_j:T_{p_j}M\to\mathbb{R}$ as $\widechi:=\psi_j\circ\exp_{p_j}$.
Therefore, in normal coordinates at $p_j$, the pairing $(d\psi_j,e(p_j;\cdot))$ has the local expression 
 $(d\widechi_j, e^\uparrow)$,
Similarly, for any function $u:M\to\R$, the $n$-form $u\cdot d\vol_M$  lifts to a form $\widetilde{u}(v)\, g(v)^{1/2} dv$, where $g(v)= {\det (g_{ij})}_{q}$ with $q= \exp_{p_j}(v)$, and $\widetilde{u}=u\circ\exp_{p_j}$.
 
 Then, we claim that

\begin{equation}\label{p1}
P_k^m((d\widechi_j, e^\uparrow))\leq C_1\frac{\sup_{|\gamma|=m+1}|D^{\gamma}\psi_j|}{|k|^m}\leq C'_1\frac{\e^{-\alpha(m+1)}}{|k|^m}
\end{equation}
\\
and 
\begin{equation}\label{p2}
P_k(\widetilde{u} g^{1/2})\leq C_2\frac{|\nabla \widetilde{u}|+|\widetilde{u}|}{|k|}, 
\end{equation}
for some constants $C_1$, $C_2>0$.
 
 To see this, observe first that,  in any basis of $T_{p_j}M$, $e^\uparrow$ has constant coordinates. 
 Thus, the function $(d\widechi_j, e^\uparrow)$ is just a linear combination of the derivatives of $\widechi$ with constant coefficients, when taking its gradient, we will get another linear combination of terms in $D^2\widechi$, etc. Hence, we get the first inequality in \eqref{p1}; the second follows from inequality \eqref{eq:size of iterated gradient}.
 Finally, in order to prove \eqref{p2}, use \eqref{eq:metric_at_zero} combined with the product rule for derivatives.

Additionally, we observe that, when $u:M\to\R$ is a function in $H^1(M)$,  and choose some $p_j$ in the net, and use the exponential normal coordinates to write $\tilde{u}=u\circ \exp_{p_j}$, then the gradient of $u$ is written locally as
\[
\nabla\tilde{u}(v)=\sum_i\sum_j \,g^{ij}(v)\frac{\partial \tilde{u}}{\partial v_j}\frac{\partial }{\partial v_i} 
\]   
Therefore, use once again,  \eqref{eq:metric_at_zero}, we get the following estimate: 
\begin{equation}
\label{eq:local-global}
\int_{\{\nabla\widetilde{\psi_j}\neq 0\}}\, |\tilde{u}|+|\nabla\tilde{u}|\, dv
\leq
C \int_{\{\nabla \psi_j\neq 0\}}\, |u|+|\nabla u|\, dp
\end{equation}
for some universal constant $C$.

\subsection{Integration by parts for test vector fields}
\label{sec:by_parts_main}
The usual formula of integration by parts in a Riemannian manifold without boundary states that
\begin{equation}
\label{eq:int_by_parts}
\int_M\, X(u) + u\div X \, dq =0,
\end{equation}
since $\int_M\,\div (uX)\,dq=0$. 
The key difference between our situation and this case is that integrating by parts will not be that obvious in the presence of two-scale convergence. 

Let $X$ be a  smooth vertical vector field in $\TM$, written as
\[
X_{[p,v]}=\sum_{i=1}^n {f}_i[p,v] e_i^\uparrow,
\]
and  recall the formulas in equation \eqref{eq:epsilon_vector field} and Definition \ref{def:tensor_average}, 
\begin{equation}
\label{eq:test_vector_field}
X^\e_q:=\sum_j\, \psi_j(q)\sum_{i=1}^n\,f_i[p_j,\exp_{p_j}^{-1}(q)/\e]\,e_i^\downarrow(p_j;q).
\end{equation}

\begin{equation}\label{eq:average_vector_field}
\widetilde{X}_q:=
\vol(\T M_q)^{-1}\int_{\T M_q} {X}[q,v] dv = \sum_{i=1}^n\,\left(\vol(\T M_q)^{-1}\int_{\TM_q}\, f_{i}[q,v]\,  dv\,\right)\cdot e_i  \in \mathfrak{X}(M).
\end{equation}

 We show that for our test functions and vector fields, integration by parts works up to an error tending to zero with $\e$. 
In order to state the result, we will shorten notation denoting by   $h[p,v]=  (\div^{v}X)[p,v]$.

\begin{Theorem} 
\label{vector_limit}
Let $u \in H^1(M)$. Then, as $\e \to 0$,
\begin{equation} \label{12.2.17}
\left |
-\int_M \langle \grad u,X^{\e} \rangle \, dq
- 
\frac{1}{\epsilon}\int_{M} u\cdot  h^{\epsilon}\,dq
-\int_M u\cdot\div\widetilde{X} \, dq \right | 
\to 0,
\end{equation} 

Moreover, this convergence is uniform when $u$ lies in a bounded set in $H^1(M)$.
 \end{Theorem}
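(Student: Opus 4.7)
The plan is to start from the divergence theorem on the closed manifold: by \eqref{eq:int_by_parts}, $-\int_M\langle\grad u, X^\e\rangle\,dq = \int_M u\,\div X^\e\,dq$, so \eqref{12.2.17} is equivalent to
\[
\int_M u\,\bigl(\div X^\e - \tfrac{1}{\e}h^\e\bigr)\,dq \longrightarrow \int_M u\,\div\widetilde X\,dq,
\]
uniformly for $u$ bounded in $H^1(M)$. I would first split $X = Y + \widetilde X^\up$ with $\widetilde X^\up\in\mathfrak{X}(\mathcalV)$ the vertical lift of the fiber-average $\widetilde X$. Since $\widetilde X^\up$ is fiber-constant, $\div^v\widetilde X^\up\equiv 0$, so $h=\div^v Y$ and $\widetilde Y\equiv 0$; moreover $X^\e = Y^\e + (\widetilde X^\up)^\e$. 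Lemma~\ref{upversusepsilon} (applied to $\widetilde X$) gives $(\widetilde X^\up)^\e \to \widetilde X$ in $L^\infty(M)$, so Cauchy--Schwarz yields $-\int_M\langle \grad u,(\widetilde X^\up)^\e\rangle\,dq \to \int_M u\,\div\widetilde X\,dq$ uniformly on $H^1$-balls. This reduces the problem to the case $\widetilde X\equiv 0$, in which the target collapses to $\int_M u\,(\div X^\e - \tfrac{1}{\e}h^\e)\,dq \to 0$.

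\textbf{Pointwise identity for $\div X^\e$.} Next, I would compute $\div X^\e$ directly from \eqref{eq:test_vector_field} using the product rule $\div(\phi V)=\phi\div V + \langle \grad\phi,V\rangle$ and Lemma~\ref{lem:df_divided_bi_epsilon}, which yields $e_i^\downarrow f_i[p_j,\exp_{p_j}^{-1}(\cdot)/\e] = \tfrac{1}{\e}(e_i^\up f_i)[p_j,\exp_{p_j}^{-1}(\cdot)/\e]$. Summation in $i$ produces the leading-order term $\tfrac{1}{\e}h^\e$, leaving
\[
\div X^\e = \tfrac{1}{\e}h^\e + E_1^\e + E_2^\e,
\]
\[
E_1^\e(q) = \sum_{j,i}\langle \grad\psi_j(q),\,e_i^\downarrow(p_j;q)\rangle\,f_i[p_j,\exp_{p_j}^{-1}(q)/\e],
\]
\[
E_2^\e(q) = \sum_{j,i}\psi_j(q)\,f_i[p_j,\exp_{p_j}^{-1}(q)/\e]\,\div e_i^\downarrow(p_j;q).
\]
The remaining task is to show $|\int_M u\,E_\ell^\e\,dq|\to 0$ for $\ell=1,2$. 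For $E_2^\e$ this is immediate: the finite overlap property of the partition combined with the pointwise bound $|\div e_i^\downarrow(p_j;q)|\le C\,d(p_j,q) \le C\e^\beta$ from Lemma~\ref{lem:divergence_down} gives $\|E_2^\e\|_{L^\infty(M)}\le C\|X\|_\infty\,\e^\beta$, so $|\int_M u\,E_2^\e|\le C\e^\beta\|X\|_\infty\|u\|_{L^2}\to 0$.

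\textbf{Oscillatory control of $E_1^\e$.} The estimate of $E_1^\e$ is more delicate and exploits the reduction to $\widetilde Y\equiv 0$. Restrict to each $D_j$, pass to normal exponential coordinates $v=\exp_{p_j}^{-1}(q)$, and write $\widechi_j = \psi_j\circ\exp_{p_j}$, $\tilde u = u\circ\exp_{p_j}$, $g(v)=\det g_{ij}(v)$. Because $\widetilde Y\equiv 0$, the fiber Fourier expansion
\[
f_i[p_j,v/\e] = \sum_{k\in\Z^n\setminus\{0\}}\hat f_{i,k}(p_j)\,e^{2\pi i k\cdot v/\e}
\]
has no $k=0$ mode. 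For each $k\neq 0$, I would iterate the $P_k$-integration-by-parts identity \eqref{parts} $N$ times: each iteration trades the oscillation for a factor $\e/|k|$, and $P_k$ differentiates the smooth companion $\tilde u\,\langle d\widechi_j,e_i^\up\rangle\,g^{1/2}$. Combining the Leibniz rule with the $P_k$-derivative bounds \eqref{p1}--\eqref{p2}, the derivative-size estimate \eqref{eq:size of iterated gradient} and the measure bound \eqref{eq:volume of one gradient} for $\supp\grad\psi_j$, and the local-to-global estimate \eqref{eq:local-global} when summing over $j$, the expected conclusion is
\[
\Bigl|\int_M u\,E_1^\e\,dq\Bigr| \le C_N\,\e^{\gamma(N)}\,\|u\|_{H^1(M)}\sum_{k\neq 0}\frac{\sup_p|\hat f_{i,k}(p)|}{|k|^N},
\]
with $\gamma(N)>0$ for $N$ sufficiently large. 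The smoothness of $X$ gives faster-than-polynomial decay of $\hat f_{i,k}$, so the $k$-sum is finite; a density argument then extends the identity from smooth $u$ to $u\in H^1(M)$ using the uniformity in $\|u\|_{H^1}$.

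\textbf{Main obstacle.} The hard part will be the bookkeeping in the last step: each derivative of $\psi_j$ costs $\e^{-\alpha}$, each $P_k$-step pays only $\e\cdot|k|^{-1}$, the support of $\grad\psi_j$ contributes $\e^{\beta(n-1)+\alpha}$ in volume, and derivatives may also land on the low-regularity factor $\tilde u$ (where only one such derivative may fall, by the $H^1$ hypothesis). The aggregate must yield a positive power of $\e$, which is only possible under the strict inequalities $1/2<\beta<\alpha<1$ built into Theorem~\ref{thm:usable_partition_of_unity}; this is precisely why the refined partition of unity of Section~\ref{sec:partition} was engineered with those parameters.
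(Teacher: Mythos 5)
Your proof is correct and takes essentially the paper's route, with one genuinely distinct organizational move: you pre-subtract the fiber-average lift, writing $X=Y+\widetilde X^\up$ with $\widetilde Y\equiv 0$ and $\div^v\widetilde X^\up\equiv 0$, so that the $k=0$ Fourier mode disappears from $E_1^\e$ before you ever expand. The paper instead keeps the $k=0$ mode in place and handles it separately in Lemma~\ref{0fm}, which integrates by parts in $q$ once more, invokes continuity of $p\mapsto\hat f(p,0)$, and again uses Lemma~\ref{lem:divergence_down}. Your reduction replaces all that by the direct $L^\infty$ comparison $(\widetilde X^\up)^\e\to\widetilde X$ from Lemma~\ref{upversusepsilon} plus Cauchy--Schwarz and a single standard integration by parts on $M$; this is slightly cleaner, and the trade is Lemma~\ref{0fm} for Lemma~\ref{upversusepsilon}. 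After that, both arguments coincide: the product-rule expansion of $\div X^\e$ into $\tfrac1\e h^\e+E_1^\e+E_2^\e$, the $E_2^\e$ bound of order $\e^\beta$ from Lemma~\ref{lem:divergence_down}, and the Fourier-mode elimination of $E_1^\e$ by iterating \eqref{parts} against the smooth factor $e^\up(\widechi_j)$, letting $P_k$ hit $\bar u\,g^{1/2}$ at most once (exactly what the inductive claim \eqref{eq:inductionstep} in Lemma~\ref{hfma} is engineered for). Two small remarks: the density argument you invoke at the end is superfluous --- Lemmas~\ref{hfma} and~\ref{hfm} are proved directly for $u\in H^1(M)$, since every iteration after the first differentiates only the smooth cutoff; and your final displayed bound with $|k|^{-N}$ is a slight overstatement --- the usable $k$-decay obtained from the iteration is $|k|^{-1}$ (the $|k|^{-N}$ factors live only in the residual that is sent to zero by letting the iteration depth $m\to\infty$), which is why the paper needs the fiber-Sobolev norm $\|f\|_{C(M,H^s(\TM_p))}$ with $s>\tfrac n2-1$ to sum over $k$. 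For $X$ smooth this is immaterial, but worth getting straight.
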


This has the immediate consequence

\begin{Cor}
\label{cor:H1_orthogonal_to_divergence}
Let $u\in H^1(M)$ and $X$ a smooth vertical vector field in $\TM$. Then, 
\[
\lim_{\e\to 0} \int_M\,u\cdot (\div^v X)^\e\,dq = 0.
\]
Moreover, this convergence is uniform over bounded sets in $H^1(M)$. In other words

\[ \lim_{\epsilon \to 0} \| (\div^v X)^\e\|_{H^{-1}(M)}=0 \]
\end{Cor}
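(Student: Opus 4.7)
The plan is to derive the corollary directly from Theorem \ref{vector_limit} by a simple rescaling: the theorem bounds the quantity $\frac{1}{\e}\int_M u\cdot (\div^vX)^\e\,dq$ plus two other terms of order $O(\|u\|_{H^1})$, so multiplying through by $\e$ converts it into the desired vanishing statement with the additional uniformity already built in.

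Concretely, writing $h=\div^v X$ as in the theorem, I would rearrange its conclusion as
\[
\frac{1}{\e}\int_M u\cdot h^{\e}\,dq = -\int_M \langle \grad u, X^{\e}\rangle\,dq - \int_M u\cdot\div\widetilde{X}\,dq + r(\e,u),
\]
where $|r(\e,u)|\to 0$ as $\e\to 0$, uniformly for $u$ in any fixed $H^1(M)$-bounded set. Multiplying by $\e$ then gives
\[
\int_M u\cdot h^{\e}\,dq = -\e\int_M \langle \grad u, X^{\e}\rangle\,dq - \e\int_M u\cdot\div\widetilde{X}\,dq + \e\, r(\e,u).
\]

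Next, I would note that the right-hand side is controlled by $C\,\e\,\|u\|_{H^1(M)}+\e|r(\e,u)|$. Indeed, $X^{\e}$ is uniformly bounded in $L^\infty(M)$ as $\e\to 0$: this follows from formula \eqref{eq:test_vector_field}, since the smooth coordinate functions $f_i[p,v]$ of $X$ are bounded, $\{\psi_j\}$ has the finite overlap property (Lemma \ref{lem:FOP}), and $e_i^\downarrow(p_j;q)$ stays uniformly close to the frame $e_i(q)$ by Lemma \ref{lem:down_vectors_variation}. The averaged field $\widetilde{X}$ in \eqref{eq:average_vector_field} is smooth on the compact $M$, so $\div\widetilde{X}\in L^\infty(M)$. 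Cauchy--Schwarz then yields
\[
\Bigl|\int_M u\cdot (\div^v X)^{\e}\,dq\Bigr| \leq \e\bigl(\|X^{\e}\|_\infty+\|\div\widetilde{X}\|_\infty\bigr)\,\|u\|_{H^1(M)}+\e\,|r(\e,u)|,
\]
which tends to $0$ as $\e\to 0$, uniformly over any bounded subset of $H^1(M)$.

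The $H^{-1}$-statement follows immediately by taking the supremum over $u\in H^1(M)$ with $\|u\|_{H^1}\leq 1$ in the duality formula $\|(\div^v X)^\e\|_{H^{-1}(M)}=\sup_{\|u\|_{H^1}\leq 1}|\langle (\div^v X)^\e, u\rangle|$. There is no real obstacle in this argument; essentially all the work has been done in Theorem \ref{vector_limit}, and the only item one needs to verify is the uniformity of the remainder $r(\e,u)$ on $H^1$-bounded sets, which is precisely the uniformity clause included in that theorem's statement.
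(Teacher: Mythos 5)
Your proposal is correct and follows essentially the same route as the paper, which simply says ``Multiply equation \eqref{12.2.17} by $\e$ and let it tend to zero.'' You have merely spelled out the bounds (uniform $L^\infty$ control on $X^\e$ via Lemma \ref{lem:down_vectors_variation} and the finite overlap property, boundedness of $\div\widetilde X$ from smoothness of $\widetilde X$ on the compact manifold) that the paper leaves implicit, and you correctly identify the uniformity clause in Theorem \ref{vector_limit} as the one ingredient carrying the $H^{-1}$ statement.
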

\begin{proof}[Proof of the Corollary]
Multiply equation \eqref{12.2.17} by $\e$, and let it tend to zero. 
\end{proof}

The proof of Theorem \ref{vector_limit} requires not only the tools appearing in the previous sections, but also new ones. We will give the proof later, but to motivate what follows, observe that, when applying equation \eqref{eq:int_by_parts} to a test vector field $X^\e$ and the function $u$, we obtain
\begin{equation} 
 -\int_M\, (du, X^\e) \, dq = \int_M\, u\div X^\e \, dq.
\end{equation}
Recall that $(du, X^\e)= \langle \grad u, X^\e \rangle$
pointwise. 

From the above, we will need to compute the divergence of a test vector field as in \eqref{eq:test_vector_field}. For the reader's convenience, we consider the simpler case when $X_{[p,v]}=f[q,v]e^\uparrow$ for some function $f:\TM\to\R$, and some vector field in the frame, giving the parallelization of $M$. 
Then equation \eqref{eq:test_vector_field} reads
\[
X^\e_q:=\sum_j\, \psi_j(q)f[p_j,\exp_{p_j}^{-1}(q)/\e]\,e^\downarrow(p_j;q),
\]

Thus, writing $\bar{f_j}(q)=f[p_j,\exp_{p_j}^{-1}(q)/\e]$, we get
 \begin{multline}
 \label{by_parts}
-\int_M\, (du, X^\e) \, dq =
\sum_j \int_{M}   u(q) 
\bar{f_j}(q)
 ({d_q\psi_j(q)},{e^{\downarrow}(p_j;q)}) dq 
\\
+\sum_j \int_M u(q) \psi_j(q)  
({d_q f(p_j, \exp^{-1}_{p_j}(q)/\epsilon)},{e^{\downarrow}(p_j;q)})   dq
\\
+  
\sum_j\int_M u(q) \psi_j(q) \bar{f_j}(q) 
\div   (e^{\downarrow}(p_j; q))\, dq.      
\end{multline}

We will treat each of the terms appearing in the right of \eqref{by_parts} separately.

\subsection{Oscillation of the coefficients}
For each $p_j$ in the net, we consider normal coordinates at $p_j$, 
\[
\exp_{p_j}:U_j\subset T_{p_j}M\to M, \qquad v\to \exp_{p_j}(v).
\]   
In these coordinates, the function $\bar{f}_j$ is written as $v\to f[p_j,v/\e]$. For each $p$ in $M$, we take the $\ZZ^n$-periodic function induced by $f[p,v]$ in $TM$, and its Fourier expansion 
\begin{equation}
\label{eq:fourier}
f(p, v)=\hat f(p, 0)+  \sum_{|k|\geq 1} \hat f(p, k)\, e^{2 \pi i \ip{v}{k}}.
\end{equation}

In these coordinates, the first term in equation \eqref{by_parts} reads as
\begin{multline}
\sum_j \int_{M}   u(q) 
f(p_j, \exp^{-1}_{p_j}(q)/\epsilon)
({d_q\psi_j(q)},{e^{\downarrow}(p_j;q)}) dq = \\
\sum_j \int_{T_{p_j}M}\, \bar{u}(v)\,f(p_j,v/\e)\,e^\uparrow(\widechi_j)\,g^{1/2}(v)\,dv,
\end{multline}
where $\bar{u}=u\circ\exp_{p_j}$, $\widechi_j=\psi_j\circ\exp_{p_j}$ and $g(v)=\det(g_{ij})$, where $(g_{ij})$ is just the metric tensor in these coordinates.

Replacing $f$ by its Fourier expansion as in \eqref{eq:fourier}, leaves two types of terms, depending on whether $|k|\geq 1$ or $k=0$. 
We will need to be a bit more delicate to discard non-constant Fourier modes as we have to deal with 
derivatives of the cut-off function, which yields powers of $\epsilon^{-\alpha}$. We start by proving the following key lemma.

\begin{Lemma}
\label{hfma}
Let $k$ be a nonzero multi-index in $\Z^n$. 
For any function $u$ in $H^{1}(M)$, it holds that
\begin{equation}
\left|\,  \int_{T_{p_j}M}\, \bar{u}(v)\,\left(\exp{\frac{2\pi i}{\e}\ip{k}{v}}\right)
\,e^\uparrow(\widechi_j)\,g^{1/2}(v)\,dv\,\right|  \le C |k|^{-1} \int_{\nabla \psi_j \neq 0}
|u|+|\nabla u| dp
\end{equation}
\end{Lemma}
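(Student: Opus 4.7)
The plan is a careful integration by parts argument against the rapid oscillation $\phi(v) := \exp(2\pi i\langle k, v\rangle/\e)$, exploiting the formula \eqref{parts} and the $P_k$ operator from Lemma \ref{lem:P_kf} to produce the factor $|k|^{-1}$. Working in the chart $\exp_{p_j}$ at $p_j$, recall that $e^\uparrow$ has constant coefficients when expressed in the frame of $T_{p_j}M$, so that $e^\uparrow(\widechi_j) = \partial_e \widechi_j$ is simply a directional derivative of $\widechi_j$ in the constant direction $e$. Thus the integral is
\[
I := \int_{T_{p_j}M} \bar u(v)\,\partial_e \widechi_j(v)\,g^{1/2}(v)\,\phi(v)\,dv,
\]
and one application of \eqref{parts} gives $I = -\e\int P_k\bigl[\bar u\,\partial_e\widechi_j\,g^{1/2}\bigr]\,\phi\,dv$.

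I would then expand the bracket by the Leibniz rule, obtaining three summands. For the terms carrying $P_k[\bar u]$ and $P_k[g^{1/2}]$, the bounds \eqref{p1}--\eqref{p2} together with $|\partial_e\widechi_j|\le C\e^{-\alpha}$ provide a direct control; since both summands are supported inside the set $\{\nabla \widechi_j \neq 0\}$ (thanks to the $\partial_e \widechi_j$ factor), I transfer the integration back to $M$ via the elementary change of variables \eqref{eq:local-global} and obtain an acceptable contribution bounded by $\frac{C\e^{1-\alpha}}{|k|}\int_{\{\nabla \psi_j\neq 0\}}(|u|+|\nabla u|)\,dp$. Since $\alpha<1$, the factor $\e^{1-\alpha}$ is uniformly bounded.

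The delicate middle term contains $P_k[\partial_e\widechi_j]$, which naively involves second derivatives of $\widechi_j$ and so a factor $\e^{-2\alpha}$ that the $\e$ gained from the first IBP cannot absorb. To avoid this, I would exploit the commutation identity $P_k[\partial_e \widechi_j] = \partial_e P_k[\widechi_j]$, valid because $P_k$ and $\partial_e$ are both constant coefficient first order operators, and then integrate by parts in $\partial_e$ to move the derivative off $P_k[\widechi_j]$. The cost is a contribution where $\partial_e$ hits $\phi$, producing $\partial_e\phi = \frac{2\pi i\langle k,e\rangle}{\e}\phi$, whose $1/\e$ factor cancels the $\e$ from the previous IBP.

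The main obstacle is then the resulting ``boundary-like'' term of the form $2\pi i\langle k,e\rangle\int P_k[\widechi_j]\,\bar u\,g^{1/2}\,\phi\,dv$: the factor $|\langle k,e\rangle|$ is of size $|k|$, apparently erasing the decay. I plan to handle it by a further IBP against $\phi$, combined with the identity $\langle k,\nabla\phi\rangle = \frac{2\pi i|k|^2}{\e}\phi$, which allows one to rewrite $\langle k,\nabla\widechi_j\rangle\phi$ in a form where the troublesome $\langle k, e\rangle$ factor is absorbed against a gain of $\e/|k|^2$. After a careful bookkeeping of the resulting cancellations — in particular recognizing that iterating this procedure produces the original integral back, so that the bad pieces combine into lower order ones — one is left only with terms controlled by $\frac{C}{|k|}\int_{\{\nabla \psi_j\neq 0\}}(|u|+|\nabla u|)\,dp$. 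Summing the contributions with the earlier ones and invoking \eqref{eq:local-global} once more delivers the claimed bound.
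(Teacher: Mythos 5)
Your opening steps are sound: one application of \eqref{parts} gives $I = -\e\int P_k[\bar u\,\partial_e\widechi_j\,g^{1/2}]\phi\,dv$, and the two Leibniz pieces in which $P_k$ lands on $\bar u$ or $g^{1/2}$ are indeed controlled by $\frac{C\e^{1-\alpha}}{|k|}\int_{\{\nabla\psi_j\neq 0\}}(|u|+|\nabla u|)$. The gap is in the treatment of the middle term, the one carrying $P_k[\partial_e\widechi_j]$, and the claimed cancellation does not actually close.

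To see the problem concretely, take $e=e_\ell$ and write $I_\ell:=\int\bar u\,\phi\,\partial_\ell\widechi_j\,g^{1/2}\,dv$. Your maneuver (commute $P_k$ past $\partial_\ell$, integrate by parts in $\partial_\ell$, and use $\partial_\ell\phi=\frac{2\pi i k_\ell}{\e}\phi$) produces, in the term where $\partial_\ell$ hits $\phi$, the quantity
\[
2\pi i k_\ell\int\bar u\,g^{1/2}\,\phi\,P_k\widechi_j\,dv
=\frac{k_\ell}{|k|^2}\sum_m k_m\int\bar u\,g^{1/2}\,\phi\,\partial_m\widechi_j\,dv
=\frac{k_\ell}{|k|^2}\,\langle k,\mathbf{I}\rangle,
\]
with $\mathbf{I}=(I_1,\dots,I_n)$. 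So after your computation one arrives at the identity $\mathbf{I}=\mathbf{G}+P\mathbf{I}$, where $P=\tfrac{kk^{T}}{|k|^2}$ is the orthogonal projection onto $\mathbb{R}k$ and $\mathbf{G}$ collects the controlled terms. Since $\mathrm{Id}-P$ is the projection onto $k^{\perp}$, it is \emph{not} invertible: the relation only determines $(\mathrm{Id}-P)\mathbf{I}=\mathbf{G}$ and says nothing about $\langle k,\mathbf{I}\rangle=\int\bar u\,g^{1/2}\,\phi\,\langle k,\nabla\widechi_j\rangle\,dv$, which is precisely the quantity one still needs to bound. The heuristic ``iterating this procedure produces the original integral back, so that the bad pieces combine into lower order ones'' would require the self-referential coefficient to have modulus strictly less than one, but here it is a norm-one projection, so there is no contraction and no gain. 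A further integration by parts against $\phi$, as you suggest, just reproduces the paper's obstacle: $P_k$ falls once more on $\langle k,\nabla\widechi_j\rangle$, costing another $\e^{-\alpha}$ and forcing yet another integration by parts, and so on.

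The paper sidesteps the fixed-point trap entirely: it integrates by parts against $\phi$ repeatedly, letting $P_k$ accumulate on $e^\uparrow(\widechi_j)$ alone. Each step contributes a controlled term of size $\frac{\e^{n(1-\alpha)}}{|k|^n}\int_{\{\nabla\psi_j\neq 0\}}(|u|+|\nabla u|)$ — the series converges geometrically to $O(|k|^{-1})$ — while the residue $\e^m\int\bar u\,g^{1/2}\,P_k^m(e^\uparrow\widechi_j)\,\phi\,dv$ is bounded by $C\e^{m(1-\alpha)-\alpha}$, which tends to zero as $m\to\infty$ because $\alpha<1$. No closed-form cancellation is needed; the loop is never closed, only the limit is taken. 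You would need to replace your fixed-point step by this iteration (or supply an independent bound on $\langle k,\mathbf{I}\rangle$) for the argument to be complete.
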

\begin{proof}
The proof of the lemma follows by integrating parts sufficiently many times and using
  that \eqref{p1} provides  the bound 
 \begin{equation} \label{eq:rec} |P_k^m(e^\uparrow(\widechi_j))| \le  C  \frac{\e^{-\alpha(m+1)}}{|k|^m};
 \end{equation}

As a matter of
fact, we claim that for $m\ge 1$ it holds that
 \begin{multline}
  \label{eq:inductionstep}
\left|\,\int_{T_{p_j}M}\, \bar{u}(v)\,\left(\exp{\frac{2\pi i}{\e}\ip{k}{v}}\right)
\,e^\uparrow(\widechi_j)\,g^{1/2}(v)\,dv\,\right| \le\\  
\sum_{n=1}^m \frac{\epsilon^{n(1-\alpha)}}{|k|^n} 
\left( \int_{\nabla \psi_j \neq 0}
|u|+|\nabla u|\, dp \right) 
+\epsilon^m \left|\int_{\R^n}
\bar{u}(v)\,\exp{\left(\frac{2\pi i}{\e}\ip{k}{v}\right)}
\,P_k^m(e^\uparrow(\widechi_j))\,g^{1/2}(v)\,dv\right|, 
\end{multline}
  Due to 
\eqref{eq:rec}, the last term is bounded by $C \epsilon^{m-(m+1)\alpha}$, that tends to zero as $m$ goes to infinity. 
Observe also that for every $m$, we have a bound $\sum_{n=1}^m \frac{\epsilon^{n(1-\alpha)}}{|k|^n} \le C |k|^{-1}$ for some constant $C$. Thus, if \eqref{eq:inductionstep} holds, we have proven  Lemma~\ref{hfma}.

In order to prove inequality \eqref{eq:inductionstep}, we will argue by induction. 
The case $m=1$ follows from a direct integration by parts 
 for the integral
\[
 \int_{T_{p_j}M}\, \bar{u}(v)\,\exp\left({\frac{2\pi i}{\e}\ip{k}{v}}\right)
\,e^\uparrow(\widechi_j)\,g^{1/2}(v)\,dv\,,
\]
which yields 
\begin{multline}
\label{eq:sum_of_integrals}
\e\int_{T_{p_j}M}\, \,\exp\left({\frac{2\pi i}{\e}\ip{k}{v}}\right)
\,e^\uparrow(\widechi_j)\,P_k(\bar{u}g^{1/2})(v)\,dv +
\\
+
\e\int_{T_{p_j}M}\, \bar{u}(v)\,\exp\left({\frac{2\pi i}{\e}\ip{k}{v}}\right)
\,P_k(e^\uparrow(\widechi_j))\,g^{1/2}(v)\,dv
\,
\end{multline}

It is therefore enough to bound the first term by 
\[
\frac{C}{|k|}\e^{1-\alpha}\int_{\{\nabla\psi_j\neq 0\}}\,(| u|+|\nabla u|)\,dp.
\]
This follows by a use of inequality \eqref{p2} for 
 the term $ P_k(\bar{u}g^{1/2})$ and \eqref{eq:size of iterated gradient} for $e^\uparrow(\widechi_j)$.  Observe also that the integrand vanish whenever $\nabla\widechi_j=0$, since it contains a factor $e^\uparrow(\widechi_j)$; finally, we use \eqref{eq:local-global} to change the integral to the manifold. 

For general $m$, suppose now that \eqref{eq:inductionstep} holds for a given $m$, and we want to prove it for $m+1$. We integrate the last term
by parts using again \eqref{parts} to obtain the expression  

\begin{multline}
\label{eq:sum_of_integrals_next_step}
\e\sum_j\, \int_{T_{p_j}M}\, \,\exp\left({\frac{2\pi i}{\e}\ip{k}{v}}\right)
\,P_k^m(e^\uparrow(\widechi_j))\,P_k(\bar{u}g^{1/2})(v)\,dv +
\\
+
\e\sum_j\, \int_{T_{p_j}M}\, \bar{u}(v)\,\exp\left({\frac{2\pi i}{\e}\ip{k}{v}}\right)
\,P_k^{m+1}(e^\uparrow\widechi_j)\,g^{1/2}(v)\,dv
\,
\end{multline}

Once again, in the first term, we bound $P_k^m(e^\uparrow(\widechi_j))$ and $P_k(\bar{u}g^{1/2})$ using \eqref{p1} and \eqref{p2} respectively, changing the integrals to the manifold by \eqref{eq:local-global}, thus finishing the proof.
\end{proof}

\begin{Lemma}
\label{hfm}
Let $u\in H^1(M)$, and let $f:\TM\to\R$ be a smooth function. Then for any $s>\frac{n}{2}-1$, there exists a constant $C_s>0$ such that
\begin{multline}
\label{eq:sumandotodo}
\left|\sum_j \sum_{|k|\ge 1} \hat{f}(p_j,k) \int_{T_{p_j}M}\, \bar{u}(v)\,\exp{\left(\frac{2\pi i}{\e}\ip{k}{v}\right)}
\,e^\uparrow(\widechi_j)\,g^{1/2}(v)\,dv\,\right|  
\\
\le  C_s \epsilon^{(\alpha-\beta)/2}\, \|u\|_{H^1(M)}\, \|f\|_{C(M,H^s(\TM_p))}
\end{multline}

\end{Lemma}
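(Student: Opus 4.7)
The plan is to combine the pointwise bound from Lemma \ref{hfma}, Cauchy--Schwarz in the Fourier variable $k$, and the volume estimate from part (3) of Theorem \ref{thm:usable_partition_of_unity} to close the argument.

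First, I would apply Lemma \ref{hfma} to each summand on the left-hand side of \eqref{eq:sumandotodo}; this bounds the absolute value of the expression by
\[
C\sum_j \left(\sum_{|k|\ge 1} \frac{|\hat f(p_j,k)|}{|k|}\right) \int_{\{\nabla \psi_j \neq 0\}} (|u|+|\nabla u|)\,dp.
\]
The next step is to absorb the Fourier sum into a Sobolev norm of $f(p_j,\cdot)$. Writing $|\hat f(p_j,k)|/|k| = (|\hat f(p_j,k)|\,|k|^s)\cdot |k|^{-1-s}$ and applying Cauchy--Schwarz in $k$ gives
\[
\sum_{|k|\ge 1}\frac{|\hat f(p_j,k)|}{|k|}
\;\le\;
\Bigl(\sum_{|k|\ge 1} |\hat f(p_j,k)|^2\,|k|^{2s}\Bigr)^{1/2}
\Bigl(\sum_{|k|\ge 1} |k|^{-2-2s}\Bigr)^{1/2}.
\]
The second factor converges precisely when $2+2s>n$, i.e.\ $s>n/2-1$, and produces a constant $C_s$. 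The first factor is (equivalent to) $\|f(p_j,\cdot)\|_{H^s(\TM_{p_j})}$, which in turn is bounded by $\|f\|_{C(M,H^s(\TM_p))}$ uniformly in $j$.

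It remains to control the sum in $j$. By the finite overlapping property (Lemma \ref{lem:FOP}) applied to $\{\nabla\psi_j\neq 0\}\subset D_j^+$, there is a uniform constant $K$ such that
\[
\sum_j \int_{\{\nabla \psi_j \neq 0\}} (|u|+|\nabla u|)\,dp
\;\le\; K \int_{U_\e} (|u|+|\nabla u|)\,dp,
\qquad U_\e := \bigcup_j \supp \nabla\psi_j.
\]
Applying Cauchy--Schwarz in the last integral against $\chi_{U_\e}$ yields
\[
\int_{U_\e} (|u|+|\nabla u|)\,dp \le \sqrt{2}\,|U_\e|^{1/2}\,\|u\|_{H^1(M)},
\]
and by part (3) of Theorem \ref{thm:usable_partition_of_unity} we have $|U_\e|\le C''\e^{\alpha-\beta}$, hence an $\e^{(\alpha-\beta)/2}$ factor. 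Multiplying the three estimates proves \eqref{eq:sumandotodo}.

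The only subtlety I anticipate is the choice of exponent $s$: one needs $s$ large enough so that $\sum_{k\neq 0}|k|^{-2-2s}$ converges over $\Z^n$, which gives exactly the stated threshold $s>n/2-1$. Everything else is a clean combination of Lemma \ref{hfma}, Sobolev-style duality in the fiber Fourier variable, the finite overlapping property, and the volume bound $\vol(\cup_i\supp\nabla\psi_i)\lesssim \e^{\alpha-\beta}$; no further geometric input is needed.
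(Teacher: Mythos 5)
Your proposal is correct and follows essentially the same route as the paper: Lemma \ref{hfma} term-by-term, Cauchy--Schwarz (equivalently H\"older) in $k$ to produce the $H^s$-norm and the convergent tail $\sum|k|^{-2(s+1)}$ under $s>n/2-1$, then Cauchy--Schwarz in $p$ combined with the volume bound \eqref{eq:volume of all gradients}. One small point in your favor: you explicitly invoke the finite overlapping property to pass from $\sum_j\int_{\{\nabla\psi_j\neq 0\}}$ to $K\int_{U_\e}$, whereas the paper writes this step as an equality, which is only an inequality up to the overlapping constant; your version is the precise one.
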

\begin{proof}
By applying the  triangle inequality in \eqref{eq:sumandotodo}, and Lemma~\ref{hfma} to each term in the resulting sum, we obtain the following upper  bound (up to a constant) for the left-hand side in \eqref{eq:sumandotodo}: 
\[ 
\sum_j \left(\int_{\nabla \psi_j \neq 0}
|u|+|\nabla u|\, dp \right)\cdot \sum_k \frac{|\hat{f}(p_j,k)|}{|k|}  .
\]
By  H\"older inequality we get that for each $j$ and exponent $s$

\[ 
\sum_k \frac{|\hat{f}(p_j,k)|} {|k|}  \le \|f\|_{C(M,H^s(\TM_p))} \left(\sum |k|^{-2(s+1)}\right)^\frac12 \]
which converges if $s>\frac{n}{2}-1$. 
Thus, the left-hand side of \eqref{eq:sumandotodo} is bounded by 

\[ \|f\|_{C(M,H^s(\TM_p))} \sum_j   \int_{\{\nabla \psi_j \neq 0\}}
|u|+|\nabla u| dp=  \|f\|_{C(M,H^s(\TM_p))} \int_{ \cup_j \{\nabla \psi_j \neq 0\}} |\nabla u|+|u|\, dp \]
which applying Cauchy-Schwarz in $M$ is bounded by 

\[ \|f\|_{C(M,H^s(\TM_p))} \|u\|_{H^1(M)} 
\cdot \vol( \cup _j\supp \nabla \psi_j  )^\frac12 
\]
and thus, the Lemma follows from inequality \eqref{eq:volume of all gradients} in part 3 of Theorem~\ref{thm:usable_partition_of_unity}.
\end{proof}

Once higher Fourier modes are discarded, the $0$-th order Fourier modes will have a contribution in the limit.
To simplify the writing, we will again state the result for the vertical vector field $X[p,v]=f[p,v]e^\uparrow$ for some  frame vector field $e$ in $M$.

\begin{Lemma}\label{0fm}
Let $u \in H^1(M)$ and $X[p,v]=f[p,v]e^\uparrow$ a vertical vector field in $\TM$, where $e$ is some vector field in the frame defining the parallelization of  $M$. If $\widetilde{X}$ is the average vector field of $X$ in $M$ as it appears in Definition \eqref{eq:average_vector_field}, then 
\begin{equation}
\label{eq:zero_order_fourier}
\lim_{\epsilon \to 0}\sum_j 
\int_{M}   u(q) 
\hat f(p_j,0)\,
e^{\downarrow}(p_j;q)(\psi_j)
dq =
\int_M u \div\widetilde{X}\, dq.
\end{equation}

\end{Lemma}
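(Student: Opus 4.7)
The plan is to perform integration by parts term-by-term in $j$, and then reassemble the resulting sums using $\sum_j\psi_j\equiv 1$ into quantities converging to the expected limit. First I identify the right-hand side intrinsically: since in frame coordinates $dv = g^{1/2}(p)\,dv^1\wedge\cdots\wedge dv^n$ and $\vol(\TM_p)=g^{1/2}(p)$, a direct fiber integration gives
\[
\widetilde X_q \;=\; \Bigl(\vol(\TM_q)^{-1}\int_{\TM_q} f[q,v]\,dv\Bigr)\,e(q) \;=\; \hat f(q,0)\,e(q).
\]
Writing $a(q):=\hat f(q,0)$ (a smooth function on $M$ by smoothness of $f$), we have $\widetilde X = a\,e$ and the goal becomes $L_\e\to\int_M u\,\div(ae)\,dq$, where $L_\e$ denotes the left-hand side of \eqref{eq:zero_order_fourier}.

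For each $j$, the vector field $a(p_j)\,e^\downarrow(p_j;\cdot)$ is smooth on the normal neighborhood containing $\supp\psi_j\subset D_j^+$. Hence the usual product-rule integration by parts identity
\[
\int_M u\,a(p_j)\,e^\downarrow(\psi_j)\,dq \;=\; -\int_M \psi_j\,a(p_j)\,e^\downarrow(u)\,dq \;-\; \int_M u\,\psi_j\,a(p_j)\,\div e^\downarrow(p_j;\cdot)\,dq
\]
is valid for $u \in H^1(M)$ on the closed manifold $M$. Summing in $j$ decomposes $L_\e = -A_\e - B_\e$. The term $B_\e$ is harmless: by Lemma \ref{lem:divergence_down}, $|\div e^\downarrow(p_j;q)|\le C\,d(p_j,q)\le C\e^\beta$ on $\supp\psi_j$, so from $\sum_j\psi_j\equiv 1$ one obtains $|B_\e|\le C\,\|a\|\,\e^\beta\,\|u\|_{L^1(M)}\to 0$.

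For the main term, I introduce the vector field
\[
Z_\e(q) \;:=\; \sum_j \psi_j(q)\,a(p_j)\,e^\downarrow(p_j;q),
\]
so that $A_\e=\int_M \langle \grad u,\,Z_\e\rangle\,dq$. Using $\sum_j\psi_j\equiv 1$, the Lipschitz bound $|a(p_j)-a(q)|\lesssim\e^\beta$ on $\supp\psi_j$, the estimate $|e^\downarrow(p_j;q)-e(q)|\lesssim\e^\beta$ from Lemma \ref{lem:down_vectors_variation}, and the finite-overlapping property (Lemma \ref{lem:FOP}), I will show that
\[
\|Z_\e - a\,e\|_{L^\infty(M)} = O(\e^\beta).
\]
Combined with the $L^2$-boundedness of $\grad u$, this yields $A_\e\to\int_M\langle \grad u,\,ae\rangle\,dq = \int_M a\,e(u)\,dq$, and the divergence theorem on the closed manifold $M$ then gives
\[
L_\e \;=\; -A_\e - B_\e \;\longrightarrow\; -\int_M a\,e(u)\,dq \;=\; \int_M u\,\div(ae)\,dq \;=\; \int_M u\,\div\widetilde X\,dq.
\]

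The main obstacle is really just bookkeeping: all the essential estimates --- integration by parts on each $D_j^+$, the $O(\e^\beta)$ comparison between $e^\downarrow(p_j;\cdot)$ and $e$, and the bound $|\div e^\downarrow|\lesssim d(p_j,\cdot)$ --- are already packaged in the preceding subsection. No oscillatory cancellation is needed, because we are working only with the $k=0$ Fourier mode; the argument is essentially a Riemann-sum reconstruction that glues the local fields $a(p_j)\,e^\downarrow(p_j;\cdot)$ into a uniform approximation of $\widetilde X=ae$.
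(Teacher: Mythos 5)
Your proof is correct and follows essentially the same strategy as the paper's: integrate by parts against $u\psi_j a(p_j)e^\downarrow(p_j;\cdot)$ on the closed manifold, kill the $\div e^\downarrow$ term via Lemma \ref{lem:divergence_down}, and recognize the remaining term as a Riemann-sum reconstruction of $\int_M\langle\grad u, a e\rangle\,dq$. The only cosmetic difference is that you package the reconstruction as a single uniform estimate $\|Z_\e - ae\|_{L^\infty}=O(\e^\beta)$, whereas the paper performs the two replacements ($\hat f(p_j,0)\to\hat f(q,0)$ via uniform continuity, then $e^\downarrow(p_j;\cdot)\to e$ via Lemma \ref{lem:down_vectors_variation}) in sequence.
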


\begin{proof}
It is clear that
\[
\div(u\psi_je^\downarrow(p_j,\cdot))=
u\cdot e^\downarrow(p_j;\cdot)(\psi_j)+
\psi_j\cdot e^\downarrow(p_j;\cdot)(u)+
u\psi_j\div e^\downarrow(p_j;\cdot)
\]
Thus, using that the integral of a divergence vanishes in a manifold without boundary, we get that each of the integrals above equals
\begin{multline}
\label{eq:parts_to_put_away_chi}
\int_{M}   u(q) 
\hat f(p_j,0)\,
e^{\downarrow}(p_j;\cdot)(\psi_j)
dq = 
\\
= 
-\int_{M} \psi_j\hat f(p_j,0)\cdot e^\downarrow(p_j;\cdot)(u)\,dq
- \int_{M} u\psi_j\hat f(p_j,0)\div e^\downarrow(p_j;\cdot)\,dq
\end{multline}

We start by examining the first integral in the right-hand side of the above equation \eqref{eq:parts_to_put_away_chi}, trying to write it in a more compact form. To these aim, we claim first that 
\[
\lim_{\e\to 0} \left|\sum_j \int_{M} \psi_j\hat f(p_j,0)\cdot e^\downarrow(p_j;\cdot)(u)\,dq -
\int_{M} \psi_j\hat f(q,0)\cdot e^\downarrow(p_j;\cdot)(u)\,dq 
\right| = 0
\]

To see this, observe that the function 
$
q\in M\to \hat{f}(q,0)
$
is uniformly continuous, since $f$ itself can be considered continuous, $M$ is compact and $\hat{f}(p,0)=\int_{\TM_p}\,f[p,v]\,dv$ is given integrating over a compact fiber. Thus, for any  $\eta>0$, we can find some $\e>0$ such that
$|\hat f(p,0)-\hat f(q,0)|<\eta$ whenever $d(p,q)\leq \e^\beta$. Therefore,
\begin{multline}
\sum_j \int_{M} \psi_j |\hat f(p_j,0)-\hat f(q,0)| \cdot |e^\downarrow(p_j;\cdot)(u)|\,dq
\\
\leq 
\eta\cdot\sum_j  \int_{M} \psi_j \cdot |e^\downarrow(p_j;\cdot)(u)|\,dq
\leq
\eta\cdot\sum_j  \int_{M}  \psi_j |e^\downarrow(p_j;\cdot)|\cdot|\nabla u|\,dq
\end{multline}
But on the support of $\psi_j$, we can estimate by above $|e^\downarrow(p_j,\cdot)|\leq 2|e_{p_j}|$, since  $e^\downarrow(p_j,\cdot)=(d\exp_{p_j})_v(e_{p_j})$, and from Lemma \ref{lem:almost Euclidean charts} we had an upper bound $\|d\exp_{p_j}\|\leq 2$.  
Thus, using that $\sum_j\psi_j=1$, the above integral can be bound above by
\[
\eta \cdot\max_{q}\|e(q)\|\cdot \int_M |\nabla u|\,dq \leq 
C\eta
\]
where we have also used that $u$ is in $H^1(M)$ and Cauchy-Schwarz. Since $\eta$ is arbitrary, this proves our claim.

A similar argument using again Lemma \ref{lem:down_vectors_variation} shows also that
\[
\lim_{\e\to 0}\left|\int_{M} \hat f(q,0)\cdot e(u)(q)\,dq-
\sum_j\int_{M} \psi_j\hat f(q,0)\cdot e^\downarrow(p_j;\cdot)(u)\,dq 
 \right| =0
\]
where in the first term, we have used again that $\sum_j\psi_j=1$. 
Thus, we get that
\begin{equation}
\label{eq:first_integral_in_parts}
\lim_{\e\to 0}\sum_j\int_{M} \psi_j\hat f(p_j,0)\cdot e^\downarrow(p_j;\cdot)(u)\,dq =
\int_{M} \hat f(q,0)\cdot e(u)(q)\,dq
\end{equation}
and, using integration by parts on the right side, we get
\begin{equation}
\label{eq:first_integral_in_parts_buena}
\lim_{\e\to 0}\sum_j\int_{M} \psi_j\hat f(p_j,0)\cdot e^\downarrow(p_j;\cdot)(u)\,dq =
- \int_M\, u\div (\hat{f}\cdot e)\,dq,
\end{equation}
in which we recognize $\hat{f}\cdot e$ as the average vector field $\tilde{X}$.

We now turn our attention to the second integral in \eqref{eq:parts_to_put_away_chi}. We claim now that
\[
\lim_{\e\to 0}\sum_j \int_{M} u\psi_j\hat f(p_j,0)\div e^\downarrow(p_j;\cdot)\,dq =0
\]
To see this, observe that $\psi_j\div e^\downarrow(p_j;\cdot) =0$ outside of the support of $\psi_j$, that lies in a ball of radius less than $C\e^\beta$.
Furthermore, by Lemma \ref{lem:divergence_down}, $|\div e^\downarrow(p_j,\cdot)(q)|\leq Cd(p_j,q)$ inside the support of $\psi_j$.
Thus,
\[
\left|\sum_j \int_{M} u\psi_j\hat f(p_j,0)\div e^\downarrow(p_j;\cdot)\,dq \right|
\leq
C\e^\beta\sum_j\int_{M} |u|\psi_j|\hat f(p_j,0)|\,dq 
\leq C_1\e^\beta
\]
where we are bounding above $|\hat f(p_j,0)|$ by the maximum of the function $q\to|\hat f(q,0)|$, and we have used once Cauchy-Schwarz to obtain the $L^2$ norm of $u$.

The combination of these two claims yields

\begin{equation}
\label{eq:after_two_claims}
\lim_{\epsilon \to 0}\sum_j 
\int_{M}   u(q) 
\hat f(p_j,0)\,
e^{\downarrow}(p_j;\cdot)(\psi_j)
dq =
\int_M u \div\widetilde{X}\, dq,
\end{equation}
as desired.
\end{proof}

\subsection{Proof of Theorem \ref{vector_limit}}
We have finally all the elements necessary to prove the main Theorem of Section \ref{sec:by_parts_main}.  

\begin{proof}[Proof of Theorem \ref{vector_limit}]
We will start with the simpler case $X=f[q,v]e^\uparrow$.
As mentioned before, we write
\[
 -\int_M\, (du, X^\e) \, dq = \int_M\, u\div X^\e \, dq,
\]
and replace $X^\e$ by its formula  to derive equation \ref{by_parts}, that we recall here for the reader's convenience:
 \begin{multline}
 \label{by_parts2}
-\int_M\, (du, X^\e) \, dq =
\sum_j \int_{M}   u(q) 
f(p_j, \exp^{-1}_{p_j}(q)/\epsilon)
({d_q\psi_j(q)},{e^{\downarrow}(p_j;q)}) dq 
\\
+\sum_j \int_M u(q) \psi_j(q)  
({d_q f(p_j, \exp^{-1}_{p_j}(q)/\epsilon)},{e^{\downarrow}(p_j;q)} )  dq
\\
+  
\sum_j\int_M u(q) \psi_j(q) f(p_j, \exp^{-1}_{p_j}(q)/\epsilon) 
\div   (e^{\downarrow}(p_j; q))\, dq.      
\end{multline}
We explain now how to treat each of the three terms in the right:
\begin{itemize}
\item Once we expand $f$ in its Fourier series, the first integral breaks in two types depending on whether $k=0$ or $|k|\geq 1$, $k$ being the multi-index in the series:  
\[\begin{aligned}
& \sum_j \int_{M}   u(q) 
f(p_j, \exp^{-1}_{p_j}(q)/\epsilon)
({d_q\psi_j(q)},{e^{\downarrow}(p_j;q)}) dq \\ & = \sum_j  \sum_k \hat f(p_j, k) \int_{T_{p_j}M}\, \bar{u}(v)\,\left(\exp{\frac{2\pi i}{\e}\ip{k}{v}}\right)
\,e^\uparrow(\widechi_j)\,g^{1/2}(v)\,dv \end{aligned} \]

 Lemma \ref{hfm} 
implies that the sum of all terms with $|k|\geq  1$ is bounded by a constant times $\e^{\alpha-\beta}$ and thus tends to zero with $\e$. 

 Furthermore, Lemma \ref{0fm} shows that  the remaining integral containing $\hat{f}(0,\cdot)$ approaches $\int_M \,u\,\div\tilde{X}\,dq$.
\item For the second integral in  \eqref{by_parts2}, we use Lemma \ref{lem:df_divided_bi_epsilon}; namely,
\[
({d_pf(p_j,\exp_{p_j}^{-1}(q)/\e)},{e^\downarrow(p_j;q)})=
\frac{1}{\e}e^\uparrow(\tilde{f})_{\exp_{p_j}^{-1}q/\e}.
\]
Observe that, since $\div^v e^\uparrow=0$, this agrees with
\[
\div^v(f\cdot e^\uparrow)=
f\cdot\div^v(e^\uparrow)+e^\uparrow(\tilde{f})
\] 
at the point $[p_j,\exp_{p_j}^{-1}q/\e]$.

Plugging this in the integral at hand, we obtain
\begin{multline}
\sum_j \int_M u(q) \psi_j(q)  
({d_q f(p_j, \exp^{-1}_{p_j}(q)/\epsilon)},{e^{\downarrow}(p_j;q)})   dq = \\
\frac{1}{\e}\int_M\,u(q)\,(\sum_j\psi_j\cdot \div^v(f\cdot e^\uparrow)[p_j,\exp_{p_j}^{-1}q/\e])\, dq=\frac{1}{\e}
\int_M\,u(q)\,h^\e(q)\,dq
\end{multline}
where recall that we were denoting $h[q,v]=(\div^v X)[p,v]$.

\item Finally, the last integral in \eqref{by_parts2} will approach zero when $\e\to 0$. The argument is entirely similar to that at the end of the previous section. Namely, by lemma \ref{lem:divergence_down}, $|\div(e^\downarrow(p_j;q))|\leq C d(p_j,q)\leq C'\e^\beta$ in the support of $\psi_j$. Using Cauchy-Schwarz will result in an upper bound of the form $C\|u\|_{H^1(M)}\e^\beta$.
\end{itemize}

The case for general $X$ is now immediate: just write it as $\sum_{i=1}^n f_i\cdot e_i^\uparrow$ for $e_i$ in the frame, and use linearity. 
\end{proof}

\section{Two-scale convergence of gradients} \label{sec:2sdifferentials}
In this section, we establish the relationship between the two-scale convergence of functions and its gradients.  We recall the function space ${L^2(M,\Hper(\TM_p))}$ introduced in Section~\ref{subsec:functionspaces} and the corresponding norm,
\begin{equation*}
\|u\|^2_{L^2(M,\Hper(\TM_p))}=\oint_{\TM} |\grad_v u(p,v)|^2 \,dv\,dp 
\end{equation*}

We start by a lemma that resembles the orthogonal decomposition of gradients and divergence free vector fields, but we will establish it only on the vertical part of $\TM$. Here we work
in the $L^2$ setting, and thus we need to  give a distributional definition of divergence free vector fields. Namely, we say that $Y \in L^2(\Vvectors)$ satisfies that $\div^v Y=0$ if

\[ 
\int_{\TM} \langle \grad_v u, Y \rangle \,dv\,dp =0, 
\]
for every  $u \in L^2(\Omega,\Hper(\TM_p))$.  In the next proposition we state the orthogonality relationship against smooth vector fields but, since being divergence free is 
a linear condition, it is equivalent to state the orthogonality condition for general $L^2$ vector fields. 
\begin{Prop}\label{prop:verticalLeray}
Let $X[p,v] \in  L^2(\Vvectors)$ such that 
\begin{equation}
\label{orthogonality} 
\int \langle X, Y \rangle \, dv\,dp=0 
\end{equation}
for every $Y\in  \Vvectors$ such that $\div_v Y=0$. Then there exists  a unique $u \in L^2(M,\Hper(\TM_p))$ 
\[X=\grad_v u \]
\end{Prop}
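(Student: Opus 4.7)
The plan is to reduce the problem to a fiberwise Helmholtz (or equivalently, De Rham/Hodge) decomposition on each torus $\TM_p$ and then to check measurability and integrability in $p$. Concretely, for each fixed $p\in M$, the fiber $\TM_p\simeq\T^n$ is a flat torus carrying the constant metric $g_{ij}(p)$, so that the classical Leray decomposition applies there: any $L^2$ vertical vector field on $\TM_p$ orthogonal to every smooth divergence-free vertical vector field on $\TM_p$ is a gradient of a mean-zero $H^1_{per}$-function on $\TM_p$.

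To localize the hypothesis to individual fibers, first I would take test fields of the product form $Y[p,v]=\varphi(p)\,Z[p,v]$ with $\varphi\in C_c^\infty(M)$ and $Z\in\vectors(\V)$ smooth with $\div^v Z=0$. Since $\div^v Y[p,v]=\varphi(p)\,\div^v Z[p,v]=0$, the orthogonality assumption \eqref{orthogonality} and Fubini's theorem yield
\[
\int_M \varphi(p)\Bigl(\int_{\TM_p}\langle X[p,v],Z[p,v]\rangle\,dv\Bigr)\,dp=0
\]
for every such $\varphi$, hence $\int_{\TM_p}\langle X[p,v],Z[p,v]\rangle\,dv=0$ for a.e.\ $p\in M$ and every smooth $Z$ that is divergence-free on $\TM_p$. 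A countable density argument (using a countable dense family of smooth divergence-free fields on a fixed reference torus, transported to the fibers via the framing) fixes a single full-measure set of good $p$'s.

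Having fiberwise orthogonality, I would construct $u$ explicitly via Fourier series in the frame coordinates $v\in\T^n$. Writing
\[
X[p,v]=\sum_{k\in\Z^n}\hat X(p,k)\,e^{2\pi i\,k\cdot v},\qquad \hat X(p,k)\in\C^n,
\]
testing against the divergence-free fields $e^{-2\pi i\,k\cdot v}\,w$ for $w\perp k$ (in the Euclidean pairing) forces $\hat X(p,k)$ to be parallel to $k$ for every $k\neq 0$. Setting
\[
\hat u(p,k):=\frac{k\cdot \hat X(p,k)}{2\pi i\,|k|^2}\ \text{ for }k\neq 0,\qquad \hat u(p,0):=0,
\]
and $u(p,v):=\sum_{k\neq 0}\hat u(p,k)e^{2\pi i k\cdot v}$ produces the desired candidate; the identity $\grad_v u=X$ follows by differentiating the series termwise and comparing Fourier coefficients (the metric $g^{ij}(p)$ enters only through the identification of $\grad_v$ with $\nabla_v$ via the frame, and does not affect this algebraic step).

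Finally, I would verify that $u\in L^2(M,\Hper(\TM_p))$. Parseval on each fiber gives $\|\grad_v u(p,\cdot)\|_{L^2(\TM_p)}^2\le C(p)\|X(p,\cdot)\|_{L^2(\TM_p)}^2$ with $C(p)$ bounded uniformly by the ellipticity of $g^{ij}(p)$ on compact $M$, so integrating in $p$ yields $\|u\|_{L^2(M,\Hper(\TM_p))}\le C\|X\|_{L^2}<\infty$ using the Poincaré--Wirtinger norm \eqref{eq:norm}; measurability of $p\mapsto\hat u(p,k)$ is inherited from that of $p\mapsto\hat X(p,k)$, giving measurability of $u$. Uniqueness follows from the mean-zero constraint: if $\grad_v u_1=\grad_v u_2$, then $u_1-u_2$ is vertically constant on each fiber, and the zero-mean condition forces it to vanish. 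I expect the main subtlety to be the measurable choice of the fiberwise Helmholtz projector and the justification that the countable family of test fields used to localize the orthogonality is genuinely sufficient; the Fourier-series approach sidesteps this cleanly since all the information is encoded in countably many coefficients $\hat X(p,k)$.
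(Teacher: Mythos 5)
Your overall strategy is the same as the paper's (frame Fourier series on the fibers), but there is a concrete error in the step you dismiss parenthetically: the fiber metric $g^{ij}(p)$ does affect the algebraic step, and your formula for $\hat u(p,k)$ is wrong unless $g_{ij}(p)=\delta_{ij}$. Writing $X^j[p,v]=\sum_k \hat X^j(p,k)e^{2\pi ik\cdot v}$ in frame coordinates, the orthogonality to divergence-free fields $e^{-2\pi ik\cdot v}w$ with $\sum_ik_iw^i=0$ forces $g_{ij}(p)\hat X^i(p,k)w^j=0$ for all such $w$ (note the Sasaki inner product is $\langle e_i^\up,e_j^\up\rangle=g_{ij}(p)$). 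Hence the \emph{lowered-index} covector $g_{ij}(p)\hat X^i(p,k)$ is parallel to $k$, i.e.\ $\hat X^j(p,k)=c(p,k)\,g^{jl}(p)k_l$ — not $\hat X(p,k)\parallel k$ as you assert. Since $(\grad_v u)^j=g^{ij}(p)\partial_{v_i}u$, matching Fourier coefficients gives $2\pi i\,g^{ij}(p)k_i\hat u(p,k)=c(p,k)g^{jl}(p)k_l$, so $\hat u(p,k)=c(p,k)/(2\pi i)$ with $c(p,k)=\dfrac{k_i\hat X^i(p,k)}{|k|_p^2}$ where $|k|_p^2:=g^{ij}(p)k_ik_j$. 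With your Euclidean $|k|^2$ in the denominator you instead get $\grad_v u=\dfrac{|k|_p^2}{|k|^2}X$ mode by mode, which is not $X$. The paper introduces $|k|_p^2$ for exactly this reason.

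A secondary remark: the paper avoids your fiberwise localization argument (product test fields $\varphi(p)Z$) altogether. It defines $u$ by the corrected Fourier formula, observes directly that $\div^v(\grad_v u-X)=0$ by construction (the multiplier of $\div^v\grad_v$ is $-4\pi^2|k|_p^2$), and then applies the hypothesis \eqref{orthogonality} with $Y=\grad_v u-X$ together with the defining identity of $\div^v$ to conclude $\oint|\grad_v u-X|^2\,dv\,dp=0$. This is cleaner because it never has to identify the kernel of $\div^v$ on a single fiber, and it also sidesteps the measurability concerns you raise about choosing a countable family of test fields. Your localization step is not wrong, but once you fix the $|k|_p^2$ error your proof and theirs become essentially the same computation.
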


\begin{proof} Let $X[p,v]=X^{i}[p,v] \partial_{v_i}$, where we have used coordinates $v_i$ such that $X^{i}[p,v] $ is $\mathbb{Z}^n$ periodic. (For the sake of clarity, we identify coordinates in  $\TM_p$ with  those in $TM_p$ and we identify $\tilde{X^{i}}$ with $X^{i}$.)

 Then we use
classical Fourier series to write for each coordinate,

\[ X^{i}[p,v]= \sum_{k \in \mathbb{Z}^n}\hat{X^{i}}(p; k)\,e^{2\pi i k\cdot v},
 \]
 
 Now, if the metric in the frame were the identity, the  multiplier of the Laplacian would be $|k|^2$. Instead, we define $ |k|^2_{p}=k_i k_j g^{ij}[p]$. 
 Then we fix $p$ and declare 
 \[
 u[p,v]=\sum_{|k|>0} \frac{k_i}{|k|_p^2}\hat{X^{i}}(p; k)\,e^{2\pi i k\cdot v},
 \]
 which is measurable in $[p,v]$ and  satisfies that 
   \[ \int_{\TM_p} u\, dv=0 \]
 for almost every $p$. Moreover, by construction,  
 \[ \div^{v} ( \grad_v u-X)=0 \]
as distributions. Thus, for  $Y=\grad_v u-X$, it holds that
  \[\int_{\TM} \langle \grad u, Y  \rangle \,dv\,dp=0 \]
  while by approximation with smooth vector fields, \eqref{orthogonality} still  implies that 
 \[
 \int_{\TM} \langle X, Y \rangle \,dv\,dp=0 .
 \]
 Therefore we get that
\[ 
 \int_{\TM} | \grad_v u-X|_{G(p)} ^2 \,dv\,dp =0,
 \]
that is  $X[p,v]=\grad_v u$  as $L^2$ functions. 
 Finally, we estimate
 
 \[ \int_{\TM} |\grad_v u|^2 dpdv= \int  \sum_i \sum_k |\hat{X}_i[p,v]|^2 dp dv  \le \|X\|_{L^2(\TM)} \]
 which yields the required bound in the space   $L^2(M,\Hper(\TM_p))$
\end{proof}

The next Theorem is the main result  of this  section. 

\begin{Theorem}[Convergence of gradients and divergence-free fields] 
\label{differential}
Let $M$ be a closed Riemannian manifold and let $u_\e:M\to\R$ be  a sequence of functions in $H^1(M)$.
\begin{enumerate} 
\item If $u_\e \rightharpoonup u$ in  $H^1(M)$,
then $u_\e$  two-scale converges to $\tilde{u}=u \circ \pi$,
where $\pi:\TM \to M$ is the projection $[p,v]\to p$. Moreover,   $\g u_\e(p)$ $\,$ two-scale converges to $(\g_p u)^\uparrow+ \grad_v u_1(p, v)$, for some  function $u_1 \in  L^2(M,H^1(\T M_p))$.
\item Let $u_\e \in H^1(M)$ uniformly bounded. Then, there is $u_0 \in L^2(M, H^1(\TM_p))$
such that, up to a subsequence, $u_\e$ two-scale converges to $u_0$, and
$\e\, \g_p u_\e$  two-scale converges to $\g_v u_0$.

\item 
 Let $X_\e \in L^2\mathfrak{X}(M)  \2s X_0 \in L^2 \mathfrak{X}(\V)$ and 
$\div_q X_\e=0$. Then, $\div^v X_0=0$ and $\div_q \tilde X=0$, where $\tilde{X}$ is the average of $X$ over the fibers of $\TM$
\end{enumerate}
\end{Theorem}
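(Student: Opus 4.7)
The proof of part (1) will begin by extracting the two-scale limit of $u_\e$ itself. Weak convergence in $H^1(M)$ together with Rellich--Kondrachov yields strong $L^2$ convergence $u_\e\to u$. For any $f\in C^\infty(\TM)$, Lemma~\ref{lem:examples} combined with Lemma~\ref{lem:extra} gives $f^\e \rightharpoonup p\mapsto \vol(\TM_p)^{-1}\int_{\TM_p} f\,dv$ weakly in $L^2(M)$, and the product of strong by weak $L^2$ convergence yields $u_\e \2s u\circ\pi$ directly. Since $\|\g u_\e\|_{L^2}$ is bounded, Lemma~\ref{lem:compactnesstensors} produces along a subsequence a two-scale limit $\xi_0 \in L^2\mathfrak{X}(\V)$. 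The key step is then to test against an arbitrary smooth vertical $Y$ with $\div^v Y = 0$: Theorem~\ref{vector_limit} reads
\[
-\int_M\langle\g u_\e, Y^\e\rangle\,dq = \int_M u_\e\,\div\widetilde Y\,dq + o(1),
\]
the $\e^{-1}$ term killed by the divergence-free condition. Passing to the limit (the left-hand side by compensated compactness Lemma~\ref{lem:cctensors} with $Y^\e\S2s Y$ from Lemma~\ref{lem:convergenceofT}; the right-hand side by strong $L^2$ convergence of $u_\e$ followed by standard integration by parts on $M$) and unwinding the average via $\int_M\langle \g u, \widetilde Y\rangle\,dq = \oint_{\TM}\langle (\g u)^\uparrow, Y\rangle\,dv\,dp$ produces
\[
\oint_{\TM}\langle \xi_0 - (\g u)^\uparrow, Y\rangle\,dv\,dp = 0
\]
for every smooth vertical divergence-free $Y$, and Proposition~\ref{prop:verticalLeray} supplies the desired $u_1$.

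For part (2), under the natural bounds $u_\e$ in $L^2(M)$ and $\e\g u_\e$ in $L^2\mathfrak{X}(M)$, Proposition~\ref{prop:L2_bounded_2_scale} and Lemma~\ref{lem:compactnesstensors} extract a subsequence with $u_\e \2s u_0$ and $\e\g u_\e \2s \zeta_0$. For any smooth vertical $X$, multiplying Theorem~\ref{vector_limit} by $\e$ gives
\[
-\int_M\langle\e\g u_\e, X^\e\rangle\,dq = \int_M u_\e(\div^v X)^\e\,dq + \e\int_M u_\e\,\div\widetilde X\,dq + o(1).
\]
Passing to the limit (left-hand side by compensated compactness; right-hand side by Remark~\ref{rem:testfunctions} since $(\div^v X)^\e \S2s \div^v X$, with the last term vanishing by the $\e$ prefactor) yields
\[
-\oint_{\TM}\langle\zeta_0, X\rangle\,dv\,dp = \oint_{\TM} u_0\,\div^v X\,dv\,dp,
\]
which is exactly the distributional identity $\zeta_0 = \g_v u_0$, forcing $u_0\in L^2(M, H^1(\TM_p))$.

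For part (3), testing the distributional identity $\div X_\e = 0$ against $f^\e$ for an arbitrary $f\in C^\infty(\TM)$ gives $\int_M\langle \g f^\e, X_\e\rangle\,dq = 0$. Multiplying by $\e$ and invoking Proposition~\ref{prop:gradient_epsilon_and_epsilon_gradient} to replace $\e\g f^\e$ by $(\g_v f)^\e$ up to an $L^\infty$-vanishing error --- absorbed by the uniform $L^2$ bound on $X_\e$ --- compensated compactness against the strongly two-scale convergent $(\g_v f)^\e \S2s \g_v f$ yields
\[
\oint_{\TM}\langle\g_v f, X_0\rangle\,dv\,dp = 0,
\]
that is, $\div^v X_0 = 0$. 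For $\div\widetilde X = 0$, Lemma~\ref{lem:compactnesstensors} provides $X_\e \rightharpoonup \widetilde X$ weakly in $L^2\mathfrak{X}(M)$; testing $0 = \int_M \phi\div X_\e\,dq = -\int_M\langle\g\phi, X_\e\rangle\,dq$ for $\phi\in C^\infty(M)$ and passing to the weak limit gives $\int_M\langle\g\phi, \widetilde X\rangle\,dq = 0$, i.e.\ $\div\widetilde X=0$.

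The delicate point throughout is the identification of the limit via the integration-by-parts Theorem~\ref{vector_limit}: its singular $\e^{-1}$ term must be neutralized either by choosing divergence-free vertical test fields (part 1) or by rescaling by $\e$ (part 2), after which compensated compactness does the rest. The vertical Leray decomposition of Proposition~\ref{prop:verticalLeray} plays in this framework the role that the classical orthogonal decomposition between gradients and solenoidal fields plays in the Euclidean two-scale theory, and it is what permits writing the non-$\g u$ part of $\xi_0$ as a vertical gradient.
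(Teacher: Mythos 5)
Your proof is correct and follows essentially the same route as the paper's: Theorem~\ref{vector_limit} is the engine, divergence-free vertical test fields (part 1) and rescaling by $\e$ (part 2) neutralize the singular $\e^{-1}$ term, and Proposition~\ref{prop:verticalLeray} produces $u_1$. Two small deviations are worth recording. For the first claim of part (1), namely $u_\e\2s u\circ\pi$, you invoke Rellich--Kondrachov to upgrade weak $H^1$ convergence to strong $L^2$ convergence and then pair with the weakly convergent $f^\e$; the paper instead tests the two-scale limit $u_0$ against $(\div^vX)^\e$ via Corollary~\ref{cor:H1_orthogonal_to_divergence} and Lemma~\ref{lem:vertical_divergence_versus_regular_divergence} to conclude that $u_0$ is constant along fibers. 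Both are legitimate; the paper's argument has the side benefit of isolating fiber-constancy as a structural fact about $H^1$-bounded sequences, while yours is more elementary. In part (3), you apply Proposition~\ref{prop:gradient_epsilon_and_epsilon_gradient} to an arbitrary smooth $f:\TM\to\R$; the paper restricts to products $\phi(p)\xi(v)$ and argues by density, but since the proposition is stated for general smooth $f$, your shorter version is legitimate. Finally, you gloss over the $\vol(\TM_p)^{-1}$ normalization built into $\oint$ at the end of part (1): Proposition~\ref{prop:verticalLeray} is stated with the unnormalized integral, so it must be applied to $\vol(\TM_p)^{-1}(\xi_0-(\grad u)^\uparrow)$, producing a $\tilde u_1$, and one sets $u_1=\vol(\TM_p)\tilde u_1$ (using that $\vol(\TM_p)$ is constant in $v$ to commute with $\grad_v$). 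This is a bookkeeping point rather than a gap, but it deserves a sentence.
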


\begin{proof}
Corollary~\ref{cor:H1_orthogonal_to_divergence} states that   for any function $u$ in $H^1(M)$, and for any smooth vertical test vector field $X$ it holds that 
\[
\lim_{\e\to 0}\int_M\,u\cdot(\div^v X)^\e dq =0,
\]
and the convergence is uniform for $u$'s lying in a compact set in $H^1(M)$. Hence, we have that 
\[
\lim_{\e\to 0}\int_M\,u_\e\cdot(\div^v X)^\e dq =0,
\]
and if $u_0[p,v]$ is the two-scale limit of $u_\e$, then 
\[
\int_{\TM}\,\vol (\T M_p)^{-1} u_0\cdot \div^v X \,dv\,dp =0.
\]

We claim that this actually implies that $u_0$ is constant along the fibers of $\TM$, that is, $u_0[p,v]=u(p)$ for some function $u:M\to\R$. 
One way to see this is to use Lemma \ref{lem:vertical_divergence_versus_regular_divergence} to get that for a vertical vector field $X$, 
$\div^v X=\div X$. Then
\[
\int_{\TM}\, X(\vol (\T M_p)^{-1}u_0) \,dv \, dp = -\int_{\TM} \vol (\T M_p)^{-1}\,u_0\cdot \div X \,dv\,dp =0,
\]
and since $X$ is an arbitrary smooth vertical field, we get our claim on $u_0=u\circ\pi$ for some function $u$ on $M$. 

Let now $X$ be a vertical vector field with $\div^v X=0$. Observe that in  \eqref{12.2.17}, the term with $h^\e$ vanishes since $h=\div^v X$.  Using once again uniform convergence in compact sets in Theorem \ref{vector_limit}, we obtain that
\[
\lim_{\e\to 0} \int_M\,  \langle \grad u_\e,X^\e \rangle \,dq=
-\lim_{\e\to 0} \int_M\,u_\e \cdot\div \tilde{X}\, dq
\]
If we denote by $Y\in L^2 (\T M)$ the two-scale limit of $\grad u_\e$, the above yields
\[
\oint_{\T M} \langle Y,X \rangle [p,v]\,dp \,dv= - \int_{M}\, u\div\tilde{X} \,dp
=\int_{M}\, \tilde{X}(u)\,dp
\]
where we have integrated by parts in $M$. Recalling that for a vertical vector field of the form $X= \sum_{i=1}^n\,f_{i}[q,v]\,e_i^\uparrow$, its average
$\tilde{X}$ is defined as
\[
\widetilde{X}_q:=
 \sum_{i=1}^n\,\left(\vol (\T M_q)^{-1}\int_{\TM_q}\, f_{i}[q,v]\,  dv\,\right)\cdot e_i, 
\]
 we obtain, using \eqref{eq:one_form_lift_defn},
\begin{multline}
\int_{\T M} \vol (\T M_p)^{-1}\langle Y, X\rangle [p,v]\,dp \,dv=
\sum_{i=1}^n\,\int_{M}\,\left(\vol (\T M_p)^{-1}\int_{\TM_p}\, f_{i}[p,v]\,  dv\,\right)\cdot e_i(u)\, dp
=
\\
\sum_{i=1}^n\,\int_{\TM}\,\vol (\T M_p)^{-1}\,f_{i}[p,v] \langle \grad u^\uparrow,e_i^\uparrow \rangle \, dv\,dp
=
\int_{\TM}\,\vol (\T M_p)^{-1} \langle  \grad u^\uparrow,X \rangle \, dv\,dp.
\end{multline}

This implies that the vector field  $\vol (\T M_p)^{-1}((Y)-(\grad u)^\uparrow)$ integrates to zero when paired with an arbitrary vertical vector field with $\div^vX=0$. Then, from Proposition  \ref{prop:verticalLeray}, we deduce the existence of a unique  function $\tilde u_1 \in L^2(M, H^1(\TM_p))$ such that 
\[\grad_v \tilde{u_1}=\vol (\T M_p)^{-1}((Y)-(\grad u)^\uparrow) \]
which yields that $Y$, the two-scale limit of $\grad u_\epsilon$ satisfies that 
\[Y=\vol{\TM_p} (\grad u)^\uparrow+\grad_v u_1,\]
for $u_1=\vol (\T M_p)\tilde u_1$. 

 To prove (2), denote by $Y$ the two-scale limit of $\e\cdot  \grad u_\e$.
Multiplying equation \eqref{12.2.17} by $\e$ and passing to the limit, we  obtain

\begin{equation}
\label{eq:step1_limit}
\oint_{\TM} u_0\cdot h [p,v]\, dv\, dp=
-\oint_{\TM} \langle Y,X\rangle [p, v]\,dp\, dv.
\end{equation}
Integrating by parts in $v$ in the left-hand side of this equation, we see that, 
since $h[p,v]=\div^v X[p,v]$ and $X$ is vertical,
\begin{equation}
\label{eq:step2_limit}
\oint_{\TM}\, u_0\cdot\div^v X [p,v]\, dv\, dp=
\oint_{\T M}\,  \langle \grad_v  u_0, X \rangle [p,v]\,dv\, dp
\end{equation}
From \eqref{eq:step1_limit} and \eqref{eq:step2_limit}, and since $X$ is an arbitrary vertical vector field, and $Y$ as well as  $( \grad_v u_0)$ are vertical vector fields, we obtain that 
$Y= ( \grad_v u_0)$. This concludes the first two parts of the Theorem.

To prove the last, recall first that  Lemma \ref{lem:compactnesstensors}  states that $\tilde X \in L^2 \mathfrak{X}(M)$ is the weak limit of the sequence $X_\e$. Then for any function $f\in C^{\infty}(M)$, 
\[
\lim_{\e\to 0}\int_M\,f\cdot \div X_\e\, dq=
\lim_{\e\to 0}\int_M\,  \langle X_\e, \grad f \rangle \, dq =
\int_M\,      \langle \tilde X , \grad f \rangle        \, dq =\int_M\,f\cdot \div \tilde X\, dq,
\]
and since $\div X_\e=0$ and $f$ is arbitrary, $\div \tilde X =0$.

In order to deal with the vertical divergence, observe that $\TM\simeq M\times \T^n$, and under this identification we can consider functions 
$f:\TM\to\R$ of the form $f[p,v]=\phi(p)\xi(v)$; more precisely, if $\varphi:\TM\to M\times \T^n$ is this fiber preserving diffeomorphism, we define
$f[p,v]=\phi(p)\cdot \xi(\pi_2\circ\varphi[p,v])$, where $\pi_2:M\times\T^n\to\T^n$ is the projection onto the second factor. 
These functions satisfy the hypothesis of Proposition \ref{prop:gradient_epsilon_and_epsilon_gradient}, thus 
\begin{equation}\label{eq:commutatoragain}
\lim_{\e\to 0}\int_M \,\|\e\grad_q (f^\e) - (\grad_v f)^\e\| \,dq =0.
\end{equation}
Then
\begin{equation}
0=\e\int_M\,\div X_\e\cdot f^\e\, dq=
-\int_M\,\langle X_\e,\e\grad f^\e\rangle\,dq,
\end{equation}
and from \eqref{eq:commutatoragain},
\begin{equation}
0=\lim_{\e\to 0}\int_M\,\langle X_\e,(\grad_v f)^\e\rangle\,dq =\oint_{\TM}\, \langle X_0,\grad_v f\rangle\, dq\,dv,
\end{equation}
since $X_0$ is the two-scale limit of the $X_\e$. 
Integrating by parts,
\begin{equation}
\oint_{\TM}\, \div^vX_0\cdot f\, dq\, dv = 0,
\end{equation}
and since the choices of $\xi$ and $ \phi$ are arbitrary in the formula for $f$,  by density we get  $\div^vX_0\equiv 0$ in the weak sense. 

\end{proof}

\part{Elliptic equations}\label{part:equations}
\section{Two scale convergence and   elliptic equations}\label{sec:2selliptic}

We start by defining the correct analogue in the manifold $M$ and in the torus bundle to matrices which are elliptic and symmetric. We will use 
 the horizontal-vertical decomposition of the tangent space of $\TM$ as explained in \eqref{eq:horiz_and_vertical_in_torus_bundle} and the general theory of 
 vertical tensors developed in Section~\ref{sec:2stensors}. 

 \begin{Def}
We denote by $ T^{1,1}_K(M)$ the class of  sections of the bundle of linear endomorphisms of $\mathfrak{X}(M)$ which are uniformly  elliptic with constant $K$ and  self-adjoint with respect to the metric $g$, i.e. $A \in T^{1,1}_K(M)$
 if
$$K^{-1}\langle X,X\rangle  \leq \langle A[q]X,   X\rangle \leq  K\langle X,X\rangle$$ 
and, taking coordinates respect to the frame, we have that 

\begin{equation}
\label{} 
\sum_{\ell} A_i^\ell [q]g_{\ell k}(q)=\sum_{\ell}A^\ell_{ k}[q] g_{\ell i} (q)
\end{equation}
for $q \in M$.
\end{Def}

 Given $A$ in $ T^{1,1}(M)$  a non necessarily self-adjoint endomorphism,    we define its adjoint $A^{ad}$ by the formulas

\begin{equation} \label{eq:adjoint}
\sum_{\ell} A_i^\ell [q]g_{\ell k}(q)=\sum_{\ell}(A^{ad})^
\ell_{ k}[q] g_{\ell i} (q) 
\end{equation}
That is to say, in matrix form $A^{ad}=GA^tG^{-1}$, where $A^t$ is the transpose of $A$. When we are  considering a  domain $\Omega\subset  M$, we
use the notation $ T^{1,1}_K(\Omega)$.

\begin{Def}
\label{def:elliptic_self_adjoint_vertical_endomorphism}
We denote by  $ T^{1,1}_K({\mathcal{V}})$
the class of sections of  vertical linear     bundle endomorphism of $\mathcalV$ which are uniformly elliptic with constant $K$ and which are self-adjoint with respect to the scalar product given in  \eqref{eq:inner_product}.
That is, for each $[p,v] \in \TM$, we have a linear map
\[
A[p,v]:\mathcalV_{[p,v]}\to \mathcalV_{[p,v]}
\]
depending on $[p,v]$ which satisfies that  for $w \in \mathcalV_{[p,v]}$

\[ \frac{1}{K} \ip{w}{w}\le \ip{A[p,v] w}{w} \le K \ip{w}{w} .\]

\end{Def}

 We recall that $\ip{*}{*}$ is the vertical inner product  given in  \eqref{eq:inner_product}, where we are using the frame $e_i(p)$ to construct the frame $e_i^\uparrow[p,v]$.
 Thus, we can write $A$ in matrix form as
\begin{equation}
A[p,v]e_i^\uparrow := \sum_\ell \,A_{i}^\ell [p,v] \,e_\ell^\uparrow,
\end{equation}
for some functions $A_{i}^\ell:\TM\to\R$ such that 
\begin{equation}\label{eq:selfadjoint}
\sum_{\ell} A_i^\ell g_{\ell k}=\sum_{\ell} A^\ell_{ k} g_{\ell i}. 
\end{equation}
 Explicitly, if $\tildeX=\sum_{i=1}^n\, f_i[p,v] e_i^\uparrow$, then
\[
A\tildeX=\sum_{\ell}\,\sum_i \left(A_i^\ell[p,v] f_i\right)\,e_\ell^\uparrow.
\]
For the regularity on the coordinate functions of $A$, we will use the terminology of subsection \ref{subsec:functionspaces}.

Notice that we have stated the result for $(1,1)$-tensors for readers with a Euclidean mind. Since we are dealing with self-adjoint maps, we could also identify $A$ with the  symmetric
 $(2,0)$-tensor given by
 \[
 A[w_1,w_2]=\langle Aw_1, w_2 \rangle,\quad  \text{where } w_1,w_2\in\calV_{[p,v]} 
 \]
When working on a specific domain $\Omega$ of $M$, we will denote by  $T^{1,1}_K(\V_\Omega)$ the vertical  $(1,1)$-tensors on the torus bundle over $\Omega$.

Given a sequence $A_\epsilon \in  T^{1,1}_K(M)$, we denote by   $u_\epsilon \in H^1_0(\Omega)$ the unique solution to 
\begin{equation}
\label{eq:div_eqn}
\textrm{div} (A_\epsilon \g u_\epsilon)=f,
\end{equation}
where we have dropped the $f$-dependence on the notation.

The next theorem is central  in  the paper, for it is where we identify the weak limit (and the two-scale limit) of $u_\epsilon$.

\begin{Theorem}\label{2stheorem} 
Let $\Omega$ be a domain in $M$. Let $A_\epsilon \in  T^{1,1}_K(\Omega) $ be  strongly two-scale convergent to $A \in T^{1,1}_K(\V_\Omega)$.
Consider $u_\e$ the solution of
\begin{equation} \div (A_\epsilon)(\grad u_\e) =f.\end{equation}
with $f \in H^{-1}(M)$. 
Then,
the sequence $u_\epsilon$ converges weakly to $u$ in $H_0^1(\Omega)$ and the sequence $\grad u_\epsilon$ two-scale converges to 
$(\grad_q u)^\uparrow+ \grad^v u_1$ where $u, u_1$ is the unique solution in  the set $H^1_0(\Omega) \times L^2(\Omega,H^1_{per}(\TM))$ of the following two-scale homogenized system:

\begin{align}
&\div^v( A[p,v] [(\g_p u)^\uparrow+ \g_v u_1])=0 \textrm{ in } \pi^{-1}(\Omega),  \label{2shomogenizedproblemv} \\
&\div_p \left(\vol (\T M_p)^{-1}\int_{\TM_p}  A[p,v]( (\g_p u)^\uparrow+ \g_v u_1)\, dv\right)=f,\label{2shomogenizedproblemp} \\
&u=0 \textrm{ on } \partial \Omega,
\end{align}
where $\pi:\TM\to M$ is the standard projection.

\end{Theorem}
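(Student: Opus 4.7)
The proof follows the oscillating test function strategy of Nguetseng--Allaire, adapted to the parallelizable manifold setting via the two-scale technology developed in Sections~\ref{sec:2sfunctions}--\ref{sec:2sdifferentials}. First, testing the weak formulation of $\div(A_\epsilon \grad u_\epsilon)=f$ against $u_\epsilon$ and using the uniform ellipticity of $A_\epsilon$ together with Poincar\'e's inequality on $\Omega$ yields the energy bound $\|u_\epsilon\|_{H^1_0(\Omega)}\le C\|f\|_{H^{-1}(\Omega)}$. Rellich's theorem combined with Theorem~\ref{differential}(1) then supplies a subsequence (not relabeled) along which $u_\epsilon \rightharpoonup u$ weakly in $H^1_0(\Omega)$ and $\grad u_\epsilon$ two-scale converges to a vertical vector field of the form $(\grad_p u)^{\uparrow}+\grad_v u_1$ with $u_1\in L^2(\Omega, H^1_{per}(\TM_p))$.

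To identify $(u,u_1)$, the plan is to plug into the weak form
\[ \int_\Omega \langle A_\epsilon \grad u_\epsilon,\, \grad \Phi_\epsilon\rangle\,dq \;=\; -\langle f,\Phi_\epsilon\rangle \]
the oscillating test sequence $\Phi_\epsilon := \phi + \epsilon\,\psi^\epsilon$, where $\phi\in C_c^\infty(\Omega)$ and $\psi\in C^\infty(\TM)$ has support in $\pi^{-1}(K)$ for some compact $K\subset\Omega$ (so that $\Phi_\epsilon \in H^1_0(\Omega)$ for all sufficiently small $\epsilon$). Since $\|\epsilon\psi^\epsilon\|_{L^\infty}\to 0$, one has $\Phi_\epsilon \to \phi$ strongly in $L^2(\Omega)$. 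For the gradient, Proposition~\ref{prop:gradient_epsilon_and_epsilon_gradient} gives
\[ \grad \Phi_\epsilon \;=\; \grad\phi \;+\; \epsilon\,\grad(\psi^\epsilon) \;=\; \grad\phi \;+\; (\grad_v\psi)^\epsilon \;+\; o(1) \quad\text{in } L^\infty(\Omega). \]
Here Lemma~\ref{lem:constanttensors} ensures that the constant sequence $\grad\phi$ strongly two-scale converges to $(\grad\phi)^\uparrow$, while Lemma~\ref{lem:convergenceofT}(i) ensures that $(\grad_v\psi)^\epsilon$ strongly two-scale converges to $\grad_v\psi$; both are uniformly bounded in $L^\infty$, hence so is $\grad\Phi_\epsilon$, which therefore strongly two-scale converges to $(\grad\phi)^\uparrow + \grad_v\psi$.

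With these ingredients in place, the compensated compactness lemma for tensors (Lemma~\ref{lem:cctensors}) applies with $A_\epsilon$ strongly two-scale convergent to $A$ and uniformly bounded (by ellipticity), $\grad u_\epsilon$ two-scale convergent to $(\grad u)^\uparrow + \grad_v u_1$, and $\grad\Phi_\epsilon$ strongly two-scale convergent to $(\grad\phi)^\uparrow + \grad_v\psi$ and uniformly bounded. Viewing $A_\epsilon$ as a $(2,0)$-tensor via $A_\epsilon[X,Y]:=\langle A_\epsilon X, Y\rangle$, this yields
\[ \int_\Omega \langle A_\epsilon\grad u_\epsilon,\,\grad\Phi_\epsilon\rangle\,dq \;\longrightarrow\; \oint \big\langle A\big[(\grad u)^\uparrow+\grad_v u_1\big],\,(\grad\phi)^\uparrow+\grad_v\psi\big\rangle\,dv\,dp. \]
Since $\langle f,\Phi_\epsilon\rangle \to \langle f,\phi\rangle$, one obtains the limiting variational identity
\[ \oint \big\langle A\big[(\grad u)^\uparrow+\grad_v u_1\big],\,(\grad\phi)^\uparrow+\grad_v\psi\big\rangle\,dv\,dp \;=\; -\langle f,\phi\rangle \]
for every admissible pair $(\phi,\psi)$.

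Specializing first to $\psi\equiv 0$ while letting $\phi$ vary, the horizontal inner product passes through the vertical average via the lift identity $(X^\uparrow,\omega^\uparrow)[p,v]=(X,\omega)(p)$ (Corollary~\ref{Cor:updown}), and after integration by parts on $M$ one recovers the macroscopic equation~\eqref{2shomogenizedproblemp} for $u\in H^1_0(\Omega)$. Specializing next to $\phi\equiv 0$ with $\psi$ arbitrary, a vertical integration by parts on each torus fiber (no boundary contribution by periodicity) yields the cell equation~\eqref{2shomogenizedproblemv} in the distributional sense on $\pi^{-1}(\Omega)$. Uniqueness of $(u,u_1)$ in $H^1_0(\Omega)\times L^2(\Omega,H^1_{per}(\TM_p))$ follows by Lax--Milgram applied to the coercive symmetric bilinear form $(V,V_1)\mapsto \oint A\big[(\grad V)^\uparrow+\grad_v V_1,\,(\grad V)^\uparrow+\grad_v V_1\big]\,dv\,dp$---the cross term between $(\grad V)^\uparrow$ and $\grad_v V_1$ vanishes upon vertical integration since $V_1$ is fiberwise periodic with zero mean---and this promotes subsequential convergence to convergence of the full sequence. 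The main technical hurdle is the simultaneous strong two-scale convergence and uniform $L^\infty$ control of $\grad\Phi_\epsilon$: the commutation identity $\epsilon\,\grad(\psi^\epsilon)=(\grad_v\psi)^\epsilon+o(1)$ encapsulates the multi-scale calculus of Sections~\ref{sec:2stensors}--\ref{sec:2sdifferentials} and is exactly what makes the compensated compactness lemma for tensors applicable at the decisive step.
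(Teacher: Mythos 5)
Your proof follows essentially the same route as the paper's: uniform energy estimate, two-scale compactness via Theorem~\ref{differential}, oscillating test pairs $\phi+\epsilon\psi^\epsilon$, the commutation Proposition~\ref{prop:gradient_epsilon_and_epsilon_gradient}, compensated compactness for tensors (Lemma~\ref{lem:cctensors}), and decoupling by taking $\psi\equiv 0$ and then $\phi\equiv 0$, with uniqueness closing the loop.

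One place where your justification is imprecise: you deduce $\langle f,\Phi_\epsilon\rangle\to\langle f,\phi\rangle$ from the fact that $\Phi_\epsilon\to\phi$ strongly in $L^2(\Omega)$. Since $f$ is only in $H^{-1}$, strong $L^2$-convergence of the test functions is not sufficient; you need $\Phi_\epsilon\rightharpoonup\phi$ weakly in $H^1_0(\Omega)$. That does hold, but for a different reason: $\epsilon\,\grad(\psi^\epsilon)$ is $L^\infty$-bounded, and by Proposition~\ref{prop:gradient_epsilon_and_epsilon_gradient} it differs from $(\grad_v\psi)^\epsilon$ by $o(1)$ in $L^\infty$, while Lemma~\ref{lem:extra} gives that $(\grad_v\psi)^\epsilon$ converges weakly in $L^2$ to the fiber average of $\grad_v\psi$, which is zero by periodicity. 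So $\grad\Phi_\epsilon\rightharpoonup\grad\phi$ in $L^2$, and then $\langle f,\Phi_\epsilon\rangle\to\langle f,\phi\rangle$. (The paper's own phrasing, ``since the right-hand side is constant,'' is likewise terse on this point.) Everything else in your argument matches the paper's.
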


\begin{proof}
By the uniform ellipticity of $A_\epsilon$, $u_\epsilon \in H^1$ uniformly. Therefore by Theorem~\ref{differential}, and up to a subsequence which we do not
relabel,  the sequence $\g u_\e(p)$  two-scale converges to $(\g_p u)^\uparrow+ \grad^v u_1(p, v)$.  On the other hand, by the weak formulation of \eqref{eq:div_eqn} 
 it holds that 
\begin{equation}
\label{eq:weakform}
\int A_\epsilon[ \grad u_\epsilon, \grad \varphi]\, dq=\int \varphi f
\, dq
\end{equation}
for every $\varphi \in C_0^\infty(\Omega)$.  
Notice that, as mentioned earlier, we are alternating at our convenience the writing of $A_\e$ as a $(1,1)$ or as a $(2,0)$-tensor, since from the context and the notation, this should not create confusion.

Let $\varphi$ be an arbitrary test function in $\Omega$ and let $\varphi_1 \in C^\infty(\TM)$. 
By  Proposition~\ref{prop:gradient_epsilon_and_epsilon_gradient}, 

\[ \lim_{\epsilon \to 0} \int_{M}   A_\epsilon[ \grad u_\epsilon, \grad \varphi+ \epsilon \grad_p \varphi_1^\epsilon]\,  dq= \lim_{\epsilon \to 0} 
\int_{M}A_\epsilon [\grad u_\epsilon, \grad \varphi+ (\grad_v \varphi_1 )^\epsilon]\, dq \]

Now by Lemma~\ref{lem:constanttensors},  $\grad \varphi \S2s (\grad \varphi)^\up$, and by Lemma~\ref{lem:convergenceofT}, $(\grad_v \varphi_1)^\epsilon \S2s \grad^v \varphi_1$. Therefore,
we can apply compensated compactness for tensors as in Lemma~\ref{lem:cctensors}  to pass to the limit and conclude that 
\[ 
\lim_{\epsilon \to 0} \int_{M}   A_\epsilon[ \grad u_\epsilon, \grad \varphi+ \epsilon \grad_p (\varphi_1)^\epsilon]\,  dp
=
\oint_{\mathbb{T}M} A[\grad u^\up+\grad_v u_1, \grad \varphi^\up+\grad_v \varphi_1] \,dp\,dv 
\]
which deals with the limit of the left-hand side of \eqref{eq:weakform}.  Since the right-hand side is constant, we arrive to 
\begin{equation}\label{eq:weakhomogenized}
\int_{\Omega}\vol(\T M_q)^{-1}\int_{\TM_q}\, \langle A\,(\g u^\up+\g_v u_1), \g\varphi^\up +\grad_v \varphi_1 \rangle \, dv\,dq= \int_\Omega\, f\cdot\varphi\, dq,
\end{equation}
which is the weak formulation of the homogenized system.

To uncouple the system, take first $\varphi_1=0$. Since $\g \varphi^\up$ is constant along each fiber $\TM_q$, thus we can write the left-hand side of \eqref{eq:weakhomogenized} as

\begin{equation}
\int_{\Omega}\left\langle \vol(\T M_q)^{-1}\int_{\TM_q}\,  A\,( \g u^\up+\g _v u_1)\, dv \, ,\,  \g\varphi \right\rangle \,dq= \int_\Omega\, f\cdot\varphi\, dq,
\end{equation}
which is the weak formulation of \eqref{2shomogenizedproblemp}, 
\[
\div_q\left(\vol(\T M_q)^{-1}\int_{\TM_q}\, A\,(\g u^\up+\g _v u_1)\, dv\right)=f,
\]
as desired.

To obtain \eqref{2shomogenizedproblemv}, we switch to the case $\varphi=0$ and $\varphi_1:\TM\to\R$ arbitrary to obtain 
\begin{equation}
\int_{\Omega}\vol(\T M_p)^{-1}\int_{\TM_q}\, \left( A\,(\g u^\up+\g_v u_1), d^v\varphi_1 \right) \, dv\,dq= 0.
\end{equation}
Then, we can integrate by parts (taking product test functions as in the end of the proof of Theorem \ref{differential}) to arrive to the desired 
\[
\div^v(A\,(\g u^\up+\g _v u_1)=0
\]
on $\pi^{-1}(\Omega)$.
\end{proof}
Theorem~\ref{2stheorem} truly generalizes \cite[Theorem 1.14] {Allaire92} to the manifold setting.

Let us finish the section with the explicit
examples arising from Section~\ref{sec:2stensors}. For $A$ as in Definition \ref{def:elliptic_self_adjoint_vertical_endomorphism}, we define
\begin{equation}
\label{eq:Aepsilon} 
A^\epsilon =  
\sum_j\, \psi_j(q) \,(H_{\e,j})_*(A[p_j,v]), \qquad  2A^\epsilon_{\sym}=  A^\epsilon+(A^\epsilon)^{\ad}\end{equation}
and 
\begin{equation}
\label{eq:barAepsilon}
\bar{A}^\epsilon= \sum_{i,\ell}^n   \sum \psi_j   A^{i}_{\ell} [p_j,\exp_{p_j}^{-1}(q)/\e] e_i \otimes \theta^\ell, \qquad 2{\bar A}^\epsilon_{\sym}=  {\bar A} ^\epsilon+({\bar A}^\epsilon)^{\ad}  
\end{equation}

Observe that a routine computation using Definitions \ref{def:Tensorcoordinates} and \ref{def:osctensorpull} shows that 
$A^\epsilon_{\sym}$ and ${\bar A}^\epsilon_{\sym}$ are uniformly elliptic with a constant $K'$ dependent of $\e$ but converging to $K$ as $\e$ tends to $0.$

Let us emphasize that for any $(1,1)$-vertical tensors, Lemma~\ref{lem:convergenceofT}  implies that 
$A^\epsilon  \S2s A$, and  Proposition~\ref{prop:strongbarT} implies that $ \bar{A}^\epsilon \S2s A$ as well. However, we can not apply directly Theorem~\ref{2stheorem}
to them because they are not symmetric. In spite of this, we have the following lemma:

\begin{lem}\label{lem:offadjoints}
Let $A \in \TV11 \cap T^{1,1}_K({\mathcal{V}})
$ and $A^\epsilon, {\bar A}^\epsilon$ be defined by \eqref{eq:Aepsilon}, \eqref{eq:barAepsilon}. Then 
\[ \lim_{\epsilon \to \infty} \int_{M}  |A^\epsilon-(A^{\epsilon})^{\ad}|\, dp+ \int_{M}  |{\bar A}^\epsilon-     (  {\bar A}^{\epsilon}    )^{\ad}|\, dp=0 \]
\end{lem}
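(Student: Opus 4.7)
The plan is to reduce both estimates to a single mechanism: the vertical tensor $A[p,v]$ is self-adjoint with respect to $g(p_j)$ (the metric frozen at the base point of the local cell), whereas the adjoint taken on $M$ at the point $q\in\supp\psi_j$ uses $g(q)$; and since $\supp\psi_j\subset B(p_j,\e^\beta)$, we have $|g_{ij}(q)-g_{ij}(p_j)|\le C\e^\beta$ by smoothness. Since $A$ is uniformly elliptic with constant $K$, its frame coordinates $A_i^\ell[p,v]$ are uniformly bounded, so this mismatch will produce a pointwise $O(\e^\beta)$ control on the antisymmetric parts, from which the $L^1(M)$ statement follows immediately by integration on the compact manifold.

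I would treat $\bar A^\e$ first. In frame coordinates, $(\bar A^\e)_i^\ell(q)=\sum_j \psi_j(q)\,A_i^\ell[p_j,\exp_{p_j}^{-1}(q)/\e]$. Using the defining relation \eqref{eq:adjoint} with the metric $g(q)$ and the vertical self-adjointness \eqref{eq:selfadjoint} of $A$ at $p_j$ (with respect to $g(p_j)$), one gets
\[
\sum_\ell(\bar A^\e)_i^\ell g_{\ell k}(q)-\sum_\ell(\bar A^\e)_k^\ell g_{\ell i}(q)
=\sum_j\psi_j(q)\sum_\ell\Bigl[A_i^\ell[p_j,\cdot](g_{\ell k}(q)-g_{\ell k}(p_j))-A_k^\ell[p_j,\cdot](g_{\ell i}(q)-g_{\ell i}(p_j))\Bigr].
\]
For $q\in\supp\psi_j$, each bracket is $O(\e^\beta)$ uniformly, and the finite-overlap property together with $\sum_j\psi_j=1$ keep the sum bounded by $C\e^\beta$. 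Inverting the (uniformly nondegenerate) matrix $g_{\ell i}(q)$ yields $|\bar A^\e-(\bar A^\e)^{\ad}|(q)\le C\e^\beta$ for every $q\in M$.

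For $A^\e$, I would compare it with $\bar A^\e$ using Lemma \ref{lem:down_vectors_variation}: on $\supp\psi_j$ the tensor-products $e_i^\downarrow(p_j;q)\otimes\theta_\ell^\downarrow(p_j;q)$ differ from $e_i(q)\otimes\theta^\ell(q)$ by $O(\e^\beta)$ in any fixed norm. Since the coefficient functions $A_i^\ell[p_j,\cdot]$ are uniformly bounded, this gives $|A^\e-\bar A^\e|(q)\le C\e^\beta$ uniformly on $M$. The adjoint operation on $T^{1,1}(M)$ is a continuous linear operator at each point (it is conjugation by $G(q)$ which has uniform bounds), so $|(A^\e)^{\ad}-(\bar A^\e)^{\ad}|(q)\le C\e^\beta$ as well. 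By the triangle inequality,
\[
|A^\e-(A^\e)^{\ad}|\le|A^\e-\bar A^\e|+|\bar A^\e-(\bar A^\e)^{\ad}|+|(\bar A^\e)^{\ad}-(A^\e)^{\ad}|\le C\e^\beta,
\]
uniformly on $M$. Integrating both pointwise bounds over the compact manifold $M$ yields the required limits.

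The only subtlety is bookkeeping: keeping straight two distinct sources of error, namely (i) $g(q)$ vs.\ $g(p_j)$, which is what makes $\bar A^\e$ fail to be self-adjoint, and (ii) $e_i^\downarrow$ vs.\ $e_i$, which is what separates $A^\e$ from $\bar A^\e$. Both are controlled by $d(q,p_j)\le\e^\beta$ on $\supp\psi_j$ via the smoothness estimates in Section~\ref{subsec:normal_charts} and Lemma~\ref{lem:down_vectors_variation}, so no real obstacle appears.
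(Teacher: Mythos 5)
Your proof is correct and follows essentially the same route as the paper's: both argue that $\bar A^\e$ fails to be self-adjoint only because the metric $g(q)$ differs from $g(p_j)$ by $O(\e^\beta)$ on $\supp\psi_j$, and both reduce the $A^\e$ case to the $\bar A^\e$ case via the smallness of $A^\e-\bar A^\e$ (the paper cites a previously stated lemma for this, you re-derive it from Lemma~\ref{lem:down_vectors_variation}). If anything, your version is slightly more explicit, since the paper's final step ("Thus, it follows that\dots") leaves implicit the uniform boundedness of the adjoint map that you state.
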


\begin{proof}
We start with $\bar{A}^\epsilon$. Recall that $(\bar{A}^\epsilon)^{\ad}(q)$ is obtained from $\bar {A}^\epsilon(q)$ through the metric $g$ at $q$ by
\eqref{eq:adjoint}. Since $A \in T^{1,1}_K({\mathcal{V}})$  \eqref{eq:selfadjoint} implies that 

\[ A^{i}_{\ell} [p_j,\exp_{p_j}^{-1}(q)   ]g_{i k}(p_j)= A^{i}_{k} [p_j,\exp_{p_j}^{-1}(q)   ]g_{i \ell } (p_j).
\]

Thus, by the continuity of the metric and the bounded overlapping property it holds that 

\[  |{\bar A}^\epsilon-     (  {\bar A}^{\epsilon}    )^{\ad}| \le C \epsilon ^\beta\]
pointwise. 

In order to deal with $A^\epsilon$ recall that

\[\lim_{\epsilon \to \infty} \int_{M} |A^\epsilon-\bar{A}^\epsilon| \, dq=0 \]

Thus, it follows that 
\[ \lim_{\epsilon \to \infty} |A^\epsilon-(A^{\epsilon})^{\ad}|\, dq=0 \]
as well. 

\end{proof}

\begin{Cor} Let $\Omega$ be a domain in $M$. Let $A_\epsilon=A_{sym}^\epsilon$ or $A_\epsilon={\bar A}^\epsilon_{sym}$ be  strongly two  scale convergent to $A \in T^{1,1}_K(\V_\Omega)$. Let $u_\epsilon$, the solution of equation \eqref{eq:div_eqn}.  Then,
the sequence $u_\epsilon$ converges weakly to $u$ in $H_0^1(\Omega)$ and the sequence $\grad u_\epsilon$ two-scale converges to 
$(\grad_q u)^\uparrow+ \grad_v u_1$ where $u, u_1$ is the unique solution to the system  \eqref{2shomogenizedproblemv} \eqref{2shomogenizedproblemp} 
\end{Cor}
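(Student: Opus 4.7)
The plan is to reduce the statement to a direct application of Theorem~\ref{2stheorem}. For this we need to verify three hypotheses for $A_\epsilon$ (taken to be either $A^\epsilon_{\sym}$ or $\bar{A}^\epsilon_{\sym}$): self-adjointness, uniform ellipticity with some constant independent of $\epsilon$ for small $\epsilon$, and strong two-scale convergence to $A$. Self-adjointness is built into the very definition of the symmetrized tensors, and uniform ellipticity with a constant $K'(\epsilon) \to K$ was already observed in the parenthetical remark following equation \eqref{eq:barAepsilon}. Thus for $\epsilon$ small enough $A_\epsilon \in T^{1,1}_{2K}(\Omega)$, which suffices for the main homogenization theorem.

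The essential point that remains is the strong two-scale convergence of the symmetrized tensors. By Lemma~\ref{lem:convergenceofT}(i) and Proposition~\ref{prop:strongbarT}, the un-symmetrized tensors $A^\epsilon$ and $\bar{A}^\epsilon$ strongly two-scale converge to $A$. Lemma~\ref{lem:offadjoints} gives $\|A^\epsilon-(A^\epsilon)^{\ad}\|_{L^1(M)}\to 0$ and similarly for $\bar{A}^\epsilon$. Since $A\in T^{1,1}_K(\mathcal{V}_\Omega)$, the coordinate entries of $A^\epsilon$ and $(A^\epsilon)^{\ad}$ are uniformly bounded in $L^\infty(M)$ (and likewise for $\bar{A}^\epsilon$). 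Interpolating between an $L^1$-null sequence and an $L^\infty$-bounded sequence yields convergence in every $L^p$ with $p<\infty$, so in particular
\[
\|A_\epsilon - A^\epsilon\|_{L^2(M)} \to 0 \quad \text{or} \quad \|A_\epsilon - \bar{A}^\epsilon\|_{L^2(M)} \to 0,
\]
depending on which of the two choices is in force.

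This $L^2$-closeness transfers the strong two-scale convergence from the un-symmetrized tensor to $A_\epsilon$. Indeed, for any smooth vector fields $X,Y$ on $M$ and any admissible test function $f$, Cauchy--Schwarz yields
\[
\left|\int_M \big(A_\epsilon[X,Y] - A^\epsilon[X,Y]\big)\, f^\epsilon\, dq\right| \le C\|X\|_\infty\|Y\|_\infty\|A_\epsilon - A^\epsilon\|_{L^2(M)}\|f^\epsilon\|_{L^2(M)} \to 0,
\]
so $A_\epsilon[X,Y]$ shares its two-scale limit with $A^\epsilon[X,Y]$. The same argument applied to $|A_\epsilon|^2 - |A^\epsilon|^2$ (using that both sequences are $L^\infty$-bounded, so this difference is $L^1$-null) shows that the $L^2$-norms converge to the correct limit, giving strong two-scale convergence in the sense of Definition~\ref{def:2stensors}.

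Having verified all the hypotheses of Theorem~\ref{2stheorem} (for $\epsilon$ small enough), the conclusion is immediate: $u_\epsilon \rightharpoonup u$ weakly in $H^1_0(\Omega)$, the gradients $\grad u_\epsilon$ two-scale converge to $(\grad_q u)^{\up}+\grad_v u_1$, and $(u,u_1)$ is the unique solution in $H^1_0(\Omega)\times L^2(\Omega,H^1_{per}(\TM))$ of the two-scale homogenized system \eqref{2shomogenizedproblemv}--\eqref{2shomogenizedproblemp}. The only delicate point is the $L^1\to L^2$ upgrade that propagates Lemma~\ref{lem:offadjoints} into the strong two-scale framework, and this rests entirely on the uniform $L^\infty$ control of the coordinate entries supplied by the ellipticity of $A$.
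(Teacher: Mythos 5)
Your proof is correct and follows essentially the same route as the paper's: reduce to Theorem~\ref{2stheorem} after showing that $A^\epsilon_{\sym}$ and $\bar A^\epsilon_{\sym}$ strongly two-scale converge to $A$, using Lemma~\ref{lem:convergenceofT}, Proposition~\ref{prop:strongbarT}, and Lemma~\ref{lem:offadjoints}. You actually fill in detail the paper leaves implicit — the upgrade from the $L^1$-null difference in Lemma~\ref{lem:offadjoints} to $L^2$-nullity via the uniform $L^\infty$ bound, and the invocation of the ``$L^2$-close sequences share two-scale limits'' lemma — so your write-up is more complete than the paper's terse one-line justification.
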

\begin{proof}

Lemma~\ref{lem:convergenceofT}  implies that for  $A^\epsilon  \S2s A$, and thus Lemma~\ref{lem:offadjoints} implies that  $A_{\sym}^\epsilon  \S2s A$ as well. Similarly,
 Proposition~\ref{prop:strongbarT} and Lemma~\ref{lem:offadjoints} guarantee that  $ \bar{A}^\epsilon_{\sym} \S2s A$. The corollary then follows from theorem~\ref{2stheorem}
\end{proof}

\section{Proof of Theorem \ref{thm:hom}}\label{sec:homogenization}
With the  previous  section  at hand, we are in position  to obtain the  homogenization result stated in the introduction. Furthermore,  we  prove  a more general  statement  without  assuming  orthonormality of the frame   with respect to  the metric.

Our strategy is to show that the  solution of the homogenized problem agrees with the unique solution to the two-scale homogenized problem by incorporating a $u_1$ built from the $w_i$'s solutions to the cell problem.  We start by fixing the terminology to avoid later confusion. Weak formulation and existence and uniqueness of solutions is explained in appendix
\ref{appendix:homogenizedproblems}

\begin{Def}  [The two scale homogenized problem] \label{def:2shomogenizedproblem}
Let $A \in T^{1,1}_K(\V_\Omega)$, and $f \in H^{-1}(\Omega)$,
We say that the function pair $u,u_1 \in H_0^{1}(\Omega) \times L^2(\Omega, \Hper(\TM_p))$ is the unique solution to
the two-scale homogenized problem  if
\begin{align}
&\div_v( A[p,v] [(\g_p u)^\uparrow+ \g_v u_1])=0 \textrm{ in } \pi^{-1}(\Omega),  \label{2shomogenizedproblemv2} \\
&\div_p \left(\int_{\TM_p}  A[p,v]( (\g_p u)^\uparrow+ \g_v u_1)\, dv\right)=f,\label{2shomogenizedproblemp2} \\
&u=0 \textrm{ on } \partial \Omega,
\end{align}
\end{Def}

\begin{Def}[The homogenized problem]\label{def:homogenizedproblem}
Let $w_i$ , $1\leq i\leq n$, be solutions to the following cell equations, in the functions space ${L^2(M,\Hper(\TM_p)}$ introduced in Section~\ref{subsec:functionspaces}. Here $\{o_i\}$ is an orthonormal basis of $T_pM$,
 \begin{equation}\label{eq:w_i-equation}
-\div^v (A_{[p,v]}[\g_v w_i+o_i ^\uparrow ])=0.
\end{equation}

We define the homogenized tensor $A^* \in T^{1,1}(M)$ by the symmetric (2,0)-tensor
\begin{equation} \label {homatrix}
A^*[p][X,Y]=\langle\avint_{\T M_p} (I+ (D_vw)^{\ad})A[p,v](I+ (D_vw))\ X^\up \,dv,Y^\up\rangle,
\end{equation}
  where  $(D_v w)^{\ad}= G D_vw^t G^{-1}$      for the matrix $D_vw$ with $i$-column $\grad_v w_i$.

\end{Def}
\begin{Def}We say that $u \in H^1_0(\Omega)$ is the solution of the homogenized system if 

\begin{equation}\label{eq:homogenized_system}
-\div A^*_p(\g u)=f\\
\end{equation}

\end{Def}

The following lemma states that given any function $u$ in $M$ we can lift it to a solution of   equation \eqref{2shomogenizedproblemv2} in the manifold through the solutions to the cell equations \eqref{eq:w_i-equation}.   It yields a way to relate $u$ and $u_1$. 
 \begin{Lemma}\label{lem:u1}
Given  $X\in \mathfrak{X}(M)$ and  $\{w_i\}_{i=1}^n$ the solution of the cell equation \eqref{eq:w_i-equation}. Let us denote  $u_1\in  L^2(\Omega,\Hper(T_p))$ as
$u_1[p,v]= \langle X^{\uparrow},w\rangle$ where,
\begin{equation}\label{eq:vhomogenized}
w=\sum_i\,w_i[p,v]\cdot o_i ^\uparrow.
\end{equation}
Then it holds that for almost every $p$
\begin{equation}
\div_v A(X^{\uparrow}(p) +\grad _v u_1)=0.
\end{equation}
In particular for $u\in H^1(M)$
choosing  $X=\grad u$  and \begin{equation}
u_1[p,v]=\langle\g u^\uparrow,w\rangle
\end{equation}
gives a solution of
\begin{equation}
\label{eq:vert_div_A_vanishes}
\div_v A(\grad u^{\uparrow}(p) +\grad _v u_1)=0.
\end{equation}
which satisfies
\begin{equation}\label {ecuacion}
\g_vu_1=D_vw(\grad u)^\up.
\end{equation}

\end{Lemma}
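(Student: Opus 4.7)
My plan is to exploit the fact that $X^{\uparrow}$ is constant along the fibers of $\TM$, so that the vertical divergence acts only on the $v$-dependent pieces. Writing $X$ in the orthonormal basis $\{o_i\}$ as $X = \sum_i X_i(p) o_i$ with $X_i(p) = \langle X, o_i\rangle$, the vertical lift is $X^{\uparrow}[p,v] = \sum_i X_i(p)\,o_i^{\uparrow}[p,v]$, and the function $u_1$ unpacks to $u_1[p,v] = \sum_i X_i(p)\, w_i[p,v]$. This reduces the problem to a linear combination of the individual cell equations.

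First I would compute $\grad_v u_1$. Since the coefficients $X_i(p)$ do not depend on $v$, they commute with $\grad_v$, yielding
\begin{equation*}
\grad_v u_1[p,v] = \sum_{i=1}^n X_i(p)\,\grad_v w_i[p,v].
\end{equation*}
Combining with $X^{\uparrow}$ gives
\begin{equation*}
X^{\uparrow} + \grad_v u_1 = \sum_{i=1}^n X_i(p)\bigl(o_i^{\uparrow} + \grad_v w_i\bigr),
\end{equation*}
and applying the vertical endomorphism $A[p,v]$ (which is linear in its argument) produces
\begin{equation*}
A\bigl(X^{\uparrow} + \grad_v u_1\bigr) = \sum_{i=1}^n X_i(p)\,A\bigl(o_i^{\uparrow} + \grad_v w_i\bigr).
\end{equation*}

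Next I would take $\div^v$ of both sides. Because $X_i(p)$ is a function of $p$ alone, it is constant along each torus fiber, so $\div^v$ passes through the sum and through each coefficient $X_i(p)$. Invoking the cell equation \eqref{eq:w_i-equation}, each term $\div^v A(o_i^{\uparrow} + \grad_v w_i)$ vanishes, so the whole sum is zero for a.e.\ $p$. This proves the first statement. The particular case $X = \grad u$ is then immediate, giving \eqref{eq:vert_div_A_vanishes}. Finally, for \eqref{ecuacion}, the expression $\grad_v u_1 = \sum_i (\grad u)_i(p)\,\grad_v w_i$ is exactly the matrix $D_v w$ (whose $i$-th column is $\grad_v w_i$) acting on the coordinate vector of $\grad u$, i.e.\ on $(\grad u)^{\uparrow}$ read in the $\{o_i^{\uparrow}\}$ basis.

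I do not anticipate a serious obstacle here: the lemma is essentially an observation about linearity in the $v$-frozen coefficients, and the only mild bookkeeping point is justifying that $\grad_v$ and $\div^v$ commute with multiplication by functions of $p$ alone, which is transparent from the definitions in Section~2. The statement is thus a direct algebraic consequence of the cell equations.
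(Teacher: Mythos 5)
Your proof is correct and follows essentially the same route as the paper: expand $X^\uparrow$ and $u_1$ in the orthonormal basis $\{o_i\}$, observe that the coefficients depend only on $p$ so they pass through $\grad_v$ and $\div^v$, and then invoke linearity and the cell equations term by term. The only omission relative to the paper's exposition is a one-line remark that $u_1 \in L^2(\Omega,\Hper(\TM_p))$ inherits directly from the $w_i$'s being in that class, but this is immediate and does not affect the argument.
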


\begin{proof}
The condition $u_1\in  L^2(\Omega,\Hper(T_p))$ follows from the same condition of the solutions of the cell equation.

From the orthogonality  of  $\{o_i\}$ it holds   $X^{\uparrow}=\sum_i \langle X^{\uparrow},o_i ^\uparrow\rangle o_i ^\uparrow$.
Observe that, with the vertical inner product in the fibers of $\TM$, since each entry $\langle X^{\uparrow},o_i^\uparrow\rangle$ is constant in the fiber, we have
\[
\grad _v u_1 = \sum_i \langle X^{\uparrow},o_i^\uparrow\rangle  \grad_v w_i = D_vw X ^{\uparrow},
\]
where $D_vw$ denotes the matrix with columns $\grad_vw_i$.
 
   We obtain
\[
X^{\uparrow}+\grad_v u_1= (I+ D_v
w) X^\uparrow=\sum_i \langle X^{\uparrow},o_i ^\uparrow\rangle \cdot (o_i ^\uparrow+\grad_v w_i),\]
and  from the cell equations  \eqref{eq:w_i-equation}, we have that each term in the following sum vanishes:
\[\div^v A(X^{\uparrow}+\grad _v u_1)=\sum_i \langle X^{\uparrow},o_i ^\uparrow\rangle \cdot\div^v(A((o_i ^\uparrow)+\g_v w_i)) =0
\]
\end{proof}

\begin{Theorem}\label{thm:finalalternativo}
Assume that $u$ solves the homogenized problem \eqref{eq:homogenized_system} and  define  $u_1[p,v]=\langle\g u^\uparrow,w\rangle$ an in lemma \ref{lem:u1}, then $(u,u_1)$ is the solution of the two scale homogenized problem. Reciprocally assume that $(u,u_1)$ is  the solution  of the two scale homogenized problem, then $u$ solves the homogenized equation   and $u_1= \langle\g u^\uparrow,w\rangle$ .
\end{Theorem}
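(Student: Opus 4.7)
The plan is to prove both directions by reducing the horizontal equation of the two-scale system to the homogenized equation through a single vertical integration by parts that exploits the cell equations.

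\textbf{Forward direction.} Given $u\in H^1_0(\Omega)$ solving $-\div(A^*\grad u)=f$, I set $u_1[p,v]:=\langle (\grad u)^\uparrow,w\rangle$ as in Lemma \ref{lem:u1}. Identity \eqref{ecuacion} then yields
\[
(\grad u)^\uparrow+\grad_v u_1=(I+D_v w)(\grad u)^\uparrow,
\]
and the vertical equation \eqref{2shomogenizedproblemv2} is exactly the content of Lemma \ref{lem:u1}. For the horizontal equation \eqref{2shomogenizedproblemp2}, I would test the weak form of $-\div(A^*\grad u)=f$ against $\phi\in C^\infty_0(\Omega)$, use the definition \eqref{homatrix}, and apply the adjoint relation \eqref{eq:adjoint} (with respect to the vertical inner product \eqref{eq:inner_product}) to rewrite
\[
\int_\Omega\langle A^*\grad u,\grad\phi\rangle\,dp=\int_\Omega\avint_{\TM_p}\langle A(I+D_vw)(\grad u)^\uparrow,\,(I+D_vw)(\grad\phi)^\uparrow\rangle\,dv\,dp.
\]
Setting $\tilde u_1[p,v]:=\langle(\grad\phi)^\uparrow,w\rangle$, the same computation as for $u_1$ gives $D_vw\,(\grad\phi)^\uparrow=\grad_v\tilde u_1$. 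Hence the right-hand side above differs from the weak form of \eqref{2shomogenizedproblemp2} tested against $\phi$ only by
\[
\int_\Omega\avint_{\TM_p}\langle A(I+D_vw)(\grad u)^\uparrow,\,\grad_v\tilde u_1\rangle\,dv\,dp.
\]
Since Lemma \ref{lem:u1} guarantees $\div^v\bigl(A(I+D_vw)(\grad u)^\uparrow\bigr)=0$ for a.e.\ $p$, a fiberwise vertical integration by parts on $\TM_p$ annihilates this term, establishing \eqref{2shomogenizedproblemp2}.

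\textbf{Reverse direction.} Suppose $(u,u_1)$ is the solution of the two-scale homogenized system. For each fixed $p\in\Omega$, equation \eqref{2shomogenizedproblemv2} is a uniformly elliptic equation on the torus $\TM_p$ in the vertical variable, with datum $(\grad u)^\uparrow$, and standard elliptic theory on the flat torus shows that its solution in $\Hper(\TM_p)$ is unique (the well-posedness of the two-scale system is also discussed in Appendix \ref{appendix:homogenizedproblems}). By Lemma \ref{lem:u1}, the function $\langle(\grad u)^\uparrow,w\rangle$ solves this vertical equation and lies in $\Hper(\TM_p)$ since each $w_i$ has zero vertical mean. Uniqueness therefore forces $u_1=\langle(\grad u)^\uparrow,w\rangle$. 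Substituting into \eqref{2shomogenizedproblemp2} and running the integration-by-parts argument of the forward direction backwards, this equation becomes the weak form of $-\div(A^*\grad u)=f$, so $u$ solves the homogenized problem.

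\textbf{Main obstacle.} The main technical point is keeping the metric-dependent adjoint in \eqref{homatrix} aligned with the vertical inner product \eqref{eq:inner_product}: the identity $\langle (D_vw)^{\ad}\xi,\eta\rangle=\langle\xi,D_vw\,\eta\rangle$ must be applied fiberwise, using that the vertical metric depends on $p$ but not on $v$, so that the adjoint passes cleanly inside $\avint_{\TM_p}$. A secondary bookkeeping issue is ensuring that the $\vol(\TM_p)^{-1}$ normalization implicit in $\avint$ within \eqref{homatrix} agrees with the normalization in Theorem \ref{2stheorem}. Beyond these, the proof is essentially the observation that the extra $(D_vw)^{\ad}$ factor appearing in $A^*$ corresponds precisely to a vertical test function of the same algebraic form as $u_1$ itself, so that the vertical equation \eqref{2shomogenizedproblemv2} is exactly what is needed for the two weak formulations to coincide.
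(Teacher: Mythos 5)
Your proof is correct and follows essentially the same route as the paper: you expand $A^*$ via \eqref{homatrix}, use \eqref{ecuacion} to convert $(I+D_vw)(\grad u)^\up$ into $(\grad u)^\up+\grad_v u_1$, introduce the auxiliary test function $\tilde u_1=\langle(\grad\phi)^\up,w\rangle$ so that $D_vw(\grad\phi)^\up=\grad_v\tilde u_1$, and cancel the cross term via the weak cell equation (your ``fiberwise vertical integration by parts'' is precisely the paper's use of the weak version \eqref{weakh3} of \eqref{eq:vert_div_A_vanishes}); the reverse direction likewise uses uniqueness in $L^2(\Omega,\Hper(\TM_p))$ to identify $u_1$ and then reverses the same algebra. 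The normalization by $\vol(\TM_p)^{-1}$ that you flag is indeed carried slightly inconsistently between \eqref{homatrix} and Definition~\ref{def:2shomogenizedproblem} in the paper itself, but it does not affect the argument since it is a $p$-dependent constant factor common to every term.
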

\begin{proof}
Before addressing the proof, we  start by 
 recalling the weak version of both systems (see \ref{appendix:homogenizedproblems} for further details) and some preliminary observations. 

For the decoupled homogenized system  we have  the cell equations, 
\begin{equation}\label{eq:weak_i-equation}
-\int_{\T M_p}\langle A_{[p,v]}[\g_v w_i(p,v)+o_i ^\uparrow ], \grad _v \phi(v)\rangle\, dv=0,
\end{equation}
and the homogenized equation
\begin{equation}\label{eq:weak_h-equation}
\int_M \langle A^*_p\g u,  \grad \varphi(p)\rangle \,dp=\int_\Omega f \varphi .
\end{equation}
For the two-scale homogenized system we have for a test pair $(\varphi, \varphi^1) \in H^1(M)\times H^1(\T M)$
\begin{equation}\label{weakallaire1}
B[(u, u_1),(\varphi, \varphi^1)]:= \oint_{\pi^{-1}(\Omega)} \langle\, A[p,v](\g u^\up+\g_v u_1), \g \varphi^\up+\g_v \varphi^1 \,\rangle\, dv\, dp=\int_M f \varphi \end{equation}
We can split the above in two equations corresponding to weak versions  of \eqref{2shomogenizedproblemv2} and \eqref{2shomogenizedproblemp2}.

Assume first that $u$ is the solution of the homogenized system in the  weak sense, i.e. for any $\varphi \in H^1(M)$
\begin{equation}
\int_M \langle A^*_p\g u,  \grad \varphi(p)\rangle \,dp=\int_\Omega f \varphi .
\end{equation}
Inserting  definition \eqref{homatrix} we have 
\begin{equation} \label{weakh1}
\int_M\avint_{\TM_p}\langle (I+ (D_vw)^{ad})A[p,v](I+D_vw)(\grad u)^\up, (\grad \varphi)^\up\rangle \,dv\,dp=\int_M f\varphi
\end{equation}
Taking $u_1= \langle \grad u ^\up,w\rangle$ and using   \refeq{ecuacion}  we have on one hand
\begin{equation} \label{weakh2}
=\int_M\avint_{\TM_p}\langle A[p,v]((\grad u)^\up+ \grad_vu_1 ), (I+D_vw)(\grad \varphi)^\up\rangle \,dv\,dp=\int_M f\varphi. 
\end{equation}
 On the other hand  by the  weak version of \eqref{eq:vert_div_A_vanishes} in   lemma \eqref{lem:u1} we have for  any $\varphi^1 \in H^1(\TM)$
 \begin{equation} \label{weakh3}
\oint_{\TM}\langle A[p,v]((\grad u)^\up+ \grad_vu_1 ), \grad_v \varphi^1\rangle \,dv\,dp=0.
\end{equation}
Declare 
 $\tilde{\varphi}:\TM\to\R$ as 
 \begin{equation}\label{omega}
\tilde{\varphi}[p,v]=\langle \g \varphi^\up, w\rangle
\end{equation}
 Take  $\varphi^1= \tilde{\varphi}[p,v]$ in \eqref {weakh3}, this  gives   
$$ \oint_{\TM }\langle A[p,v]((\grad u)^\up+ \grad_vu_1 ), D_vw\g \varphi^\up\rangle \,dpdv=0,$$
 where  we used  $\grad_v \tilde{\varphi}= D_vw \g\varphi^\up$ as in the Lemma.
We can then write  \eqref{weakh2}  as
\begin{equation} \label{weakh4}
= \oint_{\TM}\langle A[p,v]((\grad u)^\up+ \grad_vu_1 ),  (\grad \varphi)^\up\rangle \,dpdv=\int_M f\varphi. 
\end{equation}
which is the  weak version of \eqref{2shomogenizedproblemp2} in the  two scale homogenized system.
To finish, just  notice that \eqref {weakh3} is just the weak version of \eqref{2shomogenizedproblemv2} in the  two scale homogenized system.

Assume now that $(u,u_1)\in H^1(M)\times L^2(\Omega,H^1_{per}(\T_p M))$ is  the solution  of the two scale homogenized problem.
 By choosing  $\varphi=0$ in \eqref{weakallaire1}, we obtain that $u_1$ satisfies \eqref{weakh3} for any $\varphi^1:TM\to \mathbb R$. Then by uniqueness in 
 $L^2(\Omega,H^1_{per}(\T_p M))$ , $u_1=\langle \g u^\up, w\rangle$   given  in the lemma.
 
 Now take $\varphi^1=0$ in \eqref{weakallaire1}, then 
 \begin{equation}\label{weakallaire2}
 \oint_{\pi^{-1}(\Omega)} \langle\, A[p,v](\g u^
 \up+\g_v u_1), \g \varphi^\up \,\rangle\, dv\, dp=\int_M f \varphi .
  \end{equation}
Take $\varphi^1= \tilde \varphi$  in \eqref{weakh3}  as in \eqref{omega}. Then 
$$ \oint_{\TM }\langle A[p,v]((\grad u)^\up+ \grad_vu_1 ), D_vw\g \varphi^\up\rangle \,dpdv=0.$$
Adding this identity  to \eqref{weakallaire2}and using  \eqref{ecuacion}, we obtain that $u$ satisfies the homogenized  equation.

\end{proof}

\appendix
\section{The tangent bundle}
\label{appendix:tangent}
Let $(M,g)$ be a Riemannian $n$-manifold. 
 The \emph{tangent bundle of $M$}, $TM$, is the set  
\[
TM:=\{\,(p,v)\,:\,p\in M, v\in T_pM\,\}
\]
with the natural differentiable structure induced from  $M$.

Whenever $v\in T_pM$, we use $(p,v)$ to denote the corresponding point in the tangent bundle of $M$. 
We will use $\pi:TM\to M$ to denote the bundle projection 
$\pi(p,v)=p$. Given $(U, \varphi) \in M$ with $\varphi=(x^1,x^2,...,x^n)$ we define a chart $\tilde \varphi: \pi^{-1}U\to  \mathbb{R}^n \times  \mathbb{R}^n$ 
adapted to the frame  $\Gamma(p)=(e_1, \ldots,  e_n)$
by 
\begin{equation}
\label{eq:chart_in_TM}
\tilde{\varphi}(p,v)=(x^1(p), \ldots, x^n(p), v^1(p,v), \ldots, v^n(p,v))
\end{equation}
where for $v\in T_pM$,
\[ v=\sum_{j=1}^n v^i (p,v) e_i. \]

There is an exponential map
\[
\exp:TM\to M, \qquad \exp(p,v)=\exp_p(v),
\]
where $\exp_p(v)$ is $\gamma_v(1)$, and $\gamma_v$ is the geodesic in $M$ with initial condition $\gamma'_v(0)=v$. Another map that is often useful is 
\[
\Exp:TM\to M\times M, \quad \Exp(p,v)=(p,\exp_p(v)).
\]
By the Inverse Function Theorem, $\Exp$ is a diffeomorphism from a neighbourhood of the zero section in $TM$ to a neighbourhood of the diagonal $\Delta_M\subset M\times M$.

The differential of all these maps require us to consider the manifold $TTM$, \emph{the tangent bundle of the tangent bundle}. 
For each $(p,v)\in TM$, there are $n$-dimensional subspaces $\mathcalH_{(p,v)}$, $\mathcalV_{(p,v)}$ in $T_{(p,v)}TM$ called the \emph{horizontal} and \emph{vertical} subspaces, and defined as follows:
\begin{itemize}
\item $\mathcalV_{(p,v)}$ is obtained from vectors tangent to curves of the form $\alpha_w(t)=(p,v+tw)$, where $w$ ranges over vectors in $T_pM$. In our terminology, this would be the vector $w^\uparrow(p,v)$. When $W$ is a vector field in $M$, we get an induced vector field $W^\uparrow$ in TM. Notice that 
$\textrm{ker}(\pi_*)= \mathcalV_{(p,v)}$, where $\pi:TM\to M$ is the bundle projection.
\item $\mathcalH_{(p,v)}$  is obtained as follows: for each vector $w\in T_pM$, we consider the geodesic $\gamma_w$ tangent to $w$, and take the parallel transport $P_t(v)$ of $v$ along $\gamma_w(t)$. This defines a smooth curve $\beta_w(t)$ in $TM$ with $\beta_w(0)=v$; thus, its tangent vector $\beta'_w(0)$ belongs to 
$T_{(p,v)}TM$, and it is easy to see that the set of $ \beta'_w(0)$ obtained when $w\in T_pM$ form an $n$-dimensional subspace of $T_{(p,v)}TM$ that we denote as $\mathcalH_{(p,v)}$. 
\end{itemize}

\begin{figure}
  \begin{tikzpicture}[scale=0.7]
  \draw [ultra thick, fill=lightgray] (0,0) to [out=77,in=283] (0,6) to [out=15,in=165]  (6,6)  to [out=283,in=77] (6,0) to [out=165,in=15] (0,0);
  \node [below] at (2.4,2.8) {$[p,v]$};
  \node  at (5.4,3) {$\mathcal{H}$};
   \node  at (3,5.4) {$\mathcal{V}$};
  \draw[fill] (3,3) circle [radius=1pt];
  \draw[thin, ->] (2.7,3) -- (5,3);
   \draw[thin, ->] (3,2.7) -- (3,5);
   \draw[thick,->] (3,-0.5) -- (3,-2); 
    \draw[ultra thick] (0,-3.0) to [out=15,in=165] (6,-3);
     \node  at (3.3,-1.5) {$\pi$};
     \node  at (5.4,-2.5) {$M$};
      \node  at (5.4,-0.5) {$\mathbb{T}M$};
  \end{tikzpicture}
\caption{The vertizontal descomposition of $\TM$}
    \label{fig:vertizontal descomposition}
\end{figure}
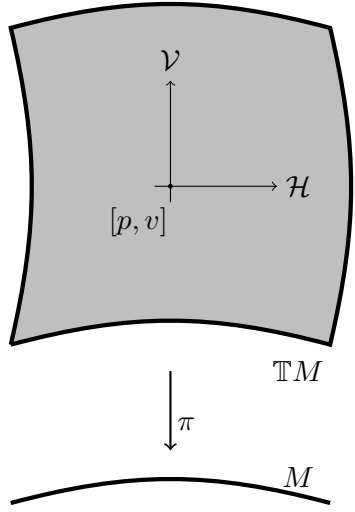

It is well known that 
\[
T_{(p,v)}TM = \mathcalH_{(p,v)}\oplus \mathcalV_{(p,v)} \simeq T_pM \times T_pM,
\]
where the last identification just follows from the construction outlined previously.  For a vector $\xi \in  T_{(p,v)}TM$
we will write

\[ 
\xi=\xi_h+\xi_v, \qquad \xi_h \in \mathcalH_{(p,v)}, \quad \xi_v \in\mathcalV_{(p,v)} 
\]

Similarly, for covectors in $TM$,
\[T^*_{(p,w)}TM=\mathcalH^*_{(p,v)}\oplus \mathcalV^*_{(p,v)}.\] 

That is, for a one form $\omega \in \Lambda^1(TM)$  we can write $\omega=\omega_h+\omega_v $
 where for $\xi \in T_{(p,v)}TM$
 \[
 (\omega_h,\xi)= (\omega,\xi_h),
\qquad 
  (\omega_v,\xi)=(\omega,\xi_v).
 \] 
 We say that  $\omega \in \Lambda^1(TM)$ is vertical if
and only if $\omega_h=0$. 

 In general, an $(m,n)$-tensor $T$ is vertical if 
\[T(\xi_1, \ldots, \xi_m, \omega_1,\ldots, \omega_n)=T((\xi_1)_v, \ldots,  (\xi_m)_v, (\omega_1)_v, \ldots,(\omega_n)_v). \]

Finally, we notice that in the coordinate chart $(\pi^{-1}U,\tilde{\phi})$, a vertical tensor has the expression

\[ T_{(p,v)}= \sum_{I,J} t^I_J(p,v) \frac{\p}{\p v^I} \otimes  dv^J,  \]
where $I,J$ are suitable multi-indexes.

In this paper, we usually consider a metric tensor defined only on vertical vector fields. But whenever we consider a metric on the whole of $\TM$, we consider the \emph{Sasaki metric}. An excellent introduction to it appears in \cite{Paternain}, but we describe it here for completeness and to facilitate the reading:
given vectors $\xi$, $\eta$ in $T_  {(p,v)}TM$,the Sasaki metric is defined as
\[
g_{(p,v)}(\xi,\eta):= g_p(\xi^h,\eta^h)+g_p(\xi^v, \eta^v), 
\]
where $\xi=\xi^h+\xi^v$, $\eta=\eta^h+\eta^v$ in the horizontal-vertical splitting.
It is clear that with this metric, the horizontal and vertical subspaces are orthogonal complements of each other. Moreover, the projection map $\pi:TM\to M$ becomes a Riemannian submersion, although we will not use this fact in the rest of the paper. More important for us, is that the Sasaki metric, although originally defined in $TM$, descends to a metric on the torus bundle $\TM$ with a similar decomposition in terms of the vertical and horizontal components. With this metric (that we will keep calling the Sasaki metric), the vertical torus fibers are orthogonal to the horizontal distribution.

\section{Weak formulation of the homogenized system }\label{appendix:homogenizedproblems}
In this section, we explore carefully  the two scale homogenized system within the tangent bundle formalism.  In particular, 
we state is weak formulation, and prove the existence of unique solutions. For completeness, we also recall the weak formulation of 
the homogenized problem.

\subsection{The two-scale homogenized problem}
\begin{Def}  [The two-scale homogenized system]
Let $A \in \Sigma_K^\mathcal V$, and $f \in H^{-1}(\Omega)$,
We say that the function pair $u,u_1 \in H_0^{1}(\Omega) \times L^2(\Omega, \Hper(\TM_p))$ is the unique solution to
the two-scale homogenized problem  if
\begin{align}
&\div_v( A[p,v] [(\g_p u)^\uparrow+ \g_v u_1])=0 \textrm{ in } \pi^{-1}(\Omega),  \label{2shomogenizedproblemv2b} \\
&\div_p \left(\int_{\TM_p}  A[p,v]( (\g_p u)^\uparrow+ \g_v u_1)\, dv\right)=f,\label{2shomogenizedproblemp2b} \\
&u=0 \textrm{ on } \partial \Omega,
\end{align}
\end{Def}

As we are dealing with weak solutions, the pointwise definition does not make sense, and therefore we interpret it through the variational formulation:

\begin{Def}
We say that $u,u_1$ solves the above system weakly if for every $(\varphi, \varphi_1) \in H_0^{1}(\Omega) \times L^2(\Omega,  H_0^{1}(\Omega) \times L^2(\Omega, \Hper(\TM_p))$
it holds that,

\begin{equation}\label{weakallaireappendix}
\int_{\pi^{-1}(\Omega)} \langle\, A[p,v](\g u^\up+\g_v u_1), \g \varphi^\up+\g_v \varphi_1 \,\rangle\, dv\, dp=\int f \varphi\, dp
\end{equation}
\end{Def}

The bilinear form 
 at the left-hand side o equation \eqref{weakallaireappendix} will be denoted by $B[(u,u_1), (\varphi,\varphi_1)]$.

\begin{Prop}
For smooth coefficients, the definitions of strong and weak solution agree.
\end{Prop}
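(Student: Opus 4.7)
The plan is to argue the equivalence in both directions by integration by parts, exploiting two distinct features of the torus bundle: each fiber $\TM_p$ is a closed manifold, so fibrewise vertical integration by parts produces no boundary term, and the test function $\varphi$ belongs to $H_0^1(\Omega)$, so horizontal integration by parts in $\Omega$ also produces no boundary term. The key coupling observation—that $(\g\varphi)^\up$ is constant along every fiber of $\TM$ over a fixed $p$—then lets me pass freely between a fibre-averaged pairing on $\Omega$ and a pairing on $\pi^{-1}(\Omega)$ involving $(\g\varphi)^\up$.

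For the forward direction, given a smooth strong solution $(u,u_1)$ and an admissible test pair $(\varphi,\varphi_1)$, I would multiply \eqref{2shomogenizedproblemv2b} by $\varphi_1$, integrate over $\pi^{-1}(\Omega)$, and apply fibrewise vertical integration by parts to produce
\begin{equation*}
\int_{\pi^{-1}(\Omega)}\langle A[p,v]((\g u)^\up+\g_v u_1),\g_v\varphi_1\rangle\,dv\,dp=0.
\end{equation*}
Separately, multiplying \eqref{2shomogenizedproblemp2b} by $\varphi$, integrating over $\Omega$, and integrating by parts gives
\begin{equation*}
\int_\Omega f\varphi\,dp = -\int_\Omega\langle B(p),\g\varphi(p)\rangle\,dp,
\end{equation*}
where $B(p):=\int_{\TM_p} A[p,v]((\g u)^\up+\g_v u_1)\,dv$. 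The fiberwise constancy of $(\g\varphi)^\up$ yields
\begin{equation*}
\int_{\TM_p}\langle A[p,v]((\g u)^\up+\g_v u_1),(\g\varphi)^\up\rangle\,dv=\langle B(p),\g\varphi(p)\rangle,
\end{equation*}
so summing the two identities reproduces \eqref{weakallaireappendix} (up to the sign convention used for the elliptic equation on $M$).

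For the converse, I would test \eqref{weakallaireappendix} against the two ``axis-aligned'' pairs $(\varphi,0)$ and $(0,\varphi_1)$ to decouple the two equations. Setting $\varphi\equiv 0$ and letting $\varphi_1$ range over $C_c^\infty(\pi^{-1}(\Omega))$ leaves exactly the vertical integral above; reversing the vertical integration by parts and applying the fundamental lemma of the calculus of variations recovers \eqref{2shomogenizedproblemv2b} pointwise. Setting $\varphi_1\equiv 0$ and letting $\varphi$ range over $C_c^\infty(\Omega)$, the same fibrewise identity collapses the left-hand side to $\int_\Omega\langle B,\g\varphi\rangle\,dp$, and Riemannian integration by parts together with the fundamental lemma yield \eqref{2shomogenizedproblemp2b}. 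The hypothesis of smooth coefficients enters precisely at this last step: one needs $A$ and $(u,u_1)$ regular enough for $\div_v(A[p,v]((\g u)^\up+\g_v u_1))$ and $\div_p B$ to be well defined pointwise, so that the fundamental lemma can be applied classically. With merely $L^2$ coefficients the same argument still gives equivalence, but both strong equations must then be read distributionally, and the statement becomes essentially the definition of weak solution.
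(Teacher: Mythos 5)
Your proposal is correct and rests on the same three facts as the paper's proof: fibrewise vertical integration by parts produces no boundary term because each $\TM_p$ is a closed torus; base integration by parts produces no boundary term because $\varphi\in H_0^1(\Omega)$; and $\varphi^\up$ (hence $(\g\varphi)^\up$) is constant along fibers, which lets one collapse the fiber integral to a pairing on $\Omega$. The organization differs somewhat. The paper performs a single integration-by-parts computation on the left-hand side of \eqref{weakallaireappendix}, producing two terms, and then uses the splitting $\div=\div^H+\div^v$ together with the fact that the fiber integral of a vertical divergence vanishes to commute the horizontal divergence past $\int_{\TM_p}$ and identify the result with the strong equations. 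You instead treat the two directions separately and decouple the weak equation by testing against $(\varphi,0)$ and $(0,\varphi_1)$, applying the fundamental lemma to pass to the pointwise identities; this avoids the horizontal/vertical divergence decomposition entirely at the cost of invoking density of axis-aligned test pairs (which is harmless here since the test space is a product). Both routes are valid and essentially equivalent in content; your version is a bit more explicit about the logical equivalence claim, while the paper's is a more compressed identity of integrals. Your remark on the sign convention is apt — the paper's equations \eqref{2shomogenizedproblemp2b} and \eqref{weakallaireappendix} are consistent only with $-\div A^*\g u=f$, so a sign must be adjusted somewhere, as you noted.
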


\begin{proof}
In what follows, we use some of the properties of vertical divergence that appear in Section \ref{sec:vertical_divergence}.
 We integrate by parts in both variables to obtain that
\begin{multline}\label{B}
\int_{\pi^{-1}(\Omega)} \langle\, A[p,v](\g u^\up+\g_v u_1), \g \varphi^\up+\g_v \varphi_1 \,\rangle\, dv\, dp =\\
-\int_{\pi^{-1}(\Omega)}\,\varphi^\up\cdot \div A[p,v](\g u_\up+\g_v u_1)\, dv\, dp  \\
-\int_\Omega\int_{\TM_p}\,\varphi_1\cdot\div^v A[p,v](\g u^\up+\g_v u_1)\, dv\, dp 
\end{multline}
Since $\phi^\up$ is constant in each $\TM_p$ fiber, we have that the first term is equal to
\begin{equation}
-\int_\Omega\,\varphi\cdot \int_{\TM_p}\, \div A[p,v](\g u^\up+\g_v u_1)\, dv\, dp
\end{equation} 
and here we use that, since $A[p,v](\g u^\up+\g_v u_1)$ is a vertical field, the integral along a fiber of its vertical divergence vanishes, thus
\begin{multline}
\int_{\TM_p}\,\div A[p,v](\g u^\up+\g_v u_1)\, dv =
\int_{\TM_p}\,\div^H A[p,v](\g u^\up+\g_v u_1)\, dv =
\\
\div\int_{\TM_p}\,A[p,v](\g u^\up+\g_v u_1)\, dv = f
\end{multline}
by the properties of divergence. 
On the other hand, the integrand in the second term in \eqref{B} vanishes, and the result is obvious. 
%

\end{proof}

\begin{Prop}
There is a unique solution to the two-scaled  homogenized problem 
\end{Prop}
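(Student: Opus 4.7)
The plan is to apply the Lax--Milgram theorem to the bilinear form $B$ defined in \eqref{weakallaireappendix} on the Hilbert space
\[
H := H^1_0(\Omega) \times L^2(\Omega, H^1_{per}(\TM_p)),
\]
equipped with the product norm
\[
\|(u,u_1)\|_H^2 := \|\grad u\|_{L^2(\Omega)}^2 + \|u_1\|_{L^2(\Omega,H^1_{per}(\TM_p))}^2,
\]
where the second norm is defined via \eqref{eq:norm}. The right-hand side is the linear functional $L(\varphi,\varphi_1) := \langle f,\varphi\rangle_{H^{-1},H^1_0}$, which is manifestly continuous on $H$ because it factors through the projection onto the first coordinate.

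The continuity of $B$ is routine: the uniform ellipticity in Definition~\ref{def:elliptic_self_adjoint_vertical_endomorphism} gives a pointwise bound $|A[p,v]\xi|\le K|\xi|$, so a Cauchy--Schwarz estimate on $\pi^{-1}(\Omega)$ bounds $|B[(u,u_1),(\varphi,\varphi_1)]|$ by a constant multiple of $\|(u,u_1)\|_H\cdot \|(\varphi,\varphi_1)\|_H$, using that $|\grad\varphi^\up|^2$ integrated over a fiber $\TM_p$ produces $\vol(\TM_p)|\grad\varphi|^2$ and $\vol(\TM_p)$ is bounded on the compact manifold $M$.

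The heart of the argument is coercivity. By the lower ellipticity bound,
\[
B[(u,u_1),(u,u_1)] \ge K^{-1}\int_{\pi^{-1}(\Omega)} |\grad u^\up + \grad_v u_1|^2\,dv\,dp.
\]
I would now split the square using vertical orthogonality: since $\grad u^\up$ is constant along each torus fiber $\TM_p$ while $\int_{\TM_p}\grad_v u_1\,dv = 0$ (because $u_1[p,\cdot]\in H^1_{per}(\TM_p)$ is periodic, so the torus fiber is closed and the vertical gradient integrates to zero componentwise), the cross term vanishes:
\[
\int_{\TM_p}\langle \grad u^\up,\grad_v u_1\rangle\,dv = \Bigl\langle \grad u,\int_{\TM_p}\grad_v u_1\,dv\Bigr\rangle = 0.
\]
Hence
\[
\int_{\pi^{-1}(\Omega)} |\grad u^\up + \grad_v u_1|^2\,dv\,dp
= \int_\Omega \vol(\TM_p)|\grad u|^2\,dp + \int_{\pi^{-1}(\Omega)}|\grad_v u_1|^2\,dv\,dp.
\]
Using the uniform positive lower bound on $\vol(\TM_p)$ (which follows from non-degeneracy of the frame on the compact manifold $M$) and the Poincar\'e--Wirtinger inequality on each torus fiber (which controls $\|u_1[p,\cdot]\|_{L^2}$ by $\|\grad_v u_1[p,\cdot]\|_{L^2}$ since $u_1$ has zero vertical mean), we obtain
\[
B[(u,u_1),(u,u_1)] \ge c\bigl(\|\grad u\|_{L^2(\Omega)}^2 + \|u_1\|_{L^2(\Omega,H^1_{per}(\TM_p))}^2\bigr) = c\|(u,u_1)\|_H^2
\]
for some constant $c>0$ depending only on $K$, $M$ and $g$. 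Lax--Milgram then provides existence and uniqueness of the weak solution pair.

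The main obstacle I anticipate is the orthogonality step: one must verify carefully that the pointwise identity $\int_{\TM_p}\grad_v u_1\,dv = 0$ holds in coordinates compatible with the Sasaki metric and with the frame structure, and that the volume weights $\vol(\TM_p)$ entering the integration along fibers do not interfere (they are continuous and bounded above and below on $M$, so they are harmless constants up to uniform equivalence of norms). Everything else is a standard Lax--Milgram argument.
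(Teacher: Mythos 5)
Your proof is correct and follows essentially the same route as the paper: Lax--Milgram on $H^1_0(\Omega)\times L^2(\Omega,\Hper(\TM_p))$, with coercivity obtained by observing that the cross term $\int_{\pi^{-1}(\Omega)}\langle\g\varphi^\up,\g_v\varphi_1\rangle\,dv\,dp$ vanishes (since $\g_v\varphi_1$ integrates to zero over each fiber while $\g\varphi^\up$ is fiber-constant) and then invoking Poincar\'e--Wirtinger on the fibers. Your writeup is in fact a bit more careful than the paper's: you correctly use the uniform \emph{lower} bound on $\vol(\TM_p)$ when converting $\int_{\pi^{-1}(\Omega)}|\g\varphi^\up|^2\,dv\,dp$ into a multiple of $\int_\Omega|\g\varphi|^2\,dp$, whereas the paper writes $V=\max_p\vol(\TM_p)$ at that point, which appears to be a typo for the minimum.
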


\begin{proof}
Uniqueness and existence follows from the classical Lax-Milgram theorem applied to the bilinear form $B$ in the space  $H_0^{1}(\Omega) \times L^2(\Omega,\Hper(\TM_p))$. Notice that
\[ \int_{\pi^{-1}\Omega} \, \langle \g \varphi, \g \varphi_1 \rangle \, dv\, dp=0 \]
 
 Next, by the ellipticity of $A$ it holds that for $\Phi=(\varphi,\varphi_1)$

\[ B(\Phi,\Phi) \ge c \int_{\pi^{-1}\Omega}  |\g \varphi+\g \varphi_1|^2 \,dv\,dp \ge c\cdot V \int_{\Omega} | \g \varphi|^2 + c\int_{\pi^{-1}\Omega}|\g \varphi_1|^2 ,
\]
where $V=\max_p{\vol(\TM_p)}$.
\end{proof}

\subsection{The homogenized system}
We recall the definition of the homogenized problem for arbitrary  metric $g$ and frame $\Gamma$.
\begin{Def}[The homogenized problem]
We define a symmetric tensor $A^* \in \mathcalT^{1,1}(M)$ 
\label{appendix:homoweakapp}
\begin{equation} \label {homatrixapp}
A^*[p][X,Y]=\avint_{\T M_p} \langle(I+ (D_vw)^{ad})A[p,v](I+ (D_vw))X^\up dv,Y^\up\rangle
\end{equation}
where  $(D_v w)^{ad}= G D_vw ^tG^{-1}$     for the matrix $D_vw$ with $i$-column    $\g_vw_i$.

\end{Def}

\begin{Def}
We say that $u \in H^1_0(\Omega)$ is a solution of the homogenized system if 

\begin{equation}\label{eq:homogeneized_system}
-\div A^*_p(\g u)=f
\end{equation}

\end{Def}

For later uses we notice that equation \eqref{eq:w_i-equation} implies that for every $\varphi_1 \in \Hper(\TM_p)$ it holds that

\begin{equation}\label{weakyappendix}
\int_{\TM_p}\, \langle A[p,v] (o_k^\up+\g_v w_k), \g_v \varphi_1\,\rangle \,dv=0
\end{equation}
and similarly, from equation \ref{eq:homogeneized_system}, for every $\varphi \in H_0^1(\Omega)$ it holds that
\begin{equation}\label{weakxapendix}
\int_{\Omega} \,\langle A^* \g u\,,\,\g \varphi\, \rangle\, dp=\int_\Omega f\cdot\varphi\, dp
\end{equation}
 
Existence and uniqueness of the $w_i$'s and $u$ for both the cell problem and the homogenized system, follows from the Lax-Milgram theorem applied to each of the corresponding 
bilinear forms.

\end{document}